\theoremstyle{plain}
\newtheorem{theorem}{Theorem}[section]
\newaliascnt{corollary}{theorem}
\newaliascnt{proposition}{theorem}
\newaliascnt{lemma}{theorem}
\newtheorem{lemma}[lemma]{Lemma}
\newtheorem{proposition}[proposition]{Proposition}
\newtheorem{corollary}[corollary]{Corollary}
\theoremstyle{definition} 
\newaliascnt{definition}{theorem}
\newaliascnt{assum}{theorem}
\newaliascnt{assums}{theorem}
\newaliascnt{conjecture}{theorem}
\newaliascnt{conv}{theorem}
\newtheorem{definition}[definition]{Definition}
\newtheorem{assum}[assum]{Assumption}
	\theoremstyle{rem}
\newaliascnt{rems}{theorem}
\newaliascnt{rem}{theorem}
\newaliascnt{exa}{theorem}
\newaliascnt{exs}{theorem}
\newtheorem{rem}[rem]{Remark}
\newtheorem{exa}[exa]{Example}
\numberwithin{equation}{section}
\numberwithin{lemma}{section}
 \def\mV{\mathsf{V}}
 \def\mE{\mathsf{E}}
 \def\mB{\mathsf{B}}
 \def\mv{\mathsf{v}}
 \def\me{\mathsf{e}}
 \def\mf{\mathsf{f}}
 \def\mw{\mathsf{w}}
 \def\mW{\mathsf{W}}
 \def\mf{\mathsf{f}}
 \def\erfc{\mathrm{erfc}}
\def\mVD{{\mV_{\mathrm{D}}}}
\def\mVN{{\mV_{\mathrm{N}}}}
\def\mvD{{\mv_{\mathrm{D}}}}
\def\mvN{{\mv_{\mathrm{N}}}}
\newcommand{\1}{\mathbf{1}}
\newcommand{\R}{\mathbb{R}}
\newcommand{\Pstv}{\mathcal{P}_{\mv \rightarrow}(\Graph)}
\newcommand{\Pstvpr}{\mathcal{P}_{\mv' \rightarrow}(\Graph)}
\newcommand{\Pendw}{\mathcal{P}_{\rightarrow \mw}(\Graph)}
\newcommand{\Pendwpr}{\mathcal{P}_{\rightarrow \mw'}(\Graph)}
\newcommand{\DeltaGD}{\Delta^{\Graph;\mVD}}
\newcommand{\ptGD}{p_t^{\Graph;\mVD}}
\newcommand{\QtGD}{{\mathcal Q}_t(\Graph;\mVD)}
\newcommand{\TrtGD}{{\mathrm{Tr}}_t(\Graph;\mVD)}
\newcommand{\C}{\mathbb{C}}
\newcommand{\N}{\mathbb{N}}
\newcommand{\heatcont}{{\mathcal Q}_t}
\newcommand{\heatcontm}{{\mathcal Q}_t^{\mathrm{NT}}}
\newcommand{\heatcontp}{{\mathcal Q}_t^{\mathrm{T}}}
\newcommand{\dd}{\,\mathrm{d}}
\newcommand{\e}{\mathrm{e}}
\DeclareSymbolFont{stixletters}{LS1}{stix}{m}{it}
\DeclareMathAccent{\cev}{\mathord}{stixletters}{"91}
\DeclareMathAccent{\vec}{\mathord}{stixletters}{"92}
\newcommand{\Graph}{\mathcal{G}} % (Metric) graph
\DeclareMathOperator{\dist}{dist}
\newcommand\Pst[1]{{\mathcal{P}_{#1 \rightarrow}(\Graph)}}
\newcommand\Pend[1]{{\mathcal{P}_{\rightarrow #1}(\Graph)}}
\title{On the heat content of compact quantum graphs} 
\subjclass[2010]{}
\keywords{}
\author[P.~Bifulco]{Patrizio Bifulco \orcidlink{0009-0004-0628-374X}}
\author[D.~Mugnolo]{Delio Mugnolo \orcidlink{0000-0001-9405-0874}}
\address{Lehrgebiet Analysis, Fakult\"at Mathematik und Informatik, Fern\-Universit\"at in Hagen, D-58084 Hagen, Germany}
\email{patrizio.bifulco@fernuni-hagen.de}
\email{delio.mugnolo@fernuni-hagen.de}
\subjclass[2010]{34B45 (05C50 35P15 81Q35)}
\keywords{Spectral geometry of quantum graphs; Heat content; Asymptotic analysis}
\thanks{
The authors were partially supported by the Deutsche Forschungsgemeinschaft (Grant 397230547).  The second author is very grateful to Marvin Plümer for participating in the early discussions from which this article developed. Both authors warmly thank James B. Kennedy (Lisbon), Tommaso Rossi (Sorbonne) and Matthias Täufer (Valenciennes) for interesting discussions and helpful remarks.
}
\begin{document}

\begin{abstract}
We study the heat content for Laplacians on compact, finite metric graphs with Dirichlet conditions imposed at the ``boundary'' (i.e., a given set of vertices). We prove a closed formula of combinatorial flavour, as it is expressed as a sum over all closed orbits hitting the boundary. Our approach delivers a small-time asymptotic expansion that delivers information on crucial geometric quantities of the metric graph, much in the spirit of the celebrated corresponding result for manifolds due to Gilkey--van den Berg; but unlike other known formulae based on different methods, ours holds for all times $t>0$ and it displays stronger decay rate in the short time limit. Furthermore, we prove new surgery principles for the heat content and use them to derive comparison principles for the heat content between metric graphs of different topology.
\end{abstract}

\maketitle
\tableofcontents

\section{Introduction}
Differential operators on a metric graph $\Graph$ have become a popular toy model in mathematical physics and operator theory over the last two decades: self-adjoint operators on $L^2(\Graph)$ usually go under the name of \emph{quantum graphs} and we refer the reader to \cite{BerKuc13, Mug14, Kur23} for an introduction to different topics related to their theory. In particular, much attention has been lately devoted to \emph{spectral geometry}, i.e., to the interplay of a metric graph's topological and metric properties on the one hand, and the eigenvalues of the free Laplacian $\Delta^\Graph$ with standard (i.e., continuity and Kirchhoff) vertex conditions.

It is well-known that compact, finite metric graphs do not admit an intrinsic notion of boundary; it is then natural to interpret as boundary any given set $\mVD$ of vertices where Dirichlet conditions are imposed on the elements of the domain of $\Delta^{\Graph}$: we use the notation $\Delta^{\Graph;\mVD}$ to stress this dependence.

The \emph{heat kernel} of $\Graph$ with respect to $\mVD$ is, by definition, the fundamental solution of the heat equation associated with $\Delta^{\Graph;\mVD}$, i.e., the solution $(0,\infty) \times \Graph \times \Graph \ni (t,x,y) \mapsto  \ptGD(x,y)$ of the initial-boundary value problem
\[
\left\{
\begin{array}{rcll}
\frac{\partial}{\partial t} u(t,x) &=& \Delta^{\Graph;\mVD} u(t,x),\qquad& t>0,\ x\in \Graph,\\
u(t,\mv)&=&0, &t>0,\ \mv\in \mVD,\\
u(0,x)&=&\delta_y(x), &x\in \Graph.
\end{array}
\right.
\]
Several descriptions of the heat kernel are available: in particular, {Mercer's theorem} immediately yields that $p^{\Graph;\mVD}$ can be  described by means of the eigenpairs $(\lambda_k(\Graph;\mVD),\varphi_k)$ of $-\Delta^{\Graph;\mVD}$, i.e.,
\[
p_t^{\Graph;\mVD}(x,y)=\sum_{k=1}^\infty \e^{-t\lambda_k(\Graph;\mVD)}\varphi_k(x)\varphi_k(y),\qquad t>0,\ x,y\in\Graph.
\]
On the other hand, several properties of the heat kernel can be deduced by abstract semigroup theory, in particular using the fact that $p^{\Graph;\mVD}$ is known to be associated with a Dirichlet form \cite{KraMugSik07,Mug07}.
Finally, in the case of metric graphs a different formula of combinatorial flavour was derived by Roth in his pioneering work~\cite{Rot84} using the parametrix method, see Section~\ref{sec:combinatorial-formula} below. It turns out that these three approaches are mutually complementary: for example, the fact that $p_t^{\Graph;\mVD}$ is, for all $t>0$, a strictly positive function is an immediate consequence of the semigroup approach based on the Beurling--Deny theory, whereas Mercer's approach immediately implies that $p_t^{\Graph;\mVD}$ lies in $C(\Graph\times \Graph)$ for all $t>0$.

 Among other things, Roth used his formula for the heat kernel to derive a closed formula for the heat trace of $\Delta^{\Graph;\mVD}$: much in the spirit of the celebrated Gutzwiller trace formula, Roth's trace formula takes the form of an infinite sum over all closed orbits within $\Graph$ but is restricted to the case of no Dirichlet conditions ($\mVD=\emptyset$). A similar trace formula has been derived in \cite{Nic85} for the case of $\mVD\ne\emptyset$, too, but using different methods that circumvent the analysis of the heat kernel; later developments can be found -- among others -- in \cite{KotSmi97, KotSmi99,KurNow05,BolEnd09}.

Like on domains, manifolds and combinatorial graphs, the heat kernel delivers fine information on metric graphs $\Graph$, as is known since~\cite{Rot83}; however, in comparison with more common geometric environments the geometric implications of the heat kernel is less well understood: this is especially true for the \textit{heat content at time $t$}, i.e.,
\[
\heatcont(\Graph;\mVD):=\int_{\Graph}\int_{\Graph} \ptGD(x,y) \dd x \dd y,\qquad t>0,
\]
a mathematical object whose theory is nowadays very rich; it has been known since \cite{BerDav89} that a small-time asymptotics of the natural  counterpart $\mathcal Q_t(\Omega)$ of the heat content for a domain $\Omega$ yields information about the volume and the perimeter of $\Omega$: similar results have been sharpened and extended to many different contexts, including Riemannian manifolds \cite{BerGil94,Sav98} and combinatorial graphs \cite[Section~2.8]{MazSolTol23}.
%\footnote{\DM{In einer früheren Fassung der Einleitung haben wir u.A. \cite{Gil99,Sav16} zitiert. Ich finde das prinzipiell gut: dann sollten wir aber auch genau erwähnen, \textit{warum} diese Artikel so relevant sind.}}

 It comes to no surprise that the heat content or the heat kernel encode geometric information about the underlying space: for example, it follows from \cite[Theorem 2.1 and Theorem~3.1]{KelLenVog15} -- which hold for a rather general class of self-adjoint operators on metric measure spaces -- that
\begin{equation}\label{eq:kellenvog1}
\lim_{t\to \infty}
\frac{\log \heatcont(\Graph;\mVD)}{t}= -\lambda_1(\Graph;\mVD)
\end{equation}
and
\begin{equation}\label{eq:kellenvog}
\lim_{t\to \infty}\frac{\log \ptGD(x,y)}{t}=-\lambda_1(\Graph;\mVD)\qquad \hbox{holds for all }x,y\in\Graph,
\end{equation}
respectively. 
%Here $\lambda_1(\Graph;\mVD)$ is the ground state energy of $-\Delta^{\Graph;\mVD}$. 
In the present article we will especially interested in the \textit{short}-time asymptotics of $\QtGD$, which arguably delivers finer information.

The theory of heat content is intertwined with the (much older) theory of torsional rigidity, i.e.,
\[
T(\Graph;\mVD):=\int_0^\infty {\mathcal Q}_t(\Graph;\mVD) \dd t,
\]
which goes back to Saint-Venant and was revived by Pólya in the 1940s~\cite{Pol48,PolWei50,PolSze51}. For example, it was shown in \cite[Corollary~4.1]{ColKagMcd16} that the isospectrality of certain pairs of non-isomorphic quantum graphs discovered in~\cite{BanParSha09}
can be resolved by their heat content, and in turn by the numerical sequence
\[
A_k:=k\int_0^\infty t^{k-1} \heatcont(\Graph;\mVD)\dd t,\qquad k\in\N,
\]
the so-called \textit{moment spectrum} of $\Graph$ \cite[Theorem~2.2]{ColKagMcd16}. A geometric theory of the torsional rigidity of quantum graphs has been developed since~\cite{MugPlu23}: the study of the heat content is made much more difficult by the fact that, unlike for $T(\Graph;\mVD)$, a variational characterisation of $\mathcal Q_t(\Graph;\mVD)$ is currently unavailable and, indeed, it seems unlikely to exist.

We circumvent this impediment and study the heat content of a quantum graph (more precisely, of a free Laplacian with Dirichlet conditions) by using a Roth-type closed formula as extended to a general class of vertex conditions in \cite{KosPotSch07}, see also \cite{CamCor17,BecGreMug21, BorHarJon22} for further extensions of its scope. 
Upper bounds for the heat kernel $p^{\Graph}_t(x,y)$ on $\Graph$ are known to hold -- see, e.g., \cite{Mug07,Hae14,BecGreMug21} --  and may be used, upon integrating them over $x,y$, to derive upper estimates for the heat content, too. In comparison with such approach, a direct application of Roth's formula has, however, the advantage of delivering a precise -- although combinatorially cumbersome -- formula for ${\mathcal Q}_t(\Graph;\mVD)$ at all $t>0$ and a.e.\ $(x,y)\in \Graph\times \Graph$ (more precisely: for all $x,y\in \Graph\setminus\mVD$) in terms of closed orbits hitting $\mVD$, i.e., of closed paths in $\Graph$ (possibly including multiple traversing of the same edge) that start at end at some $\mv\in\mVD$. Upon integrating Roth's formula (see \autoref{lem:path-sum-formula} below) for the heat kernel, and through careful bookkeeping of the contributes of such closed orbits, we are able to derive from the heat kernel formula a significantly more tractable formula for the heat content. The following is the main result of this article.

\renewcommand*{\thetheorem}{\Alph{theorem}}
\begin{theorem}\label{theo:A}
Given a finite metric graph $\Graph$ of volume $|\Graph|$ and upon imposing Dirichlet conditions on a set $\mVD$ consisting of $\#\mVD$ elements, the heat content of $\Delta^{\Graph;\mVD}$ satisfies
        \begin{align}\label{eq:heat-content-formula-intro}
        \begin{aligned}
\heatcont(\Graph;\mVD) &= \vert \Graph \vert - \frac{2\sqrt{t}}{\sqrt{\pi}} \# \mVD + 8\sqrt{t} \sum_{p} \alpha(p) \, H\bigg( \frac{\ell(p)}{2\sqrt{t}} \bigg) %\\&= \vert \Graph \vert - \frac{2\sqrt{t}}{\sqrt{\pi}} \# \mVD + 8\sqrt{t} \sum_{\mv,\mw \in \mV_\mathrm{D}} \sum_{\vec{p} \in \mathcal{P}(\mv,\mw)} \alpha(\vec{p}) \, H\bigg( \frac{\ell(\vec{p})}{2\sqrt{t}} \bigg),
%\\&= \vert \Graph \vert - \frac{2\sqrt{t}}%{\sqrt{\pi}} \# \mVD + \frac{\sqrt{t}}%{\sqrt{\pi}} \sum_{\vec{p} \in \mathcal{P}(\Graph;\mVD)} %\alpha(p) \bigg( \e^{-\frac{\ell(p)^2}{4t}} + %\frac{\ell(p)}{\sqrt{\pi t}} \int_0^\infty \frac{\e^{-%s^2}}{s} \sin\bigg(s \frac{\ell(p)}{\sqrt{t}}\bigg) %\dd s \bigg)
\end{aligned}
\end{align}
for all $t > 0$, where the summation in \eqref{eq:heat-content-formula-intro} runs over all paths starting and ending at some (possibly different) vertex in $\mVD$.
\end{theorem}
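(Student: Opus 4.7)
The plan is to integrate the Roth-type path-sum representation of the Dirichlet heat kernel recalled in \autoref{lem:path-sum-formula},
\[
p_t^{\Graph;\mVD}(x,y)=\frac{1}{\sqrt{4\pi t}}\sum_{p}\alpha(p)\exp\bigl(-\ell(p)^2/(4t)\bigr),
\]
in which $p$ ranges over broken trajectories from $y$ to $x$, $\alpha(p)$ is the cumulative scattering weight along $p$, and $\ell(p)$ its geometric length. A uniform Gaussian majorant on the tail of the $p$-series justifies Fubini and yields
\[
\heatcont(\Graph;\mVD)=\sum_{p}\alpha(p)\int_{\Graph}\int_{\Graph}\frac{\exp(-\ell(p)^2/(4t))}{\sqrt{4\pi t}}\dd x\dd y,
\]
so that the task becomes the evaluation and organisation of these integrals.

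The second step is to group the trajectories by their \emph{combinatorial skeleton}, i.e.\ the ordered sequence of directed edges traversed by $p$. On a fixed skeleton the length is affine in the distances $s,r$ of $x,y$ from the first and last vertex visited, namely $\ell(p)=s+\ell(p^{\ast})+r$, where $\ell(p^{\ast})$ -- the \emph{inner length} of the skeleton -- is constant on that class. The double integral therefore splits into two one-dimensional Gaussian integrals on $[0,L_{e_0}]$ and $[0,L_{e_n}]$, producing, after iterated antiderivatives of $\erfc$, exactly the target expression $8\sqrt{t}\,H\bigl(\ell(p^{\ast})/(2\sqrt t)\bigr)$ up to $\erfc$-type boundary terms that reflect the finiteness of the edges.

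The identification of the three terms in \eqref{eq:heat-content-formula-intro} is then performed skeleton class by class. The \emph{trivial} skeletons (no vertex visit; $x,y$ on the same edge and $p$ the straight segment between them) sum, over the edges, to $\lvert\Graph\rvert$ plus $\erfc$-tails. The \emph{single-reflection} skeletons at vertices in $\mVD$ produce, after a cancellation with the residual tails from the trivial class, precisely the coefficient $-\tfrac{2\sqrt{t}}{\sqrt{\pi}}\#\mVD$. All remaining skeletons close, after using the sum rule $\sum_{j}S_{ij}=1$ at non-Dirichlet (Kirchhoff) vertices to telescope intermediate-edge integrations, into paths whose two free endpoints both lie in $\mVD$; this is what restricts the sum in \eqref{eq:heat-content-formula-intro} to orbits between Dirichlet vertices, and their cumulative contribution is exactly $8\sqrt{t}\sum_{p}\alpha(p)H(\ell(p)/(2\sqrt t))$.

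The main obstacle is the \emph{combinatorial bookkeeping} of the $\erfc$-boundary tails generated by integrating Gaussians over finite edges: every tail produced by a skeleton that ``would have continued past the end of its edge'' must be matched against a longer skeleton that actually performs the reflection or transmission at the far vertex, and this pairing must be iterated across the graph until a Dirichlet vertex is met and the resumming closes. Organising this matching cleanly -- presumably by induction on the number of vertex hits of the skeleton, or first on a single edge followed by a gluing argument consistent with the surgery principles developed later in the paper -- is where the combinatorial heart of the proof lies; once it is carried out, the polynomial-in-$\sqrt t$ terms collapse to the two explicit leading terms of \eqref{eq:heat-content-formula-intro} and all remaining contributions are exactly those indexed by closed orbits in $\mVD$.
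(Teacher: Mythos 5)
Your plan follows essentially the same route as the paper: integrate the Roth path-sum formula, evaluate the edgewise Gaussian integrals via the antiderivative $H$ of $\erfc$ (yielding, for each ``skeleton'', the four-term combination $H(\ell(\vec p))-H(\ell(\vec p_-))-H(\ell(\vec p_+))+H(\ell(\vec p_\pm))$), and then resum using the fact that the scattering coefficients of the one-step extensions of a path sum to $+\alpha(\vec p)$ at a standard vertex and to $-\alpha(\vec p)$ at a Dirichlet vertex, so that only Dirichlet-to-Dirichlet paths survive. The combinatorial matching of $\erfc$-tails that you correctly flag as the main obstacle is exactly what the paper formalizes as \autoref{lem:decomposition-lemma} together with \autoref{lem:scattering-coeff-dual} and then applies iteratively to the four $H$-terms; your sketch predicts both the mechanism and the outcome correctly, but that telescoping is the substantive part of the argument and is left unexecuted in your proposal.
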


(Here, $\alpha(p)\in [-1,1]$ denotes a \textit{scattering coefficient} associated with each such path and $H:[0,\infty)\to (0,\infty)$ is a monotone, exponentially decaying function.)

Using \autoref{theo:A}, we will also derive two important asymptotic relations between the integrated heat kernel and the geometry of the underlying metric graph. We thus show that the heat content encodes interesting geometric information about the underlying graph $\Graph$: this is reminiscent of comparable information delivered by the torsional rigidity $T(\Graph;\mVD)$ and the ground state energy $\lambda_1(\Graph;\mVD)$, but it turns out that the heat content allows us to recover more  quantities.

\begin{theorem}[Heat content asymptotics]\label{theo:B}
%    Let $\mathcal{G}$ be a metric graph as in \autoref{ass:graph} and $\mVD \subset \mV$. Then:
If the minimal edge length of $\Graph$ is $\ell_{\min}$, the heat content satisfies
    \begin{align}\label{eq:small-time-asymp-intro}
    \mathcal{Q}_t(\mathcal{G};\mVD) = \vert \Graph \vert - \frac{2 \sqrt{t}}{\sqrt{\pi}}\# \mVD + \mathcal{O}\Big(\sqrt{t}\,\e^{-\frac{\ell_{\min}^2}{4t}}\Big) \qquad \text{as} \:\: t \rightarrow 0^+.
    \end{align}
\end{theorem}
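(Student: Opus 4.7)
The plan is to deduce \autoref{theo:B} directly from the closed formula in \autoref{theo:A}; all that needs to be shown is that the remainder
\[
R(t) := 8\sqrt{t}\sum_{p}\alpha(p)\, H\!\left(\frac{\ell(p)}{2\sqrt t}\right)
\]
is of the claimed order $\mathcal{O}(\sqrt t\,\e^{-\ell_{\min}^2/4t})$ as $t\to 0^+$, where the sum is over closed orbits starting and ending at vertices of $\mVD$.

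First, I would absorb the scattering coefficients using $|\alpha(p)|\le 1$, so that $|R(t)|\le 8\sqrt t\sum_p H(\ell(p)/2\sqrt t)$. Next, I would organise the sum according to the combinatorial length of each orbit: let $\mathcal{P}_k$ be the set of $\mVD$-to-$\mVD$ paths traversing exactly $k$ (not necessarily distinct) edges. Since $\Graph$ is finite, the edge incidence structure gives a bound $\#\mathcal{P}_k\le C\,D^k$ for constants $C,D$ depending only on $\Graph$ (with $D$ essentially the maximal vertex degree). Moreover, every such path has length $\ell(p)\ge k\,\ell_{\min}\ge\ell_{\min}$. Using the monotonicity of $H$, this leads to
\[
\sum_p H\!\left(\tfrac{\ell(p)}{2\sqrt t}\right)
 \;\le\; \sum_{k\ge 1} C\,D^k\, H\!\left(\tfrac{k\,\ell_{\min}}{2\sqrt t}\right).
\]

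The next step is to use the explicit exponential decay of $H$. Since $H$ should be of Gaussian (or $\erfc$-like) type, as will have been established in the proof of \autoref{theo:A}, one has an estimate of the form $H(x)\le C'\e^{-x^2}$ for all $x$ large enough. Inserting this yields
\[
\sum_{k\ge 1} D^k\,\e^{-k^2\ell_{\min}^2/4t}
 \;=\; \e^{-\ell_{\min}^2/4t}\sum_{k\ge 1} D^k\,\e^{-(k^2-1)\ell_{\min}^2/4t},
\]
and the remaining series is uniformly bounded for $t$ small, because the Gaussian factor $\e^{-(k^2-1)\ell_{\min}^2/4t}$ eventually crushes the exponential growth $D^k$ for every fixed $t<t_0$; a simple dominated-convergence or geometric argument shows that the tail sum tends to $0$ as $t\to 0^+$, while the $k=1$ contribution stays bounded. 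Multiplying back by the prefactor $\sqrt t$ gives $R(t)=\mathcal{O}(\sqrt t\,\e^{-\ell_{\min}^2/4t})$, which combined with \eqref{eq:heat-content-formula-intro} yields \eqref{eq:small-time-asymp-intro}.

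The main obstacle is twofold. First, one must ensure that the qualitative statement ``$H$ is monotone and exponentially decaying'' is available in a quantitatively sharp form with the correct exponent $\e^{-x^2}$ (not merely $\e^{-cx}$ for some $c$); this should come out of the explicit formula for $H$ used in \autoref{theo:A}, but must be pinned down. Second, one has to handle uniformity of the path counting: because Roth-type sums typically include paths with arbitrarily many reflections, the crude bound $\#\mathcal{P}_k\le C D^k$ must be matched against the Gaussian suppression, and care is needed to make sure the comparison between combinatorial growth and exponential decay indeed produces the single factor $\e^{-\ell_{\min}^2/4t}$ rather than a weaker one. Once both points are in place, the conclusion is immediate.
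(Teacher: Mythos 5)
Your proposal is correct and follows essentially the same route as the paper's proof: bound $|\alpha(p)|\le 1$, use $0<H(x)\le\frac{1}{\sqrt\pi}\e^{-x^2}$ (which in fact holds for all $x>0$, not just large $x$), count paths of combinatorial length $n$ by $\mathcal{O}(d_{\max}^{\,n})$, bound $\ell(p)\ge n\ell_{\min}$, and then compare the Gaussian suppression with the combinatorial growth. The paper makes your final step explicit via the inequality $n^2\ge 2n-1$, which turns $\sum_n d_{\max}^n\,\e^{-n^2\ell_{\min}^2/4t}$ into $\e^{-\ell_{\min}^2/4t}$ times a convergent geometric series in $d_{\max}\e^{-\ell_{\min}^2/2t}$ for $t<\ell_{\min}^2/(2\log d_{\max})$ — exactly the uniform bound your argument requires.
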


\begin{theorem}[Caccioppoli-type description of perimeter]\label{theo:C}
If $\mathcal H$ is a closed and connected subset of \(\Graph\setminus\mVD\) whose boundary in $\Graph$ does not contain any vertices of $\Graph$ of degree $\ge 3$, then
    \begin{align}\label{eq:caccioppoli-intro}
        \lim_{t \rightarrow 0^+} \frac{\sqrt{\pi}}{\sqrt{t}} \int_\mathcal{H} \int_{\Graph \setminus \mathcal{H}} \ptGD(x,y) \dd y \dd x = \# \partial \mathcal{H} .
    \end{align}
\end{theorem}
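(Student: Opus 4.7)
The plan is to localise the cross integral
\[
A(t) := \int_\mathcal{H} \int_{\Graph \setminus \mathcal{H}} \ptGD(x,y) \dd y \dd x
\]
to small neighbourhoods of each boundary point $p \in \partial \mathcal{H}$ and identify the leading short-time contribution as that of the Euclidean heat kernel on the line. Because every $p \in \partial \mathcal{H}$ has degree at most $2$ in $\Graph$ and lies in $\Graph \setminus \mVD$, I choose $r > 0$ so small that the neighbourhoods $U_p := \{x \in \Graph : d(x,p) < r\}$ are pairwise disjoint, each isometric to a sub-interval of $\R$, and disjoint from $\mVD$ and from all vertices of degree $\neq 2$. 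Splitting
\[
A(t) = \sum_{p \in \partial \mathcal{H}} \int_{U_p \cap \mathcal{H}} \int_{U_p \cap (\Graph \setminus \mathcal{H})} \ptGD(x,y) \dd y \dd x + R(t)
\]
then isolates one ``near-boundary'' term per $p$ from a remainder $R(t)$.

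For the remainder, any path from $\mathcal{H}$ to $\Graph \setminus \mathcal{H}$ must cross $\partial \mathcal{H}$, so pairs $(x,y)$ contributing to $R(t)$ satisfy $d(x,y) \geq \rho$ for some $\rho > 0$; Roth's formula (\autoref{lem:path-sum-formula}) then yields $R(t) = O(t^{-1/2} \e^{-\rho^2/4t})$. For a pair $x, y \in U_p$, the unique direct path through $p$ has length $|y-x|$ and scattering coefficient $1$, while every other path in the Roth sum must visit a vertex outside $U_p$ and therefore has length at least $r$. Summing the tails produces
\[
\ptGD(x,y) = \frac{1}{\sqrt{4\pi t}} \, \e^{-(y-x)^2/4t} + O\bigl(t^{-1/2} \e^{-r^2/4t}\bigr)
\]
uniformly on $U_p \times U_p$. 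Parametrising $U_p$ so that $p = 0$, $U_p \cap \mathcal{H} = (-r,0]$ and $U_p \cap (\Graph \setminus \mathcal{H}) = (0,r)$, the substitution $u := y - x$ reduces each near-boundary integral to a standard Gaussian computation which evaluates to $\sqrt{t/\pi} + O(\e^{-r^2/4t})$.

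Summing over $p$ and combining with the remainder estimate gives $A(t) = \#\partial \mathcal{H} \cdot \sqrt{t/\pi} + O(t^{-1/2} \e^{-c/t})$ for some $c > 0$, from which the claimed limit is immediate. The main technical obstacle is the geometric bookkeeping in the localisation step: one has to verify that the hypothesis ``no boundary point has degree $\geq 3$'' genuinely reduces the local picture at each $p \in \partial \mathcal{H}$ to a one-dimensional situation, and that the possible presence of degree-$2$ Kirchhoff vertices or leaves in $\partial \mathcal{H}$ does not spoil the assertion that the direct path has scattering coefficient $1$. Once that is in place, the Roth-type path estimates closely mirror those already developed en route to \autoref{theo:B}.
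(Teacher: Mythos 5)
Your argument is correct and is essentially the paper's own proof in a different packaging: the paper subdivides edges so that each boundary point of $\mathcal H$ sits in the interior of its own edge, splits the kernel via \autoref{lem:path-sum-formula} into the direct Gaussian term (which, integrated over the two sides of each boundary point, gives $\sqrt{t/\pi}$ per point up to $\mathcal O(\sqrt{t}\,\e^{-c/t})$, exactly your $U_p$ computation) and the $\mathcal P_{\geq 2}$ term, which is killed by the same lower bound $\ell(\vec p)\geq \dist(\partial\mathcal H;\mV)>0$ and geometric-series path count that you invoke for your remainder $R(t)$. The localisation to neighbourhoods $U_p$ versus the paper's edge subdivision is only a cosmetic difference, and your explicit flagging of the degree-$1$/degree-$2$ boundary-vertex bookkeeping matches the (equally implicit) treatment in the paper.
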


These asymptotic results have classical counterpart on domains that go back to \cite[Section~6]{BerDav89} and \cite[Section~3]{MirPalPar07}, respectively. Let us add that the expansion
\[
\mathcal{Q}_t(\Omega) = \mathrm{vol}(\Omega) - \frac{2\sqrt{t}}{\sqrt{\pi}} \sigma_g(\partial \Omega) + \frac{t}{2} \int_{\partial \Omega} h \dd \sigma_g + o(t) \qquad \hbox{as $t \rightarrow 0^+$},
\]
was proved in \cite{BerGil94} for general open bounded subset $\Omega \subset M$ of a smooth Riemannian manifold $(M,g)$, with Dirichlet conditions at the boundary $\partial \Omega$; and recently extended in~\cite{CapRos24} to general $\mathsf{RCD}(K,N)$-spaces, where $h$ denotes the \emph{mean curvature} of $\partial \Omega$ in $M$ and $\sigma_g$ the \emph{surface measure} of $\partial \Omega$.

We present all necessary preliminary information about metric graphs and heat content in Section~\ref{sec:prelim}: all results in this section can be extended to general symmetric Dirichlet forms on finite measure spaces. 

Our approach is modular: we will first collect in Section~\ref{sec:comb-paths-g} two major combinatorial results that will allow us to conveniently describe a class of relevant paths in $\Graph$ and that do not depend on the structure of the integral kernel of the one-dimensional heat semigroup (i.e., the Gaussian kernel). Then, we show in Section~\ref{sec:combinatorial-formula} how a complicated formula arising from integration of a closed formula for the heat kernel can be simplified, making use of the results in Section~\ref{sec:comb-paths-g} and also of the explicit expression for the Gaussian kernel. This will eventually prove \eqref{eq:heat-content-formula-intro}, see \autoref{thm:heat-content-formula-bif-mug}. 

In Section~\ref{sec:small-time-asymptotics} we use fine asymptotic information about the function $H$ in~\eqref{eq:heat-content-formula-intro}, along with decomposition methods similar to those used in the proof of \autoref{theo:A}, to prove  \eqref{eq:small-time-asymp-intro} and \eqref{eq:caccioppoli-intro}, i.e., \autoref{thm:two-term-small-time-asymptotic-heat-content} and \autoref{thm:subgraph-small-time}, respectively.

Finally, motivated by the surgery principles developed in~\cite{KenKurMal16,BerKenKur19,MugPlu23} to derive spectral and torsional inequalities for quantum graphs, we conclude our paper in Section~\ref{sec:graph-surgery}  by deriving a few comparison principles for the heat content for different metric graphs upon appropriate graph operations. The Faber--Krahn-type inequality for the heat content recently proved in~\cite{BifTau25} in particular implies that for any equilateral metric graph with a Dirichlet condition on at least one vertex, and with at least one vertex of degree larger than 2, there exists a time $t_0$ such that its heat content is, at any time $t<t_0$, strictly smaller than the heat content of the interval of same volume with Dirichlet condition at one endpoint, at the same time $t$. We complement this observation by describing in \autoref{prop:loop-cut} an operation to construct pairs of non-isomorphic metric graphs with same heat content at all times.  Our comparison results also allow for a computation (rather than a mere estimate) of  the heat content  for certain non-trivial classes of equilateral metric graphs, see \autoref{exa:advan-sec-surg}.

Let us finally remark that the above mentioned modular approach may come in handy when deriving similar heat content formulae for Laplace-type operators on $L^2(\Graph)$ under different, non-standard vertex conditions (see \autoref{rem:general-vertex-conditions} below); and/or for operators on $L^2(\Graph)$ whose associated heat kernel $q^{\Graph}_\cdot(\cdot,\cdot)$ has a convenient path decomposition but such that the integral kernel $q^\R_\cdot(\cdot,\cdot)$ of the associated semigroup on $L^2(\R)$ is not the Gaussian kernel. (It was made precise in \cite{BecGreMug21} how a suitable heat kernel decay  -- which holds, for example, for a large class of Schrödinger operators -- is all that is needed to derive a heat kernel formula as a path summation.)

\renewcommand*{\thetheorem}{\arabic{theorem}}

\section{Preliminaries and first consequences}\label{sec:prelim}
\subsection{Notation}
Throughout this article let \(\Graph\) be a metric graph with edge set \(\mE=\mE_\Graph\) and vertex set \(\mV=\mV_\Graph\). We refer to \cite{Mug19} and references therein for a detailed introduction of $\Graph$ as  a metric measure space with respect to the shortest path metric $\dist_\Graph:\Graph\times \Graph\to [0,\infty)$ induced by the edgewise Euclidean distance and the one-dimensional Lebesgue measure $\dd x$. In particular,  two metric graphs $\Graph_1,\Graph_2$ can be considered as equivalent if there exists a \textit{primitive metric graph} $\Graph$ (not containing any vertex of degree 2) such that $\Graph_1,\Graph_2$ are subdivisions of $\Graph$: as will become clear soon, all relevant analytic properties we are interested in are invariant under metric graph subdivision, and for this reason it will be sufficient to formulate all our definitions and results for primitive metric graphs only; even though some of our constructions can be occasionally extended to a primitive metric graph's subdivisions, too, or even require to temporarily pass to an appropriate subdivision, pursue some steps on that level, and then go back to the primitive graph.

 For an edge \(\me\in \mE\), let \(\ell_\me \in (0,\infty)\) denote its \emph{edge length}.  If a vertex $\mv\in \mV$ is endpoint of an edge $\me$, we write $\me\sim \mv$: we denote by $\deg_\Graph(\mv)$ the \emph{degree} of $\mv$, i.e.,
 \[
 \deg_{\Graph}(\mv):=\#\{\me\in \mE:\me\sim \mv\}
 \]
(note that loops are allowed and each loop is counted as \textit{two} incident edges); if the underlying graph is clear from the context, we write $\deg(\mv)$ rather than $\deg_\Graph(\mv)$. It is remarkable that the degree of a vertex is a purely combinatorial quantity (that is, it is not robust under small perturbations of the metric graph) and, as such, it plays a minor role in the spectral theory of metric graphs: we will see in the following (and it was observed already in \cite{Rot84}) that things are rather different when it comes to heat kernels.
 
Throughout this article we impose the following.

\begin{assum}\label{ass:graph}
The metric graph $\Graph$ is compact and finite, i.e., it consists of finitely many edges, each of them having finite length; 
Also,  \(\Graph\) has at least one vertex of degree \(1\), and  \(\mV_{\mathrm{D},\Graph} =: \mV_\mathrm{D}\) is a set
such that
\begin{align}\label{eq:ass-dirichlet-nonempty}
\emptyset \neq \mVD\subset \{\mv\in \mV: \deg_\Graph(\mv)=1\} \subset \mV 
\end{align}
and such that $\Graph \setminus \mVD$ is connected. We assume without loss of generality that $\Graph$ has no vertices of degree 2, upon possibly substituting any two edges sharing an endpoint by one edge of combined length, as this operation does not change the properties of $\Graph$ as a metric measure space.
\end{assum}

Let \(\mV_\mathrm{N}=\mV_{\mathrm{N},\Graph}=:\mV\setminus \mV_{\mathrm{D},\Graph}\) be its complement in \(\mV\). 
%\footnote{\DM{Wir können bestimmt einfach annehmen, dass $\mV_{\mathrm{D},\Graph} \supsetneq \emptyset$, oder? \PB{Ja, aber wollen wir den trivialen Fall ohne Erklärung sofort ausschließen?}}}
We refer to \(\mVD\) as the set of \emph{Dirichlet vertices} of \(\Graph\) and to \(\mV_\mathrm{N}\) as the set of \emph{standard} (or \emph{natural}) \emph{vertices} of \(\Graph\).

\begin{rem}
Observe that because the metric space $\Graph$ is locally compact and separable, connectedness is equivalent to path connectedness, i.e., connectedness of $\Graph \setminus \mVD$ is equivalent to assuming that any two points in $\Graph\setminus\mVD$ can be connected by a path $\gamma\subset \Graph\setminus\mVD$.
\end{rem}

 Two vertices $\mv,\mw \in \mV$ are \emph{adjacent}, denoted with $\mv \sim \mw$, whenever there exists an edge $\me \in \mE$ such that both $\mv$ and $\mw$ are endpoints of $\me$.  
% We denote by 
% \[
% \mathcal{E} := \bigsqcup_{\me \in \mE} (0,\ell_\me)
% \]
% the set of \emph{inner} edge points.
 
 Whenever a vertex $\mv \in \mV$ is an endpoint of an edge $\me \in \mE$ is identified with the left (resp., right) endpoint of $\me$, that is, to $0$ (resp., to $\ell_\me$), we write $\mv \overset{\me}{\rightsquigarrow}$ (resp., $\overset{\me}{\rightsquigarrow} \mv$).
 %To respect the direction of a given edge $\me \in \mE$, we denote by $\partial_-(\me) \in \mV$ and $\partial_+(\me) \in \mV$ both vertices that are incident with $\me$, where $\partial_-(\me) \in \mV$ corresponds to $0$ and $\partial_+(\me) \in \mV$ corresponding to $\ell_\me$. %in other words, $\partial_-(\me) = [(\me,0)]$ and $\partial_+(\me) = [(\me,\ell_\me)]$ with respect to the notion used in \cite{Mug07}.
Moreover, the \emph{volume} of $\Graph$ is defined as 
\[
\vert \Graph \vert := \sum_{\me \in \mE} \ell_\me.
\]
Given two edges $\me, \me' \in \mE$ we write $\me \sim \me'$ whenever there exists a vertex $\mv \in \mV$ such that $\me \sim \mv \sim \me'$. 
 
Let \(\DeltaGD\) be the Laplacian on \(\Graph\) with Dirichlet vertex conditions in \(\mVD\) and standard vertex conditions in \(\mV_\mathrm{N}\): more precisely, $\DeltaGD$ is the self-adjoint operator on 
\begin{equation}\label{eq:l2g}
L^2(\Graph) := \bigoplus_{\me \in \mE} L^2(0,\ell_\me)
\end{equation}
 associated with the quadratic form \(a=a_{\Graph}\) defined by
	\[a_{\Graph}(u):=\int_{\Graph}|u'(x)|^2\dd x\]
on the form domain
	\[H^1_0(\Graph;\mVD):=\left\{u=(u_\me)_{\me\in \mE}\in\bigoplus_{\me\in \mE} H^1(0,\ell_\me)~\bigg|~\begin{array}{l} u(\mv)=0\text{ for }\mv\in \mVD,\\ u\text{ is continuous in every }\mv\in \mV_\mathrm{N} \end{array}\right\}.\]
	(Here and in the following we write, in accordance with \eqref{eq:l2g}, 
	\[
	\int_\Graph f(x)\dd x:=\sum_{\me\in \mE}\int_0^{\ell_\me} f_\me(x)\dd x
	\]
	for any $f\in L^1(\Graph):= \bigoplus_{\me \in \mE} L^1(0,\ell_\me)$.)
\begin{rem}\label{rem:nodeg2}
Depending on its parametrization, a loop has an arbitrary number $n\in \N_0$ of vertices of degree 2, and no vertices of any other degree. How is this compatible with \autoref{ass:graph}? Let $\Graph_1,\Graph_2$ be a loop and an interval, respectively, of equal volume. Let $\mv$ be any point of $\Graph_1$ and $\mv_1,\mv_2$ be the endpoints of $\Graph_2$, respectively. Then clearly $H^1_0(\Graph_1;\{\mv\}) \simeq H^1_0(\Graph_2;\{\mv_1,\mv_2\})$, and for this reason the case of graphs with degree-2-vertices only can actually be excluded.
\end{rem}	

It is a well-known fact that $-\DeltaGD$ is given edgewise by the negative second derivative, more precisely,
\begin{align*}
D\big(\DeltaGD \big) &= \Bigg\{ f = (f_\me)_{\me \in \mE} \in \bigoplus_{\me \in \mE} H^1(0,\ell_\me) \: \left\vert \: \begin{array}{l} f_\me' \in H^1(0,\ell_\me), \: f(\mv) = 0 \text{ for all $\mv \in \mVD$} \\ f\text{ is continuous in every }\mv\in \mV \hbox{ and }\\ \sum\limits_{\me \sim \mv} \partial_\me f(\mv) = 0 \text{ for every $\mv \in \mV_\mathrm{N}$}\end{array}\right\}, \\ -\DeltaGD f &= (-f_\me'')_{\me \in \mE},
\end{align*}
where $\partial_\me f(\mv)$ denotes the \emph{inward directed derivative} of $f$ in $\me$ with respect to $\me \sim \mv$: in other words,
\[
 \partial_\mathsf{e}f(\mathsf{v}) := \begin{cases} 
  f_\mathsf{e}'(0), & \text{if }\mathsf{e}\text{ is not a loop and }\, \mathsf{v} \overset{\me}{\rightsquigarrow}, \\
  -f_\mathsf{e}'(\ell_\mathsf{e}), & \text{if }\mathsf{e}\text{ is not a loop and } \, \overset{\me}{\rightsquigarrow} \mathsf{v}, \\
  f_{\me}'(0) - f_{\me}'(\ell_\me), & \text{if }\me\text{ is a loop,}
  \end{cases} \qquad \mv \in \mV, \me \in \mE, \me \sim \mv.
\]

The following is well known, see 
\cite[Lemma~3.1]{KraMugSik07},
\cite[Theorem~3.5]{Mug07}
 and~\cite[Section~1.2]{Hae11}.

\begin{lemma}
The quadratic form $a_{\Graph}$ with domain $H^1_0(\Graph;\mVD)$ is a regular, strongly local Dirichlet form. Furthermore, the associated intrinsic metric agrees with the canonical shortest path metric induced by the edgewise Euclidean distance, i.e., $a_{\Graph}$ is even strictly local.
\end{lemma}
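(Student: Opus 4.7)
The plan is to verify the four assertions (closedness plus Markov, regularity, strong locality, and intrinsic metric identification) as essentially independent checks, mostly by reduction to one-dimensional facts on intervals and a careful treatment of the vertex conditions.

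First I would establish that $a_\Graph$ is a closed, densely defined, symmetric quadratic form and that it is Markovian. Closedness follows because $H^1_0(\Graph;\mVD)$ is a Hilbert space when endowed with the graph norm $\|u\|^2_{H^1}=a_\Graph(u)+\|u\|^2_{L^2}$: indeed, the continuity/Dirichlet constraints at vertices are preserved under $H^1$-limits by the one-dimensional Sobolev embedding $H^1(0,\ell)\hookrightarrow C[0,\ell]$. Density in $L^2(\Graph)$ is clear since $C_c^\infty(\Graph\setminus\mVN\setminus\mVD)$ — viewed edgewise — lies in the form domain. For the Markov property, given $u\in H^1_0(\Graph;\mVD)$, the truncation $\tilde u=(0\vee u)\wedge 1$ belongs to $H^1$ on each edge with $|\tilde u'|\le|u'|$ a.e.; continuity at standard vertices and the vanishing at Dirichlet vertices are preserved because the truncation acts pointwise, so $\tilde u\in H^1_0(\Graph;\mVD)$ and $a_\Graph(\tilde u)\le a_\Graph(u)$. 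Strong locality is the cheapest check: if $v$ is constant on an open neighbourhood $U\supset\supp u$ then $v'\equiv 0$ on $U$, and since $\supp u'\subset\supp u\subset U$, the integrand $u'v'$ vanishes pointwise.

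For regularity I would use that, by the 1D Sobolev embedding applied edgewise together with the matching conditions, $H^1_0(\Graph;\mVD)\cap C_c(\Graph\setminus\mVD)$ contains all continuous, edgewise piecewise-affine functions that vanish in a neighbourhood of $\mVD$. Such functions are dense in $C_c(\Graph\setminus\mVD)$ in the uniform norm by a standard partition-of-unity and piecewise-linear interpolation argument (metric graphs are locally isometric to intervals). Density in form norm is obtained by mollifying edgewise away from $\mV$ and cutting off near $\mVD$, controlling the form contribution via dominated convergence.

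The substantive point is the intrinsic metric identification. Recall that
\[
d_{\mathrm{int}}(x,y)=\sup\{\psi(x)-\psi(y):\psi\in\mathcal D_{\mathrm{loc}}(a_\Graph)\cap C(\Graph),\;d\Gamma(\psi)\le d\mu\},
\]
where $\Gamma$ is the energy measure. Since $a_\Graph$ is a sum of the standard 1D Dirichlet forms on each edge, $d\Gamma(\psi)=|\psi'|^2\,dx$ edgewise, so $d\Gamma(\psi)\le d\mu$ precisely means $|\psi'|\le 1$ a.e. on every edge. Because admissible $\psi$ are continuous across standard vertices, this is equivalent to $\psi$ being $1$-Lipschitz with respect to $\dist_\Graph$. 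The inequality $d_{\mathrm{int}}(x,y)\le\dist_\Graph(x,y)$ then follows from the triangle inequality along a minimising path, while the opposite inequality is obtained by taking as test function $\psi(\cdot)=\dist_\Graph(x,\cdot)$, which is $1$-Lipschitz and, being edgewise affine-by-pieces, lies in $\mathcal D_{\mathrm{loc}}(a_\Graph)\cap C(\Graph)$; then $\psi(x)-\psi(y)=\dist_\Graph(x,y)$. Strict locality (in the sense that the intrinsic metric generates the original topology and is attained by an admissible test function) drops out of this identification, since $\dist_\Graph$ and the Euclidean edgewise topology coincide by construction.

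The main obstacle I anticipate is the bookkeeping in the intrinsic metric computation at vertices: one has to justify that the candidate maximiser $\dist_\Graph(x,\cdot)$ really belongs to the local form domain (it does, because it is piecewise affine with slopes in $\{-1,+1\}$ and continuous), and that the $1$-Lipschitz condition edgewise plus continuity at standard vertices truly amounts to global $1$-Lipschitz continuity with respect to $\dist_\Graph$. This is where the assumption that $\Graph\setminus\mVD$ is connected (ensuring that any two points are joined by admissible test paths) and the reduction to primitive graphs from \autoref{ass:graph} are implicitly used; everything else is by now routine on account of the cited references \cite{KraMugSik07,Mug07,Hae11}.
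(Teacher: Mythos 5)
The paper does not actually prove this lemma: it is stated as ``well known'' and justified only by citations to \cite{KraMugSik07}, \cite{Mug07} and \cite{Hae11}, so there is no in-paper argument to compare yours against line by line. Your self-contained verification follows the standard route of those references and is essentially correct: closedness via the edgewise Sobolev embedding $H^1(0,\ell_\me)\hookrightarrow C[0,\ell_\me]$ (which makes the vertex constraints closed conditions), the Markov property via the unit contraction $u\mapsto (0\vee u)\wedge 1$ (which respects continuity and the Dirichlet constraints because it acts pointwise), strong locality from $\supp u'\subset\supp u$, regularity via piecewise-affine approximation, and the identification $d_{\mathrm{int}}=\dist_\Graph$ by characterising the admissible test functions as the edgewise $1$-Lipschitz continuous ones and exhibiting $\dist_\Graph(x,\cdot)$ as an extremal test function. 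Two points deserve slightly more care than your sketch gives them. First, for regularity (and for the intrinsic metric) the underlying locally compact space is $\Graph\setminus\mVD$, not $\Graph$; your density statement in $C_c(\Graph\setminus\mVD)$ is the right one, but you should also note that $L^2(\Graph)=L^2(\Graph\setminus\mVD)$ since $\mVD$ is a null set, and that the shortest-path metrics of $\Graph$ and $\Graph\setminus\mVD$ coincide because the Dirichlet vertices have degree one (so no geodesic between interior points passes through them) -- this is where \autoref{ass:graph} enters, rather than through connectedness of $\Graph\setminus\mVD$ alone. Second, the implication ``edgewise $|\psi'|\le 1$ plus continuity at vertices $\Rightarrow$ $1$-Lipschitz for $\dist_\Graph$'' should be justified by integrating $\psi'$ along a path realising (or approximating) the distance and then taking the infimum over paths; as written it is asserted rather than derived. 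Neither point is a genuine gap, only a place where the bookkeeping you yourself flag needs to be spelled out.
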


%\begin{proof}
%It is immediate to see that $a_\Graph$ is a closed quadratic form for any $\mVD$. The Beurling--Deny conditions have been checked in \cite[Lemma~3.1]{KraMugSik07} and Theorem~\cite[Theorem~3.5]{Mug07} for $\mVD=\emptyset$ and $\mVD\ne\emptyset$, respectively. 
%
%Let now $f,g\in H^1_0(\Graph;\mVD)$, with $f$ constant in a neighborhood of the support of $g$. Then
%\[
%a_\Graph(f,g)=\int_\Graph f'(x)\overline{g'(x)}\dd x=\int_{\supp g} f'(x)\overline{g'(x)}\dd x=0.
%\]
%
%Finally, the form is regular because its domain $H^1_0(\Graph;\mVD)$ is dense in both $C(\Graph)$ (with respect to the $\sup$-norm) and, obviously, in itself.\qed
%
%\end{proof}

Accordingly, the associated operator $\DeltaGD$ generates a Markovian semigroup $(\e^{t\DeltaGD})_{t \geq 0}$ on $L^2(\Graph)$. Because this semigroup is analytic, it maps $L^2(\Graph)$ to the form domain $H^1_0(\mathcal G;\mVD)$, hence by Sobolev embedding and because of self-adjointness it maps $L^1(\Graph)$ to $L^\infty(\Graph)$ and by the Kantorovich--Vulikh Theorem it consists of kernel operators, i.e., for every $t> 0$ there exists $\ptGD \in L^\infty(\Graph \times \Graph)$
(and Lipschitz continuous, too, 
see~\cite[Theorem~5.3]{KosMugNic22} and \cite[Proposition~4.2] {BifMug23})
 such that for all $f\in L^1(\Graph)$
\[
\big(\e^{t\DeltaGD} f\big)(x)=\int_\Graph \ptGD(x,y)f(y) \dd y \qquad \text{for all $t >0$} \hbox{ and a.e.\ }x\in \Graph.
\]
We next introduce the main mathematical object of this paper.

\begin{definition}\label{def:heat-content}
Let $\Graph$ be a graph as in \autoref{ass:graph}. The quantity
	\[ 
	\QtGD := 
 \big\|\e^{t\DeltaGD}\1_\Graph \big\|_{L^1(\mathcal{G})},\qquad t> 0,
	\]
is called the \textit{heat content of \(\Graph\) (with respect to $\mVD$)}.
\end{definition}
\begin{rem}
At the risk of redundancy, let us stress that assuming that each element of $\mVD$ is a \underline{vertex of degree one} is crucial in our analysis. Indeed, it is of course, true (and well-known) that the Hilbert spaces $L^2(\Graph),L^2(\Graph')$, the form domains $H^1_0(\Graph;\mVD),H^1_0(\Graph';\mVD')$, as well as the forms $a_\Graph,a_{\Graph'}$ agree if $\Graph'$ is the metric graph obtained from $\Graph$ identifying all vertices in $\mVD$ to a unique vertex $\mv_0$, so that $\mVD'=\{\mv_0\}$ is a singleton; and in particular the Laplacians $\DeltaGD$ and $\Delta^{\Graph'; \mVD'}$ agree, so also their associated heat content will agree at each time $t$. However, our formalism is based on a definition of scattering coefficients, see \autoref{defi:scatt-coeff} below, that is tailored for the convention that only vertices of degree one can lie in $\mVD$.
\end{rem}

{ 
\begin{rem}\label{rem:direct-sum}
Under our standing assumptions, $\Graph\setminus \mVD$ is connected. This is not a major restriction: indeed, observe that if 
\[
\Graph\setminus \mVD=\bigsqcup_{i=1}^m \Graph_i
\]
then by definition
\[ 
	\QtGD = 
 \sum_{i=1}^m \big\|\e^{t\Delta^{\Graph_i; \mVD_i}}\1_{\Graph_i} \big\|_{L^1(\Graph_i)}=\sum_{i=1}^m \mathcal Q_t(\Graph_i;\mVD_i)\qquad\hbox{for all } t> 0,
	\]
	where $\mVD_i:=\mVD\cap \Graph_i$, $i=1,\ldots,m$.
	\end{rem}
}

Because the semigroup is positive, hence its heat kernel is for all $t>0$ a positive function, we see that
\[
\QtGD =\int_\Graph \int_\Graph  \ptGD(x,y)\dd x \dd y\qquad \hbox{for all }t> 0.
\]

It is also known that the $\DeltaGD$ has trace-class resolvent, hence pure point spectrum: we denote its eigenvalues by $-\lambda_k(\Graph;\mVD)$, $k\in \N$, (that is, $\lambda_k\ge 0$ for all $k\in \N$) and
the \textit{heat trace of $\mathcal{G}$ (with respect to $\mVD$)} by
\[
\TrtGD := \sum_{k=1}^\infty \e^{-t\lambda_k(\Graph;\mVD)}=\int_\Graph \ptGD(x,x)\dd x,\qquad t>0.
\]
Furthermore, a Poincaré inequality holds: indeed, the smallest eigenvalue $\lambda_1(\Graph;\mVD)$
% of $\Delta_{\Graph}^\mVD$, given by
%\[
%\lambda_1(\Graph;\mVD):= \inf_{f\in H^1_0(\Graph;\mVD)}\frac{\|f'\|^2_{L^2(\mathcal{G})}}{\|f\|^2_{L^2(\mathcal{G})}} 
%%= \inf_{f\in H^1_0(\Graph;\mVD)}\frac{a_\mathcal{G}(f)}{\|f\|^2_{L^2(\mathcal{G})}},
%\]
satisfies
\begin{align}\label{eq:estimate-nicaise}
\frac{\pi^2}{4|\Graph|^2} \leq \lambda_1(\Graph;\mVD)
\end{align}
by \cite[Théorème~3.1]{Nic87}: in particular, the semigroup $(\e^{t\DeltaGD})_{t\ge 0}$ is exponentially stable.
%\footnote{\DM{Anmerkung: ähnliche Abschätzungen in $L^2$ gelten auch für unendliche Graphen mit Dirichlet-Bedingungen im Unendlichen, laut~\cite[Section~4]{DueKenMug22}. Es wäre irgendwann interessant, auch für diesen Fall Gaußsche Abschätzungen nachzuweisen.}}
(This holds at first for the Laplacian on $L^2(\Graph)$, and then also for its realization in $L^1(\Graph)$, by~\cite[\S~7.4.6]{Are06} and because $(\e^{t\DeltaGD})_{t\ge 0}$ is known to satisfy Gaussian estimates~\cite[Theorem~4.7]{Mug07}.)
%We list some further direct translations for the heat content $\mathcal{Q}_t(\mathcal{G};\mVD)$ which directly transfer from the setup of domains and manifolds.

\subsection{Some first observations}
The following are easy consequences of the spectral theorem: we formulate them for the case of Laplacians on compact metric graphs but it is immediate to check that they extend to the general case of metric measure spaces of finite measure such that the heat semigroup admits upper Gaussian estimates.

\begin{lemma}\label{prop:trivial-properties-heat-content}
   Under \autoref{ass:graph}, the following assertions hold:
    \begin{itemize}
        \item[(i)]\label{item:strictmonot} Let $(\varphi_n)_{n \in \mathbb{N}}$ be an orthonormal basis for $L^2(\mathcal{G})$ consisting of real-valued eigenfunctions of $\DeltaGD$. Then:
        \begin{align}\label{eq:heat-cont-eigenfunctions}
        \mathcal{Q}_t(\mathcal{G};\mVD) = \sum_{k=1}^\infty \e^{-t\lambda_k(\Graph;\mVD)} \bigg( \int_\mathcal{G} \varphi_k(x) \dd x \bigg)^2.
        \end{align}
        In particular, the map $t \mapsto \mathcal{Q}_t(\mathcal{G};\mVD)$ is strictly monotonically decreasing, i.e., 
        \[
        \mathcal{Q}_{s}(\mathcal{G};\mVD) < \mathcal{Q}_t(\mathcal{G};\mVD)\qquad \hbox{for any }s>t>0.
        \]
        %in particular
        %\[
        %\mathcal{Q}_t(\Graph;\mVD) \sim \e^{-t\lambda_1(\DeltaGD)} \vert \Graph \vert \qquad \text{as $t \rightarrow \infty$}.
        %\]
        \item[(ii)]\label{item:twosidedb} There holds 
        \begin{align}\label{eq:heat-content-lambda1}
        \e^{-t\lambda_1(\Graph;\mVD)} \langle \mathbf{1},\varphi_1 \rangle_{L^2(\Graph)}^2< \mathcal{Q}_t(\mathcal{G};\mVD) < \e^{-t\lambda_1(\Graph;\mVD)} \vert \mathcal{G} \vert \quad \text{for all $t>0$,}
        \end{align}
and in particular
        \begin{align}\label{eq:heat-content-trace}
        \frac{\mathcal{Q}_t(\mathcal{G};\mVD)}{\vert \mathcal{G} \vert} < \TrtGD\quad \text{for all $t>0$.}
        \end{align}
        \item[(iii)] 
        The map $(0,\infty) \ni t \mapsto \mathcal{Q}_t(\mathcal{G};\mV_{\mathrm{D}})$ belongs to $C_0^\infty(0,\infty)$ with
        \begin{align}\label{eq:heat-content-derivative}
        \frac{\mathrm{d}^n}{\mathrm{d}t^n} \mathcal{Q}_t(\mathcal{G};\mVD) = \sum_{k=1}^\infty  (-1)^n\lambda_k(\Graph;\mVD)^n \e^{-t\lambda_k(\Graph;\mVD)} \bigg( \int_\mathcal{G} \varphi_k(x) \dd x \bigg)^2.
        \end{align}
        Moreover, the map $(0,\infty) \ni t \mapsto \mathcal{Q}_t(\mathcal{G};\mV_{\mathrm{D}})$ belongs to $L^1(0,\infty)$ and
\begin{equation}\label{eq:heat-torsion}
        \int_0^\infty \mathcal{Q}_t(\mathcal{G};\mVD) \dd t =
         \|(-\DeltaGD)^{-1}\mathbf{1}\|_{L^1(\Graph)}=: T(\mathcal{G};\mVD).
\end{equation}
    \end{itemize}
\end{lemma}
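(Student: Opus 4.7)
My approach for all three assertions rests on the spectral representation of the heat semigroup, combined with one elementary observation that powers every strict inequality: since $\mVD\neq\emptyset$ and every eigenfunction $\varphi_k$ of $-\DeltaGD$ vanishes on $\mVD$, the constant function $\1_\Graph$ is not an eigenfunction of $\DeltaGD$; in particular, the Fourier coefficients $\langle\1_\Graph,\varphi_k\rangle_{L^2(\Graph)} = \int_\Graph\varphi_k\,\dd x$ cannot be concentrated on any single eigenspace. Moreover, $(\e^{t\DeltaGD})_{t\ge 0}$ is positivity-improving by Beurling--Deny, so $\varphi_1$ can be chosen strictly positive on $\Graph\setminus\mVD$; in particular $\langle\1_\Graph,\varphi_1\rangle>0$.

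For (i), I will insert Mercer's expansion $\ptGD(x,y)=\sum_k\e^{-t\lambda_k(\Graph;\mVD)}\varphi_k(x)\varphi_k(y)$ recalled in the introduction into the definition of $\QtGD$, and interchange the sum with $\int_\Graph\int_\Graph\cdot\,\dd x\,\dd y$, which is licit because the partial sums of $\ptGD$ converge in $L^2(\Graph\times\Graph)$ for each $t>0$ and $|\Graph|<\infty$; this yields~\eqref{eq:heat-cont-eigenfunctions}. Strict monotonicity follows by termwise differentiation of the resulting series, justified by exponential-versus-polynomial dominance of $\lambda_k\e^{-t\lambda_k}$ uniformly on any $[t_0,\infty)$ together with Weyl-type growth of $\lambda_k$; the derivative $-\sum_k\lambda_k\e^{-t\lambda_k}\langle\1_\Graph,\varphi_k\rangle^2$ is strictly negative because $\lambda_1>0$ by~\eqref{eq:estimate-nicaise} and $\langle\1_\Graph,\varphi_1\rangle\neq 0$.

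For (ii), I will factor out $\e^{-t\lambda_1(\Graph;\mVD)}$ from~\eqref{eq:heat-cont-eigenfunctions} and invoke Parseval's identity $\sum_k\langle\1_\Graph,\varphi_k\rangle^2=\|\1_\Graph\|_{L^2(\Graph)}^2=|\Graph|$; strictness of the upper bound comes from the fact that at least one coefficient with $\lambda_k>\lambda_1$ is nonzero, by the non-eigenfunction observation. Retaining only the first term yields the strict lower bound, with strictness again coming from positivity of the tail. For~\eqref{eq:heat-content-trace}, I will apply termwise Cauchy--Schwarz, $\langle\1_\Graph,\varphi_k\rangle^2\le|\Graph|\,\|\varphi_k\|_{L^2}^2=|\Graph|$, strict because $\varphi_k$ is not a scalar multiple of $\1_\Graph$, and sum against $\e^{-t\lambda_k}$ to obtain $\QtGD<|\Graph|\,\TrtGD$.

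For (iii), iterating the termwise-differentiation argument delivers~\eqref{eq:heat-content-derivative} and hence smoothness on $(0,\infty)$; decay at infinity for $\QtGD$ and all its derivatives follows from the upper bound of (ii) and its analogues for $\frac{\mathrm{d}^n}{\mathrm{d}t^n}\QtGD$, placing $\QtGD$ in $C_0^\infty(0,\infty)$ and furnishing $L^1$-integrability. For~\eqref{eq:heat-torsion}, I will interchange $\int_0^\infty$ with the spectral sum (by monotone/dominated convergence, summands being nonnegative after squaring) to obtain
\[
\int_0^\infty \QtGD\,\dd t = \sum_{k=1}^\infty \frac{\langle\1_\Graph,\varphi_k\rangle^2}{\lambda_k(\Graph;\mVD)} = \langle(-\DeltaGD)^{-1}\1_\Graph,\1_\Graph\rangle_{L^2(\Graph)},
\]
which equals $\|(-\DeltaGD)^{-1}\1_\Graph\|_{L^1(\Graph)}$ because the torsion function $(-\DeltaGD)^{-1}\1_\Graph=\int_0^\infty\e^{t\DeltaGD}\1_\Graph\,\dd t$ is nonnegative as the integral of a nonnegative semigroup orbit. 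The only mildly technical points are the justification of termwise operations, which reduce uniformly to exponential dominance; the conceptual engine is the simple non-eigenfunction observation from which every strict inequality flows.
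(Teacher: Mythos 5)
Your proposal is correct and follows essentially the same route as the paper: Mercer's/spectral expansion plus Fubini for \eqref{eq:heat-cont-eigenfunctions}, Parseval for \eqref{eq:heat-content-lambda1}, termwise differentiation for \eqref{eq:heat-content-derivative}, and an interchange of $\int_0^\infty$ with the spectral data for \eqref{eq:heat-torsion} (the paper phrases this as a Laplace transform of the semigroup, you as termwise integration of the series --- an immaterial difference). If anything, your explicit observation that $\mathbf{1}_\Graph$ is not an eigenfunction, so that its spectral coefficients cannot concentrate on a single eigenspace, justifies the \emph{strictness} of the inequalities more carefully than the paper's own terse argument does.
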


We denote the quantity in \eqref{eq:heat-torsion} by $T(\mathcal{G};\mVD)$: it is the \emph{torsional rigidity} of $\mathcal{G}$ with respect to $\mVD$, see \cite{MugPlu23} and references therein.

\begin{proof}
(i) 
This proof follows almost verbatim \cite{BerDryKap14}: we present it for the sake of self-containedness.

According to Mercer's theorem, and because we can choose an orthonormal basis of $L^2(\Graph)$ consisting of real-valued eigenfunctions of $\DeltaGD$, the heat kernel $\ptGD$ can be written as
\begin{equation}\label{eq:mercer}
\begin{split}
\ptGD(x,y) = \sum_{k=1}^\infty \e^{-t \lambda_k(\Graph;\mVD)} \varphi_k(x) \varphi_k(y) \quad \text{for all $t>0$ and all $x,y \in \Graph$},
\end{split}
\end{equation}
with uniformly convergent right-hand side in $\Graph \times \Graph$. Fubini's theorem then implies
\begin{align}
\begin{aligned}
\heatcont (\Graph;\mVD)&= \int_\Graph \int_\Graph \ptGD(x,y) \dd y \dd x \\&= \sum_{k=1}^\infty \e^{-t\lambda_k(\Graph;\mVD)} \int_\Graph \int_\Graph \varphi_k(x) \varphi_k(y) \dd x \dd y \\&= \sum_{k=1}^\infty \e^{-t\lambda_k(\Graph;\mVD)} \left(\int_\Graph \varphi_k(x) \dd x\right)^2,
\end{aligned} \qquad \text{for all $t>0$,}
\end{align}
which is \eqref{eq:heat-cont-eigenfunctions}.

    (ii) Since $\e^{-t\lambda_k(\Graph;\mVD)} < \e^{-t\lambda_1(\Graph;\mVD)}$ for all $t > 0$ and all $k \ge 2$, it follows due to \eqref{eq:heat-cont-eigenfunctions} and Parseval's identity that
    \begin{align*}
    \heatcont(\Graph;\mVD) < \e^{-t\lambda_1(\Graph;\mVD)}\sum_{k=1}^\infty \langle \mathbf{1}, \varphi_k \rangle_{L^2(\Graph)}^2 = \e^{-t\lambda_1(\Graph;\mVD)} \Vert \mathbf{1} \Vert_{L^2(\Graph)}^2 = \e^{-t\lambda_1(\Graph;\mVD)} \vert \Graph \vert 
    \end{align*}
    for all $t>0$.
    
    (iii)    Let $t_0 > 0$: We can estimate for every $t \in [t_0,\infty)$ that
    \[
    \bigg\vert (-1)^n \lambda_k(\Graph;\mVD)^n \e^{-t\lambda_k(\Graph;\mVD)} \bigg( \int_\mathcal{G} \varphi_k(x) \dd x\bigg)^2 \bigg\vert \leq \lambda_k(\Graph;\mVD)^n \e^{-t_0\lambda_k(\Graph;\mVD)} \Vert \varphi_k \Vert_{L^2(\mathcal{G})}^2 \vert \mathcal{G} \vert
    \]
    due to the Cauchy--Schwarz inequality and $\Vert \varphi_k \Vert_{L^2(\mathcal{G})} = 1$ for every $k \in \mathbb N$. Accordingly, it follows that the series over the derivatives of the coefficients of the expansion \eqref{eq:heat-cont-eigenfunctions} for the heat content $\mathcal{Q}_t(\mathcal{G};\mVD)$ convergences absolutely and uniformly in $t > 0$. Therefore, deriving the right-hand side in \eqref{eq:heat-cont-eigenfunctions} yields \eqref{eq:heat-content-derivative}; whereas \eqref{eq:heat-torsion} is an immediate consequence of the fact that the inverse of $-\DeltaGD$ is the Laplace transform of the semigroup generated by $\DeltaGD$.
    
Because of the Poincaré inequality 
%for the heat kernel $\ptGD$, cf.\ \cite[Theorem~4.7]{Mug07}, 
the map $t \mapsto \heatcont(\Graph;\mVD)$ indeed belongs to $L^1(0,\infty)$ according to \cite[Theorem~V.1.8]{EngNag00}.
%\footnote{\DM{Das ist overkill. Es gilt ja $t\mapsto \QtGD\in L^1$, falls $t\mapsto \|\e^{t\DeltaGD}\|_1\in L^1(0,\infty)$; dies gilt wiederum, dank dem Satz von Datko--Pazy \cite[Theorem~V.1.8]{EngNag00}, weil $\lambda_1(\Graph;\mVD)>0$ und, deshalb, $(\e^{t\DeltaGD})_{t\ge 0}$ exponentiell stabil ist. Wenn Du damit einverstanden bist, soll man auch den Satz am Anfang des Abschnitts entsprechend ändern und, statt Gaußsche Abschätzungen, eine Poincaré-Ungleichung fordern.}} 
Moreover, %as the torsion function $v_\Graph$ is given by $v_\Graph=(\DeltaGD)^{-1}\mathbf{1}$ and is (strictly) positive, one observes 
by taking Laplace transform one immediately observes
    \begin{align*}
    \int_0^\infty \heatcont(\Graph;\mVD) \dd t = \int_0^\infty \int_\Graph \e^{t\DeltaGD}\mathbf{1}(x) \dd x \dd t = \int_\Graph (-\DeltaGD)^{-1}\mathbf{1}(x) \dd x = T(\Graph;\mVD),
    \end{align*}
    which completes the proof.
\end{proof}

\begin{rem}\label{rem:several-prop}
%(1)
%\footnote{\DM{Wozu ein zweiter Beweis von einem bereits bekannten (zumal nicht bahnbrechenden ;) ) Resultat? Lieber streichen und in Deiner Diss und/oder für Deinen Artikel für Wärmekerne unendlicher Graphen erwähnen.}}
% Note that \eqref{eq:heat-content-trace} can also be obtained by using the semigroup law for the heat kernel given by
%\[
%p_{t+s}^\mathcal{G}(x,y) = \int_\mathcal{G} p_t^\mathcal{G}(x,z)p_s^\mathcal{G}(z,y) \dd z \qquad \text{for $t,s > 0$.}
%\]
%Indeed, using this in combination with the Cauchy--Schwarz inequality, it follows that
%\[
%p_t^\mathcal{G}(x,y) \leq p_t^\mathcal{G}(x,x)^\frac{1}{2}p_t^\mathcal{G}(y,y)^\frac{1}{2} \qquad \text{for all} \:\: x,y \in \mathcal{G},
%\]
%so in particular, $\mathcal{Q}_t(\mathcal{G};\mVD) \leq (\int_\mathcal{G} p_t(x,x)^\frac{1}{2} \dd x)^2$ which implies again the statement in (i) according to Jensen's\footnote{\DM{Dieser Schritt beruht auf der Eigenschaft, dass $x\mapsto p_t(x,x)$ für alle $t$ konvex ist. Wo wurde es bewiesen?}} (or again Cauchy--Schwarz) inequality. 
(1) Also note that by Roth's trace formula, see \cite[Theoreme 1]{Rot84}, there holds
%\footnote{\DM{Eher nicht, oder? \cite[Theoreme 1]{Rot84} gilt für Graphen ohne Dirichlet-Ecken. Eine angepasste Spurformel muss sicherlich den Term $\# \mVD$  enthalten.}}
\begin{align}\label{eq:trace-formula}
\TrtGD = \frac{\vert \mathcal{G} \vert}{\sqrt{4 \pi t}}+ \frac{1}{2}( \# \mV_\mathrm{N} - \# \mE) + \sum_{C \in \mathcal{C}(\Graph)} \alpha(C) \ell(\widetilde{C}) \e^{-\frac{\ell(C)^2}{4t}},\qquad \hbox{for all }t>0,
\end{align}
whence by \eqref{eq:heat-content-trace} 
\[
\mathcal{Q}_t(\mathcal{G}; \mVD) \leq \frac{\vert \mathcal{G} \vert^2}{\sqrt{4 \pi t}}+ \frac{\vert \mathcal{G} \vert}{2}( \# \mV_\mathrm{N} - \# \mE) + \vert \mathcal{G} \vert\sum_{C \in \mathcal{C}(\Graph)} \alpha(C) \ell(\widetilde{C}) \e^{-\frac{\ell(C)^2}{4t}},\qquad \hbox{for all }t>0,
\]
where $\mathcal{C}(\Graph)$ denotes the set of \emph{periodic orbits} (or \emph{cycle}) in $\mathcal{G}$ and $\widetilde{C}$ the primitive orbit corresponding to some periodic orbit $C \in \mathcal{C}(\Graph)$.
%\footnote{\PB{Das ist wahrscheinlich nicht notwendiges blabla, denn die (trivialen) Abschätzungen, die ich hier erwähne sind alles andere als gut. Vielleicht sollte man an anderer Stelle die Trace-Formula erwähnen. Wir können am Ende überlegen, ob wir dieses Remark nicht einfach streichen.}}
This should be compared with the main result of the present article, \autoref{thm:heat-content-formula-bif-mug} below.

(2) Integrating over $t$ both sides in the upper estimate in \eqref{eq:heat-content-lambda1} yields the Pólya--Szegő inequality
\[
\lambda_1(\Graph;\mVD)T(\Graph;\mVD) < \vert \Graph \vert,
\]
cf.\ \cite[Proposition~5.1]{MugPlu23}.

(3) \label{prop:semigroup-prop-for-heat-cont}
The proof of \cite[Proposition~8]{Ber06} extends verbatim to our setting and yields the following:
One has $\mathcal{Q}_{2t}(\Graph;\mVD) = \Vert \e^{t\DeltaGD} \mathbf{1} \Vert_{L^2(\Graph)}^2$, $t>0$, and also
\begin{align*}
\mathcal{Q}_{kt}(\Graph;\mVD) \leq \frac{2^{k-2}}{k-1} \Bigg(\prod_{n=1}^{k-2} n^{-\frac{1}{n+1}} \Bigg) \Vert \e^{t\DeltaGD} \mathbf{1} \Vert_{L^k(\Graph)}^k \qquad \text{for all $3 \leq k \in \mathbb{N}$, and all $t>0$.}
\end{align*}
\end{rem}

The following large-time asymptotic for the heat content follows immediately from \autoref{prop:trivial-properties-heat-content}.

\begin{corollary}[Large-time asymptotic for the heat content]\label{cor:large-time-asymp}
Under \autoref{ass:graph} one has that
\[
\heatcont(\Graph; \mVD) = \e^{-\lambda_1(\Graph;\mVD)t} \langle \mathbf{1}, \varphi_1 \rangle_{L^2(\Graph)}^2 + \mathcal{O}\big(\e^{-\lambda_2(\Graph;\mVD)t}\big) \qquad \text{as $t \rightarrow \infty$.}
\]
\end{corollary}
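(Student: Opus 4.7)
The plan is to bootstrap directly from the eigenfunction expansion of the heat content established in \autoref{prop:trivial-properties-heat-content}(i), namely
\[
\heatcont(\Graph;\mVD) = \sum_{k=1}^\infty \e^{-t\lambda_k(\Graph;\mVD)} \bigg(\int_\Graph \varphi_k(x)\dd x\bigg)^2,
\]
and to control the tail of the series by pulling out the factor $\e^{-t\lambda_2(\Graph;\mVD)}$.

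First, I would separate the leading term $k=1$ and rewrite the remainder as
\[
\heatcont(\Graph;\mVD) - \e^{-t\lambda_1(\Graph;\mVD)} \langle \mathbf{1},\varphi_1\rangle_{L^2(\Graph)}^2 = \e^{-t\lambda_2(\Graph;\mVD)} \sum_{k=2}^\infty \e^{-t(\lambda_k(\Graph;\mVD)-\lambda_2(\Graph;\mVD))} \langle \mathbf{1},\varphi_k\rangle_{L^2(\Graph)}^2.
\]
Since $\lambda_k(\Graph;\mVD) \geq \lambda_2(\Graph;\mVD)$ for every $k\geq 2$, the exponential factors in the remaining sum are bounded by $1$ for all $t\geq 0$, so by Parseval's identity applied to the orthonormal basis $(\varphi_k)_{k\in\N}$,
\[
\sum_{k=2}^\infty \e^{-t(\lambda_k(\Graph;\mVD)-\lambda_2(\Graph;\mVD))} \langle \mathbf{1},\varphi_k\rangle_{L^2(\Graph)}^2 \leq \sum_{k=1}^\infty \langle \mathbf{1},\varphi_k\rangle_{L^2(\Graph)}^2 = \|\mathbf{1}\|_{L^2(\Graph)}^2 = |\Graph|.
\]
This yields the pointwise estimate
\[
\Big| \heatcont(\Graph;\mVD) - \e^{-t\lambda_1(\Graph;\mVD)}\langle \mathbf{1},\varphi_1\rangle_{L^2(\Graph)}^2 \Big| \leq |\Graph|\, \e^{-t\lambda_2(\Graph;\mVD)} \qquad \hbox{for all } t>0,
\]
which is precisely the required $\mathcal{O}\big(\e^{-t\lambda_2(\Graph;\mVD)}\big)$ statement as $t\to\infty$.

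There is no genuine obstacle here: the argument is a textbook tail estimate, with Parseval supplying a uniform summability bound that is independent of $t$. The only point requiring a small comment is that the interchange of limit and summation in the tail is legitimate because the series converges absolutely and uniformly for $t$ bounded away from zero, as already exploited in the proof of \autoref{prop:trivial-properties-heat-content}(iii). All other ingredients -- the existence of the orthonormal basis of eigenfunctions, the discreteness of the spectrum, and the Mercer-type representation -- are already provided by the preceding discussion.
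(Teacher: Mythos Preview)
Your proof is correct and follows essentially the same approach as the paper: both start from the eigenfunction expansion \eqref{eq:heat-cont-eigenfunctions} and bound the tail $k\ge 2$. Your presentation is in fact more direct than the paper's, which first establishes the weaker asymptotic equivalence $\heatcont(\Graph;\mVD)\sim \e^{-\lambda_1 t}\langle\mathbf{1},\varphi_1\rangle^2$ and only then remarks that the $\mathcal{O}\big(\e^{-\lambda_2 t}\big)$ bound is ``an immediate observation''; you supply that observation explicitly via Parseval, obtaining the clean constant $|\Graph|$.
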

In particular, for a fixed $\mVD$, modifying the metric graph $\Graph$ in such a way that $\lambda_1(\Graph;\mVD)$ increases makes the heat content decrease for large times $t>0$, and vice versa.
\begin{proof}
Using \autoref{prop:trivial-properties-heat-content}.(i), we observe that 
\begin{align*}
\frac{\heatcont(\Graph;\mVD)}{\e^{-\lambda_1(\Graph;\mVD)t}\langle \mathbf{1}, \varphi_1 \rangle_{L^2(\Graph)}^2} = 1 + \sum_{n=2}^\infty \e^{-\left(\lambda_n(\Graph;\mVD)-\lambda_1(\Graph;\mVD)\right)t} \frac{\langle \mathbf{1}, \varphi_n \rangle_{L^2(\Graph)}^2}{\langle \mathbf{1}, \varphi_1 \rangle_{L^2(\Graph)}^2} \rightarrow 1\qquad \hbox{ as }t \rightarrow \infty, 
\end{align*}
since $\langle \mathbf{1}, \varphi_n \rangle_{L^2(\Graph)} \leq \vert \Graph \vert \Vert \varphi_n \Vert_{L^2(\Graph)} = \vert \Graph \vert$ due to Cauchy--Schwarz inequality and therefore the second term on the right-hand side tends to $0$ as $t$ goes to infinity. Moreover, it is an immediate observation that the remainder  term indeed belongs to $\mathcal{O}(\e^{-\lambda_2(\DeltaGD)}t)$ which completes the proof.
\end{proof}

%Again by \autoref{prop:trivial-properties-heat-content}.(i), it follows that 
%\[
%-\lambda_1(\Graph;\mVD) + \frac{\langle \varphi_1,\mathbf{1} \rangle^2}{t} \leq \frac{\log\heatcont(\Graph;\mVD)}{t} \leq -\lambda_1(\Graph;\mVD) + \frac{\vert \Graph \vert}{t} \qquad \text{for all $t > 0$,}
%\]
%which implies \eqref{eq:kellenvog1}.

% In fact, according to \cite{KelLenVog15}, more can be said as the following proposition states.
%\begin{proposition}
%One has that
%\begin{equation}\label{eq:kellenvog}
%\lim_{t\to \infty}\frac{\log \heatcont (\Graph;\mVD)}{t}=-\lambda_1(\Graph;\mVD),
%\end{equation}
%Moreover, for the heat kernel $\ptGD(\cdot,\cdot)$, $t>0$,
%\begin{equation}\label{eq:kellenvog}
%\lim_{t\to \infty}\frac{\log \ptGD(x,y)}{t}=-\lambda_1(\Graph;\mVD)\qquad \hbox{holds for all }x,y\in\Graph.
%\end{equation}
%\end{proposition}
%\begin{proof}
%We have already seen the first part which is also content of~\cite[Theorem~2.1]{KelLenVog15}. The second part follows by~\cite[Theorem~3.1]{KelLenVog15}.
%\end{proof}

\subsection{Three elementary examples}\label{sec:three-examp}
According to \autoref{prop:trivial-properties-heat-content}.(i), the heat content can be fully determined whenever one has full (analytic) information about the eigenvalues and eigenfunctions. Thus, in the case of intervals and star graphs one can determine the heat content explicitly: \\
(1) Let $\Graph \simeq [0, \ell]$ be the trivial metric graph consisting of just one edge of length $0 < \ell < \infty$,  and let $\emptyset \subsetneq \mVD \subset \{ 0,\ell \}$: We hence look at an interval with mixed Dirichlet/Neumann conditions or purely Dirichlet conditions, respectively, see \autoref{fig:intervals}.
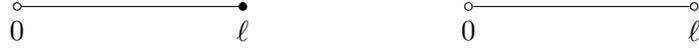
\begin{figure}[h]
        \begin{tikzpicture}[scale=0.6]
      \tikzset{enclosed/.style={draw, circle, inner sep=0pt, minimum size=.1cm, fill=black}, every loop/.style={}}
      \node[enclosed, label = {below: $\ell$}] (Z) at (0,2) {};
      \node[enclosed, label = {below: $0$}, fill=white] (A) at (-5,2) {};
      \node[enclosed, label = {below: $0$}, fill=white] (Z') at (5,2) {};
      \node[enclosed, label = {below: $\ell$}, fill=white] (A') at (10,2) {};
      \draw[-] (Z) edge node[above] {} (A) node[midway, above] (edge1) {};
      \draw[-] (Z') edge node[above] {} (A') node[midway, above] (edge2) {};
     \end{tikzpicture}
     \caption{The graph $\Graph$ with one Dirichlet condition at $0$ on the left and two Dirichlet conditions at $0$ and $\ell$ on the right (Dirichlet conditions are imposed at the white vertices).} \label{fig:intervals}
     \end{figure}
     
It is well-known that in this case the eigenvalues $(\lambda_k([0,\ell];\mVD))_{k \in \mathbb{N}} \subset (0,\infty)$ and corresponding normalized eigenfunctions $(\varphi_k)_{k \in \mathbb{N}} \subset L^2(0,\ell)$ are given by 
\begin{align*}
\lambda_k([0,\ell];\mVD) = \begin{cases} \frac{\pi^2(2k-1)^2}{4\ell^2}, & \text{if $\# \mVD = 1$}, \\ \frac{\pi^2k^2}{\ell^2}, & \text{if $\# \mVD = 2$, } \end{cases} \quad \text{and} \quad \varphi_k(x) = \begin{cases} \sqrt{\frac{2}{\ell}} \sin \big(\frac{\pi(2k-1)x}{2\ell} \big), & \text{if $\# \mVD = 1$}, \\ \sqrt{\frac{2}{\ell}} \sin \big(\frac{\pi k x}{2\ell} \big), & \text{if $\# \mVD = 2$, } \end{cases}
\end{align*}
for $k \in \mathbb{N}$, $x \in [0,\ell]$, and thus, for the heat content, we obtain -- using \eqref{eq:heat-cont-eigenfunctions} -- that
\begin{align}\label{eq:qt-interval-elem}
\mathcal{Q}_t([0,\ell];\mVD) = \frac{8 \ell}{\pi^2} \sum_{k=0}^\infty \e^{-t\big( \frac{\pi (2k+1) \# \mVD}{2\ell} \big)^2} \frac{1}{(2k+1)^2}, \qquad t > 0.
\end{align}
The profile of this function is shown in~\autoref{fig:plotqd-int}: observe that its limit for $t\to 0$ agrees with $\ell$. An educated guess, based on the small-time asymptotic expansion of the heat content on domain, suggests to study the renormalized function $\frac{\sqrt{\pi}}{2\sqrt{t}}\left(\mathcal{Q}_t([0,\ell];\mVD)-\ell\right)$: its profile, plotted in \autoref{fig:plotqd-int-norm}, shows an interesting behaviour as $t\to 0$, again.

\begin{figure}[h]
\includegraphics[width=5cm]{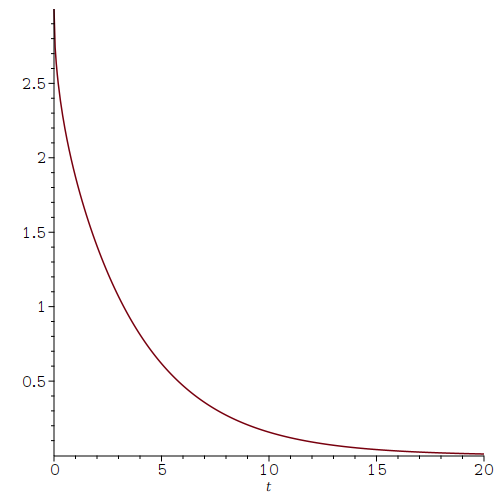} 
\includegraphics[width=5cm]{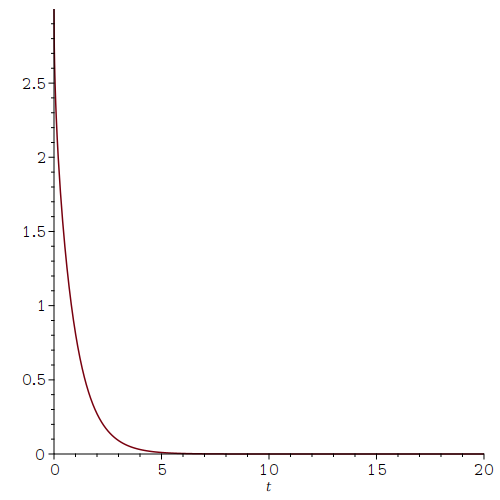} 
\caption{The profiles of $\mathcal{Q}_t([0,\ell];\mVD)$ for $\ell=3$ and $\# \mVD=1$ (left), $\# \mVD=2$ (right)...}\label{fig:plotqd-int}
\end{figure}

\begin{figure}[h]
\includegraphics[width=5cm]{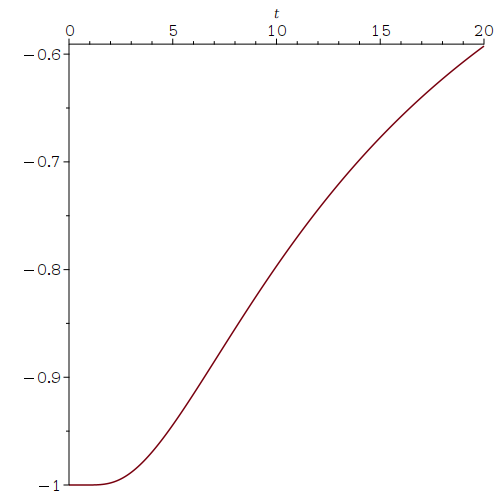} 
\includegraphics[width=5cm]{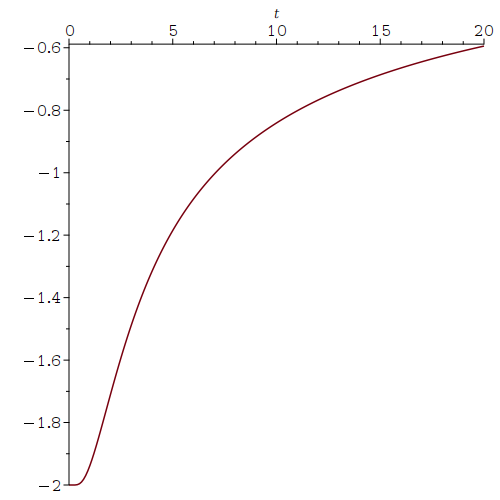} 
\caption{...and the profiles of $\frac{\sqrt{\pi}}{2\sqrt{t}}\left(\mathcal{Q}_t([0,\ell];\mVD)-\ell\right)$, again for $\ell=3$ and $\# \mVD=1$ (left), $\# \mVD=2$ (right)}\label{fig:plotqd-int-norm}
\end{figure}

(2) Let $\Graph \simeq \mathcal{S}_n(\ell) = \mathcal{S}_n$ be the equilateral star graph on $n+1$ vertices with Dirichlet conditions at all outer vertices, i.e., $\# \mVD = n$, and with same edge length $0<\ell<\infty$, see \autoref{fig:star}; for the sake of parametrization, we identify -- without any loss of generality -- the center of $\mathcal{S}_n(\ell)$ with the endpoint $\ell$ of each interval.
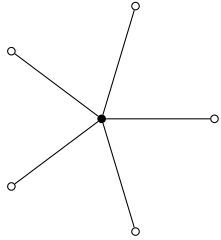
\begin{figure}[h]
\begin{tikzpicture}[scale=0.6]
      \tikzset{enclosed/.style={draw, circle, inner sep=0pt, minimum size=.1cm, fill=black}, every loop/.style={}}

      \node[enclosed] (Z) at (4,0) {};
      \node[enclosed, fill=white] (A) at (6.5,0) {};
      \node[enclosed, fill=white] (B) at (4.75,-2.5) {};
      \node[enclosed, fill=white] (C) at (4.75,2.5) {};
      \node[enclosed, fill=white] (D) at (2,-1.5) {};
      \node[enclosed, fill=white] (E) at (2,1.5) {};

      \draw (Z) edge node[above] {} (A) node[midway, above] (edge1) {};
      \draw (Z) edge node[above] {} (B) node[midway, above] (edge2) {};
      \draw (Z) edge node[above] {} (C) node[midway, above] (edge3) {};
      \draw (Z) edge node[above] {} (D) node[midway, above] (edge4) {};
      \draw (Z) edge node[above] {} (E) node[midway, above] (edge5) {};
     \end{tikzpicture}
     \caption{The star graph $\mathcal{S}_5$ with $5$ edges and Dirichlet conditions imposed at all white vertices.}\label{fig:star}
\end{figure}

We follow the same procedure in (1): A direct computation shows that the {odd} eigenvalues {(which turn out to be simple)} and normalized eigenfunctions of $\DeltaGD$ are given by
\begin{equation*}
\lambda_{2k-1}(\mathcal{S}_n;\mVD) =  \frac{\pi^2{\\
(2k-1)}^2}{4\ell^2}, \quad \text{and} \quad (\varphi_{2k-1})_\me(x) =  \sqrt{\frac{2}{n\ell}} \sin \Big(\frac{\pi {(2k-1)} x}{2\ell} \Big), \quad k \in \mathbb{N}, \, x \in \me \simeq [0,\ell];
\end{equation*}
{whereas the even eigenvalues have multiplicity $n-1$ and their corresponding normalized eigenfunctions can be chose to be antisymmetric and supported on exactly two edges. (In particular, these eigenfunctions are orthogonal to $\mathbf{1}$, thus leading to a vanishing term according to \eqref{eq:mercer}.)}

A tedious but elementary computation eventually yields the explicit expansion 
    \begin{align}\label{ex:heatcontent-stargraphs}
    \begin{aligned}
    \mathcal{Q}_t(\mathcal{S}_n;\mVD)  
%    \frac{2n}{\ell} \sum_{k=1}^\infty \e^{-t( \frac{\pi k}{2\ell})^2} \bigg( \int_0^\ell \sin\bigg( \frac{\pi k}{2\ell} x \bigg) \dd x \bigg)^2 \\&= \frac{2n}{\ell} \sum_{k=1}^\infty \e^{-t( \frac{\pi k}{2\ell})^2} \bigg( \frac{\ell}{  \pi k} \Big(1-\cos(\pi k) \Big) \bigg)^2 \\&= \frac{2n \ell}{\pi^2} \sum_{k=1}^\infty \e^{-t( \frac{\pi k}{2\ell})^2} \frac{1}{k^2} \big(1 + (-1)^{k+1} \big)^2 \\
    &= \frac{8\vert \mathcal{S}_n \vert}{\pi^2} \sum_{k=0}^\infty \e^{-t \big(\frac{\pi(2k+1)}{2\ell}\big)^2} \frac{1}{(2k+1)^2}
    \end{aligned}
    \end{align}
    of the heat content, see \autoref{fig:plotqd-starn}.

\begin{figure}[h]
\includegraphics[width=5cm]{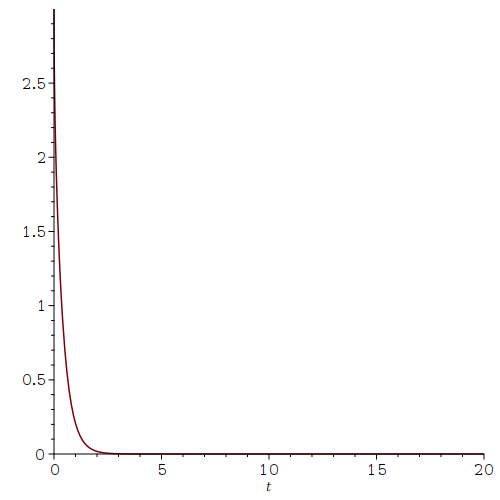} 
\includegraphics[width=5cm]{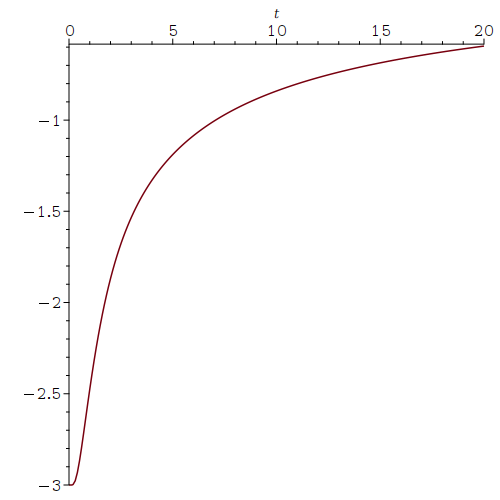} 
\caption{The profiles of $\mathcal{Q}_t(\mathcal S_n;\mVD)$ (left) and $\frac{\sqrt{\pi}}{2\sqrt{t}}\left(\mathcal{Q}_t(\mathcal S_3;\mVD)-3\ell\right)$ (right) for $\ell=1$ and $n=3$}\label{fig:plotqd-starn}
\end{figure}    

\section{Directed paths on metric graphs}\label{sec:comb-paths-g}
The proof of our main result, \autoref{thm:heat-content-formula-bif-mug}, relies upon a careful counting of directed paths between any two points in a metric graph. This section is devoted to the proof of two technical results, \autoref{lem:decomposition-lemma} and \autoref{lem:scattering-coeff-dual} that will crucially allow us to simplify certain terms that appear in the (integrated) path sum formula. We here follow the formalism used in
 \cite{BorHarJon22}.

Let $\Graph$ be a metric graph satisfying \autoref{ass:graph}. An oriented edge $\vec{\me}$ in $\Graph$ is called \emph{bond}: that is, each edge $\me \in \mE$ corresponds to exactly two bonds $\vec{\me},\cev{\me}$ running through $\me$ in opposite directions, and we write $\mathbf{b}(\me):=\{\vec{\me},\cev{\me}\}$.
% for the (two-point) set consisting of the corresponding two bonds to $\me$. 
Moreover, we denote by $\mathsf{B}$ the set of bonds in $\Graph$, i.e., 
\[
\mathsf{B} = \bigcup_{\me \in \mE} \mathbf{b}(\me).
\]

For any bond $\vec \me$, we denote its \emph{initial} vertex with $\partial^-(\vec \me)$ and its \emph{final} vertex with $\partial^+(\vec \me)$. 

\begin{definition}[Directed paths]
Two bonds $\vec{\me}_1$ and $\vec{\me}_2$ are called \emph{consecutive} if
\[
\partial^+(\vec{\me}_1) = \partial^-(\vec{\me}_2).
\]

Let $n \in \mathbb{N}_0$ and $\mv, \mw \in \mV$.
A \emph{directed path $\vec{p}$ from $\mv$ to $\mw$} (which is called in addition \emph{trivial}, if $n=0$ and $\mv = \mw$) is an ordered sequence
\[
\vec{p}=\left(\mv, \vec{\me}_1,\dots,\vec{\me}_n,\mw \right),
\]
such that $\vec{\me}_i,\vec{\me}_{i+1}$ are consecutive bonds for all $i=1,\ldots,n-1$, with $\mv = \partial^-(\vec{\me}_1)$ and $\mw = \partial^+(\vec{\me}_n)$.
%\begin{itemize}
%\item $\mv = \partial^-(\vec{\me}_1)$ and $\mw = \partial^+(\vec{\me}_n)$, 
%\item $\me \sim \mv_-(\vec q)$ and $\me' \sim \mv_+(\vec q)$.
%\item $\me,\me_0 \sim \mv_-(\vv{p})$ and $\me', \me_n %\sim \mv_+(\vv{p})$, 
%\item[(iii)] $\me_k \sim \me_{k+1}$ for all $k=0,\dots,n-1$,
%\end{itemize}
Note that the trivial path can be seen as the path staying at a single vertex, avoiding to run through \textit{any} edge
\end{definition}
For any directed path $\vec{p} = (\mv,\vec{\me}_1,\dots,\vec{\me}_n,\mw)$, its \emph{initial} and \emph{final} vertex $\mv,\mw$ will also be denoted with $\mv_-(\vec{p}) =: \mv_-$ and $\mv_+(\vec{p}) =: \mv_+$, respectively. Also,
\begin{align*}
[0,\infty)\ni \ell(\vec{p}) := \begin{cases} \sum\limits_{k=1}^n \ell_{\me_k}, & \text{if $n \in \mathbb{N}$,} \\ 0, & \text{if $n=0$,} \end{cases}
\end{align*}
and
\[
\# \vec{p}  := n \in \mathbb{N}_0
\]
denote its \emph{metric length} and its \emph{combinatorial length}, respectively.
Additionally, for $\me \in \mE$, we denote by $\#_\me \vec{p} := \#\{ i= 1,\dots,n \: : \: \vec{\me_i} \in \mathbf{b}(\me) \}$ the number of bonds in $\vec{p}$ running through the edge $\me$ (in particular, $\# \vec{p}  = \sum_{\me \in \mE} \#_\me \vec{p}$); similarly, for $\mathsf{u} \in \mV$, we define 
\[
\#_{\mathsf{u}} \vec{p} := \# \left\{ i = 1,\dots,n-1 \: : \: \partial^+(\vec{\me}_i) = \mathsf{u} \right\} + \delta_{\mv_-(\vec{p}), \mathsf{u}} + \delta_{\mv_+(\vec{p}), \mathsf{u}}
\]
%\footnote{\DM{Mir ist leider nicht klar, was $\delta_{\mv,\mathsf{u}}$ an dieser Stelle bedeuten soll, da $\#_{\mathsf{u}} \vec{p}$ nicht von $\mv$ abzuhängen scheint. Überhaupt ist mir diese Definition nicht ganz klar. Betrachten wir etwa ein Pfad $\vec{p}$ mit $\#\vec{p}=2$, der in $\mv$ startet, in $\mw$ bouncet, und zurück nach $\mv$ kommt (und dort endet). Wie werden in diesem Fall diese Zahlen definiert? $\#_{\mv} \vec{p}=2$ und $\#_{\mathsf{\mw}} \vec{p}=1$?}}
as the number of bonds having $\mathsf u$ as initial or/and final vertex. Finally, for any subset $\mW \subset \mV$ we let
\begin{align}\label{eq:hitting-number-subset-directed-path}
\#_\mW \hspace{0.02cm} \vec{p} := \sum_{\mathsf{u} \in \mW} \#_{\mathsf{u}} \vec{p}.
\end{align}
 
\begin{definition}\label{defi:scatt-coeff}
The \emph{scattering coefficient} of a nontrivial directed path $\vec{p} = (\mv,\vec{\me}_1,\dots,\vec{\me}_n,\mw)$ is defined by
\[
\alpha_\Graph(\vec{p}) := \left\{
\begin{array}{ll}
1 &\hbox{if }n=0 \hbox{ or }n=1,\\
 \prod\limits_{k=1}^{n-1} \beta(\vec{\me}_{k}, \vec{\me}_{k+1}) ,\quad &\hbox{if }n\ge 2,
\end{array}
\right.
\]
where
\begin{align}\label{eq:scattering-coefficients}
\beta(\vec{\me}_{k}, \vec{\me}_{k+1}) := \begin{cases}
  \frac{2}{\deg_\mathcal{G}(\partial^+(\vec{\me}_k))}-\delta_{{\vec{\me}_k},\vec{\me}_{k+1}}, & \text{if} \:\: \partial^+(\vec{\me}_k) \in \mV \setminus \mVD, \\ -1, & \text{if} \:\: \partial^+(\vec{\me_k}) \in \mVD,
\end{cases}
\end{align}
for $k=1,\dots,n-1$, cf.\ also \autoref{fig:fig-non-triv-dir-path} below.
\end{definition}

{ 
(If the relevant graph is clear from the context, we are going to simplify the notation and write $\alpha(\vec{p}):=\alpha_\Graph(\vec{p})$.)}
%and
%\begin{align*}
%\beta(\vec{\me}_1) := \begin{cases}
%  \frac{2}{\deg_\mathcal{G}(\mv_-)}-\delta_{\me,\me_{1}}, & \text{if} \:\: \mv_- \in \mV_\mathrm{N}, \\ -1, & \text{if} \:\: \mv_- \in \mVD,
%\end{cases}, \quad  \beta(\vec{\me}_n) := \begin{cases}
 % \frac{2}{\deg_\Graph(\mv_+)}-\delta_{\me_{n},\me'}, & \text{if} \:%\: \mv_+ \in \mV_\mathrm{N}, \\ -1, & \text{if} \:\: \mv_+ \in \mVD,
%\end{cases}
%\end{align*}
\begin{figure}[h]
        \begin{tikzpicture}[scale=0.6]
      \tikzset{enclosed/.style={draw, circle, inner sep=0pt, minimum size=.08cm, fill=black}, every loop/.style={}}

      \node[enclosed] (Z) at (0,2) {};
      \node[enclosed, label = {left: $\mv$}, fill=white] (A) at (-5,2) {};
      \node[enclosed, label = {right: $\mw$}] (B) at (2.25,4.25) {};
      \node[enclosed] (C) at (2.25,-0.25) {};
      
      \draw[-] (Z) edge node[above] {} (A) node[midway, above] (edge1) {};
      \draw[-] (Z) edge node[above] {} (B) node[midway, above] (edge2) {};
      \draw[-] (Z) edge node[above] {} (C) node[midway, above] (edge3) {};
      \draw[->, blue, thick] (A) edge[bend left = 50] node[below] {\textcolor{black}{$\vec{\me}_1$}} (Z) node[midway, above] (bond1) {};
      \draw[->, blue, thick] (Z) edge[bend left = 50] node[below] {\textcolor{black}{$\vec{\me}_2$}} (A) node[midway, above] (bond2) {};
      \draw[->, blue, thick] (A) edge[in=100, out=80, scale=500] node[above] {\textcolor{black}{$\vec{\me}_3$}} (Z) node[midway, above] (bond3) {};
      \draw[->, blue, thick] (Z) edge[bend right = 50] node[below right] {\textcolor{black}{$\vec{\me}_4$}} (B) node[midway, above] (bond3) {};
      
     \end{tikzpicture}
     \caption{A star graph $\mathcal{S}_3$ with $3$ edges and a Dirichlet condition together with a nontrivial directed path $\vec p = (\mv,\vec{\me}_1,\vec{\me}_2,\vec{\me}_3, \vec{\me}_4,\mw)$ (in blue) and corresponding scattering coefficient $\alpha(\vec p) = (\frac{2}{3}-1) \cdot (-1) \cdot \frac{2}{3} = \frac{2}{9}$.} \label{fig:fig-non-triv-dir-path}
     \end{figure}
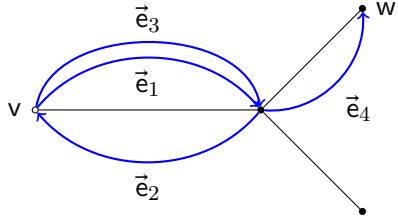

 For $\mv,\mw \in \mV$, we denote by $\mathcal{P}_m(\mv,\mw)$ the set of paths from $\mv$ to $\mw$ of combinatorial lengths $m \in \mathbb{N}$ and define
\begin{equation}\label{eq:def-set-of-paths}
\mathcal{P}_m(\Graph) := \bigcup_{\mv,\mw \in \mV} \mathcal{P}_m(\mv,\mw), \quad \text{and} \quad \mathcal{P}_{\geq n}(\Graph) := \bigcup_{\ell=n}^\infty \mathcal{P}_\ell(\Graph) \qquad \text{for $m,n \in \mathbb{N}_0$}.
\end{equation}
%as the set of all \emph{directed paths} having modulus equal to $m \in \mathbb{N}$ and of all paths having a modulus of at least $n \in \mathbb{N}$, respectively.
(In particular, $\mathcal{P}_{\geq 0}(\Graph)$ is the set of \emph{all} directed paths in $\Graph$).
 %Additionally, we denote by 
 %\[
 %{  \overline{\mathcal{P}}(\Graph)} := \mathcal{P}_{\geq 0}(\Graph)
 %\]
  %\footnote{\PB{Ich verstehe deine Verwirrung mit $\mathcal{P}_0$ und $\mathcal{P}_{\geq 0}$ weiter unten. Wie fändest du stattdessen die Notation $\overline{\mathcal{P}}(\Graph)$? Oder sollen wir einfach immer $\mathcal{P}_{\geq 0}(\Graph)$ ausschreiben, wann immer wir es brauchen (so oft tatsächlich nicht).}} the set of \emph{all} directed paths in $\Graph$ whereas 
  Moreover, we denote by
  \[
  \mathcal{P}(\Graph) := \mathcal{P}_{\geq 1}(\Graph)
  \]
the set of all nontrivial paths in $\Graph$.

Given now a nontrivial directed\ path $\vec{p} \in \mathcal{P}(\Graph)$, we denote by $\vec{\me}_-(\vec{p}) = \vec{\me}_-$ and $\vec{\me}_+(\vec{p}) = \vec{\me}_+$ the \emph{initial} and \emph{final} bond of $\vec p$, respectively; and with $\vec{\me}_-(\vec{p}) = \vec{\me}_-$ and $\vec{\me}_+(\vec{p}) = \vec{\me}_+$ the corresponding \emph{initial} and \emph{final} edge that is run through by $\vec p$. In particular, $\mv_-(\vec{p}) = \partial^-(\vec \me_-(\vec p ))$ and $\mv_+(\vec{p}) = \partial^+(\vec \me_+(\vec p ))$.
%\[
%\mv_-(\vec{p}) = \partial^-(\me_-(\vec p )), \qquad \mv_+(\vec{p}) = \partial^+(\me_+(\vec p ))
%\]
 %$\mv_-(\vec{p}) = \partial^-(\me_-(\ve ))$ and $\mv_+(\vv{p})$ the \emph{beginning} and \emph{ending} vertex, respectively.
 This leads to the following definition, which we illustrate in~\autoref{fig:fig-non-triv-dir-path-minus-plus}.
 
\begin{definition}
Let $\vec{p} =(\mv_-,\vec{\me}_1,\dots,\vec{\me}_n,\mv_+) \in \mathcal{P}(\Graph)$. Then
\begin{itemize}
\item[(i)] $\vec{p}_- \in \mathcal{P}_{\geq 0}(\Graph)$ (resp., $\vec{p}_+ \in \mathcal{P}_{\geq 0}(\Graph)$) denotes the directed path obtained deleting the \emph{first} (resp., the \emph{last}) edge $\vec{\me}_-$ (resp., $\vec{\me}_+$) from the path $\vec{p}$,
\item[(ii)] $\cev{p} \in \mathcal{P}(\Graph)$ denotes the \emph{reversed path} which runs through every edge passed by $\vec p$ in the opposite direction, and opposite order, i.e.,
\[
\cev{p} = (\mv_+,\cev{\me}_n,\dots,\cev{\me}_1,\mv_-).
\]
\item[(iii)] $\vec{p}_\pm \in \mathcal{P}_{\geq 0}(\Graph)$ denotes the directed path defined by
\[
\vec{p}_{\pm} := \begin{cases} (\vec{p}_-)_+ = (\vec{p}_+)_-, & \text{if $\# \vec{p} > 1$,} \\ \cev{p}, & \text{if $\# \vec{p}  = 1.$} \end{cases}
\]
\end{itemize}
\end{definition}

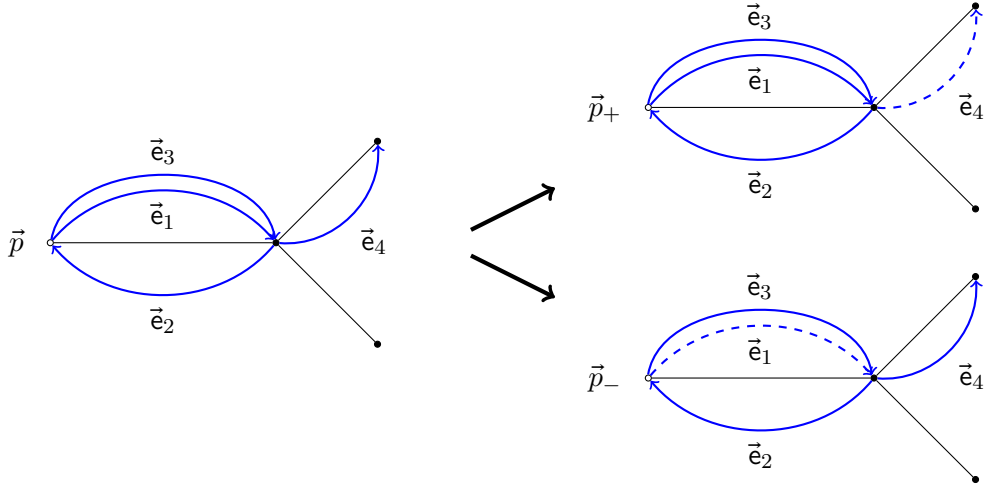
\begin{figure}[h]
        \begin{tikzpicture}[scale=0.6]
      \tikzset{enclosed/.style={draw, circle, inner sep=0pt, minimum size=.08cm, fill=black}, every loop/.style={}}

      \node[enclosed] (Z) at (0,2) {};
      \node[enclosed, label = {left: $\vec p \:\:$}, fill=white] (A) at (-5,2) {};
      \node[enclosed] (B) at (2.25,4.25) {};
      \node[enclosed] (C) at (2.25,-0.25) {};

       \node[enclosed] (Z') at (13.25,5) {};
      \node[enclosed, label = {left: $\vec p_+ \:\:$}, fill=white] (A') at (8.25,5) {};
      \node[enclosed] (B') at (15.5,7.25) {};
      \node[enclosed] (C') at (15.5,2.75) {};

      \node[enclosed] (Z'') at (13.25,-1) {};
      \node[enclosed, label = {left: $\vec p_- \:\:$}, fill=white] (A'') at (8.25,-1) {};
      \node[enclosed] (B'') at (15.5,1.25) {};
      \node[enclosed] (C'') at (15.5,-3.25) {};
      
      \node[enclosed,white] (X1) at (4.25,2.25) {};
      \node[enclosed,white] (Y1) at (6.25,3.25) {};
      \node[enclosed,white] (X2) at (4.25,1.75) {};
      \node[enclosed,white] (Y2) at (6.25,0.75) {};

      \draw[-] (Z) edge node[above] {} (A) node[midway, above] (edge1) {};
      \draw[-] (Z) edge node[above] {} (B) node[midway, above] (edge2) {};
      \draw[-] (Z) edge node[above] {} (C) node[midway, above] (edge3) {};
      \draw[->, blue, thick] (A) edge[bend left = 50] node[below] {\textcolor{black}{$\vec{\me}_1$}} (Z) node[midway, above] (bond1) {};
      \draw[->, blue, thick] (Z) edge[bend left = 50] node[below] {\textcolor{black}{$\vec{\me}_2$}} (A) node[midway, above] (bond2) {};
      \draw[->, blue, thick] (A) edge[in=100, out=80, scale=500] node[above] {\textcolor{black}{$\vec{\me}_3$}} (Z) node[midway, above] (bond3) {};
      \draw[->, blue, thick] (Z) edge[bend right = 50] node[below right] {\textcolor{black}{$\vec{\me}_4$}} (B) node[midway, above] (bond3) {};
      \draw[->, ultra thick] (X1) edge node[below right] {} (Y1) node[midway, above] (bond3) {};
      \draw[->, ultra thick] (X2) edge node[below right] {} (Y2) node[midway, above] (bond3) {};

       \draw[-] (Z') edge node[above] {} (A') node[midway, above] (edge1) {};
      \draw[-] (Z') edge node[above] {} (B') node[midway, above] (edge2) {};
      \draw[-] (Z') edge node[above] {} (C') node[midway, above] (edge3) {};
      \draw[->, blue, thick] (A') edge[bend left = 50] node[below] {\textcolor{black}{$\vec{\me}_1$}} (Z') node[midway, above] (bond1) {};
      \draw[->, blue, thick] (Z') edge[bend left = 50] node[below] {\textcolor{black}{$\vec{\me}_2$}} (A') node[midway, above] (bond2) {};
      \draw[->, blue, thick] (A') edge[in=100, out=80, scale=500] node[above] {\textcolor{black}{$\vec{\me}_3$}} (Z') node[midway, above] (bond3) {};
      \draw[->, blue, thick, dashed] (Z') edge[bend right = 50] node[below right] {\textcolor{black}{$\vec{\me}_4$}} (B') node[midway, above] (bond3) {};

       \draw[-] (Z'') edge node[above] {} (A'') node[midway, above] (edge1) {};
      \draw[-] (Z'') edge node[above] {} (B'') node[midway, above] (edge2) {};
      \draw[-] (Z'') edge node[above] {} (C'') node[midway, above] (edge3) {};
      \draw[->, blue, thick, dashed] (A'') edge[bend left = 50] node[below] {\textcolor{black}{$\vec{\me}_1$}} (Z'') node[midway, above] (bond1) {};
      \draw[->, blue, thick] (Z'') edge[bend left = 50] node[below] {\textcolor{black}{$\vec{\me}_2$}} (A'') node[midway, above] (bond2) {};
      \draw[->, blue, thick] (A'') edge[in=100, out=80, scale=500] node[above] {\textcolor{black}{$\vec{\me}_3$}} (Z'') node[midway, above] (bond3) {};
      \draw[->, blue, thick] (Z'') edge[bend right = 50] node[below right] {\textcolor{black}{$\vec{\me}_4$}} (B'') node[midway, above] (bond3) {};
      
     \end{tikzpicture}
     \caption{A nontrivial directed path $\vec p =\big(\mv_-(\vec p),\vec{\me}_1,\vec{\me}_2,\vec{\me}_3,\vec{\me}_4,\mv_+(\vec p) \big)$ consisting of \emph{four} bonds with $\vec{p}_+ = \big(\mv_-(\vec p_+) = \mv_-(\vec p) ,\vec{\me}_1,\vec{\me}_2,\vec{\me}_3, \mv_+(\vec p_+) = \partial^+(\vec \me_3)\big)$ and $\vec{p}_- = \big(\mv_-(\vec p_+) = \partial^-(\vec \me_2) ,\vec{\me}_2,\vec{\me}_3,\vec{\me}_4, \mv_+(\vec p_-) = \mv_+(\vec p) \big)$ on the right-hand side. Thus, the path $\vec{p}_{\pm} = \big(\mv_-(\vec{p}_\pm) = \partial^-(\vec \me_2), \vec \me_2, \vec \me_3, \mv_+(\vec{p}_\pm) = \partial^+(\vec \me_3) \big)$ consists of  the bonds $\vec{\me}_2,\vec{\me}_3$.} \label{fig:fig-non-triv-dir-path-minus-plus}
     \end{figure}

%If $p \in \mathcal{P}_{\geq 2}(\Graph)$, $p_-$ (resp.\ $p_+$ or $p_\pm$) denotes the directed path in $\mathcal{P}(\Graph)$, where one deletes the first (resp.\ last or both) edge(s) $\me_-(p)$ (resp. \ $\me_+(p)$ or $\me_+(p)$ and $\me_-(p)$) from the path $p$ (note that if $\vert p \vert = 2$, $p_\pm$ corresponds to a trivial path); 
\begin{rem}\label{rem:alpha-ell-cevvec}
(1) The reason why we defined $\vec{p}_\pm$ as $\cev{p}$ whenever $\vec{p} \in \mathcal{P}_1(\Graph)$ is that formally we delete the first edge (which is at the same time the last edge) of $\vec{p}$ \emph{twice}, but since $\vec{p}_-$ (resp., $\vec{p}_+$) is already the trivial path, we see the next deletion as ``adding" the original edge with reversed direction. 
%Since all our methods will be invariant under reversal of the direction, this convention is also consistent iwt is also possible to define $\vec{p}_\pm$ by $\vec{p}$ itself rather then $\cev{p}$ in this case.} \newline

(2) Given a path $\vec{p} \in \mathcal{P}(\Graph)$, one has in particular that
\[
\ell(\vec{p}_-) = \ell(\vec{p}) - \ell_{{{\me}}_-(\vec{p})} \qquad \text{(resp., $\ell(\vec{p}_+) = \ell(\vec{p}) - \ell_{{{\me}}_+(\vec{p})}$)} 
%or $L(p_\pm) = L(p) - \ell_{\me_-(p)} - \ell_{\me_+(p)}$).}
\]
as well as
\[
\ell(\vec{p}_{\pm}) = \begin{cases} \ell(\vec{p}) - \ell_{{{\me}}_-(\vec{p})} - \ell_{{{\me}}_+(\vec{p})}, & \text{if $\# \vec{p} > 1$,} \\ \ell(\cev{p}) = \ell_{{{\me}}_-(\vv{p})} = \ell_{{{\me}}_+(\vec{p})}, & \text{if $\# \vec{p} = 1$.} \end{cases}
\]
%Given a path $p \in \mathcal{P}(\mathcal{G})$, we denote by $p_{\mathrm{rev}} \in \mathcal{P}(\mathcal{G})$ the \emph{reversed path} which transfers every edge lying on $p$ in reversed direction, that is, if $p=(\mv_-(p),\me_1,\dots,\me_n,\mv_+(p))$, then $p_\mathrm{rev}$ is given by
%\[
%p_{\mathrm{rev}} = (\mv_+(p),\me_n,\dots,\me_1,\mv_-(p)),
%\]

(3) By definition, one has in addition 
\[
\mv_-(\cev{p}) = \mv_+(\vec{p})\quad \text{and} \quad \mv_+(\cev{p}) =\mv_-(\vec{p}).
\]
Furthermore,
\[
\ell(\cev{p}) = \ell(\vec{p}),\quad \alpha(\cev{p}) = \alpha(\vec{p})
\]
hold for any directed path $\vec{p} \in \mathcal{P}(\Graph)$, since \eqref{eq:scattering-coefficients} implies that $\beta(\vec{\me}_k,\vec{\me}_{k+1})=\beta(\cev{\me}_{k+1},\cev{\me}_{k})$ for any two consecutive bonds $\vec{\me}_k,\vec{\me}_{k+1}$; we are therefore occasionally going to write
 \[
\ell(p):=\ell(\cev{p}) = \ell(\vec{p}),\qquad \alpha(p):= \alpha(\cev{p}) = \alpha(\vec{p}).
 \] 
%Moreover, using this notion, we can make sense of the path $p_\pm$ in the case where $\vert p \vert=1$: clearly, if $\vert p \vert = 1$, we interpret $p_{+}$ and $p_-$ as the trivial path without transferring any edge, respectively, whereas $p_{+-}$ is interpreted as $p_{\mathrm{rev}}$.
\end{rem}

Let now $\mv,\mw \in \mV$ and $\mW,\mW' \subset \mV$ be two subsets. We let
        \[
\Pstv := \bigcup_{\mw' \in \mV} \mathcal{P}(\mv,\mw'), \qquad \Pendw := \bigcup_{\mv' \in \mV} \mathcal{P}(\mv',\mw),
        \]
i.e., $\Pstv$ and $\Pendw$ are the sets of directed nontrivial paths in $\Graph$ beginning at $\mv$ and ending at $\mw$, respectively; and furthermore
        \[
       \Pst{\mW} := \bigcup_{\mv' \in \mW} \Pstvpr, \qquad \Pend{\mW} := \bigcup_{\mw' \in \mW} \Pendwpr
        \]
        as the set of all paths beginning or ending in the vertex set $\mW\subset \mV$.
We also introduce the set 
\[
\mathcal{P}_{\mW,\mW'}(\Graph) := \Pst{\mW} \cap \Pend{\mW'}\]
 of paths in $\Graph$ starting in $\mW$ and ending in $\mW'$, and we abbreviate 
 \[
\mathcal{P}_{\mW}(\Graph) := \mathcal{P}_{\mW, \mW}(\Graph);
\]
note that for any subset $\mW \subset \mV$, we have that
 $\mathcal{P}(\Graph) = \Pst{\mW} \sqcup \Pst{\mV \setminus \mW} = \Pend{\mW} \sqcup \Pend{\mV \setminus \mW}$.
         
%       We need another notion for directed paths.
Given now a nontrivial directed path $\vec{p} \in \mathcal{P}(\Graph)$, we call $\vec{q} \in \mathcal{P}(\Graph)$ a \emph{pre-extended} (resp., \emph{post-extended}) path for $\vec{p}$ whenever $\vec{q}_- = \vec{p}$ (resp., $\vec{q}_+ = \vec{p}$). The set of all pre-extended (resp., post-extended) directed paths to $\vec{p}$ will be denoted with $\langle \vec{p} \rangle_-$ (resp., $\langle \vec{p} \rangle_+$).
\begin{exa}
        Let $n \in \mathbb{N}$. Consider $\Graph \simeq \mathcal{S}_n$, a metric star graph on $n$ edges and, hence, $n$ outer vertices $\mW := \{ \mv_1,\dots,\mv_n \}$, and with and central vertex $\mv_0$, see \autoref{fig:p_+-1}. For any path $\vec{p} \in \Pend{\mv_0}$, i.e., a path ending at  $\mv_0$, we have that $\langle \vec{p} \rangle_+ = \{\vec{p}_1,\dots,\vec{p}_n \}$, where each $\vec{p}_i$ follows the same path as $\vec{p}$ with an additional \emph{final} bond $\vec{\me}_i$ with $\partial^-(\vec{\me}_i) = \mv_0$ and $\partial^+(\vec{\me}_i) = \mv_i$ for all $i=1,\dots,n$. 
        \begin{figure}[h]
        \begin{tikzpicture}[scale=0.6]
      \tikzset{enclosed/.style={draw, circle, inner sep=0pt, minimum size=.08cm, fill=black}, every loop/.style={}}

      \node[enclosed, label = {below: $\mv_0$}] (Z) at (0,2) {};
      \node[enclosed, label = {left: $\mv_1$}] (A) at (-3,2) {};
      \node[enclosed, label = {right: $\mv_2$}] (B) at (2.25,4.25) {};
      \node[enclosed, label = {right: $\mv_3$}] (C) at (2.25,-0.25) {};
      \node[enclosed,white] (X1) at (4.25,2.25) {};
      \node[enclosed,white] (Y1) at (6.25,2.25) {};
      \node[enclosed, label = {below: $\mv_0$}] (Z') at (11.25,2) {};
      \node[enclosed, label = {left: $\mv_1$}] (A') at (8.25,2) {};
      \node[enclosed, label = {right: $\mv_2$}] (B') at (13.5,4.25) {};
      \node[enclosed, label = {right: $\mv_3$}] (C') at (13.5,-0.25) {};
      
      \draw[-] (Z) edge node[above] {} (A) node[midway, above] (edge1) {};
      \draw[-] (Z) edge node[above] {} (B) node[midway, above] (edge2) {};
      \draw[-] (Z) edge node[above] {} (C) node[midway, above] (edge3) {};
      \draw[->, blue, thick] (B) edge[bend right = 50] node[above left] {\textcolor{black}{$\vec{\me}_+(\vec{p})$}} (Z) node[midway, above] (bond1) {};
      \draw[->, ultra thick] (X1) edge[] node[above left] {} (Y1) node[midway, above] (bond1) {};
      \draw[-] (Z') edge node[above] {} (A') node[midway, above] (edge1) {};
      \draw[-] (Z') edge node[above] {} (B') node[midway, above] (edge2) {};
      \draw[-] (Z') edge node[above] {} (C') node[midway, above] (edge3) {};
      \draw[->, blue, thick] (B') edge[bend right = 50] node[above left] {\textcolor{black}{$\vec{\me}_+(\vec{p})$}} (Z') node[midway, above] (bond1) {};
       \draw[->, blue, thick, dashed] (Z') edge[bend right = 50] node[right] {\textcolor{black}{$\:\: \vec{\me}_2$}} (B') node[midway, above] (bond1) {};
        \draw[->, blue, thick, dashed] (Z') edge[bend right = 50] node[above] {\textcolor{black}{$\vec{\me}_1$}} (A') node[midway, above] (bond1) {};
         \draw[->, blue, thick, dashed] (Z') edge[bend left = 50] node[right] {\textcolor{black}{$\:\: \vec{\me}_3$}} (C') node[midway, above] (bond1) {};
      
     \end{tikzpicture}
     \caption{The star graph $\mathcal{S}_3$ together with the final bond $\vec{\me}_+(\vec{p})$ of a path $\vec{p}$ ending at $\mv_0$ on the left and the corresponding final bonds $\vec{\me}_i$, $i=1,2,3$, of all paths in $\langle \vec{p} \rangle_+$ on the right.}
     \end{figure}\label{fig:p_+-1}
        
        Likewise, if $\vec{p} \in \Pend{\mW}$, i.e. $\vec{p}$ is a path ending at an outer vertex $\mv_i \in \mW$, $i=1,\dots,n$, then $\langle \vec{p} \rangle_+$ is a singleton consisting of a path $\vec{q}$ following the same path as $\vec{p}$ with an additional final bond starting at $\mv_i$ and ending at the central vertex $\mv_0$,  see \autoref{fig:p_+-2}. 
        \begin{figure}[h]
        \begin{tikzpicture}[scale=0.6]
      \tikzset{enclosed/.style={draw, circle, inner sep=0pt, minimum size=.08cm, fill=black}, every loop/.style={}}

      \node[enclosed, label = {below: $\mv_0$}] (Z) at (0,2) {};
      \node[enclosed, label = {left: $\mv_1$}] (A) at (-3,2) {};
      \node[enclosed, label = {right: $\mv_2$}] (B) at (2.25,4.25) {};
      \node[enclosed, label = {right: $\mv_3$}] (C) at (2.25,-0.25) {};
      \node[enclosed,white] (X1) at (4.25,2.25) {};
      \node[enclosed,white] (Y1) at (6.25,2.25) {};
      \node[enclosed, label = {below: $\mv_0$}] (Z') at (11.25,2) {};
      \node[enclosed, label = {left: $\mv_1$}] (A') at (8.25,2) {};
      \node[enclosed, label = {right: $\mv_2$}] (B') at (13.5,4.25) {};
      \node[enclosed, label = {right: $\mv_3$}] (C') at (13.5,-0.25) {};
      
      \draw[-] (Z) edge node[above] {} (A) node[midway, above] (edge1) {};
      \draw[-] (Z) edge node[above] {} (B) node[midway, above] (edge2) {};
      \draw[-] (Z) edge node[above] {} (C) node[midway, above] (edge3) {};
      \draw[<-, blue, thick] (B) edge[bend left = 50] node[below right] {\textcolor{black}{$\vec{\me}_+(\vec{p})$}} (Z) node[midway, above] (bond1) {};
      \draw[->, ultra thick] (X1) edge[] node[above left] {} (Y1) node[midway, above] (bond1) {};
      \draw[-] (Z') edge node[above] {} (A') node[midway, above] (edge1) {};
      \draw[-] (Z') edge node[above] {} (B') node[midway, above] (edge2) {};
      \draw[-] (Z') edge node[above] {} (C') node[midway, above] (edge3) {};
      \draw[<-, blue, thick] (B') edge[bend left = 50] node[below right] {\textcolor{black}{$\vec{\me}_+(\vec{p})$}} (Z') node[midway, above] (bond1) {};
       \draw[->, blue, thick, dashed] (B') edge[bend right = 50] node[above left] {\textcolor{black}{$\:\: \vec{\me}_+(\vec{q})$}} (Z') node[midway, above] (bond1) {};
      
     \end{tikzpicture}
     \caption{The star graph $\mathcal{S}_3$ together with the final bond $\vec{\me}_+(\vec{p})$ of a path $\vec{p}$ ending at $\mW$ on the left and the corresponding final bond $\vec{\me}_+(\vec{q})$ of the single path $\vec{q} \in \langle \vec{p} \rangle_+$ on the right.}
     \end{figure}\label{fig:p_+-2}
        \end{exa}
        
It turns out that these sets of extended paths are related to the set $\Pst{\mW}, \Pend{\mW}$ of paths beginning or ending at a certain vertex set $\mW\subset \mV$. Here we use the notation introduced in \eqref{eq:def-set-of-paths} with $m=1$.

\begin{lemma}[Decomposition Lemma]\label{lem:decomposition-lemma}
Let $\mW \subset \mV$. Then 
\begin{align}\label{eq:decomposition-lemma-minus-plus}
\begin{aligned}
\Pst{\mW} &= \bigcup_{\vec{q} \in \Pst{\mW}} \langle \vec{q} \rangle_+ \sqcup \Big( \Pst{\mW} \cap \mathcal{P}_1(\Graph) \Big) = \bigcup_{\vec{q} \in \Pst{\mW}} \langle \vec{q} \rangle_+ \sqcup \left\{ \vec{\me} \in \mB \: : \: \partial^-(\vec{\me}) \in \mW \right\}
\end{aligned}
\end{align}
and
\begin{align}\label{eq:decomposition-lemma-plus-minus}
\begin{aligned}
\Pend{\mW} &= \bigcup_{\vec{q} \in \Pend{\mW}} \langle \vec{q} \rangle_- \sqcup \Big( \Pend{\mW} \cap \mathcal{P}_1(\Graph) \Big) = \bigcup_{\vec{q} \in \Pend{\mW}} \langle \vec{q} \rangle_- \sqcup \left\{ \vec{\me} \in \mB \: : \: \partial^+(\vec{\me}) \in \mW \right\}.
\end{aligned}
\end{align}
\end{lemma}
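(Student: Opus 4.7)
The plan is to prove the first identity \eqref{eq:decomposition-lemma-minus-plus} by a direct case analysis on the combinatorial length of a path $\vec{p} \in \Pst{\mW}$, and then to deduce the second identity \eqref{eq:decomposition-lemma-plus-minus} by the mirror argument. I do not expect a serious obstacle here: the statement is essentially careful bookkeeping, provided one keeps track of which vertex is preserved under each of the cut-off operations $\vec{p} \mapsto \vec{p}_+$ and $\vec{p} \mapsto \vec{p}_-$.

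For the inclusion ``$\subseteq$'' in \eqref{eq:decomposition-lemma-minus-plus}, I would take an arbitrary $\vec{p} \in \Pst{\mW}$ and distinguish two cases. If $\# \vec{p} = 1$, then $\vec{p}$ consists of exactly one bond $\vec{\me}$ with $\partial^-(\vec{\me}) = \mv_-(\vec{p}) \in \mW$, so $\vec{p}$ lies in $\Pst{\mW}\cap \mathcal{P}_1(\Graph)$ and is identified with an element of $\{\vec{\me}\in\mB : \partial^-(\vec{\me}) \in \mW\}$. If $\# \vec{p} \geq 2$, then $\vec{p}_+ \in \mathcal{P}(\Graph)$ is nontrivial; moreover $\mv_-(\vec{p}_+) = \mv_-(\vec{p}) \in \mW$, because deleting the \emph{last} bond does not affect the initial vertex. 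Hence $\vec{p}_+ \in \Pst{\mW}$, and by the very definition of $\langle\cdot\rangle_+$ one has $\vec{p} \in \langle \vec{p}_+ \rangle_+$, so $\vec{p}$ lies in the first summand.

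The reverse inclusion ``$\supseteq$'' is equally immediate: any $\vec{r} \in \langle \vec{q} \rangle_+$ with $\vec{q} \in \Pst{\mW}$ satisfies $\vec{r}_+ = \vec{q}$, hence $\mv_-(\vec{r}) = \mv_-(\vec{q}) \in \mW$ and so $\vec{r} \in \Pst{\mW}$, while $\Pst{\mW}\cap\mathcal{P}_1(\Graph)$ is trivially contained in $\Pst{\mW}$. To see that the union is disjoint, observe that any $\vec{r} \in \langle \vec{q} \rangle_+$ satisfies $\# \vec{r} = \# \vec{q} + 1 \geq 2$, so $\vec{r}$ cannot lie in $\mathcal{P}_1(\Graph)$; and the family $\{\langle\vec{q}\rangle_+\}_{\vec{q}\in\Pst{\mW}}$ is itself disjoint, since $\vec{r} \in \langle\vec{q}_1\rangle_+\cap\langle\vec{q}_2\rangle_+$ forces $\vec{q}_1 = \vec{r}_+ = \vec{q}_2$.

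Finally, \eqref{eq:decomposition-lemma-plus-minus} follows from exactly the same argument, replacing $\vec{p}_+$ by $\vec{p}_-$ and invoking the fact that deleting the \emph{first} bond does not change the final vertex; alternatively, one may deduce it directly from \eqref{eq:decomposition-lemma-minus-plus} by applying the reversal bijection $\vec{p} \mapsto \cev{p}$, which maps $\Pst{\mW}$ onto $\Pend{\mW}$ and exchanges $\langle\cdot\rangle_+$ with $\langle\cdot\rangle_-$ (since $(\cev{p})_- = \reflectbox{\ensuremath{(\vec{p}_+)}}$, up to the notational conventions recorded in \autoref{rem:alpha-ell-cevvec}). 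The main, and essentially only, subtlety is to notice that the length-one case must be separated out, because for such paths the cut-off $\vec{p}_+$ (or $\vec{p}_-$) is the trivial path and hence does not belong to $\mathcal{P}(\Graph)$; this is precisely why the single-bond set appears as the second summand on the right-hand side.
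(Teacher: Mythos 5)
Your proposal is correct and follows essentially the same route as the paper's proof: the reverse inclusion is immediate, and the forward inclusion is handled by splitting on $\# \vec{p}=1$ versus $\# \vec{p}\ge 2$ and observing that in the latter case $\vec{q}:=\vec{p}_+$ (resp.\ $\vec{p}_-$) still starts (resp.\ ends) in $\mW$ and satisfies $\vec{p}\in\langle\vec{q}\rangle_+$ (resp.\ $\langle\vec{q}\rangle_-$). Your additional remarks on the disjointness of the union and the alternative derivation of the second identity via the reversal bijection are correct refinements that the paper leaves implicit.
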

\begin{proof}
The inclusions \glqq $\supset$\grqq\ are immediate in both cases, thus we only have to show the converse inclusions: to this end, let $\vec{p} \in \Pst{\mW}$ (resp., $\vec{p} \in \Pend{\mW}$) and suppose that $\# \vec{p} \geq 2$ (otherwise, $\vec{p} \in \mathcal{P}_1(\Graph)$ and we are done). Putting $\vec{q} := \vec{p}_+$ (resp., $\vec{q} := \vec{p}_-$) it follows that $\vec{q} \in \Pst{\mW}$ (resp., $\vec{q} \in \Pend{\mW}$) as $\mv_-(\vec{q}) = \mv_-(\vec{p}_+) = \mv_-(\vec{p}) \in \mW$ (resp., $\mv_+(\vec{q}) = \mv_+(\vec{p}_-) = \mv_-(\vec{p}) \in \mW$) due to the fact that $\# \vec{p} \geq 2$. Moreover, by definition of a pre- (resp., post-) extended path and the definition of $\vec{q}$, it follows that $\vec{p} \in \langle \vec{q} \rangle_+$ (resp., $\vec{p} \in \langle \vec{q} \rangle_-$), i.e., $\vec{p}$ belongs to the right-hand side of \eqref{eq:decomposition-lemma-minus-plus} (resp., \eqref{eq:decomposition-lemma-plus-minus}).
\end{proof} 
Note that for every $\vec{p} \in \mathcal{P}(\Graph)$ one has that $\# \langle \vec{p} \rangle_- = \deg(\mv_-(\vec{p}))$ and $\# \langle \vec{p} \rangle_+ = \deg(\mv_+(\vec{p}))$. Moreover, it is an immediate observation that 
\begin{align}\label{eq:reverse-plus-minus}
\langle \vec p \rangle_+ = \{ \vec{q} \in \mathcal{P}(\Graph) \: : \: \cev{q} \in \langle \cev{p} \rangle_- \} \qquad \text{for all $\vec p \in \mathcal{P}(\Graph)$.}
\end{align}
We next want to study the relation between the scattering coefficients of a path and its corresponding extended paths.

\begin{lemma}\label{lem:scattering-coeff-dual}
Let $\vec{p} \in \mathcal{P}(\Graph)$. Then one has that
\begin{align}\label{eq:scattering-coeff-dual}
\sum_{\vec{q} \in \langle \vec{p} \rangle_-} \alpha(\vec{q}) = \begin{cases} \alpha(\vec{p}), & \text{if $\vec{p} \notin \Pst{\mVD}$}, \\ -\alpha(\vec{p}), & \text{if $\vec{p} \in \Pst{\mVD}$,} \end{cases} \quad \text{and} \quad  \sum_{\vec{r} \in \langle \vec{p} \rangle_+} \alpha(\vec{r}) = \begin{cases} \alpha(\vec{p}), & \text{if $\vec{p} \notin \Pend{\mVD}$}, \\ -\alpha(\vec{p}), & \text{if $\vec{p} \in \Pend{\mVD}$.} \end{cases}
\end{align}
\end{lemma}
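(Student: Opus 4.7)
The plan is to exploit the following recursive identity for $\alpha$ when prepending a single bond: for any $\vec{q} \in \langle \vec{p} \rangle_-$, write $\vec{q}$ as $\vec{p}$ with an additional initial bond $\vec{\me}_0$ satisfying $\partial^+(\vec{\me}_0) = \mv_-(\vec{p})$. A direct inspection of \autoref{defi:scatt-coeff} then yields
\[
\alpha(\vec{q}) \;=\; \beta\bigl(\vec{\me}_0,\, \vec{\me}_-(\vec{p})\bigr)\,\alpha(\vec{p}),
\]
where the convention $\alpha(\vec{p}) = 1$ for $\# \vec{p} = 1$ makes the formula valid in every case (the prepended $\vec{q}$ then carries just a single scattering factor). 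Summing over $\vec{q} \in \langle \vec{p} \rangle_-$ thus amounts to summing over all bonds ending at $\mv_-(\vec{p})$, of which there are exactly $\deg(\mv_-(\vec{p}))$ many, and reduces the claim to evaluating
\[
\Sigma \;:=\; \sum_{\vec{\me}_0:\,\partial^+(\vec{\me}_0) = \mv_-(\vec{p})} \beta\bigl(\vec{\me}_0,\, \vec{\me}_-(\vec{p})\bigr).
\]

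I would then split into two cases depending on whether $\mv_-(\vec{p})$ lies in $\mVD$. In the Dirichlet case, \autoref{ass:graph} forces $\deg(\mv_-(\vec{p})) = 1$, so the only admissible $\vec{\me}_0$ is the reversed bond $\cev{\me}_-(\vec{p})$ and the second line of~\eqref{eq:scattering-coefficients} immediately gives $\Sigma = -1$. In the non-Dirichlet case every $\vec{\me}_0$ contributes $\beta(\vec{\me}_0, \vec{\me}_-(\vec{p})) = \frac{2}{d} - \delta_{\vec{\me}_0,\, \vec{\me}_-(\vec{p})}$ with $d := \deg(\mv_-(\vec{p}))$; since precisely one admissible bond, namely $\cev{\me}_-(\vec{p})$, realises the reflection event picked up by the Kronecker delta, one obtains
\[
\Sigma \;=\; d\cdot \tfrac{2}{d} \;-\; 1 \;=\; 1.
\]
Multiplying the value of $\Sigma$ by $\alpha(\vec{p})$ produces the first identity in~\eqref{eq:scattering-coeff-dual}.

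For the second identity (post-extensions), the natural move is to reduce it to the first one via the reversal $\vec{p} \mapsto \cev{p}$. The bijection $\langle \vec{p} \rangle_+ \ni \vec{q} \mapsto \cev{q} \in \langle \cev{p} \rangle_-$ granted by~\eqref{eq:reverse-plus-minus}, together with the invariances $\alpha(\cev{q}) = \alpha(\vec{q})$ and $\alpha(\cev{p}) = \alpha(\vec{p})$ recorded in \autoref{rem:alpha-ell-cevvec}, rewrite $\sum_{\vec{r} \in \langle \vec{p} \rangle_+} \alpha(\vec{r})$ as $\sum_{\vec{q} \in \langle \cev{p} \rangle_-} \alpha(\vec{q})$. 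Since $\cev{p} \in \Pst{\mVD}$ is equivalent to $\mv_+(\vec{p}) \in \mVD$, i.e.\ to $\vec{p} \in \Pend{\mVD}$, applying the first identity to $\cev{p}$ at once yields the second.

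The main delicate point is the count of $\delta$-contributions in the non-Dirichlet case, particularly when $\me_-(\vec{p})$ happens to be a loop based at $\mv_-(\vec{p})$: then \emph{both} bonds of that loop end at $\mv_-(\vec{p})$, but only one of them, $\cev{\me}_-(\vec{p})$, corresponds to a reflection back along the initial bond of $\vec{p}$ and hence to $\delta = 1$. Once this subtlety is correctly recognised, the remaining argument is clean bookkeeping of the uniform transmission contribution $d\cdot \frac{2}{d} = 2$ against the single reflection contribution $-1$.
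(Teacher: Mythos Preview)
Your proof is correct and follows essentially the same approach as the paper: factor $\alpha(\vec{q}) = \beta(\vec{\me}_0,\vec{\me}_-(\vec{p}))\,\alpha(\vec{p})$, sum the scattering coefficients over the $\deg(\mv_-(\vec{p}))$ admissible prepended bonds (distinguishing the Dirichlet and non-Dirichlet cases), and then deduce the post-extension identity from the pre-extension one via the reversal bijection~\eqref{eq:reverse-plus-minus}. Your explicit treatment of the loop subtlety is a welcome addition that the paper leaves implicit.
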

\begin{proof}
Let $\vec{p} \in \mathcal{P}(\Graph)$ and let $m:= \deg(\mv_-(\vec{p}))$. As $\# \langle \vec{p} \rangle_- = m$, we can write
\begin{align*}
\langle \vec{p} \rangle_- = \{\vec{q}_1,\dots,\vec{q}_{m} \}, \quad \text{and} \quad \left\{ \vec{\me} \in \mB \: : \: \partial^-(\vec{\me}) = \mv_-(\vec{p}) \right\} = \{ \vec{\me}_1,\dots,\vec{\me}_m \},
\end{align*}
i.e., such that $\vec{\me}_-(\vec{q}_j) = \vec{\me}_j$ for all $j=1,\dots,m$ with $\vec{\me}_k \neq \vec{\me}_-(\vec{p})$ for $k=1,\dots,m-1$ and $\vec{\me}_m = \vec{\me}_-(\vec{p})$, in other words, $\vec{q}_1,\dots,\vec{q}_{m-1}$ are the corresponding pre-extended paths to $\vec{p}$ starting with a transfer through $\mv_-(\vec{p})$, and $\vec{q}_m$ is the corresponding pre-extended path to $\vec{p}$ starting with a reflection at $\mv_-(\vec{p})$. If $\vec{p}$ does not belong to $\Pst{\mVD}$, i.e.\ $\mv_-(\vec{p}) \in \mV_\mathrm{N}$, for each $\vec{q}_i$ we have that  $\alpha(\vec{q}_i) = (\frac{2}{m} - \delta_{i,m}) \alpha(\vec{p})$ for all $i=1,\dots,m$ and thus
\begin{align*}
\sum_{\vec{q} \in \langle \vec{p} \rangle_-} \alpha(\vec{q}) = \sum_{i=1}^m \alpha(\vec{q}_i) = \sum_{i=1}^{m-1} \frac{2}{m} \alpha(\vec{p}) + \bigg(\frac{2}{m} - 1 \bigg) \alpha(\vec{p}) = \bigg( \sum_{i=1}^m \frac{2}{m} - 1 \bigg) \alpha(\vec{p}) = \alpha(\vec{p}).
\end{align*}
In the other case, where $\vec{p} \in \Pst{\mVD}$, it follows that $m=1$ and that $\alpha(\vec{q}_1) = (-1) \cdot \alpha(\vec{p})$ and thus
\begin{align*}
\sum_{\vec{q} \in \langle \vec{p} \rangle_-} \alpha(\vec{q}) = \alpha(\vec{q}_1) = -\alpha(\vec{p}):
\end{align*}
this completely shows the left identity of \eqref{eq:scattering-coeff-dual}.

As $\vec{p} \in \Pend{\mVD}$ if and only if $\cev{p} \in \Pst{\mVD}$ for any path $\vec{p} \in \mathcal{P}(\Graph)$, we obtain using \eqref{eq:reverse-plus-minus} as well as the first identity of \eqref{eq:scattering-coeff-dual} that
\begin{align*}
\sum_{\vec{r} \in \langle \vec{p} \rangle_+} \alpha(\vec{r}) =  \sum_{\vec{r} \in \langle \cev{p} \rangle_-} \alpha(\vec{r}) = \begin{cases} \alpha(\cev{p}), & \text{if $\cev{p} \notin \Pst{\mVD}$} \\ -\alpha(\cev{p}), & \text{if $\cev{p} \in \Pst{\mVD}$} \end{cases} = \begin{cases} \alpha(\vec{p}), & \text{if $\vec{p} \notin \Pend{\mVD}$}, \\ -\alpha(\vec{p}), & \text{if $\vec{p} \in \Pend{\mVD}$,} \end{cases}
\end{align*}
completing the proof.
\end{proof}

%        Moreover, in the following we also make use of
 %       \[
 %       \vv{\mathcal{P}}(\mv) := \left\{ \vec{p} \in \mathcal{P}(\Graph) \: : \: \partial^+(\vec{\me}_-(\vec{p}))=\mv \right\}, \qquad \mv \in \mV,
 %       \]
  %     the set of paths beginning with an edge which is directed \emph{into} $\mv$, whereas
   %     \[
  %      \ww{\mathcal{P}}_\mv(\mw) := \left\{ \vec p \in \mathcal{P}(\Graph) \: : \: \mv_-(\vec p) = \mv, \: \partial^-(\vec \me_+(\vec p)) = \mw \right\}, \qquad \mv,\mw \in \mV 
   %     \]
    %    denotes the set of paths beginning at $\mv$ but ending with an edge which is directed \emph{outward} $\mw$.
  
  \section{The heat content formula}\label{sec:combinatorial-formula}

This section is devoted to formulate and prove the following.

\numberwithin{theorem}{section}
\begin{theorem}[Heat content formula]\label{thm:heat-content-formula-bif-mug}
The heat content satisfies
        \begin{align}\label{eq:heat-content-formula-bif-mug}
        \begin{aligned}
\heatcont(\Graph;\mVD) &= \vert \Graph \vert - \frac{2\sqrt{t}}{\sqrt{\pi}} \# \mVD + 8\sqrt{t} \sum_{p} \alpha(p) \, H\bigg( \frac{\ell(p)}{2\sqrt{t}} \bigg) %\\&= \vert \Graph \vert - \frac{2\sqrt{t}}{\sqrt{\pi}} \# \mVD + 8\sqrt{t} \sum_{\mv,\mw \in \mV_\mathrm{D}} \sum_{\vec{p} \in \mathcal{P}(\mv,\mw)} \alpha(\vec{p}) \, H\bigg( \frac{\ell(\vec{p})}{2\sqrt{t}} \bigg),
%\\&= \vert \Graph \vert - \frac{2\sqrt{t}}%{\sqrt{\pi}} \# \mVD + \frac{\sqrt{t}}%{\sqrt{\pi}} \sum_{\vec{p} \in \mathcal{P}(\Graph;\mVD)} %\alpha(p) \bigg( \e^{-\frac{\ell(p)^2}{4t}} + %\frac{\ell(p)}{\sqrt{\pi t}} \int_0^\infty \frac{\e^{-%s^2}}{s} \sin\bigg(s \frac{\ell(p)}{\sqrt{t}}\bigg) %\dd s \bigg)
\end{aligned}
\end{align}
for all $t > 0$, where the summation in \eqref{eq:heat-content-formula-bif-mug} runs over all undirected paths starting and ending at $\mVD$.
\end{theorem}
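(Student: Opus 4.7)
My approach is to start from Roth's path-sum representation of $\ptGD$ (\autoref{lem:path-sum-formula}), which expresses the heat kernel at $(x,y) \in \me \times \me'$ as a scattering-weighted sum indexed by directed paths in $\Graph$ connecting (an endpoint of) $\me$ to (an endpoint of) $\me'$, with Gaussian factors $\tfrac{1}{\sqrt{4\pi t}}\exp(-\delta(\vec{p};x,y)^2/(4t))$ depending on the total walk length $\delta(\vec{p};x,y)$ from $x$ to $y$. Integrating this identity over $(x,y) \in \Graph \times \Graph$, after justifying the exchange of sum and integral by the exponential decay in the path length, reduces the proof to summing a family of Gaussian double integrals over pairs of edges and then collapsing the resulting series via the combinatorial identities of Section~\ref{sec:comb-paths-g}.

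For a fixed directed path $\vec{p}$ with $\#\vec{p} \geq 2$, I parametrize $x \in \me_-(\vec{p})$ by its inward distance $u \in [0, \ell_{\me_-}]$ from $\mv_-(\vec{p})$, and $y \in \me_+(\vec{p})$ by its inward distance $v \in [0, \ell_{\me_+}]$ from $\mv_+(\vec{p})$, so that $\delta(\vec{p};x,y) = u + \ell(\vec{p}_\pm) + v$. Two successive antidifferentiations of the Gaussian using $\int_0^a e^{-s^2/(4t)}\,ds = \sqrt{\pi t}\,\erf(a/(2\sqrt{t}))$ give
\[
\int_0^{\ell_{\me_-}}\!\!\int_0^{\ell_{\me_+}} \frac{e^{-(u+\ell(\vec{p}_\pm)+v)^2/(4t)}}{\sqrt{4\pi t}}\, du\, dv = \sqrt{t}\!\left[H\!\left(\tfrac{\ell(\vec{p})}{2\sqrt{t}}\right) - H\!\left(\tfrac{\ell(\vec{p}_-)}{2\sqrt{t}}\right) - H\!\left(\tfrac{\ell(\vec{p}_+)}{2\sqrt{t}}\right) + H\!\left(\tfrac{\ell(\vec{p}_\pm)}{2\sqrt{t}}\right)\right],
\]
where $H$ is a suitable second antiderivative of the Gaussian; the four-term structure reflects the Newton--Leibniz rule applied at both integration bounds. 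The case $\#\vec{p}=1$ produces an analogous expression via the convention $\vec{p}_\pm = \cev{p}$ of \autoref{rem:alpha-ell-cevvec}, and the direct (non-scattering) contribution from $x, y$ on the same edge yields a separate diagonal term that is handled below.

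The crucial combinatorial step is to evaluate the four series $S_* := \sum_{\vec{p}} \alpha(\vec{p})\, H(\ell(\vec{p}_*)/(2\sqrt{t}))$ with $* \in \{\varnothing, -, +, \pm\}$, where $\vec{p}_\varnothing := \vec{p}$. Reindexing $S_-$ by $\vec{q} := \vec{p}_-$ and invoking \autoref{lem:decomposition-lemma} (to identify $\{\vec{p} : \vec{p}_- = \vec{q}\}$ with $\langle\vec{q}\rangle_-$) together with \autoref{lem:scattering-coeff-dual} (which gives $\sum_{\vec{p} \in \langle\vec{q}\rangle_-}\alpha(\vec{p}) = \epsilon_-(\vec{q})\alpha(\vec{q})$, with $\epsilon_-(\vec{q}) := -1$ if $\mv_-(\vec{q}) \in \mVD$ and $+1$ otherwise), I obtain $S_- = \sum_{\vec{q}} \epsilon_-(\vec{q})\,\alpha(\vec{q})\, H(\ell(\vec{q})/(2\sqrt{t}))$. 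Analogous identities hold for $S_+$ (with $\epsilon_+$) and $S_\pm$ (with $\epsilon_-\epsilon_+$, obtained by iterating and using that $\mv_-(\vec{r}) = \mv_-(\vec{q})$ for every $\vec{r} \in \langle\vec{q}\rangle_+$). Combining the four series with signs $(+,-,-,+)$, the coefficient of $\alpha(\vec{q})H(\ell(\vec{q})/(2\sqrt{t}))$ collapses to $(1-\epsilon_-(\vec{q}))(1-\epsilon_+(\vec{q}))$, which vanishes unless both endpoints of $\vec{q}$ lie in $\mVD$ and otherwise equals $4$. Passing from directed to undirected paths (by pairing $\vec{q}$ with $\cev{q}$, which share $\alpha$ and $\ell$ by \autoref{rem:alpha-ell-cevvec}) doubles this to the factor $8\sqrt{t}$ in \eqref{eq:heat-content-formula-bif-mug}.

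The remaining terms $|\Graph|$ and $-\tfrac{2\sqrt{t}}{\sqrt{\pi}}\#\mVD$ come from combining the direct-walk diagonal contribution $\sum_\me \int_\me \int_\me \tfrac{e^{-(x-y)^2/(4t)}}{\sqrt{4\pi t}}\, dx\, dy = \sum_\me [\ell_\me \erf(\ell_\me/(2\sqrt{t})) - \tfrac{2\sqrt{t}}{\sqrt{\pi}}(1 - e^{-\ell_\me^2/(4t)})]$ with the boundary values $H(0)$ that appear in the four-term formula for length-one and length-two paths; the same $(1-\epsilon_-)(1-\epsilon_+)$ cancellation then eliminates contributions attached to \emph{natural} vertices, reducing the $\#\mE$ count to $\#\mVD$ and isolating the polynomial $|\Graph|$ main term. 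I expect the main obstacle to be precisely this bookkeeping near the diagonal and for the degenerate-length cases: ensuring that the direct-walk contribution, the length-one and length-two paths, and the ad hoc convention $\vec{p}_\pm = \cev{p}$ for $\#\vec{p}=1$ all fit coherently into the four-sum identity, so that every contribution appears with the correct multiplicity and no term is double-counted or missed.
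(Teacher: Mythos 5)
Your proposal follows essentially the same route as the paper's proof: Roth's path-sum formula integrated edgewise, the four-term identity in $H$ obtained by two antidifferentiations of the Gaussian (the paper's \autoref{lem:representation-topological-heat-cont} and \autoref{thm:first-roth-like-expansion}), and the collapse of the resulting series via \autoref{lem:decomposition-lemma} and \autoref{lem:scattering-coeff-dual}, ending in the same cancellation you express as $(1-\epsilon_-)(1-\epsilon_+)$ — which the paper carries out in two stages through the auxiliary sums $\widetilde{\mathcal{Q}_{t}}$ and $\widetilde{\widetilde{\mathcal{Q}_{t}}}$ rather than your symmetric four-series bookkeeping. The diagonal and short-path boundary terms you flag as the main obstacle are handled in the paper exactly as you anticipate, with the $H(0)=1/\sqrt{\pi}$ contributions from $\mathcal{P}_1(\Graph)$ reducing the $\#\mE$ count to $\#\mVD$ and leaving the leading term $\vert\Graph\vert$.
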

The proof of \autoref{thm:heat-content-formula-bif-mug} will require a number of auxiliary results and will be completed at the end of this section.
%\subsection{A first decomposition for the heat content: the topological and non-topological part}\label{sec:lemmata-heat-content}

To begin with, let us recall the \emph{path sum formula} first obtained in \cite{Rot84} for $\mVD=\emptyset$ and, in the general case of possibly nonempty $\mVD$, in \cite[Section~3.4]{KosPotSch07}.

\begin{proposition}[Path sum formula]\label{lem:path-sum-formula}
Let $\Graph$ be as in \autoref{ass:graph} and let $\me,\mf \in \mE$. Then, the heat kernel $\ptGD(\cdot,\cdot)$ associated with the Dirichlet Laplacian $\DeltaGD$ is given by 
\begin{align}
\ptGD(x ,y) 
&= \frac{1}{\sqrt{4\pi t}} \sum_{\vec{p} \in \mathcal{P}_{\geq 0}(x,y)} \alpha(\vec{p}) \e^{-\frac{\ell(\vec p)^2}{4t}} \label{eq:heat-kernel-KPS-2} \\
&= \frac{1}{\sqrt{4\pi t}} \, \delta_{\me,\mf} \, \e^{\frac{-\dist_\Graph( x, y)^2}{4t}} + \frac{1}{\sqrt{4 \pi t}}\sum_{\vec{p} \in \mathcal{P}_{\geq 2}(x,y)} \alpha(\vec{p}) \e^{-\frac{\ell(\vec p)^2}{4t}} \label{eq:heat-kernel-KPS-2b}
%\\&=: p_{1;t}^{\Graph}(x,y) + p_{2;t}^{\Graph;\mVD}(x,y),
\end{align}
for all $t>0$ and all points $x,y\in \Graph$ that lie in the interior of edges $\me,\mf$, respectively, with uniformly convergent right-hand side on
 $\Graph \times \Graph$ for fixed $t>0$. 
\end{proposition}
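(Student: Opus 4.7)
The plan is to verify that the right-hand side of \eqref{eq:heat-kernel-KPS-2}, which I denote provisionally by
\[
K_t(x,y) := \frac{1}{\sqrt{4\pi t}} \sum_{\vec{p} \in \mathcal{P}_{\geq 0}(x,y)} \alpha(\vec{p}) \, \e^{-\ell(\vec{p})^2/4t},
\]
is the fundamental solution of the heat equation associated with $\DeltaGD$, and then to appeal to its uniqueness already available in the semigroup framework. Concretely, I would check that $K_t$ (i) solves the one-dimensional heat equation $\partial_t u = u''$ edgewise in both variables, (ii) satisfies the Dirichlet condition at every $\mv \in \mVD$ and the continuity/Kirchhoff conditions at every $\mv \in \mVN$, jointly in $x$ and $y$, and (iii) reproduces the initial datum, i.e.\ $K_t(\cdot,y) \to \delta_y$ as $t \to 0^+$.

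The first order of business is uniform convergence. Writing $\Delta_{\max}$ for the maximum vertex degree in $\Graph$ and $\ell_{\max}$ for the maximum edge length, the number of directed paths of combinatorial length $n$ issuing from any point is at most $\Delta_{\max}^n$, while every such path has metric length at least $n\ell_{\min}-2\ell_{\max}$; since $|\alpha(\vec{p})| \le 1$ by construction, the series (and all its derivatives in $t,x,y$) is dominated by the geometric-Gaussian majorant $\sum_{n\ge 0} \Delta_{\max}^n\, \e^{-(n\ell_{\min}-2\ell_{\max})^2/4t}$, which converges uniformly for $t$ in compact subsets of $(0,\infty)$ and globally in $(x,y)\in \Graph\times \Graph$. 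Term-by-term differentiation is therefore legitimate, and because $\ell(\vec{p})$ depends affinely on the edgewise coordinates of the initial and final points, each summand solves the one-dimensional heat equation separately in $x$ and $y$, yielding (i).

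The heart of the argument is step (ii), which is where the combinatorics of Section~\ref{sec:comb-paths-g} pays off. Fixing $\mv \in \mV$ and letting $x$ approach $\mv$ along an incident edge, the paths in $\mathcal{P}_{\geq 0}(x,y)$ reorganise into families indexed by the first bond emanating from $\mv$: exactly the decomposition encoded in $\langle \cdot \rangle_-$ and \autoref{lem:decomposition-lemma}. At a Dirichlet vertex (degree one), the factor $-1$ in \eqref{eq:scattering-coefficients} pairs every incoming path with the same path ``reflected'' at $\mv$, and the two contributions cancel, so $K_t(\mv,y)=0$. At a standard vertex of degree $d$, the transmission coefficients $2/d$ and reflection coefficient $2/d-1$ are precisely those for which \autoref{lem:scattering-coeff-dual} (first identity) gives $\sum_{\vec{q}\in \langle \vec{p}\rangle_-}\alpha(\vec{q})=\alpha(\vec{p})$; inserting this identity into $K_t$ and collecting terms enforces both continuity of $K_t(\cdot,y)$ at $\mv$ and $\sum_{\me\sim\mv} \partial_\me K_t(\mv,y)=0$. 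The analogous argument in the $y$-variable uses $\langle \cdot \rangle_+$, the companion identity in \autoref{lem:scattering-coeff-dual}, and the symmetry \eqref{eq:reverse-plus-minus}.

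Property (iii) is then immediate: for any fixed $\varepsilon > 0$ the sum over paths with $\ell(\vec{p}) \ge \varepsilon$ vanishes as $t\to 0^+$ faster than any power of $t$, so the small-time behaviour of $K_t$ is dictated solely by the ``straight-line'' term singled out in \eqref{eq:heat-kernel-KPS-2b}, which is the standard Gaussian on $\R$ restricted to $\me$ and converges to $\delta_y(x)$ in the sense of distributions when $\me=\mf$, and vanishes otherwise. I expect the main obstacle to lie in step (ii): keeping track of signs and multiplicities at vertices of high degree, especially when loops are incident so that a single edge contributes two outgoing bonds at the same vertex, is delicate; but the identities prepared in \autoref{lem:scattering-coeff-dual} together with \eqref{eq:reverse-plus-minus} reduce the check to the purely local algebraic content of \eqref{eq:scattering-coefficients}.
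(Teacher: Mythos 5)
The paper does not prove this proposition at all: it is explicitly \emph{recalled} from the literature, with the formula attributed to Roth \cite{Rot84} for $\mVD=\emptyset$ and to \cite[Section~3.4]{KosPotSch07} for general $\mVD$, so there is no in-paper argument to compare yours against. Your direct-verification strategy (check the edgewise heat equation, the vertex conditions, and the initial datum, then invoke uniqueness of the semigroup solution) is essentially Roth's original method of images, and it is sound in outline; by contrast, \cite{KosPotSch07} obtain the formula by expanding the resolvent in a Neumann series over the bond scattering matrix and taking an inverse Laplace transform, which is why the cited result covers much more general vertex conditions. Two points in your sketch deserve care. First, the identity you actually need at a standard vertex is the \emph{row} sum $\sum_{\vec b}\beta(\vec a,\vec b)=1$ over outgoing bonds for a fixed incoming bond $\vec a$, whereas \autoref{lem:scattering-coeff-dual} as stated sums over pre-extensions (incoming bonds for a fixed continuation); these are equivalent only via the reversal symmetry $\beta(\vec{\me}_k,\vec{\me}_{k+1})=\beta(\cev{\me}_{k+1},\cev{\me}_k)$ recorded in \autoref{rem:alpha-ell-cevvec}, and the continuity statement additionally needs the observation that the limit along edge $\me_i$ assigns to every path leaving $\mv$ the $i$-independent weight $\delta+ (2/d-\delta)=2/d$. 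Second, verifying the generator's domain and boundary conditions only identifies $K_t$ with the heat kernel after the standard uniqueness argument for the abstract Cauchy problem (e.g.\ differentiating $s\mapsto \e^{(t-s)\DeltaGD}u(s)$), which you should state rather than leave implicit; with those two points made precise, the proposal is a correct, self-contained proof along Roth's lines.
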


\begin{rem}
(1) 
Inserting artificial vertices of degree 2 does not change $\DeltaGD$ or its heat kernel. Therefore, the subclasses $\mathcal P_{\ge 0}(x,y)$ and $\mathcal P_{\ge 2}(x,y)$ of directed paths between $x,y$ are well defined.

(2) Let $y \in \mVN$. As observed in \cite[Proposition~2.1]{BorHarJon22}, \eqref{eq:heat-kernel-KPS-2b} also holds whenever $x\in \mVN$. However, \eqref{eq:heat-kernel-KPS-2b} may be wrong whenever $x\in \mVD$, as can be seen in the case of an interval $\Graph \simeq [0,\ell]$ with mixed boundary conditions (Dirichlet at $0$, Neumann at $\ell$): indeed, $\ptGD(x,y)=0$ since $\ptGD(\cdot,y)$ lies in the domain of $\DeltaGD$, while for the right-hand side of \eqref{eq:heat-kernel-KPS-2} there holds
\[
\frac{1}{\sqrt{4\pi t}} \sum_{n=0}^\infty (-1)^{n}\e^{-\frac{(2n+1)^2\ell^2}{4t}} \geq \frac{1}{\sqrt{4\pi t}} \bigg( \e^{-\frac{\ell^2}{4t}} - \e^{-\frac{9\ell^2}{4t}} \bigg) > 0 = \ptGD(x,y).
\]
\end{rem}
The first addend of \eqref{eq:heat-kernel-KPS-2b} does not depend on the scattering coefficients and thus not encode the topology of the graph $\mathcal{G}$. This motivates us to introduce the following notions as both terms appearing in \eqref{eq:heat-kernel-KPS-2b} belong to $L^\infty(\Graph \times \Graph)$ for all $t>0$.

\begin{definition}\label{def:trivial-nontrivial-heat-content}
The \emph{non-topological part} of the heat content is
    \[
    \heatcontm(\mathcal{G}) := \frac{1}{\sqrt{4\pi t}} \sum_{\me,\mf \in \mE} \int_0^{\ell_\me} \int_0^{\ell_\mf} \delta_{\me,\mf} \, \e^{\frac{-\dist_\Graph( x, y)^2}{4t}} \dd y \dd x, \qquad t>0,
    \]
    whereas the \emph{topological part} is
    \[
    \heatcontp(\mathcal{G};\mVD) := \frac{1}{\sqrt{4 \pi t}} \int_\Graph \int_\Graph \sum_{\vec{p} \in \mathcal{P}_{\geq 2}(x,y)} \alpha(\vec{p}) \e^{-\frac{\ell(\vec p)^2}{4t}}  \dd y \dd x, \qquad t >0.
    \]
\end{definition}
By construction we have that $\mathcal{Q}_t(\mathcal{G};\mVD) = \heatcontm(\mathcal{G}) + \heatcontp(\mathcal{G};\mVD)$,  and it will be beneficial for us to study both parts separately: as the name however suggests, the non-topological part is just given by integration on each edge, separately: a first immediate observation is that the non-topological part of the heat content can be written as
\begin{align}\label{eq:non-triv-heat-cont}
    \heatcontm(\mathcal{G}) = \frac{1}{\sqrt{4 \pi t}}\sum_{\me \in \mE} \int_0^{\ell_\me} \int_0^{\ell_\me} \e^{-\frac{\vert x - y \vert^2}{4t}} \dd y \dd x \qquad \text{for all $t>0$.}
    \end{align}
    Let us present a further simple representation based on the \emph{complementary error function} $\mathrm{erfc} \in C^\infty(\mathbb{R})$ defined by 
\[
\mathrm{erfc}(x) := \frac{2}{\sqrt{\pi}}\int_x^\infty \e^{-s^2} \dd s, \qquad x \in \mathbb{R},
\]
cf.\ also \cite{KosPotSch07, AbrSte72} (note that $\frac{\mathrm{d}}{\mathrm{d}x} \erfc(x) = -\frac{2}{\sqrt{\pi}} \e^{-x^2}$ for any $x \in \mathbb{R}$).
    \begin{lemma}\label{lem:trivial-heat-content}
There holds
    \begin{align}\label{eq:non-topological-heat-cont}
    \heatcontm(\mathcal{G}) = \vert \Graph \vert + \frac{2}{\sqrt{\pi}} \sqrt{t} \bigg( \sum_{\me \in \mE} \e^{-\frac{\ell_\me^2}{4t}} - \# \mE \bigg) - \sum_{\me \in \mE} \ell_\me \mathrm{erfc}\bigg( \frac{\ell_\me}{2\sqrt{t}} \bigg) \qquad  \text{for all $t>0$}.
    \end{align}
\end{lemma}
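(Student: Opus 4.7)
The proof is a direct evaluation of the Gaussian double integral on each edge; everything reduces to computing, for a single edge of length $L=\ell_\me$, the quantity
\[
J(L,t) := \int_0^L\!\!\int_0^L \e^{-\frac{(x-y)^2}{4t}}\dd y\dd x
\]
and then dividing by $\sqrt{4\pi t}$ and summing over $\me\in\mE$, as per \eqref{eq:non-triv-heat-cont}.

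The plan for computing $J(L,t)$ is the following. First I would exploit the symmetry of the integrand in $x\leftrightarrow y$ and the substitution $u=x-y$ on the lower triangle $\{0\le y\le x\le L\}$ to arrive at
\[
J(L,t) = 2\int_0^L\!\!\int_0^x \e^{-\frac{u^2}{4t}}\dd u\dd x.
\]
Then, switching the order of integration over the triangle $\{0\le u\le x\le L\}$, this becomes
\[
J(L,t) = 2\int_0^L (L-u)\,\e^{-\frac{u^2}{4t}}\dd u,
\]
which splits into two elementary pieces. The first piece is $2L\int_0^L \e^{-u^2/(4t)}\dd u$, which, after the scaling $s=u/(2\sqrt{t})$ and using $\mathrm{erf}(\cdot)=1-\mathrm{erfc}(\cdot)$, equals $2L\sqrt{\pi t}\,(1-\mathrm{erfc}(L/(2\sqrt t)))$. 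The second piece, $2\int_0^L u\,\e^{-u^2/(4t)}\dd u$, can be integrated exactly since the integrand is (up to a constant) a total derivative, giving $4t\bigl(1-\e^{-L^2/(4t)}\bigr)$.

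Combining, dividing by $\sqrt{4\pi t}=2\sqrt{\pi t}$, and simplifying, one obtains
\[
\frac{J(L,t)}{\sqrt{4\pi t}} = L - L\,\mathrm{erfc}\!\left(\tfrac{L}{2\sqrt t}\right) - \frac{2\sqrt t}{\sqrt\pi}\bigl(1-\e^{-L^2/(4t)}\bigr).
\]
Summing over all edges $\me\in\mE$, using $|\Graph|=\sum_{\me}\ell_\me$ and $\#\mE = \sum_\me 1$, yields precisely \eqref{eq:non-topological-heat-cont}.

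There is essentially no obstacle here: the formula follows from an elementary Fubini/substitution calculation, so the only care required is bookkeeping of the constants and keeping track of the conversion $\mathrm{erf}=1-\mathrm{erfc}$. The decomposition \eqref{eq:non-triv-heat-cont} already isolates the contribution of each edge because of the Kronecker delta $\delta_{\me,\mf}$ in \autoref{def:trivial-nontrivial-heat-content}, and on an individual edge the intrinsic distance $\dist_\Graph(x,y)$ reduces to $|x-y|$, so no topological information enters the non-topological part, consistently with its name.
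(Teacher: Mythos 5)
Your proof is correct and follows essentially the same route as the paper: both reduce to evaluating the edgewise Gaussian double integral $\int_0^L\int_0^L \e^{-(x-y)^2/(4t)}\dd y\dd x$ via the per-edge decomposition \eqref{eq:non-triv-heat-cont} and then summing over edges. The only difference is the technique used on that single integral — you use symmetry and Fubini on the triangle to reduce it to $2\int_0^L(L-u)\e^{-u^2/(4t)}\dd u$, whereas the paper differentiates with respect to the edge length via the Leibniz rule and integrates back; both yield the same closed form \eqref{eq:triv-heat-content-leibniz}.
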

\begin{proof}
First, for any $\me \in \mE$ we have
    \begin{align}\label{eq:triv-heat-content-leibniz}
    \int_0^{\ell_\me} \int_0^{\ell_\me} \e^{-\frac{(x-y)^2}{4t}} \dd y \dd x &= 4t\Big( \e^{-\frac{\ell_\me^2}{4t}} - 1 \Big) + 2 \sqrt{\pi t} \ell_\me \bigg(1  -  \mathrm{erfc} \bigg( \frac{\ell_\me}{2\sqrt{t}} \bigg)\bigg).
    \end{align}
    Indeed, defining $f(\ell,x):= \int_0^\ell \mathrm{e}^{-(x-y)^2} \dd y$ for $\ell \geq 0$ and $x \in [0,\ell]$, we observe according to Leibniz integral rule that
    \begin{align}\label{eq:leibniz-non-topological}
    \begin{aligned}
    \frac{\mathrm{d}}{\mathrm{d}\ell} \int_0^\ell \int_0^\ell \mathrm{e}^{-(x-y)^2} \dd y \dd x &= f(\ell,\ell) + \int_0^\ell \frac{\mathrm{d}}{\mathrm{d}\ell} f(\ell,x) \mathrm{d}x 
    %\\&= \int_0^\ell \mathrm{e}^{-(\ell-y)^2} \dd y + \int_0^\ell \bigg( \frac{\mathrm{d}}{\mathrm{d}\ell} \int_0^\ell \mathrm{e}^{-(x-y)^2} \dd y \bigg) \dd x 
    \\&= \int_0^\ell \mathrm{e}^{-(\ell - y)^2} \dd y + \int_0^\ell \mathrm{e}^{-(x-\ell)^2} \dd x = \sqrt{\pi} (1-\erfc (\ell))
    \end{aligned}
    \end{align}
    and integrating both sides of \eqref{eq:leibniz-non-topological} yields
    \begin{align*}
    \begin{aligned}
        \int_0^\ell \int_0^\ell \mathrm{e}^{-(x-y)^2} \dd y \dd x &= \sqrt{\pi} \int_0^\ell (1-\erfc (s)) \dd s %\\&= \sqrt{\pi}\ell(1-\erfc (\ell)) - \int_0^\ell 2s \mathrm{e}^{-s^2} \dd s 
        \\&= \sqrt{\pi}\ell(1-\erfc (\ell)) + \mathrm{e}^{-\ell^2} - 1,
        \end{aligned} \qquad \text{for $\ell \in \mathbb{R}_+$,}
    \end{align*}
    which eventually yields \eqref{eq:triv-heat-content-leibniz} through substitution. This shows the claim.
\end{proof}
The expression in \eqref{eq:non-topological-heat-cont} can also be rewritten as
\begin{align}\label{eq:non-topological-heat-cont-2}
\begin{aligned}
\heatcontm(\Graph) &= \vert \Graph \vert - \frac{2\sqrt{t}}{\sqrt{\pi}}\# \mE + 2\sqrt{t} \sum_{\me \in \mE} \Bigg( \frac{1}{\sqrt{\pi}} \e^{-\frac{\ell_\me^2}{4t}} - \frac{\ell_\me}{2\sqrt{t}} \erfc \bigg(\frac{\ell_\me}{2\sqrt{t}} \bigg) \Bigg) \\&= \vert \Graph \vert - \frac{2\sqrt{t}}{\sqrt{\pi}}\# \mE + 2\sqrt{t} \sum_{\me \in \mE} H \bigg( \frac{\ell_\me}{2\sqrt{t}} \bigg) \
\end{aligned}, \qquad t > 0,
\end{align}
where 
\begin{align}\label{eq:anti-primitive-erfc}
        H(x) := \frac{1}{\sqrt{\pi}} - \int_0^x \mathrm{erfc}(s)\dd s =  \frac{1}{\sqrt{\pi}}\e^{-x^2} - x\mathrm{erfc}(x), \qquad x \in \mathbb{R}.
        \end{align}

Let us summarize a few basic properties of $H$, which we plot in \autoref{fig:func-H}.

\begin{lemma}\label{lem:properties-H}
The function $H$ belongs to $C^\infty(\mathbb{R})$ and is strictly convex.
Furthermore, $H(\mathbb{R}) \subset (0,\infty)$.
    \end{lemma}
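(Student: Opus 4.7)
The plan is to verify the three claims in order, using only elementary properties of the complementary error function $\erfc$.

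For smoothness, it suffices to recall that $\erfc\in C^\infty(\R)$ (being the antiderivative of the smooth function $-\frac{2}{\sqrt\pi}\e^{-s^2}$), so the representation $H(x)=\tfrac{1}{\sqrt\pi}-\int_0^x\erfc(s)\dd s$ immediately gives $H\in C^\infty(\R)$; alternatively, the closed-form expression $H(x)=\tfrac{1}{\sqrt\pi}\e^{-x^2}-x\erfc(x)$ makes smoothness manifest.

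For strict convexity, I would differentiate directly using the integral formula: $H'(x)=-\erfc(x)$ and hence $H''(x)=-\erfc'(x)=\tfrac{2}{\sqrt\pi}\e^{-x^2}>0$ for every $x\in\R$, which is the definition of strict convexity.

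For positivity, I would split into two cases. If $x\le 0$, then $-x\erfc(x)\ge 0$ (since $\erfc\ge 0$ on $\R$ by definition), so $H(x)\ge \tfrac{1}{\sqrt\pi}\e^{-x^2}>0$. If $x>0$, the bound
\[
\int_x^\infty \e^{-s^2}\dd s \;<\; \int_x^\infty \frac{s}{x}\e^{-s^2}\dd s \;=\; \frac{\e^{-x^2}}{2x}
\]
(obtained by multiplying the integrand by $s/x\ge 1$) translates into $x\erfc(x)<\tfrac{1}{\sqrt\pi}\e^{-x^2}$, which is precisely $H(x)>0$. Neither step poses real difficulty; the only point requiring a moment of care is the positivity on $(0,\infty)$, where one has to choose a concrete comparison bound for $\erfc$, and the multiplication-by-$s/x$ trick is the cleanest option.
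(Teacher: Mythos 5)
Your proposal is correct and follows essentially the same route as the paper: differentiate to get $H'=-\erfc$ and $H''(x)=\tfrac{2}{\sqrt\pi}\e^{-x^2}>0$, and prove positivity on $(0,\infty)$ via the bound $\erfc(x)<\tfrac{\e^{-x^2}}{\sqrt\pi\,x}$, which is exactly the paper's comparison $\int_x^\infty\e^{-s^2}\dd s<\tfrac{1}{2x}\int_x^\infty 2s\,\e^{-s^2}\dd s$. The only difference is that you spell out the case $x\le 0$, which the paper dismisses as immediate.
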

    
\begin{proof}
A direct computation shows that
\[
H'(x) = -\erfc(x),\quad H''(x) = \frac{2\e^{-x^2}}{\sqrt{\pi}}\qquad\hbox{ for every }x \in \mathbb{R},
\]
hence $H\in C^\infty(\mathbb{R})$ and $H$ is strictly decreasing and convex since $H' < 0$ and $H''>0$. Moreover, 
%\footnote{\DM{Numerisch habe ich beobachtet, dass für kleine $x$ ($x<3$) die bessere Abschätzung
%\begin{align}\label{eq:estimate-erfc-2}
 %       \mathrm{erfc}(x) < \frac{\e^{-x^2}}{\sqrt{\pi}x}x^{.77} 
  %  \end{align}
   % gilt, wobei .77 nicht wesentlich verbessert werden kann (schon bei .775 geht die Abschätzung schief). Soweit ist es nur Numerologie: 
%}}
\begin{align}\label{eq:estimate-erfc}
        \mathrm{erfc}(x) = \frac{2}{\sqrt{\pi}} \int_x^\infty \e^{-t^2} \dd t < \frac{1}{\sqrt{\pi}x} \int_x^\infty 2t\e^{-t^2} \dd t = \frac{\e^{-x^2}}{\sqrt{\pi}x}\qquad\hbox{for all }x \in \mathbb{R}_+,
    \end{align}
    whence $H(\mathbb{R}_+) \subset (0,\infty)$. Moreover, it is an immediate observation that $H(\mathbb{R} \setminus \mathbb{R}_+) \subset (0,\infty)$.
\end{proof}

\begin{figure}[h]
\includegraphics[width=5cm]{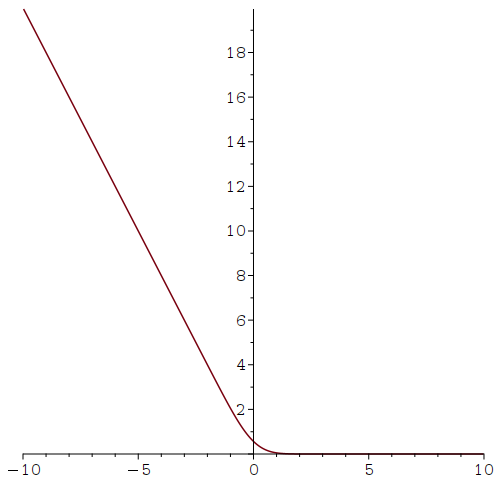} 
\caption{The profile of the function $H$ in~\eqref{eq:anti-primitive-erfc}}\label{fig:func-H}
\end{figure}

Showing that the topological part of the heat content, too, can be represented in terms of $H$ will be the next major step on the way to prove \autoref{thm:heat-content-formula-bif-mug}, the main result of this paper.

\begin{lemma}\label{lem:representation-topological-heat-cont}
    For $\Graph$ and $\heatcontp(\Graph;\mVD)$ as in \autoref{def:trivial-nontrivial-heat-content} one has
    \begin{align}
        \heatcontp(\Graph;\mVD) = \frac{1}{\sqrt{4\pi t}} \sum_{\vec{p} \in \mathcal{P}_{\geq 2}(\Graph)} \alpha(\vec{p}) \int_0^{\ell_{\me_-(\vec{p})}} \int_0^{\ell_{\me_+(\vec{p})}} \e^{-\frac{(x+\ell(\vec{p}_\pm) + y)^2}{4t}} \dd y \dd x \qquad \text{for all $t>0$.}
    \end{align}
\end{lemma}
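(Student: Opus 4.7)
The plan is to start from the defining expression
\[
\heatcontp(\Graph;\mVD) = \frac{1}{\sqrt{4 \pi t}} \int_\Graph \int_\Graph \sum_{\vec{q} \in \mathcal{P}_{\geq 2}(x,y)} \alpha(\vec{q}) \e^{-\frac{\ell(\vec q)^2}{4t}}  \dd y \dd x
\]
and swap the order of summation and integration. To justify the swap I would first truncate the inner series at combinatorial length $\leq N$, where the equality follows from Fubini on a finite sum, and then pass to the limit $N \to \infty$. The truncated sums converge uniformly in $(x,y) \in \Graph \times \Graph$ by \autoref{lem:path-sum-formula}, so the integral commutes with the limit. Absolute convergence of the reordered series is guaranteed by the Gaussian weights, which beat the at most exponential growth of the count of directed paths of combinatorial length $n$ in $\mathcal{P}_{\geq 2}(\Graph)$ (since each such path has metric length at least $n\ell_{\min}$).

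Next I would reindex the swapped expression. Every point-to-point path $\vec q \in \mathcal{P}_{\geq 2}(x,y)$ determines, and is determined by, a pair consisting of a vertex-to-vertex directed path $\vec p \in \mathcal{P}_{\geq 2}(\Graph)$ -- namely, the ordered sequence of bonds of $\vec q$ -- together with a pair of interior endpoints $x \in \me_-(\vec p)$ and $y \in \me_+(\vec p)$. Vertex points have Lebesgue measure zero and may be ignored. Because the scattering coefficient depends only on the intermediate vertex scatterings, one has $\alpha(\vec q) = \alpha(\vec p)$. The swap thus produces
\[
\heatcontp(\Graph;\mVD) = \frac{1}{\sqrt{4 \pi t}} \sum_{\vec p \in \mathcal{P}_{\geq 2}(\Graph)} \alpha(\vec p) \int_0^{\ell_{\me_-(\vec p)}} \int_0^{\ell_{\me_+(\vec p)}} \e^{-\frac{\ell(\vec q(x,y))^2}{4t}} \dd y \dd x,
\]
where $\vec q(x,y)$ is the unique point-to-point path that leaves $x$ along $\me_-(\vec p)$ in the direction of $\vec{\me}_-(\vec p)$, traverses the intermediate bonds of $\vec p$, and arrives at $y$ along $\me_+(\vec p)$.

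The last step is to identify the exponent. Writing $\tilde x$ for the arc-length distance from $x$ to $\partial^+(\vec{\me}_-(\vec p))$ along $\me_-(\vec p)$, and $\tilde y$ for the distance from $\partial^-(\vec{\me}_+(\vec p))$ to $y$ along $\me_+(\vec p)$, one has $\ell(\vec q(x,y)) = \tilde x + \ell(\vec p_\pm) + \tilde y$, using $\ell(\vec p_\pm) = \ell(\vec p) - \ell_{\me_-(\vec p)} - \ell_{\me_+(\vec p)}$ from \autoref{rem:alpha-ell-cevvec}(2). The map $x \mapsto \tilde x$ is either the identity or the reflection $x \mapsto \ell_{\me_-(\vec p)} - x$, depending on how the orientation of $\vec{\me}_-(\vec p)$ relates to the reference parametrization of the underlying edge; in either case it preserves Lebesgue measure on $[0, \ell_{\me_-(\vec p)}]$, and similarly for $y$. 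Renaming $\tilde x, \tilde y$ back to $x, y$ then yields the claimed identity. The main nuisance in the argument is the orientation bookkeeping -- in particular for loops, where the initial and final vertices of an edge coincide yet the two bonds $\vec\me$ and $\cev\me$ still produce distinct contributions -- but no analytic difficulty arises beyond the uniform-convergence input from \autoref{lem:path-sum-formula}.
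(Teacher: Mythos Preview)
Your proposal is correct and follows essentially the same route as the paper's proof: set up the bijection between point-to-point paths $\vec q\in\mathcal{P}_{\geq 2}(x,y)$ and vertex-to-vertex paths $\vec p\in\mathcal{P}_{\geq 2}(\Graph)$ (by extending the first and last half-bonds of $\vec q$ to full bonds), observe $\alpha(\vec q)=\alpha(\vec p)$ and $\ell(\vec q)=\tilde x+\ell(\vec p_\pm)+\tilde y$, and then absorb the orientation-dependent offsets $\tilde x,\tilde y$ via the measure-preserving substitutions $x\mapsto \ell_{\me_-}-x$, $y\mapsto \ell_{\me_+}-y$ where needed. Your justification of the sum--integral swap via truncation and the uniform convergence from \autoref{lem:path-sum-formula} is in fact more explicit than what the paper writes down.
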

Here $\mathcal{P}_{\geq 2}(\Graph)$ is defined as in~\eqref{eq:def-set-of-paths}.

\begin{proof}
Let $x,y \in \Graph$. Given a path $\vec{p} =(x,\vec{\me}_1,\dots,\vec{\me}_n,y) \in \mathcal{P}_{\geq 2}(x,y)$ with $n \geq 2$, we can find bonds $\vec{\me}, \vec{\mf} \in \mB$ such that $x$ lies on the corresponding edge $\me \in \mE$ and $y$ lies on the corresponding edge $\mf \in \mE$ and such that $\vec{\me}$ and $\vec{\mf}$ are directed into the same direction as $\vec{\me}_1$ and $\vec{\me}_n$ in the sense that $\partial^+(\vec \me) = \partial^+(\vec \me_1)$ and $\partial^-(\vec \mf) = \partial^-(\vec \me_n)$ (see \autoref{figure:from-paths-between-points-to-paths-between-edges}).
    \begin{figure}[h]
        \begin{tikzpicture}[scale=0.9]
      \tikzset{enclosed/.style={draw, circle, inner sep=0pt, minimum size=.08cm, fill=black}, every loop/.style={}}

      \node[enclosed, label={below: $\partial^-(\vec{\me}_n) \qquad$}] (C) at (4,5) {};
      \node[enclosed] (A) at (-1,5) {};;
      \node[enclosed] (B) at (1,5) {};;
      \node[enclosed, label={above: \qquad $y \in \mf$}] (D') at (6,5) {};
      \node[enclosed] (D) at (8,5) {};
      \node[enclosed,white] (D1) at (8.5,4.5) {};
      \node[enclosed,white] (D2) at (8.5,5) {};
      \node[enclosed,white] (D3) at (8.5,5.5) {};
      \node[enclosed, label={below: \qquad $\partial^+(\vec{\me}_1)$}] (F) at (-4,5) {};
      \node[enclosed] (G) at (-8,5) {};
      \node[enclosed, label={above: $\me \ni x \qquad$}] (G') at (-6,5) {};
      \node[enclosed, white] (G1) at (-8.5,4.5) {};
      \node[enclosed, white] (G2) at (-8.5,5) {};
      \node[enclosed, white] (G3) at (-8.5,5.5) {};
      
      \draw[dotted] (A) edge node[below] {} (B) node[midway, above] (edge1) {};
      \draw[->, thick, blue] (B) edge[bend left = 20] node[above] {\color{black} $\vec{\me}_{n-1}$} (C) node[midway, above] (edge2) {};
       \draw[-] (B) edge node[above] {} (C) node[midway, above] (edge2) {};
      \draw[<-, thick, blue] (A) edge[bend right= 20] node[above] {\color{black} $\vec{\me}_2$} (F) node[midway, above] (edge4) {};
      \draw[-] (A) edge node[above] {} (F) node[midway, above] (edge4) {};
      \draw[-] (G') edge node[above] {} (G) node[midway, above] (edge4) {};
      \draw[->, thick, blue] (G') edge[bend left] node[above] {\color{black} $\vec{\me_1}$} (F) node[midway, above] (edge4) {};
      \draw[->, thick, blue] (G) edge[bend right] node[below] {\color{black} $\vec{\me}$} (F) node[midway, above] (edge4) {};
      \draw[-] (G') edge node[above] {} (F) node[midway, above] (edge4) {};
      \draw[dotted] (G) edge node[above] {} (G1) node[midway, above] (edge4) {};
      \draw[dotted] (G) edge node[above] {} (G2) node[midway, above] (edge4) {};
      \draw[dotted] (G) edge node[above] {} (G3) node[midway, above] (edge4) {};
      \draw[dotted] (D) edge node[above] {} (D1) node[midway, above] (edge4) {};
      \draw[dotted] (D) edge node[above] {} (D2) node[midway, above] (edge4) {};
      \draw[dotted] (D) edge node[above] {} (D3) node[midway, above] (edge4) {};
      \draw[->, blue, thick] (C) edge[bend left] node[above] {\color{black} $\vec{\me}_n$} (D') node[midway, above] (edge4) {};
      \draw[-] (C) edge node[above] {} (D') node[midway, above] (edge4) {};
      \draw[->, thick, blue] (C) edge[bend right] node[below] {\color{black} $\vec{\mf}$} (D) node[midway, above] (edge4) {};
      \draw[-] (D') edge node[above] {} (D) node[midway, above] (edge4) {};
     \end{tikzpicture}
     \vspace{-3cm}
     \caption{The path $\vec{p} = (x,\vec{\me}_1,\dots,\vec{\me}_n,y)$ and edges $\me \ni x$ and $\mf \ni y$ such that $\partial^+(\vec{\me}) = \partial^+(\vec{\me}_1)$ and $\partial^-(\vec{\mf}) = \partial^-(\vec{\me}_n)$.}\label{figure:from-paths-between-points-to-paths-between-edges}
    \end{figure}
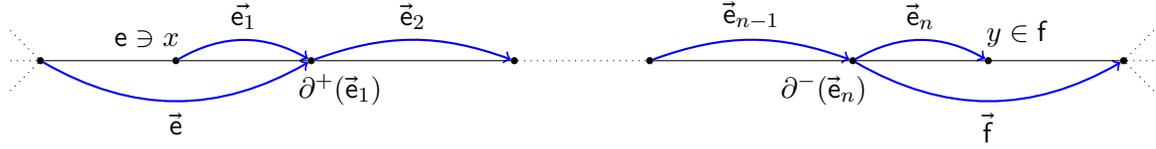

Thus, we have a one-to-one correspondence between
 $\mathcal{P}_{\geq 2}(x,y)$ and
\begin{equation}\label{eq:p2-bije}
\bigcup_{\vec{\me} \in \mathbf{b}(\me), \vec{\mf} \in \mathbf{b}(\mf)} \left\{ \vec{p} \in \mathcal{P}_{\geq 2}(\Graph) \: : \: \vec{\me}_-(\vec{p}) = \vec{\me}, \: \vec{\me}_+(\vec{p}) = \vec{\mf} \right\},
\end{equation}
where every path $\vec{p} =(x,\vec{\me}_1,\dots,\vec{\me}_n,y) \in \mathcal{P}_{\geq 2}(x,y)$ corresponds to a path 
\[
\vec{p}({\vec\me,\vec\mf}) := \big(\partial^-(\vec{\me}), \vec{\me},\vec{\me}_1,\dots,\vec{\me}_n,\vec{f}, \partial^-(\vec{\mf}) \big) \in \mathcal{P}(\Graph)
\]
with $\vec{\me} \in \mathbf{b}(\me)$ and $\vec{\mf} \in \mathbf{b}(\mf)$ such that $\partial^+(\vec \me) = \partial^+( \vec{\me}_1)$ and $\partial^-(\vec \mf) = \partial^-(\vec \me_n)$, as well as
\begin{align}\label{eq:scattering-coefficient-one-to-one-corresp}
\alpha(\vec p) = \alpha\big(\vec{p}({\vec\me,\vec\mf}) \big),
\end{align}
and
\begin{align}\label{eq:length-one-to-one-corresp}
\begin{aligned}
\ell(\vec p) &= \dist (x,\partial^+({\vec \me}_1)) + \ell\big(\vec{p}({\vec\me,\vec\mf})_\pm\big) + \dist(\partial^-({\vec{\me}}_{n}),y)\\
&= \dist(x,\partial^+({\vec \me})) + \ell(\vec{p}_\pm ) + \dist(\partial^-({\vec \mf}),y),
\end{aligned}
\end{align}
as $\vec{p}({\vec\me,\vec\mf})_\pm = \vec{p}_\pm$ by construction. Moreover, one has
\begin{align}
    \dist(x,\partial^+(\vec{\me})) = \begin{cases}
        \ell_\me-x, & \text{if $\overset{\me}{\rightsquigarrow} \partial^+(\vec{\me}_1) $,} \\ x , & \text{if $\partial^+(\vec{\me}_1) \overset{\me}{\rightsquigarrow}$,} 
    \end{cases} \quad \text{and} \quad \dist(\partial^-(\vec{\mf}),y) = \begin{cases}
        \ell_\mf - y, & \text{if $\overset{\mf}{\rightsquigarrow} \partial^-(\vec{\me}_n) $,} \\ y , & \text{if $\partial^-(\vec{\me}_n) \overset{\mf}{\rightsquigarrow}$.} 
    \end{cases}
\end{align}
By appropriate substitution, one sees that
\begin{align*}
    &\int_0^{\alpha} \int_0^{\beta} \e^{-\frac{(x + \ell + y)^2}{4t}} \dd y \dd x = \int_0^{\alpha} \int_0^{\beta} \e^{-\frac{(\alpha - x + \ell + y)^2}{4t}} \dd y \dd x \\& \quad\qquad= \int_0^{\alpha} \int_0^{\beta} \e^{-\frac{(x + \ell + \beta - y)^2}{4t}} \dd y \dd x = \int_0^{\alpha} \int_0^{\beta} \e^{-\frac{(\alpha - x + \ell + \beta - y)^2}{4t}} \dd y \dd x
\end{align*}
for all $\alpha,\beta,\ell \in \mathbb{R}_+$: therefore,
\begin{align}\label{eq:easier-integral}
    \int_0^{\ell_\me} \int_0^{\ell_\mf} \e^{-\frac{(\dist(x,\partial^+(\vec{\me})) + \ell(\vec{p}_\pm) + \dist(\partial^-(\vec{\mf}),y))^2}{4t}} \dd y \dd x = \int_0^{\ell_\me} \int_0^{\ell_\mf} \e^{-\frac{(x + \ell(\vec{p}_\pm) + y)^2}{4t}} \dd y \dd x,
\end{align}
for all $\vec \me,\vec \mf \in \mB$, all $\vec p \in \mathcal{P}_{\geq 2}(\partial^-(\vec \me), \partial^+(\vec \mf))$, and all $t > 0$. Combining now \eqref{eq:scattering-coefficient-one-to-one-corresp}, \eqref{eq:length-one-to-one-corresp} and \eqref{eq:easier-integral}, this eventually leads to
    \begin{align*}
        \heatcontp(\Graph;\mVD) &= \frac{1}{\sqrt{4\pi t}} \sum_{\me,\mf \in \mE} \int_0^{\ell_\me} \int_0^{\ell_\mf} \sum_{\vec{p} \in \mathcal{P}_{\geq 2}(x,y)} \alpha(\vec p) \e^{-\frac{\ell(\vec{p})^2}{4t}} \dd y \dd x \\&= \frac{1}{\sqrt{4\pi t}} \sum_{\me,\mf \in \mE} \int_0^{\ell_\me} \int_0^{\ell_\mf} \sum_{\vec{\me} \in \mathbf{b}(\me), \, \vec{\mf} \in \mathbf{b}(\mf)}\sum_{\stackrel{\vec{p} \in \mathcal{P}_{\geq 2}(\Graph)}{\vec{\me}_-(\vec{p}) = \vec{\me}, \, \vec{\me}_+(\vec{p}) = \vec{\mf}}} \alpha(\vec p) \e^{-\frac{(\dist(x,\partial^+(\vec{\me})) + \ell(\vec{p}_\pm)+\dist (\partial^-(\vec{\mf}),y))^2}{4t}} \dd y \dd x \\&=  \frac{1}{\sqrt{4\pi t}} \sum_{\me,\mf \in \mE} \sum_{\vec{\me} \in \mathbf{b}(\me), \, \vec{\mf} \in \mathbf{b}(\mf)}\sum_{\stackrel{\vec{p} \in \mathcal{P}_{\geq 2}(\Graph)}{\vec{\me}_-(\vec{p}) = \vec{\me}, \, \vec{\me}_+(\vec{p}) = \vec{\mf}}} \alpha(\vec p) \int_0^{\ell_\me} \int_0^{\ell_\mf} \e^{-\frac{(x + \ell(\vec{p}_\pm)+y)^2}{4t}} \dd y \dd x 
        \\&=  \frac{1}{\sqrt{4\pi t}} \sum_{\vec \me, \vec \mf \in \mB} \sum_{\stackrel{\vec{p} \in \mathcal{P}_{\geq 2}(\Graph)}{\vec{\me}_-(\vec{p}) = \vec{\me}, \, \vec{\me}_+(\vec{p}) = \vec{\mf}}} \alpha(\vec p) \int_0^{\ell_\me} \int_0^{\ell_\mf} \e^{-\frac{(x + \ell(\vec{p}_\pm)+y)^2}{4t}} \dd y \dd x
         \\&=  \frac{1}{\sqrt{4\pi t}} \sum_{\vec{p} \in \mathcal{P}_{\geq 2}(\Graph)} \alpha(\vec p) \int_0^{\ell_{\me_-(\vec{p})}} \int_0^{\ell_{\me_+(\vec{p})}} \e^{-\frac{(x + \ell(\vec{p}_\pm)+y)^2}{4t}} \dd y \dd x:
    \end{align*}
%    as $\mathcal{P}_{\geq 2}(\Graph) = \bigcup_{\vec{\me}, \vec{\mf} \in \mB} \big\{ \vec{p} \in \mathcal{P}_{\geq 2}(\Graph) \: : \: \vec{\me}_-(\vec{p}) = \vec{\me}, \: \vec{\me}_+(\vec{p}) = \vec{\mf} \big\}$, which 
this completes the proof.
\end{proof}

Using Leibniz' integral rule to compute the terms of the form $ \int_0^\alpha \int_0^\beta \e^{-\frac{(x+\ell+y)^2}{4t}} \dd y \dd x $ for $\alpha,\beta,\ell \in \mathbb{R}_+$, 
in a similar way to the proof of \autoref{lem:trivial-heat-content}, we reach at the following first combinatorial expression for the heat content which also involves the function $H$.

\begin{proposition}\label{thm:first-roth-like-expansion}
%    Let $\mathcal{G}$ be a metric graph as in \autoref{ass:graph}. 
Under \autoref{ass:graph} the following assertions hold.
    
(i) For $t>0$ each of the series
\[
\sum_{\vec{p} \in \mathcal{P}_{\geq 2}(\mathcal{G})} \alpha(\vec{p}) 
 H\bigg( \frac{\ell(\vec{p})}{2\sqrt{t}} \bigg) , \:\:
\sum_{\vec{p} \in \mathcal{P}_{\geq 2}(\mathcal{G})} \alpha(\vec{p}) 
H\bigg( \frac{\ell(\vec{p}_-)}{2\sqrt{t}} \bigg),\:\:
\sum_{\vec{p} \in \mathcal{P}_{\geq 2}(\mathcal{G})} \alpha(\vec{p})
 H\bigg( \frac{\ell(\vec{p}_+)}{2\sqrt{t}} \bigg) ,\:\:
  \sum_{\vec{p} \in \mathcal{P}_{\geq 2}(\mathcal{G})} \alpha(\vec{p})
H\bigg( \frac{\ell(\vec{p}_\pm)}{2\sqrt{t}} \bigg)
\]
converges.
    
(ii) The topological part of the heat content satisfies
\begin{align}\label{eq:first-roth-like-expansion}
        \heatcontp(\mathcal{G};\mVD) = \sqrt{t} \sum_{\vec{p} \in \mathcal{P}_{\geq 2}(\mathcal{G})} \alpha(\vec{p}) \left( H\bigg( \frac{\ell(\vec{p})}{2\sqrt{t}} \bigg) - H\bigg( \frac{\ell(\vec{p}_-)}{2\sqrt{t}} \bigg) - H\bigg( \frac{\ell(\vec{p}_+)}{2\sqrt{t}} \bigg) + H\bigg( \frac{\ell(\vec{p}_\pm)}{2\sqrt{t}} \bigg) \right)
        %\qquad \hbox{for all }t>0.
    \end{align}
for all $t>0$.
\end{proposition}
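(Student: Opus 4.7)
The strategy is, starting from the representation in \autoref{lem:representation-topological-heat-cont}, to evaluate explicitly the inner double integral
\[
I(\alpha,\beta,\ell;t):=\int_0^\alpha \int_0^\beta \e^{-\frac{(x+\ell+y)^2}{4t}} \dd y \dd x,\qquad \alpha,\beta,\ell\ge 0,
\]
which appears in every summand with $\alpha=\ell_{\me_-(\vec p)}$, $\beta=\ell_{\me_+(\vec p)}$ and $\ell=\ell(\vec p_\pm)$, and to match the resulting four terms with $H$ evaluated at the lengths of $\vec p$, $\vec p_-$, $\vec p_+$ and $\vec p_\pm$.

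For \textbf{step (ii)}, integrating first in $y$ via the substitution $s=x+\ell+y$ gives
\[
I(\alpha,\beta,\ell;t)=\sqrt{\pi t}\int_0^\alpha\!\left[\erfc\!\Big(\tfrac{x+\ell}{2\sqrt t}\Big)-\erfc\!\Big(\tfrac{x+\ell+\beta}{2\sqrt t}\Big)\right]\dd x.
\]
Since $H'=-\erfc$ by \eqref{eq:anti-primitive-erfc}, an antiderivative of $x\mapsto \erfc\big(\tfrac{x+c}{2\sqrt t}\big)$ is $x\mapsto -2\sqrt t\, H\big(\tfrac{x+c}{2\sqrt t}\big)$, whence
\[
I(\alpha,\beta,\ell;t)=2\sqrt{\pi}\,t\left[H\!\Big(\tfrac{\ell}{2\sqrt t}\Big)-H\!\Big(\tfrac{\ell+\alpha}{2\sqrt t}\Big)-H\!\Big(\tfrac{\ell+\beta}{2\sqrt t}\Big)+H\!\Big(\tfrac{\ell+\alpha+\beta}{2\sqrt t}\Big)\right].
\]
For $\vec p\in\mathcal P_{\geq 2}(\Graph)$, \autoref{rem:alpha-ell-cevvec}(2) yields $\ell(\vec p_\pm)+\ell_{\me_-(\vec p)}=\ell(\vec p_+)$, $\ell(\vec p_\pm)+\ell_{\me_+(\vec p)}=\ell(\vec p_-)$, and $\ell(\vec p_\pm)+\ell_{\me_-(\vec p)}+\ell_{\me_+(\vec p)}=\ell(\vec p)$. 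Plugging the resulting expression into \autoref{lem:representation-topological-heat-cont} and simplifying the prefactor $\tfrac{2\sqrt\pi\,t}{\sqrt{4\pi t}}=\sqrt t$ yields \eqref{eq:first-roth-like-expansion}, provided the sums can be separated; the latter is justified precisely by (i).

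For \textbf{step (i)}, we argue by absolute convergence. The scattering coefficients satisfy $|\alpha(\vec p)|\le 1$ (each factor $\beta(\vec\me_k,\vec\me_{k+1})$ in \eqref{eq:scattering-coefficients} lies in $[-1,1]$), and by the estimate \eqref{eq:estimate-erfc} together with \eqref{eq:anti-primitive-erfc} one has the simple bound $0<H(x)\le \tfrac{1}{\sqrt\pi}\e^{-x^2}$ for $x\ge 0$ (while on $(-\infty,0)$, $H$ is continuous and hence bounded on any relevant range, but one actually only meets nonnegative arguments in \eqref{eq:first-roth-like-expansion}). Since $\ell(\vec p)\ge \ell(\vec p_\pm)\ge (\#\vec p-2)\ell_{\min}$ (and likewise for $\ell(\vec p_\pm)$), each of the four series is dominated termwise by
\[
\frac{1}{\sqrt\pi}\sum_{n=2}^\infty \#\{\vec p\in\mathcal P_n(\Graph)\}\,\exp\!\Big(-\tfrac{(n-2)^2\ell_{\min}^2}{4t}\Big).
\]
The number of directed paths of combinatorial length $n$ in $\Graph$ is at most $2\#\mE\cdot (d_{\max}-1)^{n-1}$ with $d_{\max}:=\max_{\mv\in\mV}\deg(\mv)$, so this series converges for every $t>0$ thanks to the Gaussian (super-exponential) decay dominating the exponential growth in $n$.

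The main obstacle is the bookkeeping in step (ii): one must verify that the four $H$-terms produced by the explicit evaluation of $I$ really correspond to the lengths $\ell(\vec p),\ell(\vec p_-),\ell(\vec p_+),\ell(\vec p_\pm)$ with the correct signs, and that the separation of the single series into four pieces is legitimate. Both issues are handled by the absolute-convergence argument sketched in step (i), which also licenses the exchange of the sum over $\vec p$ with the integrations over $x,y$ that underlies \autoref{lem:representation-topological-heat-cont}.
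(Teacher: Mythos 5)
Your proposal is correct and follows essentially the same route as the paper: evaluate the inner Gaussian double integral in closed form via $H$ (the paper does this with the Leibniz rule, you by direct antidifferentiation, which is the same computation), match the four resulting arguments with $\ell(\vec p),\ell(\vec p_-),\ell(\vec p_+),\ell(\vec p_\pm)$ using \autoref{rem:alpha-ell-cevvec}, and justify everything by absolute convergence from the Gaussian decay of $H$ against the at most exponential growth of the number of paths. The only slip is your path count $2\#\mE\,(d_{\max}-1)^{n-1}$: since consecutive bonds may backtrack, the correct bound is $2\#\mE\,(d_{\max})^{n-1}$ (as in \eqref{eq:estimate-comb-n-dmax}), but this does not affect the convergence conclusion.
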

In particular, \autoref{thm:first-roth-like-expansion} shows that the heat content itself satisfies
    \begin{align}\label{eq:heat-cont-erf} 
    \begin{aligned}
        \mathcal{Q}_t(\mathcal{G};\mVD) &= \vert \Graph \vert + \frac{2\sqrt{t}}{\sqrt{\pi}} \# \mE - 2\sqrt{t} \sum_{\me \in \mE} H\bigg( \frac{\ell_\me}{2\sqrt{t}} \bigg) \\& \qquad +\sqrt{t} \sum_{\vec{p} \in \mathcal{P}(\mathcal{G})} \alpha(\vec{p}) \left( H\bigg( \frac{\ell(\vec{p})}{2\sqrt{t}} \bigg) - H\bigg( \frac{\ell(\vec{p}_-)}{2\sqrt{t}} \bigg) - H\bigg( \frac{\ell(\vec{p}_+)}{2\sqrt{t}} \bigg) + H\bigg( \frac{\ell(\vec{p}_\pm)}{2\sqrt{t}} \bigg) \right)
\\&= \vert \Graph \vert + \frac{2\sqrt{t}}{\sqrt{\pi}} \# \mE - 2\sqrt{t} \sum_{\me \in \mE} H\bigg( \frac{\ell_\me}{2\sqrt{t}} \bigg) \\& \qquad +\sqrt{t} \sum_{\vec{p} \in \mathcal{P}(\mathcal{G})} \alpha(\vec{p}) \left( H\bigg( \frac{\ell(\vec p)}{2\sqrt{t}} \bigg) - 2 H\bigg( \frac{\ell(\vec{p}_-)}{2\sqrt{t}} \bigg) + H\bigg( \frac{\ell(\vec{p}_\pm)}{2\sqrt{t}} \bigg) \right) 
        %\\&= \vert \Graph \vert - \sqrt{t} \sum_{p \in \mathcal{P}%(\mathcal{G})} \alpha(p) \left( \interfc\bigg( \frac{\ell(p)}%{2\sqrt{t}} \bigg) - 2\interfc\bigg( \frac{\ell(p_+)}{2\sqrt{t}} %\bigg)  + \interfc\bigg( \frac{\ell(p_\pm)}{2\sqrt{t}} \bigg) \right)  
  %      \label{eq:heat-cont-erfc}
    \end{aligned}
    \end{align}
   for every $t > 0$.
   
\begin{proof}[Proof of \autoref{thm:first-roth-like-expansion}]
(i) follows immediately from \eqref{eq:estimate-erfc}. 
    
(ii) Since for any path $\vec{p} \in \mathcal{P}(\Graph)$ one has $\ell(\vec{p}_-) = \ell(\cev{p}_+)$ and $\alpha(\vec{p}) = \alpha(\cev{p})$, and the map $\vec{p} \mapsto \cev{p}$ is a bijection on $\mathcal{P}(\Graph)$, it follows that
    \[
    \sum_{\vec{p} \in \mathcal{P}(\Graph)} \alpha(\vec{p}) H\bigg( \frac{\ell(\vec{p}_-)}{2\sqrt{t}} \bigg) = \sum_{\vec{p} \in \mathcal{P}(\Graph)} \alpha(\cev{p}) H\bigg( \frac{\ell(\cev{p}_+)}{2\sqrt{t}} \bigg) = \sum_{\vec{p} \in \mathcal{P}(\Graph)} \alpha(\vec{p}) H\bigg( \frac{\ell(\vec{p}_+)}{2\sqrt{t}} \bigg),
    \]
    thus, implying the second identity in \eqref{eq:heat-cont-erf}. 
    
 Using Leibniz integral rule once again, for $\alpha, \beta, \ell \in \mathbb{R}_+$, we deduce that
    \begin{align}\label{eq:roth-second-auxiliary-eq}
    \begin{aligned}
        &\int_0^\alpha \int_0^\beta \e^{-\frac{(x+\ell+y)^2}{4t}} \dd y \dd x \\& \quad\qquad = \sqrt{4\pi} t\Bigg( H\bigg(\frac{\ell}{2\sqrt{t}} \bigg) - H\bigg(\frac{\alpha + \ell}{2\sqrt{t}} \bigg) - H\bigg(\frac{\beta + \ell}{2\sqrt{t}} \bigg) + H\bigg(\frac{\alpha+\ell+\beta}{2\sqrt{t}} \bigg) \Bigg)
        \end{aligned} \quad \text{for all $t>0$.}
    \end{align}
    %\begin{align}\label{eq:roth-second-auxiliary-eq}
   % \begin{aligned}
     %   \int_0^\alpha \int_0^\beta \e^{-\frac{(x+\ell+y)^2}{4t}} \dd y \dd x &= \sqrt{\pi t} \bigg( \ell \erf \bigg( \frac{\ell}{2\sqrt{t}} \bigg) - (\alpha+\ell) \erf \bigg(\frac{\alpha + \ell}{2\sqrt{t}} \bigg) - (\beta + \ell) \erf \bigg(\frac{\beta + \ell}{2\sqrt{t}} \bigg) \\&+ (\alpha+\ell+\beta) \erf \bigg(\frac{\alpha + \ell + \beta}{2\sqrt{t}} \bigg)\bigg) \\&+ 2t\bigg(\e^{-\frac{\ell^2}{4t}} - \e^{-\frac{(\alpha + \ell)^2}{4t}} - \e^{-\frac{(\ell + \beta)^2}{4t}} + \e^{-\frac{(\alpha + \ell + \beta)^2}{4t}} \bigg).
     %   \end{aligned}
   % \end{align}
     Indeed, differentiating the left-hand side of \eqref{eq:roth-second-auxiliary-eq} with respect to $\beta$, we observe at first that
    \begin{align*}
        \frac{\mathrm{d}}{\mathrm{d}\beta} \int_0^\alpha \int_0^\beta \e^{-\frac{(x+\ell+y)^2}{4t}} \dd y \dd x &= \int_0^\alpha \e^{-\frac{(x+\ell+\beta)^2}{4t}} \dd x = 2 \sqrt{t} \int_{\frac{\ell+\beta}{2 \sqrt{t}}}^{\frac{\alpha + \ell + \beta}{2\sqrt{t}}} \e^{-x^2} \dd x \\&= \sqrt{t \pi} \bigg( \erfc  \bigg(\frac{\ell+\beta}{2\sqrt{t}}\bigg) - \erfc  \bigg( \frac{\alpha + \ell + \beta}{2\sqrt{t}} \bigg) \bigg)
    \end{align*}
    which yields, since $H'(x) = -\erfc (x)$ for all $x \in \mathbb{R}$,
    \begin{align*}
        \int_0^\alpha \int_0^\beta \mathrm{e}^{-\frac{(x+\ell+y)^2}{4t}} \dd y \dd x &= \sqrt{t \pi}\bigg(\int_0^\beta \erfc \bigg(\frac{\ell + s}{2\sqrt{t}} \bigg) \dd s - \int_0^\beta \erfc \bigg(\frac{\alpha + \ell + s}{2\sqrt{t}} \bigg) \dd s \bigg) \\&= \sqrt{4\pi}t \Bigg(  \int_{\frac{\ell}{2\sqrt{t}}}^{\frac{\ell + \beta}{2\sqrt{t}}} \erfc (s) \dd s - \int_{\frac{\alpha + \ell}{2\sqrt{t}}}^{\frac{\alpha + \ell + \beta}{2\sqrt{t}}} \erfc (s) \dd s \Bigg) \\&= \sqrt{4\pi}t \Bigg( H\bigg(\frac{\ell}{2\sqrt{t}} \bigg) - H\bigg(\frac{\ell+\beta}{2\sqrt{t}} \bigg) + H\bigg(\frac{\alpha + \ell + \beta}{2\sqrt{t}} \bigg) - H\bigg(\frac{\alpha + \ell}{2\sqrt{t}} \bigg) \Bigg)
        %\sqrt{\pi t} \bigg( (\alpha + \ell + \beta) \erf \bigg( %\frac{\alpha + \ell + \beta}{2\sqrt{t}} \bigg) - (\alpha + \ell) \erf %\bigg( \frac{\alpha + \ell}{2\sqrt{t}} \bigg) \\& - (\ell + \beta)%\erf \bigg( \frac{\ell + \beta}{2\sqrt{t}} \bigg) + \ell \erf \bigg( %\frac{\ell}{2\sqrt{t}} \bigg) \bigg) \\& + 2t \bigg( \e^{-%\frac{\ell^2}{4t}} - \e^{-\frac{(\alpha + \ell)^2}{4t}} - \e^{-%\frac{(\beta + \ell)^2}{4t}} + \e^{-\frac{(\alpha + \beta + \ell)^2}{4t}} \bigg),
    \end{align*}
    implying \eqref{eq:roth-second-auxiliary-eq} and, according to \autoref{lem:representation-topological-heat-cont}, the expansion in \eqref{eq:first-roth-like-expansion}.
    
Now as $\ell(\vec p_-) = \ell (\vec p_+) = 0$, $\ell (\vec p_\pm) = \ell(\cev p) = \ell(\vec p)$ and $\alpha(\vec{p}) = 1$ whenever $\# \vec{p} = 1$, it follows that
    \begin{align*}
   &\sum_{\vec p \in \mathcal{P}_{1}(\mathcal{G})} \alpha(\vec{p}) \left( H\bigg( \frac{\ell (\vec p)}{2\sqrt{t}} \bigg) - H\bigg( \frac{\ell (\vec p_-)}{2\sqrt{t}} \bigg) - H\bigg( \frac{\ell (\vec p_+)}{2\sqrt{t}} \bigg) + H\bigg( \frac{\ell (\vec p_\pm)}{2\sqrt{t}} \bigg) \right) \\& \qquad\quad = 2\sum_{\vec p \in \mathcal{P}_{1}(\mathcal{G})}  \left( H\bigg( \frac{\ell(\vec p)}{2\sqrt{t}} \bigg) - H( 0 ) \right) = -\frac{4\sqrt{t}}{\sqrt{\pi}} \# \mE + 4\sqrt{t} \sum_{\me \in \mE} H \bigg(\frac{\ell_\me}{2\sqrt{t}} \bigg)
    \end{align*}
    for all $t>0$, as $\# \mathcal{P}_1(\Graph) = 2\# \mE$, yielding the first identity in \eqref{eq:heat-cont-erf} due to \eqref{eq:first-roth-like-expansion}.
\end{proof}

All addends appearing in \eqref{eq:heat-cont-erf} are convergent for fixed $t>0$, for $\Graph$ and $\mVD \subset \mV$, and we can thus introduce quantities
        \begin{align}\label{eq:addend-1-heat-content-formula}
        \widetilde{\mathcal{Q}_{t}}(\Graph;\mVD) := \sum_{\vec{p} \in \mathcal{P}(\Graph)} \alpha(\vec{p}) \bigg( H\bigg(\frac{\ell(\vec p)}{2\sqrt{t}} \bigg) - H\bigg(\frac{\ell(\vec{p}_-)}{2\sqrt{t}} \bigg) \bigg), 
        \end{align}
     and
     \begin{align}\label{eq:addend-2-heat-content-formula}
     \widetilde{\widetilde{\mathcal{Q}_{t}}}(\Graph;\mVD) := \sum_{\vec p \in \mathcal{P}(\Graph)} \alpha(\vec p) \bigg( H\bigg(\frac{\ell(\vec p_+)}{2\sqrt{t}} \bigg) - H\bigg(\frac{\ell(\vec  p_\pm)}{2\sqrt{t}} \bigg) \bigg),
     \end{align}
for $t>0$ and study them separately, using  \autoref{lem:decomposition-lemma} and \autoref{lem:scattering-coeff-dual}. 

       \begin{lemma}\label{lem:q1tq2t}
       The expressions in \eqref{eq:addend-1-heat-content-formula} and \eqref{eq:addend-2-heat-content-formula} can be rewritten as
       \begin{align}\label{eq:modified-addend-1-heat-content-formula}
       \widetilde{\mathcal{Q}_{t}}(\Graph;\mVD) = 2  \sum_{\vec{p} \in \Pst{\mVD}} \alpha(\vec{p}) H\bigg( \frac{\ell(\vec{p})}{2\sqrt{t}} \bigg) - \frac{2\# \mE}{\sqrt{\pi}}
       \end{align}
       and
       \begin{align}\label{eq:modified-addend-2-heat-content-formula}
       \widetilde{\widetilde{\mathcal{Q}_{t}}}(\Graph;\mVD) =  2 \sum_{\vec p \in \Pst{\mVD}} \alpha(\vec p) H\bigg( \frac{\ell(\vec{p}_+)}{2\sqrt{t}} \bigg) - 2\sum_{\me \in \mE} H\bigg( \frac{\ell_\me}{2\sqrt{t}} \bigg),
       \end{align}
       respectively, for all $t>0$.
       \end{lemma}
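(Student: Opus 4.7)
The strategy is to split each of $\widetilde{\mathcal Q_t}$ and $\widetilde{\widetilde{\mathcal Q_t}}$ as the difference of two absolutely convergent series over $\mathcal P(\Graph)$ (convergence being guaranteed by \autoref{thm:first-roth-like-expansion}(i) together with $\#\mathcal P_1(\Graph)=2\#\mE<\infty$), then re-index according to the maps $\vec p\mapsto\vec p_-$ or $\vec p\mapsto\vec p_+$, and collapse the resulting inner sums via the Scattering Duality Lemma~\autoref{lem:scattering-coeff-dual}. Throughout, the subclass $\mathcal P_1(\Graph)$ has to be handled separately, since there $\vec p_-$ and $\vec p_+$ are trivial paths (of length $0$) while $\vec p_\pm=\cev p$ still has length $\ell_\me$.

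For \eqref{eq:modified-addend-1-heat-content-formula}, I write $\widetilde{\mathcal Q_t}(\Graph;\mVD)=A-B$ with
\[
A:=\sum_{\vec p\in\mathcal P(\Graph)}\alpha(\vec p)\,H\Big(\tfrac{\ell(\vec p)}{2\sqrt t}\Big),\qquad
B:=\sum_{\vec p\in\mathcal P(\Graph)}\alpha(\vec p)\,H\Big(\tfrac{\ell(\vec p_-)}{2\sqrt t}\Big).
\]
The $\mathcal P_1(\Graph)$-contribution to $B$ equals $2\#\mE\cdot H(0)=\frac{2\#\mE}{\sqrt\pi}$, since $\alpha(\vec p)=1$ and $\ell(\vec p_-)=0$ there. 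For the $\mathcal P_{\geq 2}(\Graph)$-contribution I re-index by $\vec q:=\vec p_-\in\mathcal P(\Graph)$: its preimages under this map are exactly the pre-extensions $\langle\vec q\rangle_-$, and \autoref{lem:scattering-coeff-dual} then gives $\sum_{\vec p\in\langle\vec q\rangle_-}\alpha(\vec p)=\alpha(\vec q)$ whenever $\vec q\notin\Pst{\mVD}$ and $=-\alpha(\vec q)$ otherwise. The part over $\mathcal P(\Graph)\setminus\Pst{\mVD}$ reproduces $A$, so $A-B$ boils down to $2\sum_{\vec q\in\Pst{\mVD}}\alpha(\vec q)H(\ell(\vec q)/2\sqrt t)-\frac{2\#\mE}{\sqrt\pi}$, which is exactly \eqref{eq:modified-addend-1-heat-content-formula}.

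For \eqref{eq:modified-addend-2-heat-content-formula} the key algebraic observation is that $\vec p_\pm=(\vec p_+)_-$ whenever $\#\vec p\ge 2$, so that re-indexing by $\vec r:=\vec p_+\in\mathcal P(\Graph)$ makes \emph{both} $H$-terms depend only on $\vec r$. Splitting $\widetilde{\widetilde{\mathcal Q_t}}=E-F$ analogously, with $E,F$ the sums of $\alpha(\vec p)H(\ell(\vec p_+)/2\sqrt t)$ and $\alpha(\vec p)H(\ell(\vec p_\pm)/2\sqrt t)$ respectively, the $\mathcal P_1$-pieces give
\[
E_1=\tfrac{2\#\mE}{\sqrt\pi},\qquad F_1=2\sum_{\me\in\mE}H\Big(\tfrac{\ell_\me}{2\sqrt t}\Big),
\]
the latter thanks to $\vec p_\pm=\cev p$. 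For the $\mathcal P_{\geq 2}$-pieces I re-index by $\vec r$ and apply the second identity of \autoref{lem:scattering-coeff-dual} to $\langle\vec r\rangle_+$: the parts over $\mathcal P(\Graph)\setminus\Pend{\mVD}$ reconstruct $A$ and $B$ respectively, whence
\[
E-F=\Big(\tfrac{2\#\mE}{\sqrt\pi}+(A-B)\Big)-2\sum_{\me\in\mE}H\Big(\tfrac{\ell_\me}{2\sqrt t}\Big)-2\sum_{\vec r\in\Pend{\mVD}}\alpha(\vec r)\Big(H\big(\tfrac{\ell(\vec r)}{2\sqrt t}\big)-H\big(\tfrac{\ell(\vec r_-)}{2\sqrt t}\big)\Big).
\]
Substituting the already-proved identity \eqref{eq:modified-addend-1-heat-content-formula} for $A-B$ cancels the $\frac{2\#\mE}{\sqrt\pi}$-terms, and the path-reversal bijection $\vec r\mapsto\cev r$ from \autoref{rem:alpha-ell-cevvec}(3), which preserves $\alpha$ and $\ell$, swaps $\Pend{\mVD}$ with $\Pst{\mVD}$, and converts $\ell(\vec r_-)$ into $\ell(\vec s_+)$ for $\vec s=\cev r$, rewrites every residual sum as a sum over $\Pst{\mVD}$. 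The $H(\ell(\vec p)/2\sqrt t)$-contributions cancel in this final step, leaving precisely \eqref{eq:modified-addend-2-heat-content-formula}.

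The main technical obstacle is the bookkeeping of the $\mathcal P_1(\Graph)$-contributions, where $\vec p_\pm=\cev p$ behaves anomalously relative to $\vec p_-$ and $\vec p_+$; one must also take care to invoke the correct sign case of \autoref{lem:scattering-coeff-dual} at each re-indexing (first identity for $\vec p\mapsto\vec p_-$, second identity for $\vec p\mapsto\vec p_+$) and to apply the reversal bijection only at the very end, in order to line the sums up against the $\Pst{\mVD}$-form in which the target identities are stated.
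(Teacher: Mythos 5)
Your argument is correct, and for the first identity it is essentially the paper's proof in different clothing: the paper first splits $\mathcal P(\Graph)=\Pend{\mVD}\sqcup\Pend{\mVN}$ and then applies the Decomposition Lemma (\autoref{lem:decomposition-lemma}) within each piece, whereas you re-index globally over $\mathcal P_{\geq 2}(\Graph)$ via $\vec p\mapsto\vec p_-$ and only afterwards split according to $\Pst{\mVD}$ versus its complement; the two computations coincide term by term, and your invocation of absolute convergence is the right justification for the regrouping. For the second identity, however, you genuinely diverge. The paper again uses \emph{pre}-extensions $\langle\cdot\rangle_-$, exploiting that $\vec q_\pm=(\vec q_-)_+=\vec p_+$ for $\vec q\in\langle\vec p\rangle_-$, so the $H(\ell(\vec p_\pm))$-sum collapses directly onto the $H(\ell(\vec p_+))$-sum and \eqref{eq:modified-addend-2-heat-content-formula} drops out in one pass, independently of \eqref{eq:modified-addend-1-heat-content-formula}. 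You instead re-index via \emph{post}-extensions $\langle\cdot\rangle_+$, which turns both $H$-terms into functions of $\vec r=\vec p_+$ alone ($\ell(\vec r)$ and $\ell(\vec r_-)$) and thereby reduces the whole expression to the already-computed combination $A-B$ plus residual sums over $\Pend{\mVD}$, which you then convert to $\Pst{\mVD}$-sums by the reversal bijection of \autoref{rem:alpha-ell-cevvec}(3). Your route is slightly longer and makes the second identity depend on the first, but it buys a uniform re-indexing principle (always pushing forward along the deletion map and summing the fibres via \autoref{lem:scattering-coeff-dual}); I verified that the signs, the $\mathcal P_1(\Graph)$-boundary terms ($2\#\mE\,H(0)$ versus $2\sum_\me H(\ell_\me/2\sqrt t)$, owing to $\vec p_\pm=\cev p$), and the final cancellations all come out right. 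Two cosmetic points: the phrase ``the part over $\mathcal P(\Graph)\setminus\Pst{\mVD}$ reproduces $A$'' should read that it reproduces the corresponding part of $A$ (so that it cancels in $A-B$), and you should cite \autoref{lem:decomposition-lemma} (with $\mW=\mV$) as the statement that the fibres of $\vec p\mapsto\vec p_\mp$ over $\mathcal P_{\geq 2}(\Graph)$ are exactly the sets $\langle\vec r\rangle_\mp$ and exhaust $\mathcal P(\Graph)$.
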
 
       Note that, again due to the symmetry $\vec{p} \mapsto \cev{p}$, one can replace the set $\Pst{\mVD}$ in \eqref{eq:modified-addend-1-heat-content-formula} and \eqref{eq:modified-addend-2-heat-content-formula} by $\Pend{\mVD}$ and vice versa.
       \begin{proof}
       We first determine the identity in \eqref{eq:modified-addend-1-heat-content-formula}: first, as $\mathcal{P}(\Graph) = \Pend{\mVD} \cup \Pend{\mVN}$, we can write
       \begin{align}\label{eq:decomposition-q-1-t}
       \begin{aligned}
       \widetilde{\mathcal{Q}_{t}}(\Graph;\mVD) &= \sum_{\vec{p} \in \Pend{\mVD}} \alpha(\vec{p}) \bigg( H\bigg(\frac{\ell(\vec p)}{2\sqrt{t}} \bigg) - H\bigg(\frac{\ell(\vec  p_-)}{2\sqrt{t}} \bigg) \bigg) \\& \quad\qquad + \sum_{\vec{p} \in \Pend{\mVN}} \alpha(\vec{p}) \bigg( H\bigg(\frac{\ell(\vec p)}{2\sqrt{t}} \bigg) - H\bigg(\frac{\ell(\vec  p_-)}{2\sqrt{t}} \bigg) \bigg) \\& =: \widetilde{\mathcal{Q}_{t}^\mVD}(\Graph;\mVD) + \widetilde{\mathcal{Q}_{t}^\mVN}(\Graph;\mVD),
       \end{aligned} \qquad \text{for all $t>0$.}
       \end{align}
       According to 
%       the Decomposition Lemma (
\autoref{lem:decomposition-lemma}, for $\mW \in \{ \mVD, \mVN \}$ one has that
\begin{align}\label{eq:application-decomposition-lemma}
\begin{aligned}
\sum_{\vec{p} \in \Pend{\mW}} \alpha(\vec{p})H\bigg( \frac{\ell(\vec{p}_-)}{2\sqrt{t}} \bigg) &= \sum_{\vec{p} \in \Pend{\mW}} \sum_{\vec{q} \in \langle \vec{p} \rangle_-} \alpha(\vec{q}) H \bigg( \frac{\ell(\vec{q}_-)}{2\sqrt{t}} \bigg) + \sum_{\vec{p} \in \Pend{\mW} \cap \mathcal{P}_1(\Graph)} \alpha(\vec{p}) H \bigg( \frac{\ell(\vec{p}_-)}{2\sqrt{t}} \bigg) \\&=  \sum_{\vec{p} \in \Pend{\mW}} \sum_{\vec{q} \in \langle \vec{p} \rangle_-} \alpha(\vec{q}) H \bigg( \frac{\ell(\vec{p})}{2\sqrt{t}} \bigg) + \sum_{\vec{p} \in \Pend{\mW} \cap \mathcal{P}_1(\Graph)} \frac{1}{\sqrt{\pi}}
\end{aligned}
\end{align}       
  for all $t>0$, because $\vec{q}_- = \vec{p}$ for all $\vec{q} \in \langle \vec{p} \rangle_-$ by definition, and $\ell(\vec{p}_-)=0$ as well as $\alpha(\vec{p}) =1$ for all $\vec{p} \in \mathcal{P}_1(\Graph)$. Thus, due to the convergence in \autoref{thm:first-roth-like-expansion} for any $t>0$, we can decompose $\widetilde{\mathcal{Q}_{t}^\mW}(\Graph;\mVD)$, for $\mW \in \{ \mVD,\mVN \}$ as 
        \begin{align}\label{eq:help-q-1-t}
        \begin{aligned}
        \widetilde{\mathcal{Q}_{t}^\mW}(\Graph;\mVD) &= \sum_{\vec{p} \in \Pend{\mW}} \alpha(\vec{p}) H\bigg(\frac{\ell(\vec p)}{2\sqrt{t}} \bigg) -  \sum_{\vec{p} \in \Pend{\mW}} \alpha(\vec{p}) H\bigg(\frac{\ell(\vec  p_-)}{2\sqrt{t}} \bigg) \\&= \sum_{\vec{p} \in \Pend{\mW}} \Bigg( \alpha(\vec{p}) - \sum_{\vec{q} \in \langle \vec{p} \rangle_-} \alpha(\vec{q}) \Bigg) H\bigg(\frac{\ell(\vec{p})}{2\sqrt{t}} \bigg) - \sum_{\vec{p} \in \Pend{\mW} \cap \mathcal{P}_1(\Graph)} \frac{1}{\sqrt{\pi}},
        \end{aligned}
\end{align}               
     for every $t>0$. Now, by \autoref{lem:scattering-coeff-dual}, we have that 
     \begin{align}\label{eq:scattering-dual-terms}
     \alpha(\vec{p}) - \sum_{\vec{q} \in \langle \vec{p} \rangle_-} \alpha(\vec{q}) = \begin{cases} 2\alpha(\vec{p}), & \text{if $\mW = \mVD$}, \\ 0, & \text{if $\mW = \mVN$,}
     \end{cases} \qquad \text{for all $\vec{p} \in \Pst{\mW}$},
\end{align}       
therefore, we obtain for $\widetilde{\mathcal{Q}_{t}^\mVD}(\Graph;\mVD)$ and $\widetilde{\mathcal{Q}_{t}^\mVN}(\Graph;\mVD)$ that
\begin{align*}
\widetilde{\mathcal{Q}_{t}^\mVD}(\Graph;\mVD) &= 2\sum_{ \vec{p} \in \Pend{\mVD} \cap \Pst{\mVD}} \alpha(\vec{p})H\bigg(\frac{\ell(\vec p)}{2\sqrt{t}} \bigg) - \sum_{\vec{p} \in \Pend{\mVD} \cap \mathcal{P}_1(\Graph)} \frac{1}{\sqrt{\pi}} \\&= 2\sum_{\vec{p} \in \mathcal{P}_{\mVD}(\Graph)} \alpha(\vec{p})H\bigg(\frac{\ell(\vec p)}{2\sqrt{t}} \bigg) - \sum_{\vec{p} \in \Pend{\mVD} \cap \mathcal{P}_1(\Graph)} \frac{1}{\sqrt{\pi}}
\end{align*}
and
\begin{align*}
\widetilde{\mathcal{Q}_{t}^\mVN}(\Graph;\mVD) &= 2\sum_{\vec{p} \in \Pend{\mVN} \cap \Pst{\mVD}} \alpha(\vec{p})H\bigg(\frac{\ell(\vec p)}{2\sqrt{t}} \bigg) - \sum_{\vec{p} \in \Pend{\mVN} \cap \mathcal{P}_1(\Graph)} \frac{1}{\sqrt{\pi}} \\&= 2\sum_{\vec{p} \in \mathcal{P}_{\mVD,\mVN}(\Graph)} \alpha(\vec{p})H\bigg(\frac{\ell(\vec p)}{2\sqrt{t}} \bigg) - \sum_{\vec{p} \in \Pend{\mVN} \cap \mathcal{P}_1(\Graph)} \frac{1}{\sqrt{\pi}}
\end{align*}
        for all $t>0$, yielding \eqref{eq:modified-addend-1-heat-content-formula} by \eqref{eq:decomposition-q-1-t}, since 
        \begin{align}\label{eq:decomposition-path-sets}
        \begin{aligned}
        \mathcal{P}_{\mVD}(\Graph) \sqcup \mathcal{P}_{\mVD,\mVN}(\Graph) &= \Big( \Pend{\mVD} \cap \Pst{\mVD} \Big) \sqcup \Big( \Pend{\mVN} \cap \Pst{\mVD} \Big) \\&= \underbrace{\big( \Pend{\mVD} \sqcup \Pend{\mVN} \big)}_{= \mathcal{P}(\Graph)} \cap \: \Pst{\mVD} = \Pst{\mVD};
        \end{aligned}
        \end{align}
        likewise $\big(\Pend{\mVD} \cap \mathcal{P}_1(\Graph)\big) \sqcup \big(\Pend{\mVN} \cap \mathcal{P}_1(\Graph)\big) = \mathcal{P}_1(\Graph)$, and consequently,
        \[
        \sum_{\vec{p} \in \mathcal{P}_1(\Graph)} \frac{1}{\sqrt{\pi}} = \sum_{\vec{\me} \in \mB} \frac{1}{\sqrt{\pi}} = \frac{\# \mB}{\sqrt{\pi}} = \frac{2\# \mE}{\sqrt{\pi}}.
        \]
        
        We determine \eqref{eq:modified-addend-2-heat-content-formula} in a similar manner: again, we decompose 
        \begin{align}\label{eq:decomposition-q-2-t}
       \begin{aligned}
       \widetilde{\widetilde{\mathcal{Q}_{t}}}(\Graph;\mVD) &= \sum_{\vec{p} \in \Pend{\mVD}} \alpha(\vec{p}) \bigg( H\bigg(\frac{\ell(\vec p_+)}{2\sqrt{t}} \bigg) - H\bigg(\frac{\ell(\vec  p_\pm)}{2\sqrt{t}} \bigg) \bigg) \\& \quad\qquad + \sum_{\vec{p} \in \Pend{\mVN}} \alpha(\vec{p}) \bigg( H\bigg(\frac{\ell(\vec p_+)}{2\sqrt{t}} \bigg) - H\bigg(\frac{\ell(\vec  p_\pm)}{2\sqrt{t}} \bigg) \bigg) \\& =: \widetilde{\widetilde{\mathcal{Q}_{t}^\mVD}}(\Graph;\mVD) + \widetilde{\widetilde{\mathcal{Q}_{t}^\mVN}}(\Graph;\mVD),
       \end{aligned} \qquad \text{for all $t>0$.}
       \end{align}
        and, using again \autoref{lem:decomposition-lemma},  write
        \begin{align}\label{eq:application-decomposition-lemma-2}
\begin{aligned}
\sum_{\vec{p} \in \Pend{\mW}} \alpha(\vec{p})H\bigg( \frac{\ell(\vec{p}_\pm)}{2\sqrt{t}} \bigg) &= \sum_{\vec{p} \in \Pend{\mW}} \sum_{\vec{q} \in \langle \vec{p} \rangle_-} \alpha(\vec{q}) H \bigg( \frac{\ell(\vec{q}_\pm)}{2\sqrt{t}} \bigg) + \sum_{\vec{p} \in \Pend{\mW} \cap \mathcal{P}_1(\Graph)} \alpha(\vec{p}) H \bigg( \frac{\ell(\vec{p}_\pm)}{2\sqrt{t}} \bigg) \\&=  \sum_{\vec{p} \in \Pend{\mW}} \sum_{\vec{q} \in \langle \vec{p} \rangle_-} \alpha(\vec{q}) H \bigg( \frac{\ell(\vec{p}_+)}{2\sqrt{t}} \bigg) + \sum_{\vec{p} \in \Pend{\mW} \cap \mathcal{P}_1(\Graph)} H\bigg( \frac{\ell(\vec{p})}{2\sqrt{t}} \bigg)
\end{aligned}
\end{align} 
for $\mW \in \{ \mVD, \mVN \}$ and all $t>0$, as $\vec{q}_\pm = (\vec{q}_-)_+ = \vec{p}_+$ for any $\vec{q} \in \langle \vec{p} \rangle_-$, and $\vec{p}_\pm = \cev{p}$ for all $\vec{p} \in \mathcal{P}_1(\Graph)$, thus $\ell(\vec{p}_\pm) = \ell(\cev{p}) = \ell(\vec{p})$ and $\alpha(\vec{p}_\pm) = \alpha(\cev{p}) = \alpha(\vec{p})= 1$ by definition.
Hence, like in \eqref{eq:help-q-1-t}, this implies 
\begin{align}\label{eq:help-q-2-t}
        \begin{aligned}
        \widetilde{\widetilde{\mathcal{Q}_{t}^\mW}}(\Graph;\mVD) &= \sum_{\vec{p} \in \Pend{\mW}} \alpha(\vec{p}) H\bigg(\frac{\ell(\vec p_+)}{2\sqrt{t}} \bigg) -  \sum_{\vec{p} \in \Pend{\mW}} \alpha(\vec{p}) H\bigg(\frac{\ell(\vec  p_\pm)}{2\sqrt{t}} \bigg) \\&= \sum_{\vec{p} \in \Pend{\mW}} \Bigg( \alpha(\vec{p}) - \sum_{\vec{q} \in \langle \vec{p} \rangle_-} \alpha(\vec{q}) \Bigg) H\bigg(\frac{\ell(\vec{p}_+)}{2\sqrt{t}} \bigg) - \sum_{\vec{p} \in \Pend{\mW} \cap \mathcal{P}_1(\Graph)} H\bigg( \frac{\ell(\vec{p})}{2\sqrt{t}} \bigg),
        \end{aligned}
\end{align} 
for $\mW \in \{ \mVD,\mVN \}$ and all $t>0$ and again by \eqref{eq:scattering-dual-terms} this leads to the expressions
\begin{align*}
\widetilde{\widetilde{\mathcal{Q}_{t}^\mVD}}(\Graph;\mVD) &= 2\sum_{\vec{p} \in \Pend{\mVD} \cap \Pst{\mVD}} \alpha(\vec{p})H\bigg(\frac{\ell(\vec p_+)}{2\sqrt{t}} \bigg) - \sum_{\vec{p} \in \Pend{\mVD} \cap \mathcal{P}_1(\Graph)} H\bigg(\frac{\ell(\vec{p})}{2\sqrt{t}} \bigg) \\&= 2\sum_{\vec{p} \in \mathcal{P}_{\mVD}(\Graph)} \alpha(\vec{p})H\bigg(\frac{\ell(\vec p_+)}{2\sqrt{t}} \bigg) - \sum_{\vec{p} \in \Pend{\mVD} \cap \mathcal{P}_1(\Graph)} H\bigg(\frac{\ell(\vec{p})}{2\sqrt{t}} \bigg)
\end{align*}
and
\begin{align*}
\widetilde{\widetilde{\mathcal{Q}_{t}^\mVN}}(\Graph;\mVD) &= 2\sum_{\vec{p} \in \Pend{\mVN} \cap \Pst{\mVD}} \alpha(\vec{p})H\bigg(\frac{\ell(\vec p_+)}{2\sqrt{t}} \bigg) - \sum_{\vec{p} \in \Pend{\mVN} \cap \mathcal{P}_1(\Graph)}  H\bigg(\frac{\ell(\vec{p})}{2\sqrt{t}} \bigg) \\&= 2\sum_{\vec{p} \in \mathcal{P}_{\mVD,\mVN}(\Graph)} \alpha(\vec{p})H\bigg(\frac{\ell(\vec p_+)}{2\sqrt{t}} \bigg) - \sum_{\vec{p} \in \Pend{\mVN} \cap \mathcal{P}_1(\Graph)}  H\bigg(\frac{\ell(\vec{p})}{2\sqrt{t}} \bigg)
\end{align*}
        for all $t>0$. These expressions for $\widetilde{\widetilde{\mathcal{Q}_{t}^\mVD}}(\Graph;\mVD)$ and $\widetilde{\widetilde{\mathcal{Q}_{t}^\mVN}}(\Graph;\mVD)$, respectively, then imply \eqref{eq:modified-addend-2-heat-content-formula} by \eqref{eq:decomposition-q-2-t}, using the same decompositions as in \eqref{eq:decomposition-path-sets} and afterwards, noting that
        \[
        \sum_{\vec{p} \in \mathcal{P}_1(\Graph)} H \bigg(\frac{\ell(\vec{p})}{2\sqrt{t}} \bigg) = \sum_{\vec{\me} \in \mB} H \bigg(\frac{\ell_\me}{2\sqrt{t}} \bigg) = 2\sum_{\me \in \mE} H \bigg(\frac{\ell_\me}{2\sqrt{t}} \bigg);
        \]
        this finishes the proof.      
      \end{proof}

      \begin{proof}[Proof of \autoref{thm:heat-content-formula-bif-mug}]
{We are going to show that
\begin{align}\label{eq:heat-content-formula-bif-mug-2}
\heatcont(\Graph;\mVD) &= \vert \Graph \vert - \frac{2\sqrt{t}}{\sqrt{\pi}} \# \mVD + 4\sqrt{t} \sum_{\vec{p} \in \mathcal{P}_\mVD(\Graph)} \alpha(\vec p) \, H\bigg( \frac{\ell(\vec p)}{2\sqrt{t}} \bigg),
\end{align}
which is equivalent to \eqref{eq:heat-content-formula-bif-mug}.
}            
            Plugging \eqref{eq:addend-1-heat-content-formula} and \eqref{eq:addend-2-heat-content-formula} in \eqref{eq:heat-cont-erf}, and then applying \autoref{lem:q1tq2t}, we reach at
      \begin{align}\label{eq:decomposition-pre-final-heat-content-formula}
      \begin{aligned}
      \heatcont(\Graph;\mVD) &= \vert \Graph \vert +\frac{2\sqrt{t}}{\sqrt{\pi}} \# \mE - 2\sqrt{t}\sum_{\me \in \mE} H \bigg( \frac{\ell_\me}{2\sqrt{t}} \bigg) + \sqrt{t} \Big(\widetilde{\mathcal{Q}_{t}}(\Graph;\mVD) - \widetilde{\widetilde{\mathcal{Q}_{t}}}(\Graph;\mVD) \Big) \\&= \vert \Graph \vert + 2\sqrt{t} \sum_{\vec{p} \in \Pst{\mVD}} \alpha(\vec p)\Bigg( H\bigg( \frac{\ell(\vec{p})}{2\sqrt{t}} \bigg) - H\bigg( \frac{\ell(\vec{p}_+)}{2\sqrt{t}} \bigg) \Bigg),
      \end{aligned}
      \end{align}
      for $t>0$. Now,
by \autoref{lem:decomposition-lemma}, we can write
    \begin{align*}
    \sum_{\vec{p} \in \Pst{\mVD}} \alpha(\vec p)H\bigg( \frac{\ell(\vec{p}_+)}{2\sqrt{t}} \bigg) &= \sum_{\vec{p} \in \Pst{\mVD}} \sum_{\vec{q} \in \langle \vec{p} \rangle_+} \alpha(\vec{q}) H\bigg( \frac{\ell(\vec{q}_+)}{2\sqrt{t}} \bigg) + \sum_{\vec{p} \in \Pst{\mVD} \cap \mathcal{P}_1(\Graph)} \alpha(\vec{p}) H\bigg( \frac{\ell(\vec{p}_+)}{2\sqrt{t}} \bigg) \\&= \sum_{\vec{p} \in \Pst{\mVD}} \sum_{\vec{q} \in \langle \vec{p} \rangle_+} \alpha(\vec{q}) H\bigg( \frac{\ell(\vec{p})}{2\sqrt{t}} \bigg) + \sum_{\vec{p} \in \Pst{\mVD} \cap \mathcal{P}_1(\Graph)} \frac{1}{\sqrt{\pi}},
    \end{align*}
    for every $t>0$, since $\vec{q}_+ = \vec{p}$ for any $\vec{q} \in \langle \vec{p} \rangle_+$, and $\ell(\vec{p}_+) = 0$ as well as $\alpha(\vec{p})=1$ for every $\vec{p} \in \mathcal{P}_1(\Graph)$; also, $H(0)=\frac{1}{\sqrt{\pi}}$ follows immediately from \eqref{eq:anti-primitive-erfc}.  Moreover, by \autoref{lem:scattering-coeff-dual}
\begin{align}\label{eq:scattering-dual-terms-2}
     \alpha(\vec{p}) - \sum_{\vec{q} \in \langle \vec{p} \rangle_+} \alpha(\vec{q}) = \begin{cases} 2\alpha(\vec{p}), & \text{if $\vec{p} \in \Pend{\mVD}$}, \\ 0, & \text{if $\vec{p} \in \Pend{\mVN}$,}
     \end{cases} \qquad \text{for all $\vec{p} \in \Pst{\mVD}$}.
\end{align}      
Using \autoref{thm:first-roth-like-expansion}.(i), this eventually leads to
    \begin{align}\label{eq:respresentation-rest-heat-content-formula}
    \begin{aligned}
    &\sum_{\vec{p} \in \Pst{\mVD}} \alpha(\vec p) \Bigg( H\bigg( \frac{\ell(\vec{p})}{2\sqrt{t}} \bigg) - H\bigg( \frac{\ell(\vec{p}_+)}{2\sqrt{t}} \bigg) \Bigg) \\
%    & \qquad\quad = \sum_{\vec{p} \in \Pst{\mVD}} \alpha(\vec p) H\bigg( \frac{\ell(\vec{p})}{2\sqrt{t}} \bigg) - \sum_{\vec{p} \in \Pst{\mVD}} \alpha(\vec p) H\bigg( \frac{\ell(\vec{p}_+)}{2\sqrt{t}} \bigg) 
\\& \qquad\quad = \sum_{\vec{p} \in \Pst{\mVD}} \Bigg( \alpha(\vec{p}) - \sum_{\vec{q} \in \langle \vec{p} \rangle_+} \alpha(\vec{q}) \Bigg) H\bigg( \frac{\ell(\vec{p})}{2\sqrt{t}}\bigg) -\sum_{\vec{p} \in \Pst{\mVD} \cap \mathcal{P}_1(\Graph)} \frac{1}{\sqrt{\pi}} \\
& \qquad\quad 
= 2\sum_{\vec{p} \in \Pst{\mVD} \cap \Pend{\mVD}} \alpha(\vec{p})H\bigg( \frac{\ell(\vec{p})}{2\sqrt{t}}\bigg) - \sum_{\vec{\me} \in \mB, \: \partial^-(\vec{\me}) \in \mVD} \frac{1}{\sqrt{\pi}} \\
&\qquad\quad
    = 2\sum_{\vec{p} \in \mathcal{P}_\mVD(\Graph)} \alpha(\vec{p})H\bigg( \frac{\ell(\vec{p})}{2\sqrt{t}}\bigg) - \frac{\# \mVD}{\sqrt{\pi}}.
    \end{aligned}
    \end{align}
    Now plugging \eqref{eq:respresentation-rest-heat-content-formula} into \eqref{eq:decomposition-pre-final-heat-content-formula} finally yields \eqref{eq:heat-content-formula-bif-mug-2}, and thus the claimed formula in \eqref{eq:heat-content-formula-bif-mug}.
\end{proof}       
\begin{exa}[Lasso graph]\label{ex:lasso}
Let us examine \eqref{eq:heat-content-formula-bif-mug} where $\Graph$ is a \emph{lasso graph} (see \autoref{fig:lasso}) with a single Dirichlet vertex $\mVD = \{ \mv_\mathrm{D} \}$ connected to an edge $\me_1$ of length $\ell_1 >0$ with a standard vertex $\mv_\mathrm{N}$ at the other end which is connected to a loop $\me_2$ of length $\ell_2 > 0$:
        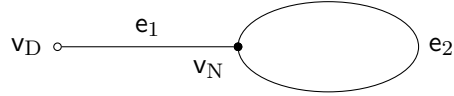
\begin{figure}[h]
        \begin{tikzpicture}[scale=0.6]
      \tikzset{enclosed/.style={draw, circle, inner sep=0pt, minimum size=.1cm, fill=black}, every loop/.style={}}
      \node[enclosed, label = {below left: $\mv_\mathrm{N}$}] (Z) at (0,2) {};
      \node[enclosed, label = {left: $\mv_\mathrm{D}$}, fill=white] (A) at (-4,2) {};
      \draw[-] (Z) edge node[above] {$\me_1$} (A) node[midway, above] (edge1) {};
      \draw[-] (Z) arc [start angle=-180, end angle=180,
                  x radius=2cm, 
                  y radius=10mm] node[right] [pos=0.5] {$\me_2$};
      \node[enclosed] (Z') at (0,2) {};
     \end{tikzpicture}
     \vspace{-0.7cm}
     \caption{A lasso graph with a Dirichlet condition at $\mv_\mathrm{D}$.} \label{fig:lasso}
     \end{figure}
     
Given $\vec p \in \mathcal{P}(\{ \mvD \})$, i.e., a directed path starting and ending at $\mvD$, we can count the number
\begin{itemize}
\item 
% (``reflections at $\mv_\mathrm{D}$''
$R_{\vec p}(\mv_\mathrm{D})$ of times $\vec{p}$ touches $\mvD$;
\item
% (``reflections at $\mv_\mathrm{N}$ into $\me_1$'': 
$R_{\vec p}^{(1)}(\mv_\mathrm{N})$ of times $\vec{p}$ arrives along $\me_1$, touches $\mvN$, and is reflected back into $\me_1$;
\item
% (``reflections at $\mv_\mathrm{N}$ into $\me_2$'': 
$R_{\vec p}^{(2)}(\mv_\mathrm{N})$ of times $\vec{p}$ arrives along $\me_2$, touches $\mvN$, and is reflected back into $\me_2$;
\item $T_{\vec p}^{(1)}(\mv_\mathrm{N})$ of times $\vec{p}$ arrives along $\me_2$ and goes into $\me_1$ by transferring through $\mvN$;
\item $T_{\vec p}^{(2)}(\mv_\mathrm{N})$ of times $\vec{p}$ arrives along $\me_2$ and goes into $\me_2$ by transferring through $\mvN$.
\end{itemize}
 respectively.
%Note that, since $\# \vec{p} \geq 1$, there surely holds
%\footnote{\DM{Warum ist es wichtig?}}
  %      \[
   %     R_{\vec p}(\mv_\mathrm{D}) + R_{\vec p}^{\ell_1}(\mv_\mathrm{N}) + R_{\vec p}^{\ell_2}(\mv_\mathrm{N}) + T_{\vec p}^{\ell_1}(\mv_\mathrm{N}) + T_{\vec p}^{\ell_2}(\mv_\mathrm{N}) \geq 1.
   %     \] 
        This leads to a length of
        \[
        \ell(\vec p) = \big(R_{\vec{p}}(\mv_\mathrm{D}) + R_{\vec{p}}^{(1)}(\mv_\mathrm{N}) +T_{\vec p}^{(1)}(\mv_\mathrm{N})+ 1 \big)\ell_1 + \big(R_{\vec{p}}^{(2)}(\mv_\mathrm{N}) + T_{\vec{p}}^{(2)}(\mv_\mathrm{N}) \big) \ell_2
        \]
        and a scattering coefficient
        \begin{align*}
        \alpha(\vec p) &= (-1)^{R_{\vec p}(\mv_\mathrm{D})}\bigg(-\frac{1}{3}\bigg)^{R_{\vec p}^{(1)}(\mv_\mathrm{N}) + R_{\vec p}^{(2)}(\mv_\mathrm{N})} \bigg(\frac{2}{3}\bigg)^{T_{\vec p}^{(1)}(\mv_\mathrm{N}) + T_{\vec p}^{(2)}(\mv_\mathrm{N})} %\\&= (-1)^{R_{\vec p}(\mv_\mathrm{D}) + R_{\vec p}^{\ell_1}(\mv_\mathrm{N}) + R_{\vec p}^{\ell_2}(\mv_\mathrm{N})} \bigg( \frac{1}{3} \bigg)^{R_{\vec p}^{\ell_1}(\mv_\mathrm{N}) + R_{\vec p}^{\ell_2}(\mv_\mathrm{N})}\bigg( \frac{2}{3} \bigg)^{T_{\vec p}^{\ell_1}(\mv_\mathrm{N}) + T_{\vec p}^{\ell_2}(\mv_\mathrm{N})};
        \end{align*}
        thus, letting $L_1(\vec{p}) := R_{\vec{p}}(\mv_\mathrm{D}) + R_{\vec{p}}^{(1)}(\mv_\mathrm{N}) +T_{\vec p}^{(1)}(\mv_\mathrm{N})$ and $L_2(\vec{p}) := R_{\vec{p}}^{(2)}(\mv_\mathrm{N}) + T_{\vec{p}}^{(2)}(\mv_\mathrm{N})$ the  heat content formula \eqref{eq:heat-content-formula-bif-mug} -- which is based on path enumeration -- can be reformulated in the following purely analytic way
        \begin{align*}
&\heatcont(\mathcal{\Graph};\{ \mv_\mathrm{D} \}) = \ell_1 + \ell_2 - \frac{2\sqrt{t}}{\sqrt{\pi}} \\
& \:\, + 4\sqrt{t} \sum_{\vec p \in \mathcal{P}(\{\mv_\mathrm{D} \})} (-1)^{R_{\vec p}(\mv_\mathrm{D})}\bigg(-\frac{1}{3}\bigg)^{R_{\vec p}^{(1)}(\mv_\mathrm{N}) + R_{\vec p}^{(2)}(\mv_\mathrm{N})} \bigg(\frac{2}{3}\bigg)^{T_{\vec p}^{(1)}(\mv_\mathrm{N}) + T_{\vec p}^{(2)}(\mv_\mathrm{N})} H\bigg( \frac{L_1(\vec{p})\ell_1 + L_2(\vec{p}) \ell_2}{2\sqrt{t}} \bigg) \\
& \:\,= \ell_1+\ell_2 - \frac{2\sqrt{t}}{\sqrt{\pi}} + 8\sqrt{t} \sum_{m \in \mathbb{N}_0} (-1)^m \sum_{\substack{n,k,\ell,j \in \mathbb{N}_0 \\ n+k \leq m+1}} \bigg( -\frac{1}{3} \bigg)^{n + \ell}\bigg(\frac{2}{3} \bigg)^{k+j} H \bigg( \frac{(m+n+\ell+1)\ell_1 + (k+j) \, \ell_2}{2\sqrt{t}} \bigg),
        \end{align*}
using the fact that each \textit{un}directed path is counted twice to account for the orientation of the elements of $\mathcal P_\mVD(\Graph)$.
 
(Note that each path in $\mathcal{P}(\{ \mvD \})$ (which, by definition, is necessarily nontrivial) either stops at $\mvD$ or reflects at $\mvD$ after hitting $\mvN$ at least once; hence $R_{\vec{p}}^{\ell_1}(\mvN) + T_{\vec{p}}^{\ell_1}(\mvN) \leq R_{\vec{p}}(\mvD) + 1$.)
\end{exa}
\begin{rem}\label{rem:general-vertex-conditions}
%\footnote{\DM{Hier würde sich lohnen, auch eine Art axiomatische Einführung zur Theorie des Wärmeinhalts formulieren: welche Eigenschaft des Dirichlet-Laplace brauchen wir eigentlich um welche Sätze zu beweisen?}}
There is no particular reason for restricting to the case of standard (continuity and Kirchhoff) conditions in $\mV\setminus \mVD$ and/or to Dirichlet conditions at the ``boundary'' of $\Graph$. The theory of heat content as presented above is mainly based on two ingredients: 1) the fact that the Gaussian kernels is the fundamental solution of the heat equation on $\R$ and 2) the fact that the heat kernel for the Laplacian with standard vertex conditions is positive. By~\cite[Theorem~6.85]{Mug14}, the heat kernel's positivity still holds if, more generally, the boundary conditions are given by 
\[
\Gamma_\circ u\in Y \quad \hbox{and}\quad \Gamma^\circ u+S\Gamma_\circ u\in Y^\perp
\]
for a sublattice $Y$ of $\C^{2\ell_\me}$ and a linear operator $S$ on $Y$ such that the orthogonal projector onto $Y$ is a positive operator and $(\e^{-tS})_{t\ge 0}$ is a positive semigroup, see also \cite{CarMug09,Kur19}; here 
\[
\Gamma_\circ u:=\begin{pmatrix}
u(0)\\ u(\ell)
\end{pmatrix}^\top\quad\hbox{and}\quad \Gamma^\circ u:=\begin{pmatrix}
-u'(0)\\ u'(\ell)\end{pmatrix}.
\]
By general Perron--Frobenius theory, the generators of such semigroups have a dominant eigenvalue associated with a positive eigenfunction. In order to avoid trivialities, it is sufficient to define the heat content by evaluating the heat kernel at a positive function that does \textit{not} belong to the generator's null space. In particular, apart from the case of the Laplacian with natural conditions at each vertex, $\mathbf 1$ is not in the null space of a Laplacian realization, and hence not a fixed vector of the semigroup, for any positive realization, since it does not satisfies \textit{weighted} continuity conditions; and in this case we can start our machinery again, upon suitably adapting the scattering coefficients in~\autoref{defi:scatt-coeff}, see e.g.~\cite{KosPotSch07}. Observe that in this case \autoref{lem:path-sum-formula} remains valid by \cite[Corollary~3.4]{KosPotSch07}.
\end{rem}

\section{Small-time asymptotics for the heat content}\label{sec:small-time-asymptotics}
Using the heat content formula  in \autoref{thm:heat-content-formula-bif-mug}, it is possible to deduce 
%quite relatable results such as an expression for the heat content by restricting to small times $t > 0$: in fact, this consideration leads to 
the following \emph{small-time asymptotic} for the heat content.

\begin{theorem}[Small-time asymptotics for the heat content]\label{thm:two-term-small-time-asymptotic-heat-content}
%    Let $\mathcal{G}$ be a metric graph as in \autoref{ass:graph} and $\mVD \subset \mV$. Then:
The heat content satisfies
    \begin{align}\label{eq:small-time-asymp-estimate}
    \left\vert \heatcont(\Graph;\mVD) - \vert \Graph \vert + \frac{2\sqrt{t}}{\sqrt{\pi}} \# \mVD \right\vert \leq \frac{8\sqrt{t}}{\sqrt{\pi}} \frac{
       \e^{-\frac{\ell_{\min}^2}{4t}}}{1-d_{\max} \e^{-\frac{\ell_{\min}^2}{2t}}}  \qquad \text{for all $0 < t \leq t_0 < \frac{\ell_{\min}^2}{2\log d_{\max}}$.}
    \end{align}
    %\[
    %\mathcal{Q}_t(\mathcal{G};\mVD) = \frac{2}{\sqrt{\pi}} \sqrt{t} \bigg( \sum_{\me \in \mE} \e^{-\frac{\ell_\me^2}{4t}} - \# \mE \bigg) + \sum_{\me \in \mE} \ell_\me \erf \bigg( \frac{\ell_\me}{2\sqrt{t}} \bigg) + \frac{1}{\sqrt{4\pi t}} \mathcal{O} \bigg( \vert \mathcal{G} \vert^2 \# \mE \e^{-\frac{\ell_{\min}^2}{4t}} \bigg)
    %\]
    %for $t \leq t_0 < \frac{\ell_{\min}^2}{2 \log \# \mE}$, in other words,
    \begin{align}\label{eq:small-time-asymp}
    \mathcal{Q}_t(\mathcal{G};\mVD) = \vert \Graph \vert - \frac{2 \sqrt{t}}{\sqrt{\pi}}\# \mVD + \mathcal{O}\Big(\sqrt{t}\,\e^{-\frac{\ell_{\min}^2}{4t}}\Big) \qquad \text{as} \:\: t \rightarrow 0^+.
    \end{align}
\end{theorem}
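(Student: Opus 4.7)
The starting point is \autoref{thm:heat-content-formula-bif-mug}, which precisely isolates the quantity we need to control: we have
\[
\heatcont(\Graph;\mVD) - \vert \Graph \vert + \frac{2\sqrt{t}}{\sqrt{\pi}} \# \mVD = 8\sqrt{t} \sum_{p} \alpha(p) H\!\left(\frac{\ell(p)}{2\sqrt{t}}\right),
\]
so the task reduces to estimating the right-hand sum for small $t$. The overall strategy is to bound the summand in absolute value using the elementary estimates on $\alpha$ and $H$, then group paths by combinatorial length $k:=\#p$ and exploit the length lower bound $\ell(p)\ge k\,\ell_{\min}$ to obtain geometric decay that kills the combinatorial explosion of the number of paths of length $k$.

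First I would invoke $|\alpha(p)|\le 1$ (which follows from the definition of scattering coefficients together with $\deg_\Graph(\mv)\ge 2$ whenever $\mv$ is the endpoint of two consecutive bonds along an interior vertex, so that each factor $\beta(\vec{\me}_k,\vec{\me}_{k+1})$ lies in $[-1,1]$) and the bound $H(x)\le \tfrac{1}{\sqrt{\pi}}\,\e^{-x^2}$ for $x\ge 0$, which is immediate from the explicit formula $H(x)=\tfrac{1}{\sqrt{\pi}}\e^{-x^2}-x\erfc(x)$ and positivity of $x\erfc(x)$ on $[0,\infty)$. This reduces the estimate to
\[
\left|8\sqrt{t}\sum_p \alpha(p) H\!\left(\tfrac{\ell(p)}{2\sqrt{t}}\right)\right| \le \frac{8\sqrt{t}}{\sqrt{\pi}}\sum_p \e^{-\ell(p)^2/(4t)}.
\]

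Next I would partition the paths by $k:=\#p\ge 1$. Since $\ell(p)\ge k\ell_{\min}$, we have $\ell(p)^2\ge k^2\ell_{\min}^2$. The key elementary inequality $(k-1)^2\ge 0$, i.e.\ $k^2 \ge 1 + 2(k-1)$, yields
\[
\e^{-\ell(p)^2/(4t)} \le \e^{-\ell_{\min}^2/(4t)} \, \e^{-(k-1)\ell_{\min}^2/(2t)}.
\]
For the path count: by \autoref{ass:graph} every vertex in $\mVD$ has degree $1$, so a directed path of combinatorial length $k$ starting in $\mVD$ is determined by its initial vertex (at most $\#\mVD$ choices) and by the choice of outgoing bond at each of the $k-1$ intermediate vertices (at most $d_{\max}$ choices each). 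Consequently the number of (directed, and hence also undirected) paths of combinatorial length $k$ starting and ending in $\mVD$ is at most $\#\mVD\cdot d_{\max}^{k-1}$.

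Combining these ingredients and summing a geometric series yields
\[
\sum_p \e^{-\ell(p)^2/(4t)} \le \#\mVD\cdot \e^{-\ell_{\min}^2/(4t)} \sum_{k=1}^\infty \bigl(d_{\max}\e^{-\ell_{\min}^2/(2t)}\bigr)^{k-1} = \#\mVD\cdot\frac{\e^{-\ell_{\min}^2/(4t)}}{1-d_{\max}\e^{-\ell_{\min}^2/(2t)}},
\]
valid whenever $d_{\max}\e^{-\ell_{\min}^2/(2t)}<1$, i.e.\ $t<\tfrac{\ell_{\min}^2}{2\log d_{\max}}$. This delivers the explicit inequality \eqref{eq:small-time-asymp-estimate}; the asymptotic \eqref{eq:small-time-asymp} then follows trivially by letting $t\to 0^+$, since the denominator of the right-hand side tends to $1$. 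The only mildly subtle step is the inequality $k^2\ge 2k-1$, which is what enables repackaging the super-Gaussian decay $\e^{-k^2\ell_{\min}^2/(4t)}$ into an overall prefactor $\e^{-\ell_{\min}^2/(4t)}$ times a geometric series in $k$; absent this rearrangement the combinatorial bound $d_{\max}^{k-1}$ would not match the decay.
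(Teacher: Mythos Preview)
Your proof is correct and follows essentially the same approach as the paper: bound $H(x)\le \tfrac{1}{\sqrt{\pi}}\e^{-x^2}$ and $|\alpha(p)|\le 1$, stratify by combinatorial length, use $\ell(p)\ge k\ell_{\min}$ together with the elementary inequality $k^2\ge 2k-1$ to convert into a geometric series in $d_{\max}\e^{-\ell_{\min}^2/(2t)}$. One small discrepancy: your path count picks up a factor $\#\mVD$ (from the choice of initial Dirichlet vertex), so the explicit bound you actually derive is $\tfrac{8\sqrt{t}}{\sqrt{\pi}}\,\#\mVD\cdot\tfrac{\e^{-\ell_{\min}^2/(4t)}}{1-d_{\max}\e^{-\ell_{\min}^2/(2t)}}$, not quite \eqref{eq:small-time-asymp-estimate} as stated; this is harmless for the asymptotic \eqref{eq:small-time-asymp}, which is the main point.
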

Here we have denoted by $d_{\max} := \max_{\mv \in \mV} \deg(\mv)$ the \emph{maximal degree} and by $\ell_{\min} := \min_{\me \in \mE} \ell_\me$ the \emph{minimal edge length} of $\Graph$. 

Also, $\frac{\ell_{\min}^2}{2\log d_{\max}}:=+\infty$ whenever $\Graph$ is a path graph (recall that, by \autoref{ass:graph}, we are always assuming $\Graph$ to have no vertices of degree 2) and, hence, $d_{\max} =1$; in other words, in the case of intervals the estimate in \eqref{eq:small-time-asymp-estimate} for the heat content in the previous theorem holds for all $t > 0$ (as we, indeed, already know from \eqref{eq:qt-interval-elem}).
\begin{proof}
Given $\Graph$ and $\mVD \subset \mV$, we denote by
\begin{align}\label{eq:ltgvd}
\mathfrak{L}_t(\Graph;\mVD) := 4\sqrt{t}  \sum_{\vec p \in \mathcal{P}_\mVD(\Graph)} \alpha(\vec p) H \bigg( \frac{\ell(\vec p)}{2\sqrt{t}} \bigg), \qquad t>0,
\end{align}
the remainder term in~\eqref{eq:heat-content-formula-bif-mug}.
By \autoref{lem:properties-H} and the definition of $H$ we find
\begin{align}\label{eq:first-estimate-r-t}
|\mathfrak{L}_t(\Graph;\mVD)| \leq \frac{4\sqrt{t}}{\sqrt{\pi}} \sum_{\vec p \in \mathcal{P}_\mVD(\Graph)}| \alpha(\vec p) | \mathrm{e}^{-\frac{\ell(\vec p)^2}{4t}} \qquad \text{for all $t>0$.}
\end{align}
Given a path $\vec p \in \mathcal{P}_\mVD(\Graph)$ of combinatorial length $n\ge 2$, 
%for some Dirichlet vertices $\mv,\mw \in \mVD$ (of combinatorial length $\vert p \vert = n$) 
one can trivially estimate $|\alpha(\vec p)|\le 1$; moreover, there are at most $(d_{\max})^{n-1}$ such paths, since there are at most $(d_{\max})^{n-1}$ possibilities to reach $\mv_+(\vec{p})$  from $\mv_-(\vec{p})$ in $n$ steps, because at any vertex $\mv' \in \mV$ that is traversed by $\vec{p}$ there are at most $\deg(\mv')\le d_{\max}$
% (which is $1$ if $\mv' = \mv_-(\vec{p})$), thus at most $d_{\max}$, 
possible choices to reach the next vertex from $\mv'$; in other words,
\begin{align}\label{eq:estimate-comb-n-dmax}
\# (\mathcal{P}_n(\Graph) \cap \mathcal{P}_\mVD(\Graph) ) \leq 2(d_{\max})^{n-1} \quad \text{for every $n \in \mathbb{N}$}
\end{align}
where the factor $2$ appears because each \textit{un}directed path is counted twice to account for the orientation of the elements of $\mathcal P_\mVD(\Graph)$.
% respect both directions (for instance, $\mathcal{P}_1(\Graph) \cap \mathcal{P}_\mVD(\Graph)$ is nonempty, and consists of exactly two directed paths, if and only if $\Graph$ is an interval with pure Dirichlet boundary conditions, in which case $d_{\max}=1$).
Additionally, the length of these paths can be estimated from below by $\ell(\vec p) \geq n\ell_{\min}$. Noting that $\mathcal{P}_\mVD(\Graph) = \bigcup_{n \in \mathbb{N}} \mathcal{P}_n(\Graph) \cap \mathcal{P}_\mVD(\Graph)$, this yields the estimate
\begin{align}\label{eq:help-estimate-small-time-asymp}
\sum_{\vec p \in \mathcal{P}_\mVD(\Graph)} |\alpha(\vec p)| \e^{-\frac{\ell(\vec p)^2}{4t}} = \sum_{n = 1}^\infty \sum_{\vec p \in \mathcal{P}_n(\Graph) \cap \mathcal{P}_\mVD(\Graph)}| \alpha(\vec p)| \mathrm{e}^{-\frac{\ell(\vec p)^2}{4t}} \leq \frac{2}{d_{\max}}\sum_{n=1}^\infty \big(d_{\max} \big)^{n} \mathrm{e}^{-\frac{n^2\ell_{\min}^2}{4t}}.
\end{align}
%for all $\mv, \mw \in \mVD$.
    Now, we use
        \begin{equation}\label{eq:sim-borharjon}
        2n-1\le n^2
%    -\frac{\ell_{\min}^2}{4t} + \frac{n \ell_{\min}^2}{2t} 
%     \leq \frac{n^2 \ell_{\min}^2}{4t}, \qquad 
\qquad\hbox{for all }n \in \mathbb{N},
    \end{equation}
%    which is equivalent to
%    \[
%    -\frac{n^2 \ell_{\min}^2}{4t} \leq \frac{\ell_{\min}^2}{4t} - \frac{n\ell_{\min}^2}{2t}, \qquad n \in \mathbb{N}.
%    \]
and the fact that $d_{\max} \e^{-\frac{\ell_{\min}^2}{2t}} < 1$ holds if and only if  $0 < t < \frac{\ell_{\min}^2}{2 \log d_{\max}}$: combining \eqref{eq:first-estimate-r-t} with \eqref{eq:help-estimate-small-time-asymp}, considering \eqref{eq:sim-borharjon}, and using the geometric series afterwards, we reach at the following estimate for $\mathfrak{L}_t(\Graph;\mVD)$:
    \begin{align}\label{eq:final-estimate-long-term-significant-part}
    \begin{aligned}
        \vert \mathfrak{L}_t(\Graph;\mVD) \vert &\leq \frac{8\sqrt{t}}{\sqrt{\pi}}\frac{1}{d_{\max}} \, \e^{\frac{\ell_{\min}^2}{4t}}\sum_{n=1}^\infty \big(d_{\max}\big)^n \e^{-\frac{n\ell_{\min}^2}{2t}} \\&= \frac{8\sqrt{t}}{\sqrt{\pi}} \frac{1}{d_{\max}} \, \e^{\frac{\ell_{\min}^2}{4t}}\sum_{n=1}^\infty \Big( d_{\max} \e^{-\frac{\ell_{\min}^2}{2t}} \Big)^n \\&= \frac{8\sqrt{t}}{\sqrt{\pi}}\frac{1}{d_{\max}} \,\e^{\frac{\ell_{\min}^2}{4t}} \frac{d_{\max} \e^{-\frac{\ell_{\min}^2}{2t}}}{1-d_{\max} \e^{-\frac{\ell_{\min}^2}{2t}}} \\&= \frac{8\sqrt{t}}{\sqrt{\pi}} \frac{
       \e^{-\frac{\ell_{\min}^2}{4t}}}{1-d_{\max} \e^{-\frac{\ell_{\min}^2}{2t}}} \in \mathcal{O} \Big(
         \sqrt{t}\,\e^{-\frac{\ell_{\min}^2}{4t}} \Big),
         \end{aligned}
    \end{align}
    for all $0 < t \leq t_0 < \frac{\ell_{\min}^2}{2 \log d_{\max}}$, eventually yielding \eqref{eq:small-time-asymp}.
\end{proof}
\begin{rem}
(i) 
A similar decay-rate
\begin{align}\label{eq:small-time-borthwick}
\ptGD(x,x) =
\begin{cases} \frac{1}{\sqrt{4\pi t}} \Bigg( 1+ \bigg( \frac{2}{\deg(\mv_x)} -1\bigg) \e^{-\frac{\dist (x,\mV)^2}{t}} \Bigg) + \frac{1}{\sqrt{4\pi t}} \mathcal{O}\Big( \# \mE \, \e^{-\frac{\ell_{\min}^2}{4t}} \Big) \\
\frac{1}{\sqrt{4\pi t}}  + \frac{1}{\sqrt{4\pi t}} \mathcal{O}\Big( \# \mE \, \e^{-\frac{\ell_{\min}^2}{4t}} \Big) \\
\end{cases}
\quad \hbox{as }t\to 0+
\end{align}
 for the heat kernel $\ptGD$ on the diagonal  was proved in \cite[Proposition~3.1]{BorHarJon22} in the case where $\mVD = \emptyset$, for all $0 < t \leq t_0 < \frac{\ell_{\min}^2}{2 \log \# \mE}$, where the second expression holds for each $x\in \Graph$ that is the midpoint of an edge and the first expression for all other $x\in \Graph$. Here $\dist (x,\mV) := \min_{\mv \in \mV} \dist(x,\mv)$ denotes the minimal distance of $x \in \Graph$ to the vertex set $\mV$ and $\mv_x \in \mV$ the corresponding vertex which has minimal distance to $x$, i.e., the vertex in $\mV$ such that $\dist_\Graph (x,\mV) = \dist_\Graph (x,\mv_x)$. But -- by the same argumentation used in the proof of \autoref{thm:two-term-small-time-asymptotic-heat-content} -- one may sharpen \eqref{eq:small-time-borthwick} replacing $\# \mE$ by $d_{\max}$. 
 
(ii) In the case of $\mVD \neq \emptyset$, it is possible to adapt the methods used in \cite{BorHarJon22}, to deduce
\begin{align*}
\ptGD(x,x) = \begin{cases} \frac{1}{\sqrt{4\pi t}} \Bigg( 1+ \bigg( \frac{2}{\deg(\mv_x)} \cdot \mathbf{1}_{\mV_\mathrm{N}}(\mv_x) -1\bigg) \e^{-\frac{\dist (x,\mV)^2}{t}} \Bigg) + \frac{1}{\sqrt{4\pi t}} \mathcal{O}\Big( d_{\max} \e^{-\frac{\ell_{\min}^2}{4t}} \Big) \\
\frac{1}{\sqrt{4\pi t}}  + \frac{1}{\sqrt{4\pi t}} \mathcal{O}\Big( d_{\max} \e^{-\frac{\ell_{\min}^2}{4t}} \Big) \end{cases}
\end{align*}
for $x \in \Graph \setminus \mVD$ and all $0 < t \leq t_0 < \frac{\ell_{\min}^2}{2\log d_{\max}}$. Observe that such an asymptotic analysis based on the methods in \cite{BorHarJon22} delivers an exponential decay, thus improving the polynomial decay that was proved in \cite[Proposition~8.1]{BolEggRue15} for general Schrödinger operators with general self-adjoint vertex conditions, including — as in the setting of our paper — mixed (standard and Dirichlet) vertex conditions.
\end{rem}
As the remainder term $\mathfrak{L}_t(\Graph;\mVD)$ in particular belongs to $\sqrt{t}\, \mathcal{O}\big(\e^{-\frac{\ell_{\min}^2}{4t}}\big)$ for sufficiently small $t > 0$ and $\mathcal{Q}_t(\Graph;\mVD) \rightarrow 0$ as $t \rightarrow 0^+$, the heat content $\mathcal{Q}_t(\mathcal{G})$ cannot have nontrivial polynomial terms of order $\alpha \in \mathbb{R} \setminus \{0,\frac{1}{2}\}$ for small $t > 0$. Thus, we reach at the small-time expansion for the heat content which represents a counterpart of \cite[Theorem~1.1]{BerGil94} to quantum graphs. 
\begin{corollary}[Asymptotic small-time expansion]\label{cor:vandenberg-gilkey-small-time}
The map $(0,\infty) \ni t \mapsto \mathcal{Q}_t(\Graph;\mVD)$ admits an asymptotic expansion near $0$, more precisely,
\begin{equation}\label{eq:asympt-small-time-exp}
\heatcont(\Graph;\mVD)\sim \sum_{n=0}^\infty \beta_n t^\frac{n}{2},\qquad \hbox{as }t\rightarrow 0^+
\end{equation}
for constants $\beta_n = \beta_n(\Graph;\mVD) \in \R$, $n\in \mathbb{N}_0$ (depending on the graph $\Graph$) given by 
\begin{align*}
\beta_k(\Graph;\mVD) = \begin{cases} \vert \Graph \vert, & \text{if $k=0$,} \\ -\frac{2\# \mVD}{\sqrt{\pi}}, & \text{if $k=1$,} \\ 0, & \text{else,}
\end{cases} \qquad k \in \mathbb{N}_0.
\end{align*}
\end{corollary}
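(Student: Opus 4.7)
The plan is to reduce the statement to a direct consequence of \autoref{thm:two-term-small-time-asymptotic-heat-content}. Recall that a function $f\colon (0,\infty)\to \R$ admits the asymptotic expansion $f(t)\sim \sum_{n=0}^\infty \beta_n t^{n/2}$ as $t\to 0^+$ precisely when, for every $N \in \N_0$,
\[
f(t) - \sum_{n=0}^N \beta_n t^{n/2} = o(t^{N/2}) \qquad \text{as } t\to 0^+.
\]
So my aim is to verify this definition with the candidate coefficients $\beta_0 = |\Graph|$, $\beta_1 = -\frac{2\#\mVD}{\sqrt{\pi}}$, and $\beta_n = 0$ for $n\ge 2$.

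First I would invoke \eqref{eq:small-time-asymp} to write
\[
\heatcont(\Graph;\mVD) - |\Graph| + \frac{2\sqrt{t}}{\sqrt{\pi}}\#\mVD = \mathcal{O}\bigl(\sqrt{t}\,\e^{-\ell_{\min}^2/(4t)}\bigr) \qquad \text{as } t\to 0^+.
\]
This already handles the two leading orders. For the vanishing of all higher coefficients, the key observation is that the exponential factor decays faster than any power of $t$: for every $k\in \N_0$ the substitution $s = 1/t$ yields
\[
\lim_{t\to 0^+} t^{-k}\,\e^{-\ell_{\min}^2/(4t)} = \lim_{s\to \infty} s^k \e^{-\ell_{\min}^2 s/4} = 0,
\]
and consequently $\sqrt{t}\,\e^{-\ell_{\min}^2/(4t)} = o(t^{N/2})$ as $t\to 0^+$ for every $N\in \N_0$.

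Combining these two observations, for any $N \ge 1$ I obtain
\[
\heatcont(\Graph;\mVD) - \sum_{n=0}^N \beta_n t^{n/2} = \mathfrak{L}_t(\Graph;\mVD) = \mathcal{O}\bigl(\sqrt{t}\,\e^{-\ell_{\min}^2/(4t)}\bigr) = o(t^{N/2}),
\]
with $\mathfrak{L}_t(\Graph;\mVD)$ as in \eqref{eq:ltgvd}, which establishes \eqref{eq:asympt-small-time-exp} with the claimed coefficients. Moreover, the coefficients are uniquely determined by the asymptotic expansion (as usual, by iteratively extracting $\lim_{t\to 0^+}t^{-n/2}(f(t) - \sum_{k<n}\beta_k t^{k/2})$), so no other values are possible.

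There is no real obstacle here: the result is essentially a transcription of \autoref{thm:two-term-small-time-asymptotic-heat-content} into the language of asymptotic series, the only mildly delicate point being the observation that $\e^{-c/t}$ is flat at the origin, hence negligible against every power of $t^{1/2}$ --- which is precisely why the Gilkey--van den Berg-style heat content expansion \emph{terminates} after two terms in the metric graph setting, in sharp contrast to the situation on Riemannian manifolds.
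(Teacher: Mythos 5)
Your proof is correct and is precisely the argument the paper intends: the paper's own proof consists of the single sentence that the corollary is an immediate consequence of \autoref{thm:heat-content-formula-bif-mug} and \autoref{thm:two-term-small-time-asymptotic-heat-content}, and you have simply spelled out the standard verification that the exponentially small remainder $\mathcal{O}\bigl(\sqrt{t}\,\e^{-\ell_{\min}^2/(4t)}\bigr)$ is $o(t^{N/2})$ for every $N$, forcing all coefficients beyond $\beta_1$ to vanish.
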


\begin{proof}
This is an immediate consequence of \autoref{thm:heat-content-formula-bif-mug} and \autoref{thm:two-term-small-time-asymptotic-heat-content}.
\end{proof}

\begin{rem}
The asymptotic expansion~\eqref{eq:asympt-small-time-exp} has been known to hold, among others, for planar domains with polygonal boundary~\cite[Theorem~1]{BerSri90} and compact Riemannian manifolds with smooth boundary since~\cite[Theorem~1.2]{BerGil94} -- but seems to be unknown for general metric measure spaces and, hitherto, even for metric graphs. Quite recently,  a similar small-time asymptotic for the heat content on $\mathsf{RCD}(K,N)$-spaces has been proved in \cite[Theorem~1.1]{CapRos24}: it is noteworthy that the remainder term derived there is only of \emph{polynomial} type $\mathcal{O}\Big(t^{\frac{2(1+\rho)-1}{2(1+\rho)}} \Big)$ (for certain $\rho > 0$). More nontrivial terms than in~\eqref{eq:asympt-small-time-exp} may, therefore, generally exist in this small-time expansion. 
\end{rem}

%Using similar ideas as for the small-time asymptotics in \autoref{thm:two-term-small-time-asymptotic-heat-content}, 
We conclude this section presenting a result that relates the heat flow out of a subgraph of $\Graph$ with its topological boundary, in the spirit of a Caccioppoli-type heat kernel-based description of the perimeter of a set, as studied in the Euclidean case, cf.~\cite[Theorem~3.3]{MirPalPar07}.
% able to reconstruct the \emph{topological boundary} $\partial \mathcal H$ of the subgraph $\mathcal{H} \subset \mathcal{G}$ using the heat kernel associated with the original graph $\Graph$.

\begin{theorem}\label{thm:subgraph-small-time}
    Let $\Graph$ be as in \autoref{ass:graph} and $\mathcal H$ be a closed and connected subset of \(\Graph\setminus\mVD\) whose boundary $\partial \mathcal{H}$ in $\Graph$ does not contain any vertices of $\Graph$ of degree $\ge 3$. Then
    %\begin{align*}
    %    \sqrt{\frac{\pi}{t}} \int_\mathcal{H} \int_{\Graph \setminus \mathcal{H}} \ptGD(x,y) \dd y \dd x = \vert \partial \mathcal{H} \vert + \mathcal{O}\Big( \vert \partial \mathcal{H} \vert \e^{-\frac{\dist(\partial \mathcal{H};\mV)}{4t}} \Big) + \frac{1}{2t} \mathcal{O}\Big(\vert \mathcal{H} \vert \vert \mathcal{G} \setminus \mathcal{H} \vert d_{\max} \e^{-\frac{\dist(\partial \mathcal{H};\mV_\mathrm{N})}{4t}} \Big)
  %  \end{align*}
  %  for $0 < t \leq t_0 < \frac{\dist(\partial \mathcal{H};\mV_\mathrm{N})^2}{2\log \# \mE}$\footnote{\PB{Die decay-rate kann mit ziemlicher Sicherheit noch weiter optimiert werden, indem man mehr Terme ausrechnet. So wie bei der Kurzzeitasymptotik.}}, in particular, 
    \begin{align}
        \lim_{t \rightarrow 0^+} \frac{\sqrt{\pi}}{\sqrt{t}} \int_\mathcal{H} \int_{\Graph \setminus \mathcal{H}} \ptGD(x,y) \dd y \dd x = \# \partial \mathcal{H} .
    \end{align}
\end{theorem}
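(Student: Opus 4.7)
The plan is to combine Roth's path sum formula with a localization argument at each point of $\partial \mathcal H$, in the spirit of the proofs of \autoref{thm:heat-content-formula-bif-mug} and \autoref{thm:two-term-small-time-asymptotic-heat-content}. First I would observe structurally that $\partial \mathcal H$ consists of finitely many points, each lying in the interior of some edge of $\Graph$: a vertex of degree one in $\partial \mathcal H \subset \mathcal H$ would be an isolated point of $\mathcal H$, contradicting connectedness of $\mathcal H$ except in the degenerate case $\mathcal H = \{\mv\}$ of Lebesgue measure zero, while vertices of degree $\ge 3$ are excluded by hypothesis. Consequently, for each $p \in \partial \mathcal H$ the set $\mathcal H$ agrees, in a small neighborhood of $p$ inside the containing edge, with a closed half-interval adjacent to $p$.

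Using the decomposition of \autoref{def:trivial-nontrivial-heat-content}, I would write
\[
\int_{\mathcal H}\int_{\Graph \setminus \mathcal H} \ptGD(x,y)\dd y \dd x = J_t^{\mathrm{NT}}(\mathcal H) + J_t^{\mathrm{T}}(\mathcal H),
\]
where $J_t^{\mathrm{NT}}$ collects the intra-edge Gaussian contributions and $J_t^{\mathrm{T}}$ the sum over paths of combinatorial length $\ge 2$. For $J_t^{\mathrm{NT}}$, on each edge $\me$ the set $\mathcal H \cap \me$ is a finite disjoint union of closed subintervals separated by the boundary points. Applying formula~\eqref{eq:roth-second-auxiliary-eq} with $\ell=0$ to each pair of subintervals adjacent at some $p \in \partial \mathcal H$, together with $H(0) = 1/\sqrt \pi$ and the exponential decay $H(a/(2\sqrt t)) = O(\e^{-a^2/(4t)})$ from~\eqref{eq:estimate-erfc}, yields the contribution $\sqrt t/\sqrt \pi + O(\e^{-c/t})$ per boundary point; non-adjacent subinterval pairs on the same edge are separated by a positive distance and contribute only exponentially small terms. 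Summing over edges then gives
\[
J_t^{\mathrm{NT}}(\mathcal H) = \frac{\sqrt t}{\sqrt \pi}\,\#\partial \mathcal H + O\bigl(\sqrt t\,\e^{-c/t}\bigr).
\]

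The main work is the bound on $J_t^{\mathrm{T}}(\mathcal H)$. Recasting it as in \autoref{lem:representation-topological-heat-cont} as a sum over $\vec p \in \mathcal P_{\ge 2}(\Graph)$ with initial and final integrations restricted to $\mathcal H \cap \me_{-}(\vec p)$ and $(\Graph \setminus \mathcal H)\cap \me_{+}(\vec p)$, paths of combinatorial length $n \ge 3$ satisfy $\ell(\vec p) \ge (n-2)\ell_{\min}$, and the geometric-series argument from the proof of \autoref{thm:two-term-small-time-asymptotic-heat-content} (combining $|\alpha(\vec p)|\le 1$ with the estimate $\#\mathcal P_n(\Graph) \le 2 d_{\max}^{n-1}$) produces an $O(\e^{-c/t})$ total. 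The delicate case is the length-two paths, where $\ell(\vec p_\pm) = 0$ and no intrinsic length-based decay is available. Here the hypothesis is used crucially: any intermediate vertex $\mv \in \mV$ is either in $\mVD$ (hence disjoint from $\mathcal H \subset \Graph \setminus \mVD$) or has degree $\ge 3$ (hence not in $\partial \mathcal H$). In either case there exists $\delta_\mv > 0$ with either $\dist(\mathcal H,\mv) \ge \delta_\mv$ or $\dist(\Graph \setminus \mathcal H,\mv) \ge \delta_\mv$, which forces the relevant integrand $\e^{-(\dist(x,\mv)+\dist(\mv,y))^2/(4t)}$ to be bounded pointwise by $\e^{-\delta_\mv^2/(4t)}$ on the region of integration; summing the finitely many vertex contributions yields $J_t^{\mathrm{T}}(\mathcal H) = O(\e^{-c'/t})$.

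Combining both estimates and multiplying by $\sqrt \pi/\sqrt t$ produces the claimed limit. The principal obstacle is exactly the length-two combinatorial case in $J_t^{\mathrm{T}}(\mathcal H)$: since $\ell(\vec p)$ can be arbitrarily small when $x$ and $y$ both approach a common intermediate vertex, the decay must be extracted entirely from the geometric separation between that vertex and the transition region $\partial \mathcal H$, which is precisely what the theorem's hypothesis on $\partial \mathcal H$ is designed to provide.
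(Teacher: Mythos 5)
Your proof is correct and follows essentially the same route as the paper's: the same non-topological/topological decomposition, the same Leibniz-rule computation with $H$ extracting $\tfrac{\sqrt t}{\sqrt\pi}\#\partial\mathcal H$ per boundary point, and exponential suppression of the path-sum remainder. The only cosmetic difference is in the topological part, where the paper uses the single uniform bound $\ell(\vec p)\ge\dist(\partial\mathcal H;\mV)>0$ (every crossing path must hit both a vertex and a boundary point) in place of your case split between combinatorial length $2$ and $\ge 3$; both yield the same $\mathcal O(\e^{-c/t})$ conclusion.
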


\begin{proof}
First, we can assume without loss of generality -- upon possibly subdividing edges -- that each edge in $\Graph$ either has empty intersection with $\mathcal H$ or $\Graph \setminus \mathcal{H}$, or else it contains precisely one element of the topological boundary of $\mathcal H$ {in its interior}: in the latter case, since $\partial \mathcal{H}$ does not contain any vertices of $\Graph$, the Lebesgue measure $\ell_\me^\mathcal{H}:=\vert \me\cap \mathcal H\vert$ of $\me \cap \mathcal{H}$ is positive; moreover, we let
\[
\mE_{\Graph}^{\partial \mathcal{H}} := \{ \me \in \mE_\Graph \: : \: \me \cap \partial \mathcal{H} \neq \emptyset\}. 
\]
Like in the proof of \autoref{thm:heat-content-formula-bif-mug}, one can decompose $\int_{\mathcal{H}} \int_{\mathcal{G} \setminus \mathcal{H}} \ptGD(x,y) \dd y \dd x$ -- using \eqref{eq:heat-kernel-KPS-2b} -- as
\begin{align*}
\begin{aligned}
    \int_{\mathcal{H}} \int_{\mathcal{G} \setminus \mathcal{H}} \ptGD(x,y) \dd y \dd x &= \frac{1}{\sqrt{4\pi t}} \int_{\mathcal{H}} \int_{\Graph \setminus \mathcal{H}} \delta_{\me,\me'} \e^{-\frac{\dist(x_\me,y_{\me'})^2}{4t}} \dd x_\me \dd y_{\me'} \\& \qquad\quad+ \frac{1}{\sqrt{4\pi t}} \int_{\mathcal{H}} \int_{\Graph \setminus \mathcal{H}} \sum_{\vec p \in \mathcal{P}_{\geq 2}(x,y)} \alpha(\vec p) \e^{-\frac{\ell(\vec p)^2}{4t}} \dd x \dd y \\&=: \widetilde{\mathcal{Q}_{t}}(\Graph; \mathcal H) + \widetilde{\widetilde{\mathcal{Q}_{t}}}(\Graph;\mathcal H).
    \end{aligned} \qquad \text{for $t>0$.}
\end{align*}
We determine on $\widetilde{\mathcal{Q}_{t}}(\Graph;\mathcal{H})$ first: indeed, since for any edge $\me \in \mE_\Graph$, $\vert \me \cap (\Graph \setminus \mathcal{H})\vert > 0$ and $ \vert \me \cap \mathcal{H} \vert > 0$ is by construction of the subgraph $\mathcal{H}$ only possible if and only if $\me \in \mE_{\mathcal{H}}^{\partial \mathcal{H}}$,
%is by construction of the subgraph $\mathcal{H}$ only possible if $\me \cap \partial \mathcal{H} \neq \emptyset$, 
one observes
\begin{align}\label{eq:boundary-q1}
\widetilde{\mathcal{Q}_{t}}(\Graph;\mathcal{H}) = \frac{1}{\sqrt{4\pi t}} \sum_{\me \in \mE_{\Graph}^{\partial \mathcal{H}}} \int_{\ell_{\me}^\mathcal{H}}^{\ell_\me} \int_0^{\ell_\me^{\mathcal{H}}} \e^{-\frac{(x-y)^2}{4t}} \dd y \dd x, \qquad t > 0.
\end{align}
Moreover, analogous to \eqref{eq:triv-heat-content-leibniz} and \eqref{eq:roth-second-auxiliary-eq}, the Leibniz integral rule yields
\begin{align*}
\frac{1}{\sqrt{4\pi t}}\int_\alpha^\beta \int_0^\alpha \e^{-\frac{(x-y)^2}{4t}} \dd y \dd x = \sqrt{t} \Bigg( H\bigg( \frac{\beta}{2\sqrt{t}} \bigg) - H\bigg( \frac{\alpha}{2\sqrt{t}}\bigg) - H\bigg( \frac{\beta-\alpha}{2\sqrt{t}}\bigg) + H(0) \Bigg) 
\end{align*}
for all $\beta \geq \alpha \in \mathbb{R}_+$ and all $t > 0$: together with \eqref{eq:boundary-q1}, this implies that
\begin{align*}
    \widetilde{\mathcal{Q}_{t}}(\Graph;\mathcal{H}) &= \sqrt{t} \sum_{\me \in \mE_{\Graph}^{\partial \mathcal{H}}} \Bigg( H\bigg( \frac{\ell_\me}{2\sqrt{t}} \bigg) - H\bigg( \frac{\ell_\me^\mathcal{H}}{2\sqrt{t}}\bigg) - H\bigg( \frac{\ell_\me-\ell_\me^\mathcal{H}}{2\sqrt{t}}\bigg) + H(0) \Bigg)  \\&= \frac{\sqrt{t}}{\sqrt{\pi}} \sum_{\me \in \mE_{\Graph}^{\partial \mathcal{H}}} 1 + \sqrt{t} \sum_{\me \in \mE_{\Graph}^{\partial \mathcal{H}}} \Bigg( H\bigg( \frac{\ell_\me}{2\sqrt{t}} \bigg) - H\bigg( \frac{\ell_\me^\mathcal{H}}{2\sqrt{t}}\bigg) - H\bigg( \frac{\ell_\me-\ell_\me^\mathcal{H}}{2\sqrt{t}}\bigg) \Bigg) \\&= \frac{\sqrt{t}}{\sqrt{\pi}} \# \partial \mathcal{H} + \sqrt{t} \sum_{\me \in \mE_{\Graph}^{\partial \mathcal{H}}} \Bigg( H\bigg( \frac{\ell_\me}{2\sqrt{t}} \bigg) - H\bigg( \frac{\ell_\me^\mathcal{H}}{2\sqrt{t}}\bigg) - H\bigg( \frac{\ell_\me-\ell_\me^\mathcal{H}}{2\sqrt{t}}\bigg) \Bigg), 
\end{align*}
since $\# \mE_{\Graph}^{\partial \mathcal{H}} = \# \partial \mathcal{H}$ by construction (recall that we suppose each edge that has nonempty intersection with both $\mathcal{H}$ and $\Graph \setminus \mathcal{H}$ to contain exactly one element of $\partial \mathcal{H}$). This yields
\begin{align}\label{eq:small-time-boundary-q1}
    \frac{\sqrt{\pi}}{\sqrt{t}} \widetilde{\mathcal{Q}_{t}}(\Graph;\mathcal{H}) &= \# \partial \mathcal{H} + \sqrt{\pi} \sum_{\me \in \mE_{\Graph}^{\partial \mathcal{H}}} \Bigg( H\bigg( \frac{\ell_\me}{2\sqrt{t}} \bigg) - H\bigg( \frac{\ell_\me^\mathcal{H}}{2\sqrt{t}}\bigg) - H\bigg( \frac{\ell_\me-\ell_\me^\mathcal{H}}{2\sqrt{t}}\bigg) \Bigg) \qquad \text{for $t>0$.}
\end{align}
Using now the estimate in \eqref{eq:estimate-erfc}, we observe that the second addend in the right-hand side of \eqref{eq:small-time-boundary-q1} can be estimated from above by $3\sqrt{\pi} (\# \partial \mathcal{H}) \e^{-\frac{\dist(\partial \mathcal{H};\mV)}{4t}} \in \mathcal{O}\big(\e^{-\frac{\mathrm{dist}(\partial \mathcal{H};\mV)}{4t}}\big)$ as $t \rightarrow 0^+$, since 
\[
\min\left\{\ell_\me,\ell_\me^\mathcal{H}, \ell_\me - \ell_\me^\mathcal{H} \right\} \geq \min_{\mv \in \mV, y \in \partial \mathcal H} \dist(\mv,y) = \dist(\partial \mathcal{H} ;\mV)>0 \qquad \text{for all $\me \in \mE$.}
\]
This implies 
%\footnote{\DM{Ich verstehe nicht genau, was hier passiert. Ich sehe nur, dass $H\bigg( \frac{\ell_\me}{2\sqrt{t}} \bigg) - H\bigg( \frac{\ell_\me^\mathcal{H}}{2\sqrt{t}}\bigg)<0$  aufgrund der strengen Monotonie von $H$, und natürlich ebenfalls $-H\bigg( \frac{\ell_\me -\ell_\me^\mathcal{H}}{2\sqrt{t}}\bigg)<0$. Welche Eigenschaft von $H$ wird hier dann verwendet?} \PB{Man kann den Betrag des Terms in der Summe durch die jeweiligen Exponentialfunktionen abschätzen und sollte dann diese Ordnung bekommen (das ist sicherlich nicht optimal und sollten wir irgendwann noch umschreiben, jedoch soll das nur heißen: für $t \rightarrow 0^+$ verschwindet der Summenterm rechts).}}
\begin{align}\label{eq:trivial-boundary-part-caccioppoli}
\frac{\sqrt{\pi}}{\sqrt{t}} \widetilde{\mathcal{Q}_{t}}(\Graph;\mathcal{H}) = \# \partial \mathcal{H} + 
\mathcal{O}\Big( \e^{-\frac{\dist(\partial \mathcal{H};\mV)}{4t}} \Big) \qquad \hbox{as }t \rightarrow 0^+.
\end{align}
We next evaluate on $\widetilde{\widetilde{\mathcal{Q}_{t}}}(\Graph;\mathcal H)$: let $x \in \Graph \setminus \mathcal{H}$ and $y \in \mathcal{H}$. Any directed path $\vec{p} \in \mathcal{P}_{  \geq 2}(x,y)$ then has to traverse at least one vertex in $\mV$ but also a point lying on the boundary $\partial \mathcal{H}$ as it connects points from $\Graph \setminus \mathcal{H}$ and $\mathcal{H}$, respectively. Therefore
 one estimates
\begin{align*}
\ell(\vec{p}) \geq \dist(\partial \mathcal{H}; \mV) > 0
\end{align*}
 %let $\me \neq \me'$ such that $\me \cap (\Graph \setminus \mathcal{H}) \neq \emptyset$ and $\me' \cap \mathcal{H} \neq \emptyset$. Then for any $x \in \me' \cap \mathcal{H} \simeq [0,\ell_{\me'}] \cap \mathcal{H}$ and $y \in \me \cap (\Graph \setminus \mathcal{H}) \simeq [0,\ell_\me] \cap (\Graph \setminus \mathcal{H})$ one has that $\dist(x,y) \geq \dist(\partial \mathcal{H};\mV_\mathrm{N})$: indeed, if $\mv \in \mV_\mathrm{N}$ such that $\mv \sim \me$ and $\mv \sim \me'$, we  estimate
%\[
%\dist(x,y) \geq \dist(x,\partial \mathcal{H}) \geq \dist(\mv; \partial \mathcal{H}) \geq \min_{\mv \in \mV_\mathrm{N}} \dist(\mv;\partial \mathcal{H}) = \dist(\partial \mathcal{H}; \mV_\mathrm{N}).
%\]
and therefore, we observe at first
\begin{align}\label{eq:first-estimate-second-term-cacciopoli}
    \widetilde{\widetilde{\mathcal{Q}_{t}}}(\Graph;\mathcal H) \leq \frac{\vert \mathcal{H} \vert \vert \mathcal{G} \setminus \mathcal{H} \vert}{\sqrt{4\pi t}}  \sum_{n=0}^\infty \big(d_{\max}\big)^{n +1} \e^{-\frac{n^2\ell_{\min}^2{ + \dist(\partial \mathcal{H};\mV)^2}}{4t}};
\end{align}
{ note that, because we are summing over directed paths with combinatorial distance of at least $2$ connecting $\mathcal{H}$ and $\Graph \setminus \mathcal{H}$, every such path has a length of at least $n\ell_{\min} + \dist(\partial \mathcal{H};\mV)$, $n \in \mathbb{N}_0$. Moreover, as every such path hits atleast one vertex of $\Graph$, we get an additional factor of $d_{\max}$ on the right-hand side of \eqref{eq:first-estimate-second-term-cacciopoli}.}
Letting now $0 < t < \frac{\ell_{\min}}{2\log d_{\max}}$ one can deduce  { similarly} to the proof of \autoref{thm:two-term-small-time-asymptotic-heat-content} that
\begin{align*}
    \sqrt{\frac{\pi}{t}}\widetilde{\widetilde{\mathcal{Q}_{t}}}(\Graph;\mathcal H) &= { \frac{\vert \mathcal{H} \vert \vert \mathcal{G} \setminus \mathcal{H} \vert}{\sqrt{4\pi t}} d_{\max} \bigg( \e^{-\frac{\dist(\partial \mathcal{H};\mV)^2}{4t}} + \sum_{n=1}^\infty (d_{\max})^n \e^{-\frac{n^2\ell_{\min}^2 + \dist(\partial \mathcal{H};\mV)^2}{4t}} \bigg)} \\&\lesssim \frac{1}{2{ \sqrt{t}}} { \Bigg(\e^{-\frac{\dist(\partial \mathcal{H};\mV)^2}{4t}} + }\frac{d_{\max} \e^{-\frac{\ell_{\min}^2}{4t}}}{1-d_{\max} \e^{-\frac{\ell_{\min}^2}{2t}}} \Bigg) 
    %\qquad \text{for all $0 < t < \frac{\ell_{\min}}{2\log d_{\max}}$}.
\end{align*}
for all $0 < t < \frac{\ell_{\min}}{2\log d_{\max}}$. Now the claim follows letting $t \rightarrow 0^+$ and combining this with \eqref{eq:trivial-boundary-part-caccioppoli}.
\end{proof}

\section{Heat content of subgraphs and further surgery principles}\label{sec:graph-surgery}

\subsection{Domination results}
Let \(\mathcal H\) be closed
 subset of \(\Graph\setminus \mVD\): we can think of \(\mathcal H\) as compact finite metric graph in its own right, with its vertex set given by \(\mV_\mathcal H=\partial \mathcal H\cup (\mV_\Graph\cap \mathcal H)\), see \autoref{fig:subgraph}. Following the usual notation, let \(\Delta^{\mathcal H; \mV_{\mathrm{D}, \mathcal{H}}}\) be the Laplacian on \(\mathcal H\) with Dirichlet conditions in the set 
 \[
 \mV_{\mathrm{D},\mathcal H}:=\partial \mathcal H\cup (\mV_{\mathrm{D}, \Graph}\cap \mathcal H),
 \]  
 and Kirchhoff and continuity conditions in the set 
 \[
 \mV_{\mathrm{N}, \mathcal H}=\mV_{\mathrm{N},\Graph}\cap \mathcal H;
 \] 
 we write $\mathcal{H} \subset \mathcal{G}$.

\begin{figure}[h]
      \centering
\begin{tikzpicture}[scale=0.6]
      \tikzset{enclosed/.style={draw, circle, inner sep=0pt, minimum size=.08cm, fill=black}}

      \node[enclosed] (A) at (0,2) {};
      \node[enclosed, blue] (B) at (1,4) {};
      \node[enclosed, blue] (C) at (3,3) {};
      \node[enclosed, white, label={below: $\Graph$}] (C'') at (3,2) {};
      \node[enclosed, blue] (D) at (4.5,6) {};
      \node[enclosed, blue] (E) at (7.5,6) {};
      \node[enclosed, blue] (G) at (-0.5,4) {};
      \node[enclosed] (H) at (-0.5,6) {};
      \node[enclosed] (J) at (7,2) {};

      \draw (B) edge[blue] node[below] {} (C) node[midway, above] (edge2) {};
      \draw (C) edge[blue] node[right] {\color{black} $\mathcal{H}$} (D) node[midway, above] (edge3) {};
      \draw (D) edge[blue] node[above] {} (E) node[midway, above] (edge3) {};
      \draw (B) edge[blue] node[below] {} (G) node[midway, above] (edge3) {};

      \node[enclosed, blue, label={right:$\partial \mathcal{H}$}] (A') at (7.25,4) {};
      \draw (A') edge[blue] node[below] {} (E) node[midway, above] (edge3) {};
      \draw (A') edge node[below] {} (J) node[midway, above] (edge3) {};
      \node[enclosed, blue,label={left:$\partial \mathcal{H}$}] (B') at (-0.5,4.5) {};
      \draw (H) edge node[below] {} (B') node[midway, above] (edge3) {};
      \draw (G) edge[blue] node[below] {} (B') node[midway, above] (edge3) {};
      \node[enclosed, blue, label={above:$\partial \mathcal{H}$}] (C') at (2.2,6) {};
      \draw (H) edge node[below] {} (C') node[midway, above] (edge3) {};
      \draw (C') edge[blue] node[below] {} (D) node[midway, above] (edge3) {};
      \node[enclosed, blue, label={right:{\color{black} $\partial \mathcal{H}$}}] (D') at (0.25,2.5) {};
      \draw (D) edge[blue] node[below] {} (B) node[midway, above] (edge3) {};
       \draw (E) edge[blue] node[below] {} (C) node[midway, above] (edge3) {};
      \draw (A) edge node[below] {} (D') node[midway, above] (edge3) {};
      \draw (D') edge[blue] node[below] {} (B) node[midway, above] (edge3) {};
     \end{tikzpicture}
     \vspace{-0.9cm}
     \caption{A graph $\mathcal{G}$ with a closed subgraph $\mathcal{H} \subset \Graph$ (blue).}\label{fig:subgraph}
    \end{figure}
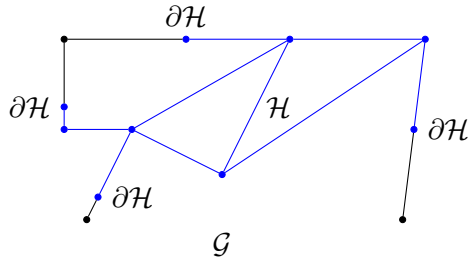
To begin with, we observe that $\Delta^{\mathcal H; \mV_{\mathrm{D}, \mathcal{H}}}$ generates a positive strongly continuous semigroup \break $(\e^{t\Delta^{\mathcal H; \mV_{\mathrm{D}, \mathcal{H}}}})_{t \geq 0}$, which is dominated by the original semigroup $(\e^{t\DeltaGD})_{t \geq 0}$ as the following proposition states.

\begin{proposition}\label{prop:domination-subgraphs}
Let $\Graph$ be a graph as in \autoref{ass:graph} and $\mathcal{H}$ be a closed subset of $\Graph \setminus \mVD$. Then
\begin{align}\label{eq:domination-heat-kernel}
	p^{\mathcal H; \mV_{\mathrm{D}, \mathcal{H}}}_t(x,y)\leq \ptGD(x,y)\qquad\hbox{for all \(t>0\) and \(x,y\in \mathcal H\)},
	\end{align}
and in particular
\begin{equation}\label{eq:monotonicity-heat-kernel-and-heat-content}
\heatcont(\mathcal{H};\mV_{\mathrm{D},\mathcal{H}}) \leq  \heatcont(\Graph;\mVD) \qquad \text{for all $t>0$.}
\end{equation}
\end{proposition}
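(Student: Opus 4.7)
The plan is to reduce \eqref{eq:monotonicity-heat-kernel-and-heat-content} to the pointwise kernel bound \eqref{eq:domination-heat-kernel}, and to obtain the latter from a Beurling--Deny--Ouhabaz-type domination at the level of the underlying Dirichlet forms. Since the assumption $\mathcal{H}\subset\Graph\setminus\mVD$ forces $\mV_{\mathrm{D},\mathcal{H}}=\partial\mathcal{H}$, this is precisely the standard ``Dirichlet cutting'' operation, for which form domination via zero extension is the natural tool.

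First I would introduce the zero-extension map $E\colon L^2(\mathcal{H})\to L^2(\Graph)$ and verify that it restricts to an isometric embedding
\[
E\colon H^1_0(\mathcal{H};\mV_{\mathrm{D},\mathcal{H}})\hookrightarrow H^1_0(\Graph;\mVD), \qquad a_\Graph(Eu)=a_\mathcal{H}(u).
\]
The only two points to check are continuity of $Eu$ at the vertices of $\Graph$ and its vanishing at $\mVD$. Vanishing at $\mVD$ is automatic because $\mVD\subset\Graph\setminus\mathcal{H}$. Continuity at any $\mv\in\partial\mathcal{H}$ is guaranteed by the Dirichlet condition $u(\mv)=0$, since $Eu$ is zero on both sides of the cut; continuity at any original natural vertex of $\Graph$ lying in the interior of $\mathcal{H}$ is inherited from $u\in H^1_0(\mathcal{H};\mV_{\mathrm{D},\mathcal{H}})$. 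The edgewise form value is unchanged because $Eu$ is zero (hence has zero weak derivative) off $\mathcal{H}$.

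Second, I would invoke Ouhabaz's domination criterion for Dirichlet forms. The image $E(H^1_0(\mathcal{H};\mV_{\mathrm{D},\mathcal{H}}))$ is an order ideal of $H^1_0(\Graph;\mVD)$ (for $0\le v\le Eu$ the function $v$ must vanish on $\Graph\setminus\mathcal{H}$ and its restriction to $\mathcal{H}$ lies in $H^1_0(\mathcal{H};\partial\mathcal{H})$), and the identity $a_\Graph(Eu)=a_\mathcal{H}(u)$ ensures the required inequality on the quadratic forms. This yields the semigroup domination
\[
0\le E\bigl(\e^{t\Delta^{\mathcal{H};\mV_{\mathrm{D},\mathcal{H}}}}f\bigr)(x)\le \e^{t\DeltaGD}(Ef)(x) \qquad\hbox{for a.e. }x\in\Graph,
\]
valid for every $0\le f\in L^2(\mathcal{H})$ and every $t\ge 0$. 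The kernel bound \eqref{eq:domination-heat-kernel} then follows by testing against $f=\mathbf{1}_{B_r(y)}/|B_r(y)|$ and sending $r\to 0^+$, using the Lipschitz continuity of both heat kernels recalled after \autoref{def:heat-content}.

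Finally, for the heat content, I would simply apply the semigroup domination with $f=\mathbf{1}_\mathcal{H}$ and use positivity of $(\e^{t\DeltaGD})_{t\ge 0}$ together with $\mathbf{1}_\mathcal{H}\le \mathbf{1}_\Graph$:
\[
\heatcont(\mathcal{H};\mV_{\mathrm{D},\mathcal{H}})=\int_\mathcal{H}\e^{t\Delta^{\mathcal{H};\mV_{\mathrm{D},\mathcal{H}}}}\mathbf{1}_\mathcal{H}(x)\dd x\le\int_\mathcal{H}\e^{t\DeltaGD}\mathbf{1}_\mathcal{H}(x)\dd x\le\int_\Graph\e^{t\DeltaGD}\mathbf{1}_\Graph(x)\dd x=\heatcont(\Graph;\mVD).
\]
The only genuinely delicate point is the form-embedding step: it is crucial that the cut $\partial\mathcal{H}$ be equipped with Dirichlet conditions, since otherwise the zero-extension would introduce jump discontinuities and fail to belong to $H^1_0(\Graph;\mVD)$. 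Everything else is a routine application of the semigroup/kernel machinery already set up in Section~\ref{sec:prelim}.
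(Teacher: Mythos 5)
Your proposal is correct and follows essentially the same route as the paper: zero extension realizes $H^1_0(\mathcal H;\mV_{\mathrm{D},\mathcal H})$ as an ideal of $H^1_0(\Graph;\mVD)$ with matching form values, Ouhabaz's criterion gives semigroup domination, and the kernel and heat content inequalities follow. The extra details you supply (verification of continuity at $\partial\mathcal H$, the delta-approximation step for the kernel bound, and deducing the heat content inequality directly from semigroup domination) are sound elaborations of the same argument.
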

\begin{proof}
By choice of the sets of Dirichlet and Neumann vertices, the inclusion
	\[H^1_0(\mathcal H;\mV_{\mathrm{D}, \mathcal H})\hookrightarrow H^1_0(\Graph;\mVD), \quad g\mapsto \bar{g}:=\begin{cases} g, & \text{on}~ \mathcal H,\\ 0, &\text{on}~\Graph\setminus \mathcal H, \end{cases}\]
is well-defined, more precisely -- \(H^1_0(\mathcal H;\mV_{\mathrm{D},\mathcal H})\) is an ideal in \(H^1_0(\Graph;\mVD)\) in the sense of~\cite[Definition~2.19]{Ouh05} -- with \(a_\Graph(\bar{g})=a_\mathcal H(g)\) and \(\|\bar{g}\|_{L^2(\Graph)}=\|g\|_{L^2(\mathcal H)}\) for \(g\in H^1_0(\mathcal H;\mV_{\mathrm{D},\mathcal H})\). Thus, by~\cite[Corollary~2.22]{Ouh05}, cf.\ also~\cite[Corollary~B.3]{StoVoi96}, this implies that $(\e^{t\DeltaGD})_{t\ge 0}$ dominates $(\e^{t\Delta^{\mathcal H; \mV_{\mathcal{H},\mathrm{D}}}})_{t\ge 0}$ and hence, by the fundamental theorem of calculus of variations, pointwise domination of heat kernel \(p_t^{\mathcal H;\mV_{\mathcal{H},\mathrm{D}}}(\cdot,\cdot) \) by \(\ptGD(\cdot, \cdot)\), i.e., \eqref{eq:domination-heat-kernel}. \eqref{eq:monotonicity-heat-kernel-and-heat-content} then readily follows. 
\end{proof}

The following can be proved in an analogous way to \autoref{prop:domination-subgraphs}, using the fact that 
$H_0^1(\mathcal{G};\mVD \cup \{ \mv_0 \})$ is an ideal in $H_0^1(\mathcal{G};\mVD)$ for every $\mv_0 \in \mVN$.

\begin{proposition}[Imposing Dirichlet conditions on additional vertices]\label{prop:additional-Dir}
Let $\mathcal{G}$ be a metric graph which satisfies \autoref{ass:graph} and let $\mv_0 \in \mVN$. Then
        
        \[p_t^{\Graph;\mVD \cup \{\mv_0\}}(x,y) \leq p_t^{\Graph;\mVD}(x,y)  \qquad \text{for all $x,y \in \Graph$ and all $t > 0$,}
        \] in particular, 
        \[
        \heatcont(\Graph;\mVD \cup \{ \mv_0 \}) \leq \heatcont(\Graph; \mVD) \qquad \text{for all $t>0$.}
        \]
\end{proposition}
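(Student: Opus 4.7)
The plan is to imitate the proof of \autoref{prop:domination-subgraphs} almost verbatim, but now with the two form domains living on the \emph{same} metric graph $\Graph$, differing only by the additional Dirichlet constraint $u(\mv_0)=0$. Concretely, the canonical inclusion
\[
H^1_0(\Graph;\mVD\cup\{\mv_0\})\hookrightarrow H^1_0(\Graph;\mVD),\qquad g\mapsto g,
\]
is clearly well-defined (no zero-extension is needed) and isometric both in $L^2$-norm and in the form $a_\Graph$. It remains to verify that the smaller space is an \emph{ideal} of the larger one in the sense of \cite[Definition~2.19]{Ouh05}, after which \cite[Corollary~2.22]{Ouh05} will immediately deliver the desired semigroup domination.

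To check the ideal property, I would first observe that point evaluation at $\mv_0$ is a well-defined continuous linear functional on $H^1_0(\Graph;\mVD)$: this uses crucially that $\mv_0\in\mVN$, so that every element of $H^1_0(\Graph;\mVD)$ is by definition continuous at $\mv_0$. In particular, $H^1_0(\Graph;\mVD\cup\{\mv_0\})=\{u\in H^1_0(\Graph;\mVD):u(\mv_0)=0\}$ is a closed subspace, and it is a sublattice since $|u|$, $u^\pm$ vanish at $\mv_0$ whenever $u$ does. Next, for the order-ideal condition, suppose $f\in H^1_0(\Graph;\mVD)$ and $g\in H^1_0(\Graph;\mVD\cup\{\mv_0\})$ satisfy $|f|\le |g|$ pointwise on $\Graph$; since both functions are continuous at $\mv_0$, one gets $|f(\mv_0)|\le |g(\mv_0)|=0$, whence $f\in H^1_0(\Graph;\mVD\cup\{\mv_0\})$.

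With the ideal property in hand, \cite[Corollary~2.22]{Ouh05} (see also \cite[Corollary~B.3]{StoVoi96}) yields that $(\e^{t\DeltaGD})_{t\ge 0}$ dominates $(\e^{t\Delta^{\Graph;\mVD\cup\{\mv_0\}}})_{t\ge 0}$. Passing from semigroup domination to pointwise kernel domination via the fundamental theorem of the calculus of variations, as in \autoref{prop:domination-subgraphs}, gives
\[
p_t^{\Graph;\mVD\cup\{\mv_0\}}(x,y)\le p_t^{\Graph;\mVD}(x,y)\qquad\text{for all }t>0,\ x,y\in\Graph,
\]
and integration of both sides over $\Graph\times\Graph$ yields the heat content inequality.

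The only point that requires a moment of thought is the very first one — that $\mv_0\in\mVN$ ensures the trace at $\mv_0$ is a meaningful continuous functional on $H^1_0(\Graph;\mVD)$ — because without it the set $\{u(\mv_0)=0\}$ would not even be a closed subspace of the form domain, and the whole Ouhabaz machinery would not apply. Everything else is a routine transcription of the argument already used for subgraphs.
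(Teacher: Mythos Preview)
Your proposal is correct and follows exactly the route indicated by the paper, which simply remarks that the proof is analogous to \autoref{prop:domination-subgraphs} via the fact that $H^1_0(\Graph;\mVD\cup\{\mv_0\})$ is an ideal in $H^1_0(\Graph;\mVD)$. You have merely spelled out the verification of the ideal property in more detail than the paper does.
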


{ 
\begin{corollary}\label{cor:lengthening-dir}
Let $\mathcal{G}$ be a metric graph which satisfies \autoref{ass:graph}, and let $\widetilde{\Graph}$ be a metric graph that arises from $\Graph$  lengthening by $\ell>0$ an edge one of whose endpoints lies in $\mVD$. Then 
%\[
%p_t^{\widetilde{\Graph};\mVD}(x,y)-p_t^{{\Graph};\mVD}(x,y)\ge 
%p_t^{[0,\ell];\{0,\ell\}}(x)>0 \qquad \hbox{for all }t > 0 \hbox{and all }x,y\in \Graph.
%\]
%In particular,
\[
\heatcont(\widetilde{\Graph};\mVD)-\heatcont(\Graph;\mVD)\ge 
\frac{8 \ell}{\pi^2} \sum_{k=0}^\infty \e^{-t\big( \frac{\pi (2k+1)}{\ell} \big)^2} \frac{1}{(2k+1)^2}>0 \qquad \hbox{for all }t > 0.
\]
\end{corollary}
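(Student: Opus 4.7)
The plan is to subdivide the lengthened edge at an interior point, exploit the Dirichlet-monotonicity of \autoref{prop:additional-Dir} to insert an additional Dirichlet condition there, use \autoref{rem:direct-sum} to split the heat content into two independent contributions, and then compute the contribution of the short piece by means of the explicit interval formula \eqref{eq:qt-interval-elem}.

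More precisely, let $\mv_0 \in \mVD$ be the vertex at which $\Graph$ is lengthened by $\ell$ and let $\me_0$ be the unique edge incident to $\mv_0$ (unambiguous since $\deg_\Graph(\mv_0) = 1$ by \autoref{ass:graph}). In $\widetilde{\Graph}$ the corresponding edge has length $\ell_{\me_0} + \ell$; I parametrise it so that $\mv_0$ corresponds to the coordinate $0$ and insert a degree-$2$ vertex $p$ at coordinate $\ell$. By \autoref{rem:nodeg2}, this subdivision leaves both $\Delta^{\widetilde{\Graph};\mVD}$ and $\heatcont(\widetilde{\Graph};\mVD)$ unchanged, whereupon \autoref{prop:additional-Dir} applied at $p$ gives
\[
\heatcont(\widetilde{\Graph};\mVD) \;\ge\; \heatcont(\widetilde{\Graph}; \mVD \cup \{p\}).
\]

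The key geometric observation is that $\widetilde{\Graph} \setminus (\mVD \cup \{p\})$ has exactly two connected components: the open interval of length $\ell$ between $\mv_0$ and $p$, and a metric graph canonically isometric to $\Graph \setminus \mVD$ (with $p$ playing the role of $\mv_0$). Invoking \autoref{rem:direct-sum} therefore yields
\[
\heatcont(\widetilde{\Graph}; \mVD \cup \{p\}) \;=\; \heatcont(\Graph; \mVD) + \mathcal{Q}_t\bigl([0,\ell]; \{0,\ell\}\bigr),
\]
and plugging \eqref{eq:qt-interval-elem} (evaluated at $\#\mVD = 2$) into the second summand produces the announced series $\frac{8\ell}{\pi^2}\sum_{k=0}^\infty (2k+1)^{-2} \e^{-t(\pi(2k+1)/\ell)^2}$, which is manifestly strictly positive for every $t>0$ as every term is. I do not anticipate any genuine difficulty; the only point that merits a brief sanity check is that \autoref{prop:additional-Dir} can be applied at a vertex of degree $2$, but this is immediate from its form-theoretic proof, which only exploits the fact that $H^1_0(\widetilde{\Graph};\mVD \cup \{p\})$ remains an ideal in $H^1_0(\widetilde{\Graph};\mVD)$ irrespective of the degree of $p$.
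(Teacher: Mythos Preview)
Your proposal is correct and follows essentially the same route as the paper's proof: both insert an additional Dirichlet vertex on the lengthened edge at distance $\ell$ from the Dirichlet endpoint, invoke \autoref{prop:additional-Dir} for the monotonicity, split the resulting graph via \autoref{rem:direct-sum} into $\Graph$ and an interval of length $\ell$ with two Dirichlet endpoints, and plug in \eqref{eq:qt-interval-elem}. Your write-up is in fact slightly more careful than the paper's, since you explicitly flag the degree-$2$ issue when applying \autoref{prop:additional-Dir}.
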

\begin{proof}
Let $\ell_\me$ the length of the relevant edge in $\Graph$, and $\widetilde{\ell}_\me>\ell_\me$ its lengthened version in $\widetilde{\Graph}$. Consider the heat content of the interval of length $\widetilde{\ell}_\me-\ell_\me=:\ell>0$ with Dirichlet conditions at both endpoints, which by \eqref{eq:qt-interval-elem} is given by
\[
\mathcal{Q}_t([0,\ell];\mVD) = \frac{8 \ell}{\pi^2} \sum_{k=0}^\infty \e^{-t\big( \frac{\pi (2k+1)}{\ell} \big)^2} \frac{1}{(2k+1)^2}, \qquad t > 0.
\]
Then ${\Graph}$ can be obtained from $\widetilde{\Graph}$ by first imposing a an additional Dirichlet condition on the edge $\me$ at distance $\widetilde{\ell}_\me-\ell_\me>0$ from the Dirichlet endpoint of $\widetilde{\Graph}$ (this raises the heat content by \autoref{prop:additional-Dir}), and then regarding $\widetilde{\Graph}$ as the disjoint union of $\Graph$ and an interval with two Dirichlet endpoints (whose heat content is larger than $\heatcont(\Graph;\mVD)$ by \autoref{rem:direct-sum}).
\end{proof}
}

\subsection{Operations not changing the volume}
We discuss the effect on the heat content of cutting the graph through a vertex of degree two. 
\begin{proposition}[Midpoint loop cut]\label{prop:loop-cut}
Let $\Graph$ satisfy \autoref{ass:graph} and let $\me \in \mE$ be a loop (i.e., both endpoints of $\me$ correspond to the same vertex). Moreover, let $\widetilde{\Graph}$ be the graph which arises from $\Graph$ by cutting the edge $\me$ through its midpoint (i.e., the point corresponding to $\frac{\ell_\me}{2}$) into two pendant edges both of length $\frac{\ell_\me}{2}$ (cf.\ \autoref{fig:lasso-edge-cut} below). Then
\[
\heatcont(\Graph;\mVD) = \heatcont(\widetilde{\Graph};\mVD) \qquad \text{for all $t>0$.}
\]
%and equality holds if and only if $s = \frac{\ell_\me}{2}$.
\end{proposition}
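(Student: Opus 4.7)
The plan is to exhibit an isometric isomorphism between suitable invariant subspaces of $L^2(\Graph)$ and $L^2(\widetilde\Graph)$ that intertwines the heat semigroups and identifies $\mathbf 1_\Graph$ with $\mathbf 1_{\widetilde\Graph}$; heat contents then coincide because $\heatcont(\Graph;\mVD)=\langle \e^{t\Delta^{\Graph;\mVD}}\mathbf 1_\Graph,\mathbf 1_\Graph\rangle_{L^2(\Graph)}$ is a purely Hilbert space quantity.

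Parametrize the loop $\me$ as $[0,\ell_\me]$ with both endpoints identified with the vertex $\mv$ to which $\me$ is attached, and parametrize the two pendants $\me_1,\me_2$ of $\widetilde\Graph$ as $[0,\ell_\me/2]$, with $x=0$ at $\mv$ and the new degree-one vertices $\mv^\ast_1,\mv^\ast_2$ at $x=\ell_\me/2$. Consider two natural $\mathbb Z_2$-symmetries: the midpoint reflection $\sigma:L^2(\Graph)\to L^2(\Graph)$, $(\sigma u)_\me(x)=u_\me(\ell_\me-x)$ (identity elsewhere), and the pendant swap $\tau:L^2(\widetilde\Graph)\to L^2(\widetilde\Graph)$, $(u_{\me_1},u_{\me_2})\mapsto(u_{\me_2},u_{\me_1})$. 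Both are isometric involutions commuting with the associated Dirichlet forms $a_\Graph,a_{\widetilde\Graph}$, hence with $\Delta^{\Graph;\mVD},\Delta^{\widetilde\Graph;\mVD}$ and the corresponding heat semigroups, and both fix $\mathbf 1$. They yield orthogonal decompositions $L^2(\Graph)=L^2_+(\Graph)\oplus L^2_-(\Graph)$ and $L^2(\widetilde\Graph)=L^2_+(\widetilde\Graph)\oplus L^2_-(\widetilde\Graph)$ into $\pm 1$ eigenspaces, and $\mathbf 1$ lies in the $+1$ subspace in each case.

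Define $\Phi:L^2_+(\Graph)\to L^2_+(\widetilde\Graph)$ by $\Phi u=u$ on $\Graph\setminus\me=\widetilde\Graph\setminus\{\me_1,\me_2\}$ and, for symmetric $u_\me$, by $(\Phi u)_{\me_i}(x):=u_\me(x)$ for $x\in[0,\ell_\me/2]$, $i=1,2$. The identity
\[
\int_0^{\ell_\me}|u_\me|^2\,\mathrm dx=2\int_0^{\ell_\me/2}|u_\me|^2\,\mathrm dx=\sum_{i=1,2}\int_0^{\ell_\me/2}|(\Phi u)_{\me_i}|^2\,\mathrm dx,
\]
together with the analogous identity for $|u_\me'|^2$, shows that $\Phi$ is an isometric bijection preserving the Dirichlet energy. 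To check that the form domain is mapped onto itself note that continuity of $u$ at $\mv$ in $\Graph$ (which forces $u_\me(0)=u_\me(\ell_\me)$) translates exactly into continuity of $\Phi u$ at $\mv$ in $\widetilde\Graph$, and that no additional condition is imposed at $\mv^\ast_1,\mv^\ast_2$ since they are degree-one vertices. For the operator domain the Neumann condition at $\mv^\ast_i$ is $(\Phi u)_{\me_i}'(\ell_\me/2)=u_\me'(\ell_\me/2)=0$, which holds automatically for any symmetric $H^2$ function on $\me$; and the loop contribution to the Kirchhoff condition at $\mv$ in $\Graph$ is $\partial_\me u(\mv)=u_\me'(0)-u_\me'(\ell_\me)=2u_\me'(0)$ for symmetric $u$, exactly matching $\partial_{\me_1}\Phi u(\mv)+\partial_{\me_2}\Phi u(\mv)=2u_\me'(0)$ in $\widetilde\Graph$. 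Hence $\Phi\Delta^{\Graph;\mVD}=\Delta^{\widetilde\Graph;\mVD}\Phi$ on $D(\Delta^{\Graph;\mVD})\cap L^2_+(\Graph)$ and, by functional calculus, $\Phi\e^{t\Delta^{\Graph;\mVD}}=\e^{t\Delta^{\widetilde\Graph;\mVD}}\Phi$ on $L^2_+(\Graph)$.

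Since $\Phi\mathbf 1_\Graph=\mathbf 1_{\widetilde\Graph}$ and $\mathbf 1$ lies in the $+1$ subspace which is preserved by the heat semigroup, the isometry gives
\[
\heatcont(\Graph;\mVD)=\langle \e^{t\Delta^{\Graph;\mVD}}\mathbf 1_\Graph,\mathbf 1_\Graph\rangle_{L^2(\Graph)}=\langle \Phi\e^{t\Delta^{\Graph;\mVD}}\mathbf 1_\Graph,\Phi\mathbf 1_\Graph\rangle_{L^2(\widetilde\Graph)}=\heatcont(\widetilde\Graph;\mVD)
\]
for every $t>0$. The hard part is the careful bookkeeping of form and operator domains under $\Phi$: one must verify that the implicit Neumann conditions at the newborn degree-one vertices of $\widetilde\Graph$ correspond precisely to the automatic vanishing of the derivative at the midpoint of $\me$ forced by the loop-reflection symmetry, and that the coefficient $2$ in the modified Kirchhoff condition at $\mv$ is reproduced by two pendants being jointly attached at $\mv$. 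Once these matchings are checked, the remainder of the argument is a triviality from the Hilbert space description of $\heatcont$. Alternatively, one could proceed via \autoref{thm:heat-content-formula-bif-mug} by producing a length- and scattering-coefficient-preserving bijection between $\mathcal P_\mVD(\Graph)$ and $\mathcal P_\mVD(\widetilde\Graph)$ that replaces each maximal block of consecutive loop bonds at $\mv$ by a corresponding sequence of pendant excursions in $\widetilde\Graph$, but keeping track of reflections vs.\ transmissions at the loop spokes makes this route considerably more delicate than the symmetry-based one above.
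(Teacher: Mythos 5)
Your proof is correct, and it takes a genuinely different route from the paper's. The paper proves \autoref{prop:loop-cut} by invoking the heat content formula of \autoref{thm:heat-content-formula-bif-mug} and constructing an explicit length- and scattering-coefficient-preserving bijection between $\mathcal P_\mVD(\Graph)$ and $\mathcal P_\mVD(\widetilde\Graph)$, in which each clockwise (resp.\ counter-clockwise) traversal of the loop is replaced by an out-and-back excursion along $\me^+$ (resp.\ $\me^-$), with reflections at the new degree-one vertices supplying the missing bonds; the bookkeeping of reflection versus transmission coefficients at $\mv$ is exactly the delicate part you anticipated. Your folding argument instead works at the level of the Dirichlet forms: the midpoint reflection $\sigma$ and the pendant swap $\tau$ are unitary involutions commuting with the respective forms, $\mathbf 1$ lies in the $+1$ eigenspaces, and the unfolding map $\Phi$ is a unitary equivalence of the restricted forms carrying $\mathbf 1_\Graph$ to $\mathbf 1_{\widetilde\Graph}$; the domain checks you record (automatic Neumann condition at the loop midpoint for symmetric $H^2$ functions, and the factor $2$ matching $\partial_\me u(\mv)=u_\me'(0)-u_\me'(\ell_\me)=2u_\me'(0)$ against the two pendant contributions) are exactly the right ones and all go through. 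What your approach buys is independence from the path-sum machinery of Sections 3--4 (so the result holds for any operator defined via the quadratic form, and in fact yields equality of the semigroups on all $\sigma$-symmetric data, not just of the heat content), plus an evident generalization to cutting at any fixed point of a reflection symmetry; what the paper's approach buys is uniformity with the surrounding surgery toolkit, since the same path-bijection template is reused in \autoref{lem:technical-lemma-attaching} and \autoref{thm:mirroring-heat-content}, where no exact $L^2$-isometry onto an invariant subspace is available.
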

\begin{figure}[h]
        \begin{tikzpicture}[scale=0.6]
      \tikzset{enclosed/.style={draw, circle, inner sep=0pt, minimum size=.1cm, fill=black}, every loop/.style={}}
      \node[enclosed, label = {below left: $\mv$}] (Z) at (0,2) {};
      \node[enclosed, label = {left: $\Graph \qquad$}] (A) at (-4,2) {};
      \node[enclosed, white] (X) at (5.5,2) {};
      \node[enclosed, white] (Y) at (7.5,2) {};
      
	   \node[enclosed, label = {below left: $\widetilde{\mv}$}] (Z') at (13,2) {};
      \node[enclosed] (A') at (9,2) {};
      \node[enclosed, label = {above right: $\mv_1$}] (vs1) at (17,3) {};   
      \node[enclosed, label = {below right: $\mv_2$}] (vs2) at (17,1) {};   
      \node[enclosed, label = {right: $\qquad \widetilde{\Graph}$}, white] (invis) at (17,2) {}; 
      
      \draw[-] (Z) edge node[above] {} (A) node[midway, above] (edge1) {};
      \draw[->, ultra thick] (X) edge node[above] {} (Y) node[midway, above] (edge1) {};
      \draw[-] (Z) arc [start angle=-180, end angle=180,
                  x radius=2cm, 
                  y radius=10mm] node[below] [pos=0.25] {$\me$};
                  
      \node[enclosed, label = {above right: $\mv_0$}] (vs) at (4,2) {};
      \draw[-] (Z') edge node[above] {} (A') node[midway, above] (edge1) {};
      \draw[-] (Z') edge node[above] {} (vs1) node[midway, above] (edge1) {};
      \draw[-] (Z') edge node[above] {} (vs2) node[midway, above] (edge1) {};
      
      \draw [decorate,decoration={brace,amplitude=10pt},xshift=0pt,yshift=0pt, dotted]
    (13.075,2) -- (16.925,3) node [black,midway,xshift=-0.2cm,yshift=0.65cm] 
    {\small \rotatebox{18}{$\frac{\ell_\me}{2}$}};
    \draw [decorate,decoration={brace,amplitude=10pt},xshift=0pt,yshift=0pt, dotted]
    (16.925,1) -- (13.075,2) node [black,midway,xshift=-0.2cm,yshift=-0.65cm] 
    {\small \rotatebox{-12}{$\frac{\ell_\me}{2}$}};
     \end{tikzpicture}
     \caption{The lasso graph $\mathcal{\Graph}$ from \autoref{ex:lasso} (left) and a  $3$-star graph $\widetilde{\Graph}$ arising from a midpoint loop cut in the sense of \autoref{prop:loop-cut} through the edge $\me$ at $\mv_0$.} \label{fig:lasso-edge-cut}
     \end{figure}
     
\begin{proof}
Let $\me^+,\me^-$ denote the two pendant edges of $\widetilde{\Graph}$ arising through the midpoint cut through $\me$ and denote by $\mv^+,\mv^-$ as the corresponding endpoints of $\me^+,\me^-$ having degree $1$. Moreover, let $\mv$ be the vertex in $\Graph$ which connects the loop to a different edge of $\Graph$, and $\widetilde{\mv}$ be its corresponding counterpart in $\widetilde{\Graph}$.

Clearly, the first two addends appearing in \eqref{eq:heat-content-formula-bif-mug} are the same for $\Graph$ and $\widetilde{\Graph}$. Thus, it suffices to show that $\mathcal{L}_t(\Graph;\mVD) = \mathcal{L}_t(\widetilde{\Graph};\mVD)$ (cf.\ \eqref{eq:ltgvd}) for every $t>0$: to see this, for each directed path $\vec{p} \in \mathcal{P}_\mVD(\Graph)$ we have to assign a corresponding directed path $\vec{q} \in \mathcal{P}_\mVD(\widetilde{\Graph})$ with the same length as well as scattering coefficient, in a way that each path in $\mathcal{P}_\mVD(\widetilde{\Graph})$ can be constructed by a uniquely determined path in $\mathcal{P}_\mVD(\Graph)$, and vice versa. 

Therefore, each time a directed path $\vec{p} \in \mathcal{P}_\mVD(\Graph)$ enters the edge $\me$ coming from a \emph{different} edge, we assign two directions for the first transfer of the loop $\me$: for the clockwise and counter-clockwise traverse, respectively. Whenever $\vec{p}$ enters the loop $\me$ in clockwise direction, the corresponding path $\vec{q}$ shall traverse the edge $\me^+$ and reflect at $\mv^+$ back to $\widetilde{\mv}$ afterwards; otherwise if $\vec{p}$ enters the loop $\me$ in counter-clockwise direction it shall traverse $\me^-$ and reflect at $\mv^-$ back to $\widetilde{\mv}$ instead, cf.\ also \autoref{fig:lasso-edge-cut-contructing-q}.

\begin{figure}[h]
        \begin{tikzpicture}[scale=0.6]
      \tikzset{enclosed/.style={draw, circle, inner sep=0pt, minimum size=.1cm, fill=black}, every loop/.style={}}
      \node[enclosed, label = {below left: $\mv$}] (Z) at (0,2) {};
      \node[enclosed, white] (W1) at (3,3.5) {};
      \node[enclosed, white] (W2) at (3,0.5) {};
      \node[enclosed, label = {left: $\Graph \qquad$}] (A) at (-4,2) {};
      \node[enclosed, white] (X) at (5.5,2) {};
      \node[enclosed, white] (Y) at (7.5,2) {};
      
	   \node[enclosed, label = {below left: $\widetilde{\mv}$}] (Z') at (13,2) {};
      \node[enclosed] (A') at (9,2) {};
      \node[enclosed, label = {above right: $\mv^+$}] (vs1) at (17,3) {};   
      \node[enclosed, label = {below right: $\mv^-$}] (vs2) at (17,1) {};   
      \node[enclosed, label = {right: $\qquad \widetilde{\Graph}$}, white] (invis) at (17,2) {}; 
      
      \draw[-] (Z) edge node[above] {} (A) node[midway, above] (edge1) {};
      \draw[->, ultra thick] (X) edge node[above] {} (Y) node[midway, above] (edge1) {};
      \draw[-] (Z) arc [start angle=-180, end angle=180,
                  x radius=2cm, 
                  y radius=10mm] node[below] [pos=0.25] {$\me$};
                  
      \draw[-] (Z') edge node[above] {} (A') node[midway, above] (edge1) {};
      \draw[-] (Z') edge node[above] {$\me^+$} (vs1) node[midway, above] (edge1) {};
      \draw[-] (Z') edge node[below] {$\me^-$} (vs2) node[midway, above] (edge1) {};
      
      \draw[->, thick, blue] (Z) edge[bend left = 60] node[left] {\textcolor{black}{\small clockwise \:}} (W1) node[midway, above] (edge1) {};
      \draw[->, thick, OliveGreen] (Z) edge[bend right = 60] node[left] {\textcolor{black}{\small counterclockwise \:}} (W2) node[midway, above] (edge1) {};
      \draw[->, thick, blue] (Z') edge[bend left = 45] node[above] {} (vs1) node[midway, above] (edge1) {};
      \draw[->, thick, OliveGreen] (Z') edge[bend right = 45] node[above] {} (vs2) node[midway, above] (edge1) {};
      \draw[->, thick, blue] (vs1) edge[bend right = 75] node[above] {} (Z') node[midway, above] (edge1) {};
      \draw[->, thick, OliveGreen] (vs2) edge[bend left = 75] node[above] {} (Z') node[midway, above] (edge1) {};
     \end{tikzpicture}
     \caption{The lasso graph $\mathcal{\Graph}$  (left) and the $3$-star graph $\widetilde{\Graph}$ arising from a midpoint loop cut in the sense of \autoref{prop:loop-cut} through the edge $\me$ as in \autoref{fig:lasso-edge-cut}. The blue arrow (left) marks the clockwise whereas the green arrow (left) marks the counter-clockwise direction corresponding to reflections (blue and green) at $\mv^+$ and $\mv^-$ (right), respectively.} \label{fig:lasso-edge-cut-contructing-q}
     \end{figure}
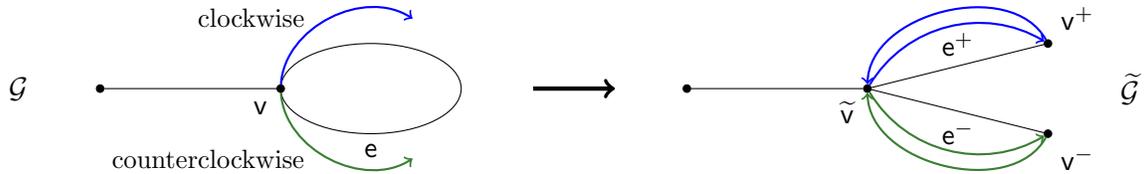

After this, $\vec{q}$ shall follow the same bonds in $\widetilde{\Graph}$ as $\vec{p}$ where each transfer (resp.\ reflection) at $\mv$ corresponds to a transfer (resp.\ reflection) of $\vec{q}$ at $\widetilde{\mv}$ (see \autoref{fig:lasso-edge-cut-contructing-q-2}) until $\vec{p}$ enters the loop $\me$ again: in this case, the above procedure shall be repeated to determine the upcoming bonds of $\vec{q}$. 
\begin{figure}[h]
        \begin{tikzpicture}[scale=0.6]
      \tikzset{enclosed/.style={draw, circle, inner sep=0pt, minimum size=.1cm, fill=black}, every loop/.style={}}
      \node[enclosed, label = {below left: $\mv$}] (Z) at (0,2) {};
      \node[enclosed, white] (W1) at (4.4,2.1) {};
      \node[enclosed, white, label = {right: $\vec{\me}_2$}] (W) at (4.35,2) {};
      \node[enclosed, white, label = {right: $\vec{\me}_3$}] (W') at (5.5,2) {};
      
      \node[enclosed, white] (W1') at (5.5,2.1) {};
      \node[enclosed, white] (W2) at (4.4,1.9) {};
      \node[enclosed, white] (W2') at (5.5,1.9) {};
      \node[enclosed] (A) at (-4,2) {};
      \node[enclosed, white] (A1) at (-1.9,3.75) {};
      \node[enclosed, white] (A2) at (-2.1,3.75) {};
      \node[enclosed, white, label = {above: $\vec{\me}_4$}] (A3) at (-2,3.75) {};
      \node[enclosed, white] (X) at (7.5,2) {};
      \node[enclosed, white] (Y) at (9.5,2) {};
      
	   \node[enclosed, label = {below left: $\widetilde{\mv}$}] (Z') at (15,2) {};
      \node[enclosed] (A') at (11,2) {};
      \node[enclosed, white] (A1') at (12.9,3.75) {};
      \node[enclosed, white] (A2') at (13.1,3.75) {};
      \node[enclosed, white, label = {above: $\vec{\me}_4$}] (A3') at (13,3.75) {};
      \node[enclosed, label = {above right: $\mv^+$}] (vs1) at (19,3) {};   
      \node[enclosed, label = {below right: $\mv^-$}] (vs2) at (19,1) {};   
      
      \draw[-] (Z) edge node[above] {} (A) node[midway, above] (edge1) {};
      \draw[->, ultra thick] (X) edge node[above] {} (Y) node[midway, above] (edge1) {};
      \draw[-] (Z) arc [start angle=-180, end angle=180,
                  x radius=2cm, 
                  y radius=10mm] node[above] [pos=0.25] {$\me$};
                  
      \draw[-] (Z') edge node[above] {} (A') node[midway, above] (edge1) {};
      \draw[-] (Z') edge node[below] {} (vs1) node[midway, above] (edge1) {};
      \draw[-] (Z') edge node[above] {} (vs2) node[midway, above] (edge1) {};

	\draw[->, thick, blue] (A) edge[bend left] node[above] {\textcolor{black}{$\vec{\me}_1$}} (Z) node[midway, above] (edge1) {};      
      \draw[-, thick, blue] (Z) edge[bend left = 90] node[above] {} (W2) node[midway, above] (edge1) {};
      \draw[-, thick, blue] (Z) edge[bend left = 90] node[above] {} (W2') node[midway, above] (edge1) {};
      \draw[->, thick, blue] (W1) edge[bend left = 90] node[above] {} (Z) node[midway, above] (edge1) {};
      \draw[->, thick, blue] (W1') edge[bend left = 90] node[above] {} (Z) node[midway, above] (edge1) {};
      \draw[-, thick, blue] (Z) edge[bend right = 40] node[above] {} (A2) node[midway, above] (edge1) {};
      \draw[->, thick, blue] (A1) edge[bend right = 40] node[above] {} (A) node[midway, above] (edge1) {};
      \draw[->, thick, blue] (Z') edge[bend left = 25] node[above right] {\tiny \textcolor{black}{$\vec{\me}_2^1$}} (vs1) node[midway, above] (edge1) {};
      \draw[->, thick, blue] (Z') edge[bend right = 25] node[below right] {\tiny \textcolor{black}{$\vec{\me}_3^1$}} (vs2) node[midway, above] (edge1) {};
      \draw[->, thick, blue] (vs1) edge[bend right = 75] node[above] {\textcolor{black}{$\vec{\me}_2^2$}} (Z') node[midway, above] (edge1) {};
      \draw[->, thick, blue] (vs2) edge[bend left = 75] node[below] {\textcolor{black}{$\vec{\me}_3^2$}} (Z') node[midway, above] (edge1) {};
      \draw[->, thick, blue] (A') edge[bend left] node[above] {\textcolor{black}{$\vec{\me}_1$}} (Z') node[midway, above] (edge1) {};
      \draw[-, thick, blue] (Z') edge[bend right = 40] node[above] {} (A1') node[midway, above] (edge1) {};
      \draw[->, thick, blue] (A2') edge[bend right = 40] node[above] {} (A') node[midway, above] (edge1) {};
     \end{tikzpicture}
     \caption{A directed path $\vec{p} = \big( \mv_-(\vec{p}), \vec{\me}_1,\vec{\me}_2,\vec{\me}_3,\vec{\me}_4,\mv_+(\vec{p}) \big)$ on the graph $\Graph$ (left) entering the loop $\me$ with the bond $\vec{\me}_2$ and continuing with a transfer through $\mv$ before leaving the loop $\me$ with $\vec{\me}_4$. The corresponding directed path $\vec{q}$ is thus given by $\vec{q}= \big( \mv_-(\vec{q}), \vec{\me}_1,\vec{\me}_2^1,\vec{\me}_2^2,\vec{\me}_3^1,\vec{\me}_3^2,\vec{\me}_4,\mv_+(\vec{q}) \big)$ where the bond $\vec{\me}_2$ (entering of loop $\me$ in \emph{clockwise} direction) is replaced by two bonds $\vec{\me}_2^1, \vec{\me}_2^2$ such that $\vec{q}$ reflects at $\mv^+$ and the consecutive bond $\vec{\me}_3$ (a transfer at $\mv$) is replaced by $\vec{\me}_3^1, \vec{\me}_3^2$ such that $\vec{q}$ reflects now at $\mv_-$.} \label{fig:lasso-edge-cut-contructing-q-2}
     \end{figure}
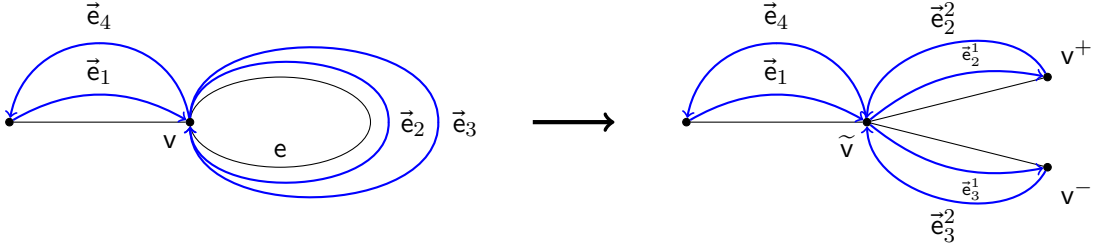
     
This construction indeed leaves the length and scattering coefficient invariant as $\deg_\Graph(\mv) = \deg_{\widetilde{\Graph}}(\widetilde{\mv})$ and each bond of $\vec{p}$ traversing the loop $\me$ (and thus contributes a length of $\ell_\me$ to $\ell(\vec{p})$) is replaced by two bonds traversing $\me^+$ (resp.\ $\me^-$) twice, thus leading to a contributing length of $2\ell_{\me^+} = \ell_\me$ (resp.\ $2\ell_{\me^+} = \ell_\me$) to $\ell(\vec{q})$.     
     
     If now $m_\me(\vec{p}) \in \mathbb{N}_0$ denotes the number of enters of the loop $\me$ for a directed path $\vec{p}$ and $\vec{q}_{\vec{p}, m(\vec{p})}$ the corresponding directed path in $\widetilde{\Graph}$ constructed as above, we have bijection between the sets 
\[
\mathcal{P}(\Graph) = \bigcup_{m=0}^\infty \{ \vec{p} \in \mathcal{P}_\mVD(\Graph) \: : \: m_\me(\vec{p}) = m \}
\]
and $\mathcal{P}_\mVD(\widetilde \Graph)$ -- mapping each $\vec{p}$ to $\vec{q}_{\vec{p};m(\vec{p})}$ -- such that $\ell(\vec{p}) = \ell(\vec{q}_{\vec{p};m(\vec{p})})$ and $\alpha(\vec{p}) = \alpha(\vec{q}_{\vec{p};m(\vec{p})})$ for every $\vec{p} \in \mathcal{P}_\mVD(\Graph)$. This yields
\begin{align*}
\mathfrak{L}_t(\Graph; \mVD) &= \sum_{\vec{p} \in \mathcal{P}_\mVD(\Graph)} \alpha(\vec{p}) H \bigg( \frac{\ell(\vec{p})}{2\sqrt{t}} \bigg) = \sum_{m=0}^\infty \sum_{\substack{\vec{p} \in \mathcal{P}_\mVD(\Graph) \\ m_\me(\vec p) = m}} \alpha(\vec{p}) H \bigg( \frac{\ell(\vec{p})}{2\sqrt{t}} \bigg) \\&= \sum_{m=0}^\infty \sum_{\substack{\vec{p} \in \mathcal{P}_\mVD(\Graph) \\ m_\me(\vec p) = m}} \alpha(\vec{q}_{\vec{p}, m(\vec{p})}) H \bigg( \frac{\ell(\vec{q}_{\vec{p}, m(\vec{p})})}{2\sqrt{t}} \bigg) = \sum_{\vec{q} \in \mathcal{P}_\mVD(\widetilde \Graph)} \alpha(\vec{q}) H \bigg( \frac{\ell(\vec{q})}{2\sqrt{t}} \bigg) = \mathfrak{L}_t(\widetilde \Graph; \mVD)
\end{align*}
for all $t>0$, finally proving the claim.
\end{proof}

\subsection{Operations increasing the volume}
Let us now discuss some surgery principles for the heat content involving certain geometric manipulations of the underlying graph.

We start by studying the operation of attaching pendant graphs in the sense of~\cite[Definition~3.9]{BerKenKur19}. 

We already know from \autoref{cor:lengthening-dir} that lengthening a pendant edge $\me$ raises the heat content, as long as one endpoint of $\me$ lies in $\mVD$. Indeed, the same is true if the degree one vertex lies in $\mVN$. The following shows this and more: it is an immediate consequence of a Feynman--Kac-type formula for metric graphs, see \cite[Proposition~3.2]{BifTau25} and we therefore omit the easy proof.

\begin{proposition}[Attaching graphs]\label{prop:attaching-surgery}
Let $\Graph$ be a metric graph which satisfies \autoref{ass:graph} and let $\widetilde{\Graph}$ be the graph which is formed by attaching a pendant graph $\mathcal{H}$ at a vertex $\mv_0 \in \mVN$, then $\heatcont(\Graph;\mVD) \leq \heatcont(\widetilde{\Graph};\mVD)$ for all $t >0$.
\end{proposition}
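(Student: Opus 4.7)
The strategy is to invoke the Feynman--Kac representation referenced in the statement: for any compact metric graph $\Graph$ as in \autoref{ass:graph}, one has
\[
\big(\e^{t\DeltaGD}\mathbf{1}\big)(x) \;=\; \mathbb{P}_x\big(\tau_{\mVD}^\Graph > t\big)\qquad\hbox{for a.e.\ }x\in\Graph,\ t>0,
\]
where $\tau_{\mVD}^\Graph$ denotes the first hitting time of the set $\mVD$ by the reflected/continuous Brownian motion $(X_s^\Graph)_{s\ge 0}$ on $\Graph$ (standard Kirchhoff/continuity scattering at $\mVN$, killing at $\mVD$). Consequently, by Fubini's theorem,
\[
\heatcont(\Graph;\mVD) \;=\; \int_\Graph \mathbb{P}_x\big(\tau_{\mVD}^\Graph > t\big)\dd x,
\]
and likewise for $\widetilde{\Graph}$.

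The first step is to use the fact that $\mv_0\in\mVN$ is the single gluing vertex to write $\widetilde{\Graph}=\Graph\cup\mathcal{H}$ with $\Graph\cap\mathcal{H}=\{\mv_0\}$, and in particular
\[
\heatcont(\widetilde{\Graph};\mVD) \;=\; \int_\Graph \mathbb{P}_x\big(\tau_{\mVD}^{\widetilde\Graph} > t\big)\dd x + \int_\mathcal{H}\mathbb{P}_x\big(\tau_{\mVD}^{\widetilde\Graph} > t\big)\dd x.
\]
The second addend is nonnegative, so it suffices to compare the first integral with $\heatcont(\Graph;\mVD)$, pointwise in $x\in\Graph$.

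For this I would set up a pathwise coupling: starting from $x\in\Graph\subset \widetilde{\Graph}$, the process $X^{\widetilde\Graph}$ behaves like $X^\Graph$ away from $\mv_0$, and at $\mv_0$ it performs scattering through all $\deg_{\widetilde\Graph}(\mv_0)=\deg_\Graph(\mv_0)+\deg_\mathcal{H}(\mv_0)$ edges rather than only the $\deg_\Graph(\mv_0)$ edges of $\Graph$. Excursions of $X^{\widetilde\Graph}$ into the pendant $\mathcal{H}$ never encounter a Dirichlet vertex before returning to $\mv_0$ (since $\mVD\subset\Graph$ by assumption and $\mathcal{H}$ is attached at a natural vertex), so, after deleting (time-changing away) these excursions, the remaining trace on $\Graph$ is distributed as $X^\Graph$, with the Kirchhoff/continuity conditions at $\mv_0$ preserved by the excursion theory. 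In particular the hitting time of $\mVD$ can only be delayed:
\[
\tau_{\mVD}^{\widetilde\Graph} \;\ge\; \tau_{\mVD}^\Graph\qquad \mathbb{P}_x\hbox{-almost surely},
\]
and therefore $\mathbb{P}_x\big(\tau_{\mVD}^{\widetilde\Graph}>t\big)\ge \mathbb{P}_x\big(\tau_{\mVD}^\Graph>t\big)$ for every $x\in\Graph$ and $t>0$. Integrating over $\Graph$ and adding the nonnegative contribution from $\mathcal{H}$ yields the claim.

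The only delicate point is the justification of the excursion/time-change identification at $\mv_0$; but this is exactly what \cite[Proposition~3.2]{BifTau25} provides, so the argument reduces to invoking that statement together with the trivial monotonicity of the hitting-time tails. Alternatively, one can bypass the probabilistic setup entirely by noting that the map $u\mapsto u\vert_\Graph$ sends $e^{t\Delta^{\widetilde\Graph;\mVD}}\mathbf{1}_{\widetilde\Graph}$ to a supersolution of the heat equation on $\Graph$ with Dirichlet data on $\mVD$ and data $\mathbf{1}_\Graph$ at $t=0$, and conclude by the parabolic comparison principle; this is essentially a form restatement of the same fact.
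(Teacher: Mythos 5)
Your proposal is correct and follows essentially the same route as the paper, which likewise reduces the claim to the Feynman--Kac representation of \cite[Proposition~3.2]{BifTau25} (the paper omits the details, which you have filled in: the decomposition of the integral over $\widetilde{\Graph}=\Graph\cup\mathcal H$, the nonnegativity of the contribution from $\mathcal H$, and the excursion/time-change argument showing $\tau_{\mVD}^{\widetilde\Graph}\ge\tau_{\mVD}^{\Graph}$ pathwise from $x\in\Graph$). No gaps.
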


\begin{exa}\label{exa:larger-loop}
Let us present a simple application of \autoref{prop:attaching-surgery}. Let $\Graph$ be a lasso or, more generally, any metric graph that contains a pendant loop. If $\widetilde{\Graph}$ is a metric graph arising from $\Graph$ lengthening the loop, then 
\begin{equation}\label{eq:larger-loop}
\heatcont(\Graph;\mVD)\le \heatcont(\widetilde{\Graph};\mVD)\qquad\hbox{ for all }t>0.
\end{equation}
Indeed, let $\Graph^{*}$ and $\widetilde{\Graph}^{*}$ the metric graphs that arise from $\Graph$ and $\widetilde{\Graph}$, respectively, by a midpoint loop cut, thus generating two new edges $\me_1,\me_2$ and $\widetilde{\me}_1,\widetilde{\me}_2$, respectively: we know from \autoref{prop:loop-cut} that this operation does not change the heat content at any $t>0$, i.e., 
\[
\heatcont(\Graph;\mVD)=\heatcont(\Graph^*;\mVD)\quad\hbox{ and }
\heatcont(\widetilde{\Graph};\mVD)=\heatcont(\widetilde{\Graph}^*;\mVD)\qquad\hbox{for all }t>0.
\] 
Then $\widetilde{\Graph}^{*}$ arises from $\Graph^{*}$ by attaching a pendant interval at the Neumann endpoint of both $\me_1,\me_2$: by \autoref{prop:attaching-surgery}, \eqref{eq:larger-loop} follows immediately; see also \autoref{cor:heat-length-s} below for a comparable result.
\end{exa}

\begin{definition}[Mirroring at vertices]\label{defi:basic-surgery}
Let $\Graph$ be a metric graph which satisfies \autoref{ass:graph} and let $\mV_0 \subset \Graph \setminus \mVD$ be a finite subset. Consider the (disconnected) metric graph consisting of the disjoint union of $m$ copies of $\Graph$ -- let us denote them by $\Graph_1,\ldots,\Graph_m$ -- each of which contains a copy of elements of $\mV_0$ -- let us denote them by $\mv^{(1)},\ldots,\mv^{(m)}$ for every $\mv\in \mV_0$.
For each $\mv\in \mV_0$, glue the vertices  $\mv^{(1)},\ldots,\mv^{(m)}$ to form a new vertex $\widetilde{\mv}$: we denote by $\Graph^{(m)}(\mV_0)$ the metric graph thus arising, and we call it the \textit{$m$-fold mirrored} metric graph with respect to the reflection set $\mV_0$. Moreover, for any subset $\mW \subset \mV \setminus \mV_0$, we denote with $\mW^{(m)}$ the $m$ copies corresponding $\mW$ in the $m$-fold mirrored graph (cf.\ \autoref{fig:m-fold-mirroring} below).

\end{definition} 
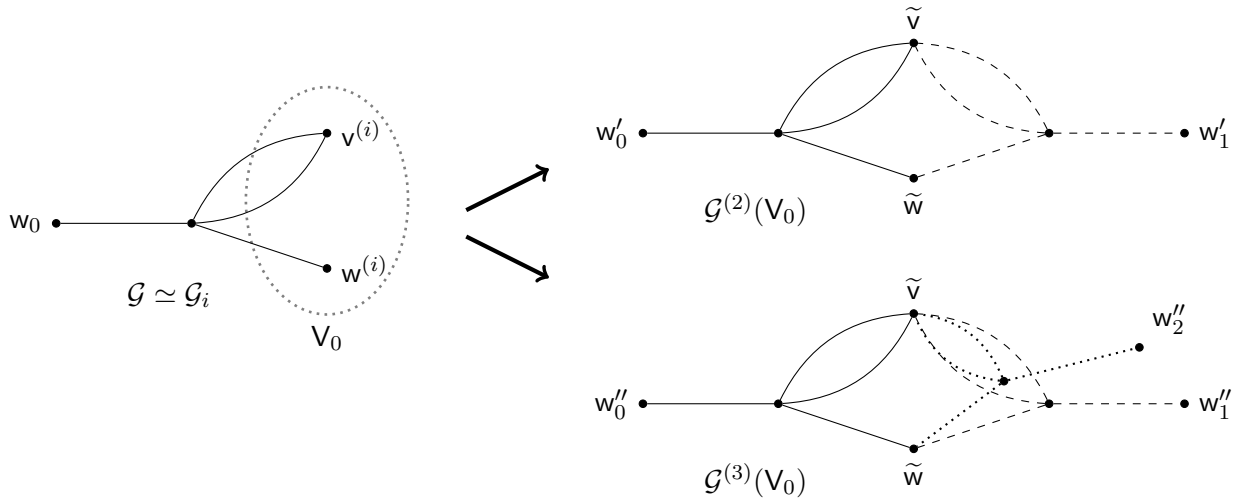
\begin{figure}[h]
\begin{tikzpicture}[scale=0.60]
      \tikzset{enclosed/.style={draw, circle, inner sep=0pt, minimum size=.10cm, fill=black}, every loop/.style={}, every fit/.style={ellipse,draw, dotted,inner sep=3.2pt,text width=1.3cm, line width=1pt}}

      \node[enclosed] (Z) at (-1,0) {};
      \node[enclosed, label = {right: $\mv^{(i)}$}] (A) at (2,2) {};
      \node[enclosed, label = {left: $\mw_0$}] (B) at (-4,0) {};
      \node[enclosed, label = {right: $\mw^{(i)}$}] (C) at (2,-1) {};
      \node[enclosed, white] (E) at (5,0.25) {};
      \node[enclosed, white] (F) at (7,1.25) {};
      \node[enclosed, white] (E') at (5,-0.25) {};
      \node[enclosed, white] (F') at (7,-1.25) {};
      
      \node[enclosed] (Z') at (12,2) {};
      \node[enclosed, label = {above: $\widetilde{\mv}$}] (A') at (15,4) {};
      \node[enclosed, label = {left: $\mw_0'$}] (B') at (9,2) {};
      \node[enclosed, label = {below: $\widetilde{\mw}$}] (C') at (15,1) {};
      \node[enclosed, label = {right: $\mw_1'$}] (B'refl) at (21,2) {};
      \node[enclosed] (Z'refl) at (18,2) {};
      
      \node[enclosed] (Z'') at (12,-4) {};
      \node[enclosed, label = {above: $\widetilde{\mv}$}] (A'') at (15,-2) {};
      \node[enclosed, label = {left: $\mw_0''$}] (B'') at (9,-4) {};
      \node[enclosed, label = {below: $\widetilde{\mw}$}] (C'') at (15,-5) {};
      \node[enclosed, label = {right: $\mw_1''$}] (B''refl) at (21,-4) {};
      \node[enclosed] (Z''refl) at (18,-4) {};
      \node[enclosed, label = {above right: $\mw_2''$}] (B''refl2) at (20,-2.75) {};
      \node[enclosed] (Z''refl2) at (17,-3.5) {};
     
      \node[enclosed, white, label={below: $\Graph \simeq \mathcal{G}_i$}] (G) at (-1.5,-1) {};
      \node[enclosed, white, label={below: $\Graph^{(2)}(\mV_0)$}] (G'') at (11.5,1) {};
      \node[enclosed, white, label={below: $\Graph^{(3)}(\mV_0)$}] (G'') at (11.5,-5) {};

\node [gray,fit=(A) (C),label=below:\textcolor{black}{$\mV_0$}] {};
%\node [gray,fit=(A') (C'),label=below:\textcolor{black}{}] {};
%\node [gray,fit=(A'') (C''),label=below:\textcolor{black}{}] {};

      \draw (Z) edge[bend left] node[above] {} (A) node[midway, above] (edge1) {};
      \draw (Z) edge[bend right] node[above] {} (A) node[midway, above] (edge1) {};
      \draw (Z) edge node[above] {} (B) node[midway, above] (edge2) {};
      \draw (Z) edge node[above] {} (C) node[midway, above] (edge3) {};
      
      \draw (Z') edge[bend left] node[above] {} (A') node[midway, above] (edge1) {};
      \draw (Z') edge[bend right] node[above] {} (A') node[midway, above] (edge1) {};
      \draw (Z') edge node[above] {} (B') node[midway, above] (edge2) {};
      \draw (Z') edge node[above] {} (C') node[midway, above] (edge3) {};
      \draw[dashed] (Z'refl) edge[bend left] node[above] {} (A') node[midway, above] (edge1) {};
      \draw[dashed] (Z'refl) edge[bend right] node[above] {} (A') node[midway, above] (edge1) {};
      \draw[dashed] (Z'refl) edge node[above] {} (C') node[midway, above] (edge3) {};
      \draw[dashed] (Z'refl) edge node[above] {} (B'refl) node[midway, above] (edge2) {};
      
      \draw (Z'') edge[bend left] node[above] {} (A'') node[midway, above] (edge1) {};
      \draw (Z'') edge[bend right] node[above] {} (A'') node[midway, above] (edge1) {};
      \draw (Z'') edge node[above] {} (B'') node[midway, above] (edge2) {};
      \draw (Z'') edge node[above] {} (C'') node[midway, above] (edge3) {};
      \draw[dashed] (Z''refl) edge[bend left] node[above] {} (A'') node[midway, above] (edge1) {};
      \draw[dashed] (Z''refl) edge[bend right] node[above] {} (A'') node[midway, above] (edge1) {};
      \draw[dashed] (Z''refl) edge node[above] {} (C'') node[midway, above] (edge3) {};
      \draw[dashed] (Z''refl) edge node[above] {} (B''refl) node[midway, above] (edge2) {};
      \draw[thick, dotted] (Z''refl2) edge[bend left] node[above] {} (A'') node[midway, above] (edge1) {};
      \draw[thick, dotted] (Z''refl2) edge[bend right] node[above] {} (A'') node[midway, above] (edge1) {};
      \draw[thick, dotted] (Z''refl2) edge node[above] {} (C'') node[midway, above] (edge3) {};
      \draw[thick, dotted] (Z''refl2) edge node[above] {} (B''refl2) node[midway, above] (edge2) {};
 
      \draw[->, ultra thick] (E) edge node[above] {} (F) node[midway, above] (edge5) {};
      \draw[->, ultra thick] (E') edge node[above] {} (F') node[midway, above] (edge5') {};

     \end{tikzpicture}
     \caption{The corresponding $2$- (right above) and $3$-fold (right below) mirrored graphs $\Graph^{(2)}(\mV_0)$ and $\Graph^{(3)}(\mV_0)$ arising from $i=2,3$ copies of a graph $\Graph$ (left). For $\mW := \{ \mw_0 \}$ one has that $\mW^{(2)} = \{\mw_0',\mw_1' \}$ and $\mW^{(3)} = \{\mw_0'',\mw_1'',\mw_2'' \}$.}\label{fig:m-fold-mirroring}
     \end{figure}

\begin{theorem}\label{thm:mirroring-heat-content}
Let $\Graph$ be a metric graph which satisfies \autoref{ass:graph} and let $\mV_0 \subset \mV_\mathrm{N}$. For every $m \in \mathbb{N}$ there holds 
\begin{equation}\label{eq:heat-mirrored}
\heatcont(\Graph^{(m)}(\mV_0);(\mVD)^{(m)}) = m\heatcont(\Graph;\mVD)\qquad \hbox{for all }t>0.
\end{equation}
\end{theorem}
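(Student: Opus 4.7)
The plan is to prove \eqref{eq:heat-mirrored} by applying the heat content formula \eqref{eq:heat-content-formula-bif-mug} to both sides and reducing the identity to a combinatorial statement about path sums. Since $|\Graph^{(m)}(\mV_0)| = m|\Graph|$ and $\#(\mVD)^{(m)} = m\#\mVD$ by construction (gluing occurs only at vertices in $\mV_0 \subset \mVN$, which is disjoint from $\mVD$), the first two terms in the heat content formula already scale by the factor $m$. It remains to establish that
\[
\sum_{\vec{p'} \in \mathcal{P}_{(\mVD)^{(m)}}(\Graph^{(m)}(\mV_0))} \alpha_{\Graph^{(m)}(\mV_0)}(\vec{p'}) H\!\left(\frac{\ell(\vec{p'})}{2\sqrt{t}}\right) = m \sum_{\vec{p} \in \mathcal{P}_{\mVD}(\Graph)} \alpha_\Graph(\vec{p}) H\!\left(\frac{\ell(\vec{p})}{2\sqrt{t}}\right).
\]

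To this end, I would introduce the natural projection $\pi \colon \Graph^{(m)}(\mV_0) \to \Graph$ identifying corresponding points in the $m$ copies, which maps directed paths $\vec{p'}$ in $\Graph^{(m)}(\mV_0)$ to directed paths $\vec{p} = \pi(\vec{p'})$ in $\Graph$ of the same metric length, and restricts to a surjection from $\mathcal P_{(\mVD)^{(m)}}(\Graph^{(m)}(\mV_0))$ onto $\mathcal P_{\mVD}(\Graph)$. Since the length is invariant under $\pi$, everything reduces to showing the \emph{scattering identity}
\begin{equation}\label{eq:scatt-mirror-plan}
\sum_{\vec{p'} \in \pi^{-1}(\vec{p})} \alpha_{\Graph^{(m)}(\mV_0)}(\vec{p'}) = m \cdot \alpha_\Graph(\vec{p}) \qquad \text{for every } \vec{p} \in \mathcal{P}_{\mVD}(\Graph).
\end{equation}

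The proof of \eqref{eq:scatt-mirror-plan} proceeds by encoding each lift $\vec{p'} \in \pi^{-1}(\vec{p})$ via the following discrete data: the choice of copy $i_0 \in \{1,\dots,m\}$ for the initial vertex $\mv_-^{(i_0)}$ (well-defined since $\mv_- \in \mVD \subset \mV\setminus\mV_0$), together with, at each internal visit of $\vec{p}$ to a vertex $\mv \in \mV_0$, the choice of copy $i' \in \{1,\dots,m\}$in which the next edge lies. At an internal vertex $\mathsf u \notin \mV_0$ the scattering factor at $\mathsf u^{(i)}$ in $\Graph^{(m)}(\mV_0)$ equals its counterpart in $\Graph$ because $\deg_{\Graph^{(m)}(\mV_0)}(\mathsf u^{(i)}) = \deg_\Graph(\mathsf u)$. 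At an internal visit to $\widetilde{\mv}$ with $\mv \in \mV_0$ and $d := \deg_\Graph(\mv)$, so that $\deg_{\Graph^{(m)}(\mV_0)}(\widetilde{\mv}) = md$, summing the factor $\beta(\vec{\me}^{(i)}, \vec{\me'}^{(i')}) = \frac{2}{md} - \delta_{\vec{\me}^{(i)},\vec{\me'}^{(i')}}$ over the $m$ possible values of $i'$ yields $m \cdot \frac{2}{md} = \frac{2}{d}$ when $\vec\me \neq \vec\me'$ (a transfer in $\Graph$), and $(m-1)\cdot\frac{2}{md} + \bigl(\frac{2}{md} - 1\bigr) = \frac{2}{d}-1$ when $\vec\me = \vec\me'$ (a reflection in $\Graph$); in either case the sum reproduces exactly $\beta(\vec\me, \vec\me')$ as computed in $\Graph$. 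Factoring this sum over all internal visits to $\mV_0$ and summing over the $m$ choices of $i_0$ yields \eqref{eq:scatt-mirror-plan}.

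The main potential obstacle is to make sure the parametrization of lifts is actually a bijection, and in particular to handle loops in $\Graph$ attached at a vertex of $\mV_0$ correctly: a loop at $\mv \in \mV_0$ becomes, in $\Graph^{(m)}(\mV_0)$, a bouquet of $m$ loops at $\widetilde{\mv}$, and a bond of $\Graph$ traversing such a loop lifts to $m$ distinct bonds. However, once one works consistently at the level of \emph{bonds} rather than edges (as is done throughout Section~\ref{sec:comb-paths-g}), the argument above applies verbatim, since the relevant local degrees and the Kronecker-$\delta$ in \eqref{eq:scattering-coefficients} keep track of exactly the information needed. Combining \eqref{eq:scatt-mirror-plan} with the length invariance of $\pi$ and with formula \eqref{eq:heat-content-formula-bif-mug} concludes the proof of \eqref{eq:heat-mirrored}.
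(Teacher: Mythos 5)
Your proposal is correct and follows essentially the same route as the paper: the paper's proof also reduces, via the heat content formula, to the identity $\sum_{\vec q}\alpha(\vec q)=m\,\alpha_\Graph(\vec p)$ over all lifts of a path $\vec p\in\mathcal P_{\mVD}(\Graph)$ (this is \autoref{lem:technical-lemma-attaching}(iv)), parametrizing the lifts by the choice of copy before the first, between consecutive, and after the last hit of $\mV_0$, and then verifying exactly your local computation $\sum_{k=1}^m\beta_j^{m_j,k}=\beta_j$ at each visit to $\mV_0$ before telescoping, with the overall factor $m$ coming from the one remaining free choice of copy. Your remark about treating loops at the level of bonds matches how the paper handles this implicitly, so there is no substantive difference between the two arguments.
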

In particular, the left hand side of \eqref{eq:heat-mirrored} does not depend on the number of elements in $\mV_0$, since neither does the right hand side.

The proof will be based on on another notion which is based on the notations introduced in Section \ref{sec:comb-paths-g}: to this end, let $m \in \mathbb{N}$, let $\Graph$ be a graph, and $\mV_0 \subset \Graph \setminus \mVD$ be a finite subset. For a directed path $\vec{p} \in \mathcal{P}(\Graph)$ with $n:=\#_{\mV_0} \vec{p} \in \mathbb{N}_0$ we denote with 
\[
\vec{p}(m_1,\dots,m_j,\dots,m_{n+1}), \qquad \text{where $m_i \in \{1,\dots,m \}$ and $i \in \{1,\dots,n+1 \}$}, 
\]
the directed path in $\mathcal{P}(\Graph^{(m)}(\mV_0))$ that follows the same bonds as $\vec{p}$
\begin{itemize}
\item[•] between the $(j-1)$-th and the $j$-th hit of $\mV_0$, if $j=2,\dots,n$,
\item[•] before the first hit of $\mV_0$, if $j=1$,
\item[•] after the last hit of $\mV_0$, if $j=n+1$
\end{itemize}
 in the \emph{$k$-th} copy of $\Graph$ if $m_j=k$; see also \autoref{fig:expl-m-fold-mirror-2} and \autoref{fig:expl-m-fold-mirror-3} below.

% the directed path in $\mathcal{P}(\Graph^{(m)}(\mV_0))$ that -- \emph{between} the $(j-1)$-th and the $j$-th hit of $\mV_0$ (if $j=1$ or $j=n+1$, this either means that before the \emph{first} hit or else after the \emph{last} hit of $\mV_0$) -- follows the same bonds as $\vec{p}$ in the \emph{$k$-th} copy of $\Graph$ if $m_j=k$; see also \autoref{fig:expl-m-fold-mirror-2} and \autoref{fig:expl-m-fold-mirror-3} below.
\begin{figure}[h]
        \begin{tikzpicture}[scale=0.57]
      \tikzset{enclosed/.style={draw, circle, inner sep=0pt, minimum size=.1cm, fill=black}, every loop/.style={}}
      \node[enclosed] (Z) at (0,2) {};
      \node[enclosed] (A) at (-4,2) {};
      \node[enclosed, label = {right: $\mv_0$}] (W) at (4,2) {};
      \node[enclosed, white, label = {above: $\vec{\me}_4$}] (W'up) at (2,4.5) {};
      \node[enclosed, white] (W'upr) at (2.1,4.5) {};
      \node[enclosed, white] (W'upl) at (1.9,4.5) {};
      \node[enclosed, white, label = {below: $\vec{\me}_5$}] (W'low) at (2,-0.5) {};
      \node[enclosed, white] (W'lowr) at (2.1,-0.5) {};
      \node[enclosed, white] (W'lowl) at (1.9,-0.5) {};
      \node[enclosed, white] (X) at (5.75,2) {};
      \node[enclosed, white] (Y) at (7.75,2) {};
      
      \node[enclosed] (Z') at (12.75,2) {};
      \node[enclosed] (Z'refl) at (20.75,2) {};
      \node[enclosed] (A') at (8.75,2) {};
      \node[enclosed] (A'refl) at (24.75,2) {};
      \node[enclosed] (W') at (16.75,2) {};
      
      \draw[-] (Z) edge node[above] {} (A) node[midway, above] (edge1) {};
      \draw[-] (Z) arc [start angle=-180, end angle=180,
                  x radius=2cm, 
                  y radius=7mm] node[right] [pos=0.5] {};
     \draw[->, thick, blue] (A) edge[bend left] node[above] {\textcolor{black}{$\vec{\me}_1$}} (Z) node[midway, above] (edge1) {};
     \draw[-, thick, blue] (Z) edge[bend left = 50] node[below] {} (W'upr) node[midway, above] (edge1) {};
     \draw[->, thick, blue] (W'upl) edge[bend left = 50] node[below] {} (W) node[midway, above] (edge1) {};
      \draw[-, thick, blue] (W) edge[bend left = 50] node[above] {} (W'lowl) node[midway, above] (edge1) {};
       \draw[->, thick, blue] (W'lowr) edge[bend left = 50] node[above] {} (Z) node[midway, above] (edge1) {};
     \draw[->, thick, blue] (Z) edge[in=80, out=100] node[above] {\textcolor{black}{$\vec{\me}_2$}} (W) node[midway, above] (edge1) {};
     \draw[->, thick, blue] (W) edge[in=-100, out=-80] node[below] {\textcolor{black}{$\vec{\me}_3$}} (Z) node[midway, above] (edge1) {};
     \draw[->, thick, blue] (Z) edge[bend left] node[below] {\textcolor{black}{$\vec{\me}_6$}} (A) node[midway, above] (edge1) {};
     \draw[->, ultra thick] (X) edge node[below] {} (Y) node[midway, above] (edge1) {};
     
     \draw[-] (Z') edge node[above] {} (A') node[midway, above] (edge1) {};
     \draw[-] (Z'refl) edge node[above] {} (A'refl) node[midway, above] (edge1) {};
     \draw[-] (Z') arc [start angle=-180, end angle=180,
                  x radius=2cm, 
                  y radius=7mm] node[right] [pos=0.5] {};
      \draw[-] (Z'refl) arc [start angle=0, end angle=360,
                  x radius=2cm, 
                  y radius=7mm] node[right] [pos=0.5] {};
                  
\draw[->, thick, blue] (A') edge[bend left] node[above] {\textcolor{black}{$\vec{\me}_1$}} (Z') node[midway, above] (edge1) {};
     \draw[->, thick, blue] (Z') edge[in=80, out=100] node[above] {\textcolor{black}{$\vec{\me}_2$}} (W') node[midway, above] (edge1) {};
     \draw[->, thick, red] (W') edge[in=80, out=100] node[above] {\textcolor{black}{$\vec{\me}_3$}} (Z'refl) node[midway, above] (edge1) {};
     \draw[->, thick, red] (Z'refl) edge[in=-100, out=-80] node[below] {\textcolor{black}{$\vec{\me}_4$}} (W') node[midway, above] (edge1) {};
     \draw[->, thick, blue] (W') edge[in=-100, out=-80] node[below] {\textcolor{black}{$\vec{\me}_5$}} (Z') node[midway, above] (edge1) {};
     \draw[->, thick, blue] (Z') edge[bend left] node[below] {\textcolor{black}{$\vec{\me}_6$}} (A') node[midway, above] (edge1) {};
     
     \draw [decorate,decoration={brace,amplitude=10pt},xshift=0pt,yshift=0pt, thick]
    (16.7,-0.5) -- (8.8,-0.5) node [black,midway,xshift=0cm,yshift=-0.6cm] 
    {first copy of $\Graph$};
     \draw [decorate,decoration={brace,amplitude=10pt},xshift=0pt,yshift=0pt, thick]
    (24.7,-0.5) -- (16.8,-0.5) node [black,midway,xshift=0cm,yshift=-0.6cm] 
    {second copy of $\Graph$};
     \end{tikzpicture}
     \caption{A lasso graph $\Graph$ together with an vertex $\mv_0$ and a directed path $\vec{p} = \big(\mv_-(\vec{p}), \vec{\me}_1, \vec{\me}_2, \vec{\me}_3, \vec{\me}_4, \vec{\me}_5, \vec{\me}_6, \mv_+(\vec{p})\big) \in \mathcal{P}(\Graph)$ with $\#_{\mv_0} \vec{p} = 2)$ (left), and its corresponding $2$-fold mirrored graph (at $\mV_0 := \{ \mv_0 \}$) $\Graph^{(2)}(\mV_0)$ together with the path $\vec{p}(1,2,1)$ whose bonds \emph{before} the first and \emph{after} the second (i.e., the last) hit of $\mv_0$ belong to the first copy of $\Graph$ (blue) and the bonds \emph{between} the first and the second hit belong to the second copy of $\Graph$ (red).} \label{fig:expl-m-fold-mirror-2}
     \end{figure}
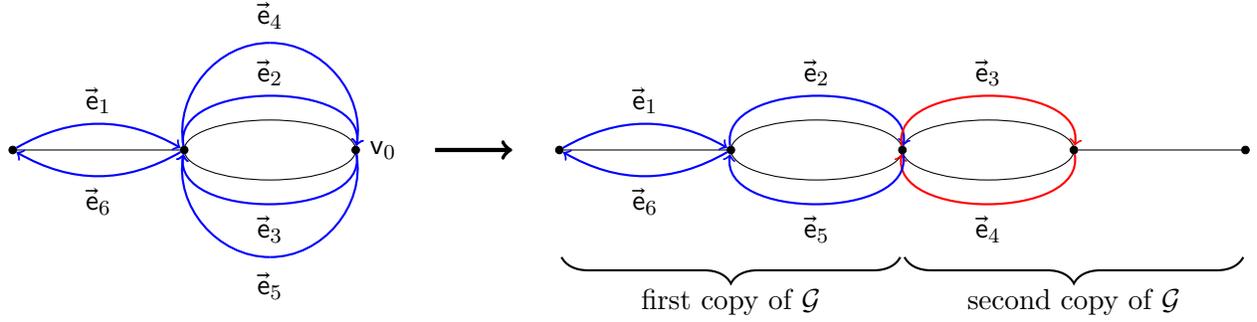
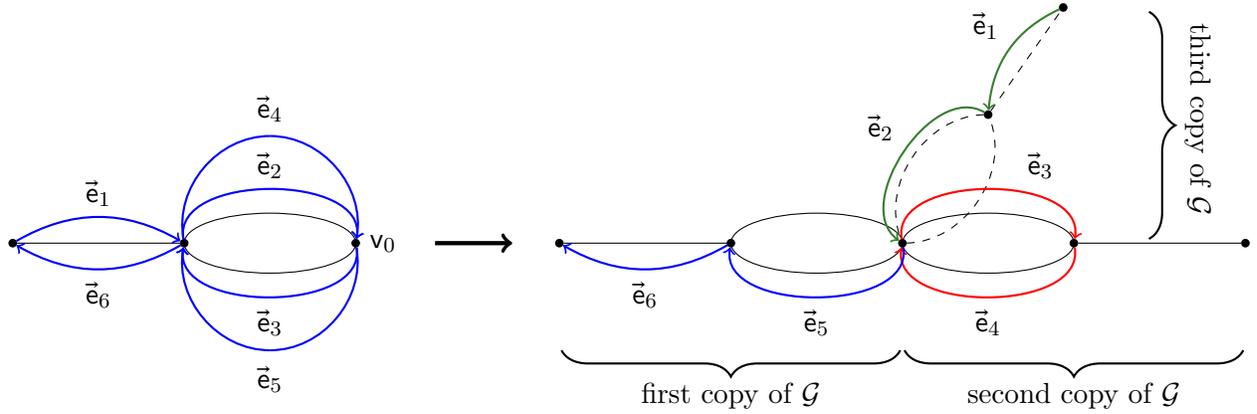
\begin{figure}[h]
        \begin{tikzpicture}[scale=0.57]
      \tikzset{enclosed/.style={draw, circle, inner sep=0pt, minimum size=.1cm, fill=black}, every loop/.style={}}
      \node[enclosed] (Z) at (0,2) {};
      \node[enclosed] (A) at (-4,2) {};
      \node[enclosed, label = {right: $\mv_0$}] (W) at (4,2) {};
      \node[enclosed, white, label = {above: $\vec{\me}_4$}] (W'up) at (2,4.5) {};
      \node[enclosed, white] (W'upr) at (2.1,4.5) {};
      \node[enclosed, white] (W'upl) at (1.9,4.5) {};
      \node[enclosed, white, label = {below: $\vec{\me}_5$}] (W'low) at (2,-0.5) {};
      \node[enclosed, white] (W'lowr) at (2.1,-0.5) {};
      \node[enclosed, white] (W'lowl) at (1.9,-0.5) {};
      \node[enclosed, white] (X) at (5.75,2) {};
      \node[enclosed, white] (Y) at (7.75,2) {};
      
      \node[enclosed] (Z') at (12.75,2) {};
      \node[enclosed] (Z'refl) at (20.75,2) {};
      \node[enclosed] (Z'refl2) at (18.75,5) {};
      \node[enclosed] (A') at (8.75,2) {};
      \node[enclosed] (A'refl) at (24.75,2) {};
      \node[enclosed] (A'refl2) at (20.5,7.5) {};
      \node[enclosed] (W') at (16.75,2) {};
      
      \draw[-,dashed] (W') edge[bend right = 55] node[above] {} (Z'refl2) node[midway, above] (edge1) {};   
      \draw[-,dashed] (Z'refl2) edge[bend right = 55] node[above] {} (W') node[midway, above] (edge1) {};     
       \draw[-,dashed] (Z'refl2) edge node[above] {} (A'refl2) node[midway, above] (edge1) {};  
      
      \draw[-] (Z) edge node[above] {} (A) node[midway, above] (edge1) {};
      \draw[-] (Z) arc [start angle=-180, end angle=180,
                  x radius=2cm, 
                  y radius=7mm] node[right] [pos=0.5] {};
     \draw[->, thick, blue] (A) edge[bend left] node[above] {\textcolor{black}{$\vec{\me}_1$}} (Z) node[midway, above] (edge1) {};
     \draw[-, thick, blue] (Z) edge[bend left = 50] node[below] {} (W'upr) node[midway, above] (edge1) {};
     \draw[->, thick, blue] (W'upl) edge[bend left = 50] node[below] {} (W) node[midway, above] (edge1) {};
      \draw[-, thick, blue] (W) edge[bend left = 50] node[above] {} (W'lowl) node[midway, above] (edge1) {};
       \draw[->, thick, blue] (W'lowr) edge[bend left = 50] node[above] {} (Z) node[midway, above] (edge1) {};
     \draw[->, thick, blue] (Z) edge[in=80, out=100] node[above] {\textcolor{black}{$\vec{\me}_2$}} (W) node[midway, above] (edge1) {};
     \draw[->, thick, blue] (W) edge[in=-100, out=-80] node[below] {\textcolor{black}{$\vec{\me}_3$}} (Z) node[midway, above] (edge1) {};
     \draw[->, thick, blue] (Z) edge[bend left] node[below] {\textcolor{black}{$\vec{\me}_6$}} (A) node[midway, above] (edge1) {};
     \draw[->, ultra thick] (X) edge node[below] {} (Y) node[midway, above] (edge1) {};
     
     \draw[-] (Z') edge node[above] {} (A') node[midway, above] (edge1) {};
     \draw[-] (Z'refl) edge node[above] {} (A'refl) node[midway, above] (edge1) {};
     \draw[-] (Z') arc [start angle=-180, end angle=180,
                  x radius=2cm, 
                  y radius=7mm] node[right] [pos=0.5] {};
      \draw[-] (Z'refl) arc [start angle=0, end angle=360,
                  x radius=2cm, 
                  y radius=7mm] node[right] [pos=0.5] {};
                  
\draw[->, thick, OliveGreen] (A'refl2) edge[bend right] node[above left] {\textcolor{black}{$\vec{\me}_1$}} (Z'refl2) node[midway, above] (edge1) {};
     \draw[->, thick, OliveGreen] (Z'refl2) edge[bend right = 87.5] node[above left] {\textcolor{black}{$\vec{\me}_2$}} (W') node[midway, above] (edge1) {};
     \draw[->, thick, red] (W') edge[in=80, out=100] node[above right] {\textcolor{black}{\quad $\vec{\me}_3$}} (Z'refl) node[midway, above] (edge1) {};
     \draw[->, thick, red] (Z'refl) edge[in=-100, out=-80] node[below] {\textcolor{black}{$\vec{\me}_4$}} (W') node[midway, above] (edge1) {};
     \draw[->, thick, blue] (W') edge[in=-100, out=-80] node[below] {\textcolor{black}{$\vec{\me}_5$}} (Z') node[midway, above] (edge1) {};
     \draw[->, thick, blue] (Z') edge[bend left] node[below] {\textcolor{black}{$\vec{\me}_6$}} (A') node[midway, above] (edge1) {};
     
     \draw [decorate,decoration={brace,amplitude=10pt},xshift=0pt,yshift=0pt, thick]
    (16.7,-0.5) -- (8.8,-0.5) node [black,midway,xshift=0cm,yshift=-0.6cm] 
    {first copy of $\Graph$};
     \draw [decorate,decoration={brace,amplitude=10pt},xshift=0pt,yshift=0pt, thick]
    (24.7,-0.5) -- (16.8,-0.5) node [black,midway,xshift=0cm,yshift=-0.6cm] 
    {second copy of $\Graph$};
    \draw [decorate,decoration={brace,amplitude=10pt},xshift=0pt,yshift=0pt, thick]
    (22.5,7.4) -- (22.5,2.1) node [black,midway,xshift=0.65cm,yshift=1.5cm, rotate=-90,anchor=west] 
    {third copy of $\Graph$};
     \end{tikzpicture}
     \caption{The lasso graph $\Graph$ from \autoref{fig:expl-m-fold-mirror-2} again together with the vertex $\mv_0$ and the directed path $\vec{p} = \big(\mv_-(\vec{p}), \vec{\me}_1, \vec{\me}_2, \vec{\me}_3, \vec{\me}_4, \vec{\me}_5, \vec{\me}_6, \mv_+(\vec{p})\big)$ with $\#_{\mv_0} \vec{p} = 2)$ (left), but now with its corresponding \emph{$3$-fold} mirrored graph (at $\mV_0 := \{ \mv_0 \}$) $\Graph^{(3)}(\mV_0)$ together with the path $\vec{p}(3,2,1)$ whose bonds \emph{before} the first hit belongs to the third copy (green), the bonds \emph{between} the first and the second hit of $\mv_0$ belong to the second copy (red), and the bonds after the second (i.e., the last) hit of $\mv_0$ belong to the first copy of $\Graph$ (blue).} \label{fig:expl-m-fold-mirror-3}
     \end{figure}     
     
     Now for a path $\vec{p} \in \mathcal{P}(\Graph)$ we introduce the sets
     \begin{align}\label{eq:decomposition-sets-m-fold-mirrored-graph}
\mathcal{P}_{\vec{p}}(\Graph^{(m)}(\mV_0)) := \Big\{ \vec{p}(m_1,m_2,\dots,m_{n+1}) \: : \: m_i \in \{ 1,\dots,m \}, \: i \in \{ 1,\dots,n+1 \} \Big\};
\end{align}
note that in the special case where $n=0$, $\mathcal{P}_{\vec{p}}(\Graph^{(m)}(\mV_0))$ consists of $m$ copies of $\vec{p}$, each of them lying in a different copy $\Graph_1,\dots,\Graph_m$ of $\Graph$.
%, following the same bonds as $\vec{p}$ in the respective copy of $\Graph$. 
Moreover, $\vec{p}(1,\dots,1)$ can be regarded as the embedded copy of $\vec p$ in $\Graph^{(m)}(\mV_0)$.
%one has that $\vec{p}(1,1,\dots,1) = \vec{p}$ (where $\vec{p}$ is interpreted as a path in $\Graph^{(m)}(\mV_0)$).

%The sets described in \eqref{eq:decomposition-sets-m-fold-mirrored-graph} allow us to characterize each corresponding directed path in the $m$-fold mirrored graph $\Graph^{(m)}(\mV_0)$ as the following lemma states.
\begin{lemma}\label{lem:technical-lemma-attaching}
Let $m \in \mathbb{N}$, and let $\Graph$ satisfy \autoref{ass:graph}. Moreover, let $\mV_0 \subset \Graph \setminus \mVD$ and $\mW \subset \mV \setminus \mV_0$. Then the family $(\mathcal{P}_{\vec{p}}(\Graph^{(m)}(\mV_0)))_{\vec{p} \in \mathcal{P}_\mW(\Graph)}$ of subsets of $\mathcal{P}_{\mW^{(m)}}(\Graph^{(m)}(\mV_0))$ defined in \eqref{eq:decomposition-sets-m-fold-mirrored-graph} satisfies
\begin{itemize}
\item[(i)] $\mathcal{P}_{\vec{p}}(\Graph^{(m)}(\mV_0)) \cap \mathcal{P}_{\vec{q}}(\Graph^{(m)}(\mV_0)) = \emptyset$ for all $\vec{p}, \vec{q} \in \mathcal{P}_{\mW}(\Graph)$ with $\vec{p} \neq \vec{q}$,
\item[(ii)] $\bigcup\limits_{\vec{p} \in \mathcal{P}_\mW(\Graph)} \mathcal{P}_{\vec{p}}(\Graph^{(m)}(\mV_0)) = \mathcal{P}_{\mW^{(m)}}(\mathcal{G}^{(m)}(\mV_0))$,
\item[(iii)] $\ell(\vec{q})=\ell(\vec{p})$ for all $\vec{p} \in \mathcal{P}_\mW(\Graph)$ and all $\vec{q} \in \mathcal{P}_{\vec{p}}(\Graph^{(m)}(\mV_0))$, and
\item[(iv)] $\sum\limits_{\vec{q} \in \mathcal{P}_{\vec{p}}(\Graph^{(m)}(\mV_0))} \alpha_{\Graph^{(m)}(\mV_0)}(\vec{q}) = m\alpha_\Graph(\vec{p})$.
\end{itemize}
\end{lemma}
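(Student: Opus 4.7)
The plan hinges on the natural projection $\pi\colon \Graph^{(m)}(\mV_0) \to \Graph$ that forgets copy indices (identifying $\widetilde{\mv}$ with $\mv$ for each $\mv \in \mV_0$ and sending interior points of each copy $\Graph_i$ to their originals in $\Graph$). This projection extends to directed paths and does all the bookkeeping for (i)--(iii); the genuine content lies in (iv), which I would reduce to a short linear-algebra identity.

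For (i)--(iii), the key point is that the only vertices shared by different copies of $\Graph$ in $\Graph^{(m)}(\mV_0)$ are the glued vertices $\widetilde{\mv}$ for $\mv \in \mV_0$. Hence any $\vec{q} \in \mathcal{P}_{\mW^{(m)}}(\Graph^{(m)}(\mV_0))$ breaks canonically at its hits of $\mV_0$-vertices into $n+1$ segments, each entirely contained in a single copy of $\Graph$. Applying $\pi$ to $\vec{q}$ produces a path $\vec{p} \in \mathcal{P}_\mW(\Graph)$ (the assumption $\mW \subset \mV\setminus\mV_0$ guarantees that the endpoints of $\vec{q}$ do not collapse into a $\widetilde{\mv}$), and reading off the sequence of copy indices of the segments yields a unique tuple $(m_1,\ldots,m_{n+1}) \in \{1,\ldots,m\}^{n+1}$ with $\vec{q} = \vec{p}(m_1,\ldots,m_{n+1})$. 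Since conversely $\vec{p}$ and $(m_1,\ldots,m_{n+1})$ determine $\vec{q}$, both (i) and (ii) follow; (iii) is then immediate, as each bond of $\vec{q}$ inherits the length of its $\pi$-image.

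The main step is (iv). Write $n := \#_{\mV_0}\vec{p}$ and let $\mv_j$ be the $j$-th $\mV_0$-vertex hit by $\vec{p}$, with $d_j := \deg_\Graph(\mv_j)$; let $\epsilon_j \in \{0,1\}$ record whether the $j$-th transition in $\vec{p}$ is a reflection. The scattering factors of $\vec{q}=\vec{p}(m_1,\ldots,m_{n+1})$ and $\vec{p}$ agree at every intermediate vertex outside $\mV_0$, because such a vertex keeps its $\Graph$-degree and its incident bonds lie in a single copy. At the $j$-th hit $\widetilde{\mv}_j$, however, the degree is $m d_j$ and the two bonds of $\vec{q}$ around $\widetilde{\mv}_j$ coincide if and only if $m_j = m_{j+1}$ \emph{and} the underlying bonds in $\vec{p}$ already did; hence by \eqref{eq:scattering-coefficients}
\[
\beta^{(m)}_j(m_j, m_{j+1}) = \frac{2}{m d_j} - \epsilon_j\,\delta_{m_j, m_{j+1}}.
\]
Factoring out the common contribution $A(\vec{p})$ of the non-$\mV_0$ transitions, (iv) reduces to the identity
\[
\sum_{(m_1,\ldots,m_{n+1}) \in \{1,\ldots,m\}^{n+1}} \prod_{j=1}^n \beta^{(m)}_j(m_j, m_{j+1}) = m \prod_{j=1}^n \Bigl(\tfrac{2}{d_j} - \epsilon_j\Bigr),
\]
which I would prove by viewing the left-hand side as $\mathbf{1}^\top M^{(1)} \cdots M^{(n)} \mathbf{1}$ for the $m \times m$ matrices $M^{(j)} := \tfrac{2}{m d_j} J - \epsilon_j I$, where $J = \mathbf{1}\mathbf{1}^\top$. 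Since $J\mathbf{1} = m\mathbf{1}$, one has $M^{(j)}\mathbf{1} = (\tfrac{2}{d_j} - \epsilon_j)\mathbf{1}$; a telescoping right-to-left reduction then collapses the product, and pairing with $\mathbf{1}^\top$ supplies the factor $m$. The case $n=0$ is handled separately but is immediate, as $\mathcal{P}_{\vec{p}}(\Graph^{(m)}(\mV_0))$ then consists of the $m$ isometric copies of $\vec{p}$, each with scattering coefficient $\alpha_\Graph(\vec{p})$. The only mild obstacle I anticipate is keeping the bookkeeping around loops based in $\mV_0$ honest (a single edge then contributes two bonds both incident with $\widetilde{\mv}$); but since all counting is organised at the level of bonds rather than edges, this case is handled by exactly the same formula.
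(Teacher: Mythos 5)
Your proposal is correct and follows essentially the same route as the paper: properties (i)--(iii) by the canonical segment decomposition at hits of $\mV_0$, and (iv) by factoring out the non-$\mV_0$ scattering contributions and exploiting that, at each glued vertex, the transmission weight $\tfrac{2}{\deg}$ scales by $\tfrac1m$ while the reflection penalty $-1$ occurs only when the copy index is unchanged. Your identity $\sum_{k=1}^m \beta_j^{(m)}(m_j,k)=\beta_j$ is exactly the paper's key equation, and your matrix formulation $\mathbf{1}^\top M^{(1)}\cdots M^{(n)}\mathbf{1}$ with $M^{(j)}\mathbf{1}=\beta_j\mathbf{1}$ is just a compact repackaging of the paper's iterated right-to-left telescoping sum, with the final factor $m$ arising from $\mathbf{1}^\top\mathbf{1}=m$ in place of the free sum over $m_1$.
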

\begin{proof}
%Let $\vec{p} \in \mathcal{P}_\mW(\Graph)$ and let $n:= \#_{\mV_0} \vec{p} \in \mathbb{N}_0$ (cf.\ \eqref{eq:hitting-number-subset-directed-path}). We define
%\begin{align*}
%\mathcal{P}_{\vec{p}}(\Graph^{(2)}(\mV_0)) := \Big\{ \vec{p}(m_1,m_2,\dots,m_{n+1}) \: : \: m_i \in \{ 1,2 \}, \: i=1,\dots,n+1 \Big\},
%\end{align*}
%where $\vec{p}(\dots,m_j,\dots)$ denotes the directed path in $\mathcal{P}_{\mW^{(2)}}(\Graph^{(2)}(\mV_0))$ that -- after $(j-1)$-th hit of $\mV_0$ (if $j=1$, then this means that the set $\mV_0$ has not been hit yet) -- it follows
%\begin{align*}
%\begin{cases} \text{the same bonds as $\vec{p}$ in the \emph{first} copy of $\Graph$} & \text{if $m_j=1$,} \\ \text{the same bonds as $\vec{p}$ in the \emph{second} copy of $\Graph$} & \text{if $m_j=2$.} \\
%\end{cases}
%\end{align*}
%Note that in the special case where $n=1$, $\mathcal{P}_{\vec{p}}(\Graph^{(2)}(\mV_0))$ is a two-point set consisting of $\vec{p}$ itself and a corresponding counterpart of $\vec{p}$ following the same bonds in the second copy of $\Graph$; in particular $\vec{p}(1,1,\dots,1) = \vec{p}$. 
By construction, one immediately observes properties (i), (ii) and (iii); thus we have to verify the fourth condition: to this end, let 
\[
\vec{p} = \big( \mv_-(\vec{p}), \vec{\me}_1,\dots,\vec{\me}_k, \mv_+(\vec{p}) \big) \in \mathcal{P}_\mW(\Graph)
\]
(i.e. $\# \vec{p} =k$) with $n:= \#_{\mV_0} \vec
p$. {Also -- upon possibly subdividing edges so that $\mV_0$ only consists of vertices of $\Graph$ -- let 
\begin{equation}\label{eq:ekj}
\vec{\me}_{k(j)},\qquad k(j) \in \{1,\dots,\# \vec{p}-1\},\ j=1,\dots,n,
\end{equation}
denote the bond of $\vec{p}$ that hit the set $\mV_0$ (more precisely, $\partial^+(\vec{\me}_{k(j)}) \in \mV_0$) for the $j$-th time (note that $\vec{p}$ cannot end at $\mV_0$!): accordingly, $k(j)$ is the number of edges that the path $\vec{p}$ traverses before hitting $\mV_0$ for the $j$-th time. }

{Using the notation in~\autoref{defi:scatt-coeff},} let now $\beta_j := \beta(\vec{\me}_{k(j)}, \vec{\me}_{k(j)+1})$, $j=1,\dots,n$, be the contributing factor to the scattering coefficient $\alpha_\Graph(\vec{p})$ of $\vec{p}$ arising from $j$-th hit of $\mV_0$, and let
\begin{align*}
\widetilde{\alpha_\Graph}(\vec{p}):=\frac{\alpha_\Graph(\vec{p}) }{ \prod\limits_{j=1}^n \beta_j}.
\end{align*}
I.e., $\widetilde{\alpha_\Graph}(\vec{p})$ is the product consisting of the factors that contribute to the scattering coefficient $\alpha_\Graph(\vec{p})$ that do not arise from a point in $\mV_0$ (if $n=0$, the product $\prod_{j=1}^n \beta_j$ shall be interpreted as $1$): in other words, 
\begin{align*}
\widetilde{\alpha_\Graph}(\vec{p}) = \prod_{\substack{\ell=1 \\ \partial^+(\vec{\me}_\ell) \notin \mV_0}}^{k-1} \beta(\vec{\me}_\ell, \vec{\me}_{\ell+1}).
\end{align*}
 Now, any path in $\mathcal{P}_{\vec{p}}(\Graph^{(m)}(\mV_0))$ follows by construction the same bonds as $\vec{p}$ (possibly in a different copy of $\Graph$): therefore, the contributing factors to the respective scattering coefficients the arise from bonds that do not hit $\mV_0$ (as paths in $\Graph^{(m)}(\mV_0)$) are the same as for the original path $\vec{p}$. More precisely,
 \begin{align*}
 \widetilde{\alpha_\Graph}(\vec{p})= \prod_{\substack{ \ell=1 \\ \partial^+(\mf_\ell) \notin \mV_0}}^{\# \vec{q}-1} \beta\big(\vec{\mf}_\ell, \vec{\mf}_{\ell+1} \big)  \qquad  \text{for every $\vec{q} = \big( \mv_-(\vec{q}),\vec{\mf}_1,\dots, \vec{\mf}_{\# \vec{q}}, \mv_+(\vec{q})\big) \in \mathcal{P}_{\vec{p}}(\Graph^{(m)}(\mV_0))$}.
 \end{align*}
 Additionally, after each hit of $\mV_0$, any path in $\mathcal{P}_{\vec{p}}(\Graph^{(m)}(\mV_0))$ can run into $m$ different copies of $\Graph$. Furthermore, as each of them follows the same bonds as $\vec{p}$ possibly in a different copy, the corresponding contributing factor to the respective scattering coefficient arising from the $j$-th hit of $\mV_0$ does only depend on both copies of $\Graph$ the path crosses before and after the $j$-th hit, but not on the remaining itinerary of the path in $\Graph^{(m)}(\mV_0)$.
We can therefore write
\begin{align*}
\alpha_{\Graph^{(m)}(\mV_0)}(\vec{p}(m_1,m_2,\dots,m_{n+1})) = \widetilde{\alpha_\Graph}(\vec{p}) \prod_{j=1}^n \beta_j^{m_j,m_{j+1}}
\end{align*}
for all $m_i \in \{1,\dots,m\}$, $i \in \{1,\dots,n+1 \}$ with
\begin{align*}
\beta_j^{m_j,m_{j+1}} := \beta(\vec{\me}(m_1,m_2,\dots,m_{n+1})_{k(j)},\vec{\me}(m_1,m_2,\dots,m_{n+1})_{k(j)+1}) \qquad \text{for $j=1,\dots,n$},
\end{align*}
where $\vec{\me}(m_1,m_2,\dots,m_{n+1})_\ell$, $\ell=1,\dots,k$ denote the bonds of $\vec{p}(m_1,m_2,\dots,m_{n+1})$. (Note that the number of steps for a path in $\mathcal{P}_{\vec{p}}(\Graph^{(m)}(\mV_0))$ to hit $\mV_0$ is the same as for $\vec{p}$.) 
\begin{figure}[h]
        \begin{tikzpicture}[scale=0.6]
      \tikzset{enclosed/.style={draw, circle, inner sep=0pt, minimum size=.1cm, fill=black}, every loop/.style={}}
      \node[enclosed, label = {right: $\mv_0$}] (Z) at (0,2) {};
      \node[enclosed, label = {left: $\Graph \qquad$}] (A) at (-3,2) {};
      \node[enclosed] (B) at (-3,0) {};
      \node[enclosed] (C) at (-3,4) {};
      
      \node[enclosed] (Z') at (8.5,2) {};
      \node[enclosed] (A') at (5.5,2) {};
      \node[enclosed] (B') at (5.5,0) {};
      \node[enclosed] (C') at (5.5,4) {};
      \node[enclosed] (A'refl) at (9,5.25) {};
      \node[enclosed] (B'refl) at (11.75,2.5) {};
      \node[enclosed] (C'refl) at (10.5,3.875) {};
      \node[enclosed] (A'refl2) at (9,-1.25) {};
      \node[enclosed] (B'refl2) at (11.75,1.5) {};
      \node[enclosed] (C'refl2) at (10.5,0.125) {};
      \node[enclosed, white, label = {right: $\:\:\qquad\qquad \Graph^{(3)}(\{\mv_0\})$}] (E) at (12,2) {};
      
      \node[enclosed, white] (X) at (2,2) {};
      \node[enclosed, white] (Y) at (4,2) {};
      \draw[->, thick, blue] (C) edge[bend left] node[above right] {\textcolor{black}{$\vec{p}$}} (Z) node[midway, above] (edge1) {};
      \draw[->, thick, blue] (Z) edge[bend left] node[below left] {\textcolor{black}{$\beta_1 \:\:$}} (A) node[midway, above] (edge1) {};
      \draw[-] (Z) edge node[above] {} (A) node[midway, above] (edge1) {};
      \draw[-] (Z) edge node[above] {} (B) node[midway, above] (edge1) {};
      \draw[-] (Z) edge node[above] {} (C) node[midway, above] (edge1) {};
      \draw[->, ultra thick] (X) edge node[above] {} (Y) node[midway, above] (edge1) {};
      
      \draw[-] (Z') edge node[above] {} (A') node[midway, above] (edge1) {};
      \draw[-] (Z') edge node[above] {} (B') node[midway, above] (edge1) {};
      \draw[-] (Z') edge node[above] {} (C') node[midway, above] (edge1) {};
      
       \draw[-] (Z') edge node[above] {} (A'refl) node[midway, above] (edge1) {};
       \draw[-] (Z') edge node[above] {} (B'refl) node[midway, above] (edge1) {};
       \draw[-] (Z') edge node[above] {} (C'refl) node[midway, above] (edge1) {};
       
       \draw[-] (Z') edge node[above] {} (A'refl2) node[midway, above] (edge1) {};
       \draw[-] (Z') edge node[above] {} (B'refl2) node[midway, above] (edge1) {};
       \draw[-] (Z') edge node[above] {} (C'refl2) node[midway, above] (edge1) {};

       \draw [decorate,decoration={brace,amplitude=10pt},xshift=0pt,yshift=0pt, thick]
    (8.45,-2) -- (5.55,-2) node [black,midway,xshift=0cm,yshift=-0.6cm] 
    {\nth{1} copy};
       \draw [decorate,decoration={brace,amplitude=10pt},xshift=0pt,yshift=0pt, thick]
    (12.5,5.2) -- (12.5,2.55) node [black,midway,xshift=0.65cm,yshift=0.85cm, rotate=-90,anchor=west]
    {\nth{2} copy};
    \draw [decorate,decoration={brace,amplitude=10pt},xshift=0pt,yshift=0pt, thick]
    (12.5,1.45) -- (12.5,-1.2) node [black,midway,xshift=0.65cm,yshift=0.85cm, rotate=-90,anchor=west]
    {\nth{3} copy};
    
    \draw[->, thick, blue] (C') edge[bend left] node[above] {} (Z') node[midway, above] (edge1) {};
    \draw[->, thick, blue, dashed] (Z') edge[bend left] node[below left] {\tiny \textcolor{black}{$\beta_1^{1,1}$}} (A') node[midway, above] (edge1) {};
      \draw[->, thick, blue, dashed] (Z') edge[bend left] node[above] {\tiny \textcolor{black}{$\: \beta_1^{1,2}$}} (C'refl) node[midway, above] (edge1) {};
      
      \draw[->, thick, blue, dashed] (Z') edge[bend left] node[below right] {\tiny \textcolor{black}{$\quad \beta_1^{1,3}$}} (C'refl2) node[midway, above] (edge1) {};
     \end{tikzpicture}
     \caption{A $3$-star graph $\Graph$ together with a directed path $\vec{p}$ (left) with $\alpha_\Graph(\vec{p}) = \beta_1$ (i.e., $\widetilde{\alpha_\Graph}(\vec{p}) = 1$ in this case) and the corresponding $3$-fold mirrored graph $\Graph_3(\{\mv_0 \})$ together with corresponding paths $\vec{p}(1,1), \vec{p}(1,2), \vec{p}(1,3)$ (right) following the same bonds as $\vec{p}$ possibly changing the copy after hitting the vertex $\mv_0$. The induced contributing coefficients $\beta_1^{1,i}$, $i=1,2,3$ mark the respective copy that $\vec{p}$ visits after hitting $\mv_0$. A similar correspondence would arise considering respective copies of $\vec{p}$ lying in the second or third copy of $\Graph$ before hitting $\mv_0$, therefore leading to $9 = 3 \cdot 3$ possibilities of contributing coefficients at $\mv_0$.} \label{fig:3-star-to-9-star}
     \end{figure}
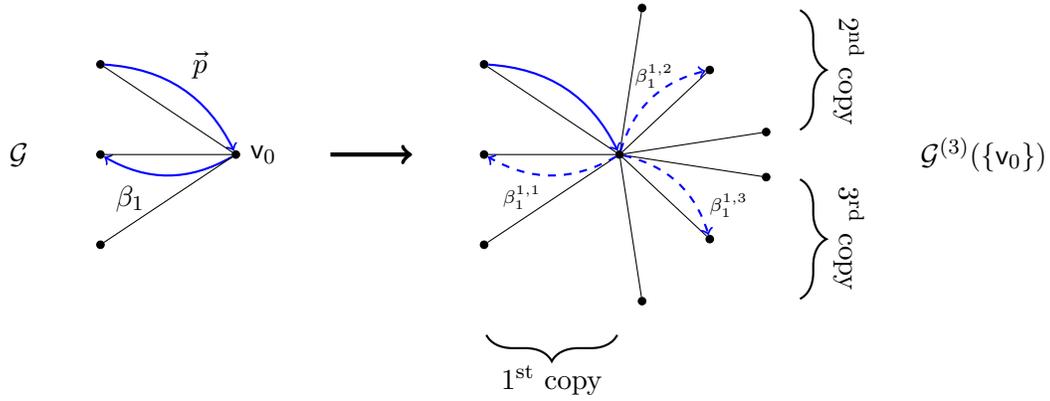

We claim
\begin{equation}\label{eq:sum-scattering-mirror}
\sum_{k=1}^m \beta_j^{m_{j},k} = \beta_j \qquad \hbox{for every }j=1,\dots,n:
\end{equation}
%\begin{enumerate}
%\item[(1)] $\beta_j^{m_{j},1} + \beta_j^{m_{j},2} = \beta_j$ for every $j=1,\dots,n$, and
%\item[(2)] $\beta_j^{1, m_{j+1}} + \beta_j^{2,m_{j+1}} = \beta_j$ for every $j=1,\dots,n$
%\end{enumerate}
indeed, if $\vec{p}$ reflects at the $j$-th hit of $\mV_0$, that is, $\beta_j = \frac{2}{\deg_{\Graph}(\partial^+(\vec{\me}_{k(j)}))} - 1$,
%for some $\mv_j \in \mV_0$, 
then
\begin{align*}
\beta_j^{m_j,k} = \begin{cases} \frac{2}{\deg_{\Graph^{(m)}(\mV_0)}(\partial^+(\vec{\me}_{k(j)}))} - 1, & \text{if $m_j=k$,} \\ \frac{2}{\deg_{\Graph^{(m)}(\mV_0)}(\partial^+(\vec{\me}_{k(j)}))}, & \text{else,} \end{cases} 
%\quad \text{whereas } \quad \beta_j^{m_j,2} = \begin{cases} \frac{2}{\deg_{\Graph^{(2)}(\mV_0)}(\mv_j)}, & \text{if $m_j=1$,} \\ \frac{2}{\deg_{\Graph^{(2)}(\mV_0)}(\mv_j)} - 1, & \text{if $m_j=2$,} \end{cases} 
\qquad \text{for $k =1,\dots,m$.}
\end{align*}
since the corresponding path $\vec{p}(m_1,\dots,m_n)$ also reflects at $\partial^+(\vec{\me}_{k(j)})$ whenever it stays in the \emph{same} copy of $\Graph$ before \emph{and} after the $j$-th hit of $\mV_0$, and transfers through $\partial^+(\vec{\me}_{k(j)})$, otherwise, (cf.\ \autoref{fig:3-star-to-9-star-part-2})
%\footnote{\PB{\autoref{fig:3-star-to-9-star-part-2} und \autoref{fig:3-star-to-9-star-part-3} sind jeweils Beispiele dafür, dass $\vec{p}$ an einem Vertex in $\mV_0$ reflektiert bzw.\ transferiert. Wir weisen den Leser nun im Fließtext darauf hin. :)}}.

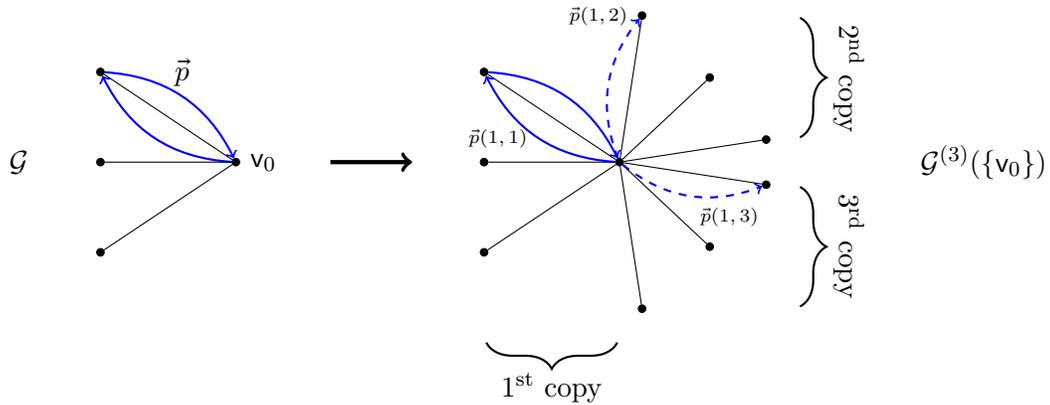
\begin{figure}[h]
        \begin{tikzpicture}[scale=0.6]
      \tikzset{enclosed/.style={draw, circle, inner sep=0pt, minimum size=.1cm, fill=black}, every loop/.style={}}
      \node[enclosed, label = {right: $\mv_0$}] (Z) at (0,2) {};
      \node[enclosed, label = {left: $\Graph \qquad$}] (A) at (-3,2) {};
      \node[enclosed] (B) at (-3,0) {};
      \node[enclosed] (C) at (-3,4) {};
      
      \node[enclosed] (Z') at (8.5,2) {};
      \node[enclosed] (A') at (5.5,2) {};
      \node[enclosed] (B') at (5.5,0) {};
      \node[enclosed] (C') at (5.5,4) {};
      \node[enclosed, label = {left: \tiny \textcolor{black}{$\vec{p}(1,2)$}}] (A'refl) at (9,5.25) {};
      \node[enclosed] (B'refl) at (11.75,2.5) {};
      \node[enclosed] (C'refl) at (10.5,3.875) {};
      \node[enclosed] (A'refl2) at (9,-1.25) {};
      \node[enclosed] (B'refl2) at (11.75,1.5) {};
      \node[enclosed] (C'refl2) at (10.5,0.125) {};
      \node[enclosed, white, label = {right: $\:\:\qquad\qquad \Graph^{(3)}(\{\mv_0\})$}] (E) at (12,2) {};
      
      \node[enclosed, white] (X) at (2,2) {};
      \node[enclosed, white] (Y) at (4,2) {};
      \draw[->, thick, blue] (C) edge[bend left] node[above] {\textcolor{black}{$\vec{p}$}} (Z) node[midway, above] (edge1) {};
      \draw[->, thick, blue] (Z) edge[bend left] node[below] {} (C) node[midway, above] (edge1) {};
      \draw[-] (Z) edge node[above] {} (A) node[midway, above] (edge1) {};
      \draw[-] (Z) edge node[above] {} (B) node[midway, above] (edge1) {};
      \draw[-] (Z) edge node[above] {} (C) node[midway, above] (edge1) {};
      \draw[->, ultra thick] (X) edge node[above] {} (Y) node[midway, above] (edge1) {};
      
      \draw[-] (Z') edge node[above] {} (A') node[midway, above] (edge1) {};
      \draw[-] (Z') edge node[above] {} (B') node[midway, above] (edge1) {};
      \draw[-] (Z') edge node[above] {} (C') node[midway, above] (edge1) {};
      
       \draw[-] (Z') edge node[above] {} (A'refl) node[midway, above] (edge1) {};
       \draw[-] (Z') edge node[above] {} (B'refl) node[midway, above] (edge1) {};
       \draw[-] (Z') edge node[above] {} (C'refl) node[midway, above] (edge1) {};
       
       \draw[-] (Z') edge node[above] {} (A'refl2) node[midway, above] (edge1) {};
       \draw[-] (Z') edge node[above] {} (B'refl2) node[midway, above] (edge1) {};
       \draw[-] (Z') edge node[above] {} (C'refl2) node[midway, above] (edge1) {};

       \draw [decorate,decoration={brace,amplitude=10pt},xshift=0pt,yshift=0pt, thick]
    (8.45,-2) -- (5.55,-2) node [black,midway,xshift=0cm,yshift=-0.6cm] 
    {\nth{1} copy};
       \draw [decorate,decoration={brace,amplitude=10pt},xshift=0pt,yshift=0pt, thick]
    (12.5,5.2) -- (12.5,2.55) node [black,midway,xshift=0.65cm,yshift=0.85cm, rotate=-90,anchor=west]
    {\nth{2} copy};
    \draw [decorate,decoration={brace,amplitude=10pt},xshift=0pt,yshift=0pt, thick]
    (12.5,1.45) -- (12.5,-1.2) node [black,midway,xshift=0.65cm,yshift=0.85cm, rotate=-90,anchor=west]
    {\nth{3} copy};
    
    \draw[->, thick, blue] (C') edge[bend left] node[above] {} (Z') node[midway, above] (edge1) {};
    \draw[->, thick, blue] (Z') edge[bend left] node[left] {\tiny \textcolor{black}{$\vec{p}(1,1)$}} (C') node[midway, above] (edge1) {};
      \draw[->, thick, blue, dashed] (Z') edge[bend left] node[above] {} (A'refl) node[midway, above] (edge1) {};
      
      \draw[->, thick, blue, dashed] (Z') edge[bend right] node[below right] {\tiny \textcolor{black}{$\vec{p}(1,3)$}} (B'refl2) node[midway, above] (edge1) {};
     \end{tikzpicture}
     \caption{The $3$-star graph $\Graph$ from \autoref{fig:3-star-to-9-star} together with a directed path $\vec{p}$ reflecting at $\mv_0$ (left) and the corresponding $3$-fold mirrored graph $\Graph_3(\{\mv_0 \})$ with corresponding paths $\vec{p}(1,1), \vec{p}(1,2), \vec{p}(1,3)$ (right).} \label{fig:3-star-to-9-star-part-2}
     \end{figure}
     
Now, 
as $\deg_{\Graph^{(m)}(\mV_0)}({ \partial^+(\vec{\me}_{k(j)})}) = m\deg_\Graph({ \partial^+(\vec{\me}_{k(j)})})$, it follows that
\begin{align*}
\sum_{k=1}^m \beta_j^{m_{j},k} &= \beta_j^{m_j,m_j} + \sum_{k=1, \, k \neq m_j}^m \beta_j^{m_{j},k} = \frac{2}{\deg_{\Graph^{(m)}(\mV_0)}(\partial^+(\vec{\me}_{k(j)}))} - 1 + \frac{2(m-1)}{\deg_{\Graph^{(m)}(\mV_0)}(\partial^+(\vec{\me}_{k(j)}))} \\&= \frac{2m}{\deg_{\Graph^{(m)}(\mV_0)}(\partial^+(\vec{\me}_{k(j)}))} - 1 = \frac{2}{\deg_{\Graph}(\partial^+(\vec{\me}_{k(j)}))} - 1 = \beta_j,
\end{align*}
%\begin{align*}
%\beta_j^{m_j,1} + \beta_j^{m_j,2} &= \begin{cases} \Big(\frac{2}{\deg_{\Graph^{(2)}(\mV_0)}(\mv_j)} - 1 \Big) + \frac{2}{\deg_{\Graph^{(2)}(\mV_0)}(\mv_j)}, & \text{if $m_j=1$,} \\ \frac{2}{\deg_{\Graph^{(2)}(\mV_0)}(\mv_j)} + \Big(\frac{2}{\deg_{\Graph^{(2)}(\mV_0)}(\mv_j)} - 1 \Big), & \text{if $m_j=2$.} \end{cases} \\&= \begin{cases} \frac{2}{\deg_{\Graph}(\mv_j)} - 1, & \text{if $m_j=1$,} \\ \frac{2}{\deg_{\Graph}(\mv_j)} - 1, & \text{if $m_j=2$.} \end{cases} \\&= \beta_j,
%\end{align*}
showing \eqref{eq:sum-scattering-mirror} in this case. 

In the other case, where $\vec{p}$ transfers at the $j$-th hit of $\mV_0$, i.e., $\beta_j = \frac{2}{\deg_\Graph(\partial^+(\vec{\me}_{k(j)}))}$,
% for some $\mv_j \in \mV_0$,
then -- regardless of the value of $m_j \in \{1,\dots,m\}$ -- the corresponding path $\vec{p}(m_1,\dots,m_{n+1})$ will also transfer at $\partial^+(\vec{\me}_{k(j)})$ as it stays at the same copy of $\Graph$ before and after the $j$-th hit; or it enters the other copy of $\Graph$ (and thus can not reflect) at $\mv_j$ (cf.\ \autoref{fig:3-star-to-9-star-part-3}).

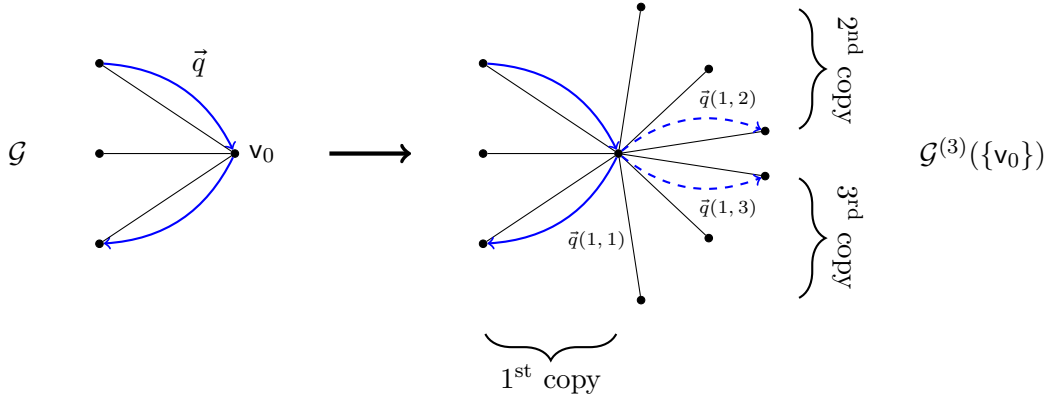
\begin{figure}[h]
        \begin{tikzpicture}[scale=0.6]
      \tikzset{enclosed/.style={draw, circle, inner sep=0pt, minimum size=.1cm, fill=black}, every loop/.style={}}
      \node[enclosed, label = {right: $\mv_0$}] (Z) at (0,2) {};
      \node[enclosed, label = {left: $\Graph \qquad$}] (A) at (-3,2) {};
      \node[enclosed] (B) at (-3,0) {};
      \node[enclosed] (C) at (-3,4) {};
      
      \node[enclosed] (Z') at (8.5,2) {};
      \node[enclosed] (A') at (5.5,2) {};
      \node[enclosed] (B') at (5.5,0) {};
      \node[enclosed] (C') at (5.5,4) {};
      \node[enclosed,] (A'refl) at (9,5.25) {};
      \node[enclosed] (B'refl) at (11.75,2.5) {};
      \node[enclosed] (C'refl) at (10.5,3.875) {};
      \node[enclosed] (A'refl2) at (9,-1.25) {};
      \node[enclosed] (B'refl2) at (11.75,1.5) {};
      \node[enclosed] (C'refl2) at (10.5,0.125) {};
      \node[enclosed, white, label = {right: $\:\:\qquad\qquad \Graph^{(3)}(\{\mv_0\})$}] (E) at (12,2) {};
      
      \node[enclosed, white] (X) at (2,2) {};
      \node[enclosed, white] (Y) at (4,2) {};
      \draw[->, thick, blue] (C) edge[bend left] node[above right] {\textcolor{black}{$\vec{q}$}} (Z) node[midway, above] (edge1) {};
      \draw[->, thick, blue] (Z) edge[bend left] node[below] {} (B) node[midway, above] (edge1) {};
      \draw[-] (Z) edge node[above] {} (A) node[midway, above] (edge1) {};
      \draw[-] (Z) edge node[above] {} (B) node[midway, above] (edge1) {};
      \draw[-] (Z) edge node[above] {} (C) node[midway, above] (edge1) {};
      \draw[->, ultra thick] (X) edge node[above] {} (Y) node[midway, above] (edge1) {};
      
      \draw[-] (Z') edge node[above] {} (A') node[midway, above] (edge1) {};
      \draw[-] (Z') edge node[above] {} (B') node[midway, above] (edge1) {};
      \draw[-] (Z') edge node[above] {} (C') node[midway, above] (edge1) {};
      
       \draw[-] (Z') edge node[above] {} (A'refl) node[midway, above] (edge1) {};
       \draw[-] (Z') edge node[above] {} (B'refl) node[midway, above] (edge1) {};
       \draw[-] (Z') edge node[above] {} (C'refl) node[midway, above] (edge1) {};
       
       \draw[-] (Z') edge node[above] {} (A'refl2) node[midway, above] (edge1) {};
       \draw[-] (Z') edge node[above] {} (B'refl2) node[midway, above] (edge1) {};
       \draw[-] (Z') edge node[above] {} (C'refl2) node[midway, above] (edge1) {};

       \draw [decorate,decoration={brace,amplitude=10pt},xshift=0pt,yshift=0pt, thick]
    (8.45,-2) -- (5.55,-2) node [black,midway,xshift=0cm,yshift=-0.6cm] 
    {\nth{1} copy};
       \draw [decorate,decoration={brace,amplitude=10pt},xshift=0pt,yshift=0pt, thick]
    (12.5,5.2) -- (12.5,2.55) node [black,midway,xshift=0.65cm,yshift=0.85cm, rotate=-90,anchor=west]
    {\nth{2} copy};
    \draw [decorate,decoration={brace,amplitude=10pt},xshift=0pt,yshift=0pt, thick]
    (12.5,1.45) -- (12.5,-1.2) node [black,midway,xshift=0.65cm,yshift=0.85cm, rotate=-90,anchor=west]
    {\nth{3} copy};
    
    \draw[->, thick, blue] (C') edge[bend left] node[above] {} (Z') node[midway, above] (edge1) {};
    \draw[->, thick, blue] (Z') edge[bend left] node[below right] {\hspace{-0.1cm}\tiny \textcolor{black}{$\vec{q}(1,1)$}} (B') node[midway, above] (edge1) {};
      \draw[->, thick, blue, dashed] (Z') edge[bend left] node[above right] {\tiny \textcolor{black}{$\vec{q}(1,2)$}} (B'refl) node[midway, above] (edge1) {};
      
      \draw[->, thick, blue, dashed] (Z') edge[bend right] node[below right] {\tiny \textcolor{black}{$\vec{q}(1,3)$}} (B'refl2) node[midway, above] (edge1) {};
     \end{tikzpicture}
     \caption{The $3$-star graph $\Graph$ from \autoref{fig:3-star-to-9-star} together with another directed path $\vec{q}$ transferring through $\mv_0$ (left) and the corresponding $3$-fold mirrored graph $\Graph_3(\{\mv_0 \})$ with corresponding paths $\vec{q}(1,1), \vec{q}(1,2), \vec{q}(1,3)$ (right), all having the same contributing coefficient at $\mv_0$.} \label{fig:3-star-to-9-star-part-3}
     \end{figure}
     
Consequently, $\beta_j^{m_{j},k} = \frac{2}{\deg_{\Graph^{(m)}(\mV_0)}(\partial^+(\vec{\me}_{k(j)}))}$ for every $k=1,\dots,m$, and thus
\[
\sum_{k=1}^m \beta_j^{m_{j},k} = \frac{2m}{\deg_{\Graph^{(m)}(\mV_0)}(\partial^+(\vec{\me}_{k(j)}))} = \frac{2}{\deg_\Graph(\partial^+(\vec{\me}_{k(j)}))} = \beta_j,
\]
%\[
%\beta_j^{m_j,1} + \beta_j^{m_j,2} = \frac{2}{\deg_{\Graph^{(2)}(\mV_0)}(\mv_j)} + \frac{2}{\deg_{\Graph^{(2)}(\mV_0)}(\mv_j)} = \frac{2}{\deg_\Graph(\mv_j)} = \beta_j,
%\]
yielding \eqref{eq:sum-scattering-mirror} also in this case. Therefore, using \eqref{eq:sum-scattering-mirror}, we deduce
\begin{align*}
\widetilde{\alpha_\Graph}(\vec{p})^{-1} \Bigg( \sum\limits_{\vec{q} \in \mathcal{P}_{\vec{p}}(\Graph^{(m)}(\mV_0))} \alpha_{\Graph^{(m)}(\mV_0)}(\vec{q})\Bigg) &= \widetilde{\alpha_\Graph}(\vec{p})^{-1} \Bigg( \sum\limits_{m_1=1}^{{m}} \dots \sum\limits_{m_n=1}^{{m}} \sum\limits_{m_{n+1} = 1}^{{m}} \alpha_{\Graph^{(m)}(\mV_0)}(m_1,\dots,m_n,m_{n+1}) \Bigg) \\&= \sum_{m_1=1}^{{m}} \dots \sum_{m_n=1}^{{m}} \sum_{m_{n+1} = 1}^{{m}} \bigg(\prod_{j=1}^n \beta_j^{m_j,m_{j+1}} \bigg) \\&= \sum_{m_1=1}^{{m}} \dots \sum_{m_n=1}^{{m}} \sum_{m_{n+1} = 1}^{{m}} \Bigg( \bigg(\prod_{j=1}^{n-1} \beta_j^{m_j,m_{j+1}} \bigg)\beta_n^{m_n,m_{n+1}} \Bigg) \\&= \sum_{m_1=1}^{{m}} \dots \sum_{m_n=1}^{{m}} \Bigg( \bigg(\prod_{j=1}^{n-1} \beta_j^{m_j,m_{j+1}} \bigg)\underbrace{ \sum_{m_{n+1} = 1}^m \beta_n^{m_n,m_{n+1}}}_{\overset{\eqref{eq:sum-scattering-mirror}}= \beta_n} \Bigg) \\&= \Bigg( \sum_{m_1=1}^{{m}} \dots \sum_{m_n=1}^{{m}} \bigg(\prod_{j=1}^{n-1} \beta_j^{m_j,m_{j+1}} \bigg) \Bigg)\beta_n,
\end{align*}
and, inductively, we eventually reach at
\begin{align}\label{eq:inductive-argument-reflecting}
\begin{aligned}
\widetilde{\alpha_\Graph}(\vec{p})^{-1} \Bigg(\sum\limits_{\vec{q} \in \mathcal{P}_{\vec{p}}(\Graph^{(m)}(\mV_0))} \alpha_{\Graph^{(m)}(\mV_0)}(\vec{q}) \Bigg) &= \bigg(\sum_{m_1=1}^{m} \underbrace{\sum_{m_2=1}^{m} \beta_1^{m_1,m_2}}_{\overset{\eqref{eq:sum-scattering-mirror}}= \beta_1} \bigg) \prod_{j=2}^n \beta_j 
%\\&= \bigg( \sum_{m_1=1}^2 (\underbrace{\beta_1^{m_1,1} + \beta_1^{m_1,2}}_{\beta_1}) \bigg)\prod_{j=2}^n \beta_j 
\\&= \bigg( \sum_{m_1=1}^{m} \beta_1 \bigg) \prod_{j=2}^n \beta_j = m \, \prod_{j=1}^n \beta_j .
\end{aligned}
\end{align}
Hence, \eqref{eq:inductive-argument-reflecting} finally implies that
\begin{align*}
\sum_{\vec{q} \in \mathcal{P}_{\vec{p}}(\Graph^{(m)}(\mV_0))} \alpha_{\Graph^{(m)}(\mV_0)}(\vec{q}) = {m}\, \widetilde{\alpha_\Graph}(\vec{p}) \prod_{j=1}^n \beta_j = m \, \alpha_\Graph(\vec{p}),
\end{align*}
finishing the proof.
\end{proof}
%\footnote{\PB{Neuer Beweis, neues Glück. Bitte einmal kritisch prüfen (oder wir besprechen es einmal gemeinsam an der Tafel). Ich bin aber ganz zuversichtlich, dass alles stimmt. :) }}
\begin{proof}[Proof of \autoref{thm:mirroring-heat-content}]
 %We only have to show that $\heatcont(\Graph^{(2)}(\mV_0);(\mVD)_2) = 2\heatcont(\Graph;\mVD)$ for an arbitrary graph $\Graph$ satisfying \autoref{ass:graph} because then by \eqref{eq:associativity-mirror-doubling}, one has
  % \footnote{\DM{Wäre es nicht natürlicher, $ \Graph_{2^m}(\mV_0) = \Graph_{2^{m - 1}}({\mV_0}_{2^{m-1}}) \sqcup_{{\mV_0}_{2^{m-1}}}\Graph_{2^{m - 1}}$ zu betrachten? }}
   % \begin{equation}\label{eq:multipl-kopien}
   % \begin{split}
   % \Graph_{2^m}(\mV_0) &= \Graph_{2^m - 1}(\mV_0) \sqcup_{\mV_0} \Graph = \big(\Graph_{2^m-2}(\mV_0) \sqcup_{\mV_0} \Graph \big) \sqcup_{\mV_0} \Graph \\&= \Graph_{2^m-2}(\mV_0) \sqcup_{\mV_0} \big( \Graph \sqcup_{\mV_0} \Graph \big) = \Graph_{2^m-2}(\mV_0) \sqcup_{\mV_0} \Graph^{(2)}(\mV_0) = \dots \\&= \Graph_{2^{m} - 2^{m-1}}(\mV_0) \sqcup_{\mV_0} \Graph_{2^{m-1}}(\mV_0) = \Graph_{2^{m-1}}(\mV_0) \sqcup_{\mV_0} \Graph_{2^{m-1}}(\mV_0),
   % \end{split}
   % \end{equation}
   % and hence, putting $\mathcal{H} := \Graph_{2^{m-1}}(\mV_0)$ and $\mVD(\mathcal{H}) := (\mVD)_{2^{m-1}}$, it follows that
  %  \[
   % \heatcont(\Graph_{2^m}(\mV_0);(\mVD)_{2^m}) = \heatcont(\mathcal{H}_2(\mV_0);\mVD(\mathcal{H})_2) = 2\heatcont(\mathcal{H}; \mVD(\mathcal{H})) =2\heatcont(\mathcal{G}_{2^{m-1}}(\mV_0);(\mVD)_{2^{m-1}}),
   % \]
   % whence the claim follows inductively.
    
    Clearly $\vert \Graph^{(m)}(\mV_0) \vert = m\vert \Graph \vert$ as well as $\# (\mVD)^{(m)} = m \# \mVD$. Therefore, according to \autoref{thm:heat-content-formula-bif-mug}, it remains to show that
    \begin{align}\label{eq:doubling-long-term-significant part}
    \mathfrak{L}_t(\Graph^{(m)}(\mV_0); (\mVD)^{(m)}) = m \mathfrak{L}_t(\Graph;\mVD) \qquad \text{for all $t>0$},
    \end{align}
    where we have adopted the notation in~\eqref{eq:ltgvd}. Indeed considering the corresponding family of subsets $(\mathcal{P}_{\vec{p}}(\Graph^{(m)}(\mV_0)))_{\vec{p} \in \mathcal{P}_\mVD(\Graph)}$ from \eqref{eq:decomposition-sets-m-fold-mirrored-graph} and \autoref{lem:technical-lemma-attaching}, we observe
  \begin{equation}\label{eq:final-proof-m}
  \begin{aligned}
 \mathfrak{L}_t(\Graph^{(m)}(\mV_0); (\mVD)^{(m)}) &= 4\sqrt{t}\sum_{\vec{p} \in \mathcal{P}_{(\mVD)^{(m)}}(\Graph^{(m)}(\mV_0))} \alpha(\vec{p}) H\bigg( \frac{\ell(\vec{p})}{2\sqrt{t}} \bigg) \\
 &= 4\sqrt{t}\sum_{\vec{p} \in \mathcal{P}_{\mVD}(\Graph)}\left( \sum_{\vec{q} \in \mathcal{P}_{\vec{p}}(\Graph^{(m)}(\mV_0))} \alpha(\vec{q}) H\bigg( \frac{\ell(\vec{q})}{2\sqrt{t}} \bigg) \right)\\&= 4\sqrt{t}\sum_{\vec{p} \in \mathcal{P}_{\mVD}(\Graph)} \Bigg(\sum_{\vec{q} \in \mathcal{P}_{\vec{p}}(\Graph^{(m)}(\mV_0))} \alpha(\vec{q})\Bigg) H\bigg( \frac{\ell(\vec{p})}{2\sqrt{t}} \bigg) \\&=  4m \sqrt{t} \sum_{\vec{p} \in \mathcal{P}_{\mVD}(\Graph)} \alpha(\vec{p}) H\bigg( \frac{\ell(\vec{p})}{2\sqrt{t}} \bigg) = m \mathfrak{L}_t(\Graph;\mVD)
  \end{aligned}
  \qquad \text{for all $t>0$,}
  \end{equation}
  which finishes the proof.
\end{proof}

\begin{exa}\label{exa:advan-sec-surg}
Let $m \in \mathbb{N}$. 

(i) If $\Graph$ is an equilateral star graph on $2m$ edges, each of length $\ell$, with $m$ outer Dirichlet, and $m$ outer Neumann conditions conditions, then taking into account \autoref{thm:mirroring-heat-content} the computations in Section~\ref{sec:three-examp}  yield
\[
\heatcont(\Graph;\mV_{\mathrm{D},\Graph}) = m\heatcont\bigg( \bigg[0,\frac{\vert \Graph \vert}{m} \bigg]; \{0\} \bigg) 
= \frac{{16m\ell }}{\pi^2} \sum_{k=0}^\infty \e^{-t\big( \frac{\pi (2k+1) }{4\ell} \big)^2} \frac{1}{(2k+1)^2}, \qquad t>0.
\]
%this should be compared with \eqref{ex:heatcontent-stargraphs}.

(ii) {Following~\cite{BerKenKur17}, an $m$-regular pumpkin chain consists of $m$ copies of an interval $[0,\ell]$ such that all $m$ copies of the points at distance $r_j\in [0,\ell]$ from the endpoint 0 are glued together, with $0=r_0<r_1<\ldots<r_\ell=\ell$ and $j=0,1,\ldots,h$, see \autoref{fig:pumch}.}
Let now $\Graph$ be an $m$-regular pumpkin chain with Dirichlet condition at one or both ends: then
\[
\heatcont(\Graph;\mV_{\mathrm{D},\Graph}) = m\heatcont\bigg( \bigg[0,\frac{\vert \Graph \vert}{m} \bigg]; \mV_{\mathrm{D},\Graph} \bigg) = \frac{8 {m\ell}}{\pi^2} \sum_{k=0}^\infty \e^{-t\big( \frac{\pi (2k+1)(\#\mV_{\mathrm{D},\Graph})}{2{\ell}} \big)^2} \frac{1}{(2k+1)^2}, \qquad t>0.
\]
We stress that the heat content only depends on $\#\mV_{\mathrm{D},\Graph}$, ${\ell}$ and $m$, but not on the number $h$ of pumpkins $\Graph$ consists of.
\end{exa}

\begin{figure}[h]
\begin{tikzpicture}%[scale=0.8]
\coordinate (a) at (0,0);
\coordinate (b) at (2,0);
\coordinate (c) at (6,0);
\coordinate (d) at (7,0);
\coordinate (e) at (9,0);
\draw (a) -- (b);
\draw[bend left]  (a) edge (b);
\draw[bend right]  (a) edge (b);
\draw (b) -- (c);
\draw[bend left]  (b) edge (c);
\draw[bend right]  (b) edge (c);
\draw (c) -- (d);
\draw[bend left]  (c) edge (d);
\draw[bend right]  (c) edge (d);
\draw (d) -- (e);
\draw[bend left]  (d) edge (e);
\draw[bend right]  (d) edge (e);
		\draw[fill=white] (a) circle (1.75pt);
		\draw[fill] (b) circle (1.75pt);
		\draw[fill] (c) circle (1.75pt);
		\draw[fill] (d) circle (1.75pt);
		\draw[fill] (e) circle (1.75pt);
\end{tikzpicture}
\caption{A $3$-regular pumpkin chain with $h=4$ and $\#\mV_{\mathrm{D}}=1$.}\label{fig:pumch}
\end{figure}

        \subsection{A Hadamard-type formula}
        Given a graph $\Graph$ as in \autoref{ass:graph} and fix an edge $\me_0 \in \mE$. We consider its \emph{length-perturbed} graph $\Graph_s$ for some $s > 0$ which is defined by the same topology as $\Graph$ but with edge lengths $(\ell_{\me,s})_{\me \in \mE}$ defined via
        \begin{align*}
        \ell_{\me,s} := \begin{cases} s + \ell_{\me}, & \text{if $\me = \me_0$}, \\ \ell_\me, & \text{if $\me \neq \me_0$,} \end{cases} \qquad \me \in \mE.
        \end{align*}
        Using now the combinatorial expansion in \eqref{eq:heat-content-formula-bif-mug}, we immediately reach at the following Hadamard-type formula for the heat content with respect to variation of the underlying edge lengths.
        \begin{theorem}\label{theo:hadamard}
        Let $\Graph$ a metric graph as in \autoref{ass:graph} and $\Graph_s$, $s>0$ some length-perturbation. Then, one has
        \begin{align}\label{eq:hadamard-heat-content}
        \frac{\mathrm{d}}{\mathrm{d}s}\bigg\vert_{s=0} \heatcont(\Graph_s;\mVD) &= 1- 2 \sum_{\vec p \in \mathcal{P}_\mVD(\Graph)} \alpha(\vec p) (\#_{\me_0} \vec{p}) \erfc\bigg(\frac{\ell( \vec p)}{2\sqrt{t}} \bigg), \qquad t > 0.
        \end{align}
        % \frac{1}{\sqrt{\pi}} \sum_{n=0}^\infty n \bigg[\sum_{p \in \mathcal{P}_{\me_0}^n(\Graph)} \alpha(p) \bigg( \int_{\ell(p_-)}^{\ell(p)} \e^{-\frac{x^2}{4t}} \dd x - \int_{\ell(p_\pm)}^{\ell(p_-)} \e^{-\frac{x^2}{4t}} \dd x \bigg)\bigg]
       % where
        %\[
      %  \vert p \vert_{\me_0} := \#\{ \text{transfers of $p$ at $\me_0$}\} \in \mathbb{N}_0 \quad \text{for $p \in \mathcal{P}_{\me_0}(\mv,\mw)$}.
      %  \]
      %  In particular, $        \frac{\mathrm{d}}{\mathrm{d}s}\Big\vert_{s=\ell_{\me_0}} \heatcont(\Graph_s)$ is strictly positive for any $t>0$.\footnote{\PB{Hieran habe ich mittlerweile Zweifel: Intuitiv würde ich erwarten, dass es für alle Zeiten $t>0$ strikt positiv ist, jedoch sehe ich anhand unserer Hadamardtype-Formel nur die Positivität für kleine und große $t>0$. Gleiches Phänomen wie bei Faber-Krahn (noch?).}}
        \end{theorem}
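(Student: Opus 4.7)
The plan is to start from the heat content formula established in \autoref{thm:heat-content-formula-bif-mug}, more precisely in the equivalent form \eqref{eq:heat-content-formula-bif-mug-2}, which reads
\[
\heatcont(\Graph;\mVD) = |\Graph| - \frac{2\sqrt{t}}{\sqrt{\pi}}\#\mVD + 4\sqrt{t}\sum_{\vec{p} \in \mathcal{P}_\mVD(\Graph)} \alpha(\vec{p})\, H\!\left(\frac{\ell(\vec{p})}{2\sqrt{t}}\right),
\]
and to apply it to the perturbed graph $\Graph_s$. The first step is to observe how each ingredient depends on the parameter $s$: since $\Graph_s$ differs from $\Graph$ only by lengthening $\me_0$, we have $|\Graph_s| = |\Graph| + s$, while $\#\mVD$ is unchanged; the combinatorial set of directed paths $\mathcal{P}_\mVD(\Graph_s)$ is canonically in bijection with $\mathcal{P}_\mVD(\Graph)$, the scattering coefficients $\alpha(\vec{p})$ are purely combinatorial and hence unaffected, and the metric length transforms as $\ell_s(\vec{p}) = \ell(\vec{p}) + s\cdot \#_{\me_0}\vec{p}$ by definition of $\#_{\me_0}\vec{p}$.

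Substituting these into the formula, $\heatcont(\Graph_s;\mVD)$ depends on $s$ only through the affine linear term $s$ and through the argument of $H$ in each summand. Differentiating termwise and using $H'(x) = -\erfc(x)$ (recalled in the proof of \autoref{lem:properties-H}) then yields
\[
\frac{\mathrm{d}}{\mathrm{d}s}\bigg|_{s=0}\heatcont(\Graph_s;\mVD) = 1 + 4\sqrt{t}\sum_{\vec{p} \in \mathcal{P}_\mVD(\Graph)} \alpha(\vec{p})\,\frac{\#_{\me_0}\vec{p}}{2\sqrt{t}}\,\Bigl(-\erfc\bigl(\tfrac{\ell(\vec{p})}{2\sqrt{t}}\bigr)\Bigr),
\]
which simplifies to the claimed \eqref{eq:hadamard-heat-content}.

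The main obstacle, and the only nontrivial point, is justifying the interchange of differentiation and summation. For this I would proceed exactly as in the proof of \autoref{thm:two-term-small-time-asymptotic-heat-content}: bound $|\alpha(\vec{p})| \le 1$, estimate $\#_{\me_0}\vec{p} \le \#\vec{p} = n$ and $\ell_s(\vec{p}) \ge n\ell_{\min}$ uniformly for $s$ in a small neighborhood of $0$ (where $\ell_{\min}$ refers to the minimal edge length in the whole family $\Graph_s$, which stays bounded away from $0$), and use the cardinality bound \eqref{eq:estimate-comb-n-dmax} together with the estimate $\erfc(x)\le \e^{-x^2}/(\sqrt{\pi}x)$ from \eqref{eq:estimate-erfc}. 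This reduces the question to the convergence of
\[
\sum_{n=1}^\infty n\,(d_{\max})^{n-1}\,\e^{-n^2\ell_{\min}^2/(4t)},
\]
which is immediate. Hence the series of derivatives converges uniformly on a neighborhood of $s=0$, legitimising the termwise differentiation and concluding the proof.
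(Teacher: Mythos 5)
Your proposal is correct and follows essentially the same route as the paper's proof: apply the heat content formula of \autoref{thm:heat-content-formula-bif-mug} to $\Graph_s$, identify $\mathcal{P}_\mVD(\Graph_s)$ with $\mathcal{P}_\mVD(\Graph)$ so that only $|\Graph_s|=|\Graph|+s$ and the path lengths $\ell(\vec p)+s\,\#_{\me_0}\vec p$ depend on $s$, and differentiate termwise using $H'=-\erfc$. Your justification of the interchange of differentiation and summation (via $|\alpha(\vec p)|\le 1$, the cardinality bound \eqref{eq:estimate-comb-n-dmax} and the Gaussian decay of $\erfc$) is in fact spelled out in more detail than in the paper, which merely asserts the uniform convergence in $s$.
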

        Recall that $\#_{\me_0} \vec{p}$ denotes the number of time a directed path $\vec{p} \in \mathcal{P}_{\mVD}(\Graph)$ crosses $\me_0$.
        \begin{proof}
We are going to derive \eqref{eq:hadamard-heat-content} from \autoref{thm:heat-content-formula-bif-mug} and the fact that $H'(x) = -\erfc(x)$, noting that, for fixed $t>0$, the series appearing in the expansion for $\heatcont(\Graph_s)$ (with the derivative with respect to $s$ taken inside the sum) is uniformly convergent with respect to $s>0$. One can thus interchange the order of summation and derivation and reach at
        \begin{align}\label{eq:calc-hadamard}
        \begin{aligned}
        \frac{\mathrm{d}}{\mathrm{d}s} \heatcont(\Graph_s;\mVD) &= \frac{\mathrm{d}}{\mathrm{d}s} \Bigg( \vert \Graph_s \vert - \frac{2\sqrt{t}}{\sqrt{\pi}}\# \mVD + 4\sqrt{t} \sum_{\vec p \in \mathcal{P}_{\mVD}(\Graph_s)} \alpha(\vec p) H \bigg(\frac{\ell( \vec p)}{2\sqrt{t}} \bigg) \Bigg) \\&= \frac{\mathrm{d}}{\mathrm{d}s} \Bigg( \vert \Graph_s \vert + 4\sqrt{t} \sum_{\vec p \in \mathcal{P}_{\mVD}(\Graph)} \alpha(\vec p) H \bigg(\frac{\ell( \vec p) + (\#_{\me_0} \vec{p}) s}{2\sqrt{t}} \bigg) \Bigg) \\&= \frac{\mathrm{d}}{\mathrm{d}s} \Big( \vert \Graph \vert + s \Big) + 4\sqrt{t} \sum_{\vec{p} \in \mathcal{P}_{\mVD}(\Graph)} \alpha(\vec p) \, \Bigg( \frac{\mathrm{d}}{\mathrm{d}s}  H \bigg(\frac{\ell(\vec p)+ (\#_{\me_0} \vec{p})s}{2\sqrt{t}} \bigg) \Bigg) \\&= 1 - 2 \sum_{\vec p \in \mathcal{P}_{\mVD}(\Graph)} \alpha(\vec p) (\#_{\me_0} \vec{p}) \erfc \bigg(\frac{\ell(\vec p)+(\#_{\me_0} \vec{p})s}{2\sqrt{t}} \bigg)
        \end{aligned}
        \end{align}
        for all $s>0$, where we used the fact that $\alpha_{\Graph_s}(\vec{p}) = \alpha_{\Graph}(\vec{p})$ for all $\vec{p} \in \mathcal{P}_\mVD(\Graph_s)$ and $\mathcal{P}_{\mVD}(\Graph_s) \simeq \mathcal{P}_\mVD(\Graph)$ identifying each directed path $\vec{p} \in \mathcal{P}_{\mVD}(\Graph_s)$ with its canonical counterpart in $\mathcal{P}_\mVD(\Graph)$ following the same bonds in $\Graph$ as $\vec{p}$ in $\Graph_s$, with the difference that each bond running through $\me_0$ has length $\ell_{\me_0}$ rather than $s$. Moreover, one has
        \begin{align*}
        \frac{\mathrm{d}}{\mathrm{d}s} H\bigg( \frac{\ell(\vec p)+(\#_{\me_0} \vec{p})s}{2\sqrt{t}} \bigg) = -\frac{\#_{\me_0} \vec{p}}{2\sqrt{t}}\, \erfc\bigg( \frac{\ell(\vec p) + (\#_{\me_0} \vec{p})s}{2\sqrt{t}} \bigg) \quad \text{for $\vec p \in \mathcal{P}_\mVD(\Graph)$, $t>0$}.
        \end{align*}
        Plugging $s = 0$ into \eqref{eq:calc-hadamard} (taking into account that $\Graph_{0} = \Graph$) then yields \eqref{eq:hadamard-heat-content}.   
%{ 
%Finally, strict positivity of $\frac{\mathrm{d}}{\mathrm{d}s}\bigg\vert_{s=\ell_{\me_0}} \frac{\heatcont(\Graph_s)}{\sqrt{t}}$ follows by observing that [...]
%\footnote{\DM{Vielleicht sollte man hier mehr Details geben.}}
%}
        \end{proof}
        As a consequence we deduce the following.
        \begin{corollary}[Short time behaviour under edge lengthening]\label{cor:heat-length-s}
        Let $\Graph$ be a metric graph as in \autoref{ass:graph} and $\widetilde{\Graph}$ the graph that arises by lengthening any of the edges. Then there exists $t_0 >0$ such that
        \[
        \heatcont(\Graph;\mVD) < \heatcont(\widetilde{\Graph};\mVD) \qquad \text{for all $0<t\leq t_0$.}
        \]
        \end{corollary}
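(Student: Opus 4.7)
The plan is to deduce this from the Hadamard-type formula from \autoref{theo:hadamard}, applied not just at $s=0$ but along the entire deformation. Write $\widetilde{\Graph} = \Graph_{s_0}$ for some fixed $s_0>0$ (the amount by which the edge is lengthened), and express
\[
\heatcont(\widetilde{\Graph};\mVD) - \heatcont(\Graph;\mVD) = \int_0^{s_0} \frac{\mathrm{d}}{\mathrm{d}s} \heatcont(\Graph_s;\mVD) \, \mathrm{d}s.
\]
It then suffices to show that the integrand is strictly positive for sufficiently small $t$, \emph{uniformly} in $s \in [0,s_0]$.

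The key observation is that the calculation \eqref{eq:calc-hadamard} in the proof of \autoref{theo:hadamard} is valid for \emph{every} $s\ge 0$, not merely at $s=0$, and therefore yields
\[
\frac{\mathrm{d}}{\mathrm{d}s} \heatcont(\Graph_s;\mVD) = 1 - 2\sum_{\vec p \in \mathcal{P}_\mVD(\Graph)} \alpha(\vec p)\,(\#_{\me_0} \vec p)\, \erfc\bigg(\frac{\ell(\vec p) + (\#_{\me_0} \vec p)\, s}{2\sqrt t}\bigg).
\]
Since $\erfc$ is strictly decreasing on $\R$, the modulus of the sum is bounded, uniformly in $s\ge 0$, by
\[
R_t := \sum_{\vec p \in \mathcal{P}_\mVD(\Graph)}(\#_{\me_0}\vec p)\,\erfc\bigg(\frac{\ell(\vec p)}{2\sqrt t}\bigg),
\]
so it remains to check that $R_t\to 0$ as $t\to 0^+$.

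To this end, I would reuse the combinatorial machinery from the proof of \autoref{thm:two-term-small-time-asymptotic-heat-content}: the bounds $\#(\mathcal{P}_n(\Graph)\cap \mathcal{P}_\mVD(\Graph))\le 2 d_{\max}^{n-1}$, $\#_{\me_0}\vec p \le \#\vec p = n$, $\ell(\vec p)\ge n\ell_{\min}$, together with $\erfc(x)\le \e^{-x^2}/(\sqrt\pi\, x)$ for $x>0$. Grouping paths by combinatorial length yields an estimate of the form
\[
R_t \,\lesssim\, \sum_{n=1}^\infty n\, d_{\max}^{n-1}\, \e^{-\frac{n^2\ell_{\min}^2}{4t}},
\]
and the same geometric-series argument as in \eqref{eq:final-estimate-long-term-significant-part} (using $2n-1\le n^2$ and $d_{\max}\e^{-\ell_{\min}^2/(2t)}<1$ for $t$ small enough) shows that $R_t\in\mathcal{O}\bigl(\sqrt t\,\e^{-\ell_{\min}^2/(4t)}\bigr)$ as $t\to 0^+$.

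Consequently, there exists $t_0>0$ such that $2R_t<1$ for all $0<t\le t_0$, and hence $\frac{\mathrm{d}}{\mathrm{d}s}\heatcont(\Graph_s;\mVD)>0$ for all $s\ge 0$; integrating over $[0,s_0]$ yields the claim. The only subtle point is the uniformity of the error estimate in $s$, which is handled simply by monotonicity of $\erfc$; the rest is bookkeeping analogous to what was already done for the small-time asymptotics.
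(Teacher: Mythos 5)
Your proposal is correct and follows essentially the same route as the paper: the paper likewise deduces the corollary from the Hadamard-type formula by showing, via the bounds $\#_{\me_0}\vec p/\ell(\vec p)\le 1/\ell_{\min}$, \eqref{eq:estimate-erfc}, \eqref{eq:estimate-comb-n-dmax} and the geometric series, that the path sum in \eqref{eq:hadamard-heat-content} is $\mathcal O\bigl(\sqrt t\,\e^{-\ell_{\min}^2/(4t)}\bigr)$, so the derivative is positive for small $t$. Your explicit integration over $s\in[0,s_0]$ together with the monotonicity of $\erfc$ to get uniformity in $s$ is a welcome extra degree of care — the paper leaves this step implicit (the constants $\ell_{\min}$, $d_{\max}$ only improve or stay fixed under lengthening) — and your bookkeeping, once the factor $\sqrt t/(n\ell_{\min})$ from the $\erfc$ bound is carried through to cancel the factor $\#_{\me_0}\vec p\le n$, reproduces the paper's estimate exactly.
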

        \begin{proof}
        It suffices to show that the sum in the right-hand side of \eqref{eq:hadamard-heat-content} converges to $0$ as $t \rightarrow 0^+$, because then the derivative with respect to edge lengthening will be strictly positive for sufficiently for all $t>0$ small enough: to this end, let again $0 < t < \frac{\ell_{\min}^2}{2\log d_{\max}}$. Now as 
        \begin{align}\label{eq:estimate-length-number-of-transfer}
        \frac{\#_{\me_0} \vec{p}}{\ell(\vec{p})} \leq \frac{\#_{\me_0} \vec{p}}{(\# \vec{p}) \ell_{\min}} \leq \frac{1}{\ell_{\min}} \qquad \text{for all $\vec{p} \in \mathcal{P}_{\mVD}(\Graph)$},
        \end{align}
        since any path $\vec{p} \in \mathcal{P}_{\mVD}(\Graph)$ can run through the edge $\me_0$ at most $\# p$ times, by \eqref{eq:estimate-erfc} and \eqref{eq:sim-borharjon} one finds
        \begin{align}\label{eq:helping-estimate-derivative}
        \begin{aligned}
        \bigg\vert \sum_{\vec p \in \mathcal{P}_\mVD(\Graph)} \alpha(\vec p) (\#_{\me_0} \vec{p}) \erfc\bigg(\frac{\ell( \vec p)}{2\sqrt{t}} \bigg) \bigg\vert 
        &\leq \frac{2\sqrt{t}}{\sqrt{\pi}}\sum_{\vec p \in \mathcal{P}_\mVD(\Graph)} \frac{\#_{\me_0} \vec{p}}{\ell(\vec{p})} \e^{-\frac{\ell( \vec p)^2}{4t}}
%\leq \e^{\frac{\ell_{\min}^2}{4t}}\sum_{n=1}^\infty n(d_{\max})^n \e^{-\frac{n^2\ell_{\min}^2}{4t}}
         \\&\leq \frac{4\sqrt{t}}{\sqrt{\pi}} \frac{\e^{\frac{\ell_{\min}^2}{4t}}}{\ell_{\min} \, d_{\max}}\sum_{n=1}^\infty \Big( d_{\max} \e^{-\frac{\ell_{\min}^2}{2t}} \Big)^{n} \\&= \frac{4\sqrt{t}}{\sqrt{\pi}} \frac{\e^{\frac{\ell_{\min}^2}{4t}}}{\ell_{\min}} \frac{\e^{-\frac{\ell_{\min}^2}{2t}}}{1-d_{\max}\e^{-\frac{\ell_{\min}^2}{2t}}} \\&= \frac{4\sqrt{t}}{\sqrt{\pi} \ell_{\min}} \frac{\e^{-\frac{\ell_{\min}^2}{4t}}}{1-d_{\max}\e^{-\frac{\ell_{\min}^2}{2t}}},
        \end{aligned}
\end{align}        
where the second estimate holds by \eqref{eq:estimate-length-number-of-transfer} in combination with \eqref{eq:estimate-comb-n-dmax}. Hence, \eqref{eq:helping-estimate-derivative} indeed implies that
\begin{align*}
\lim_{t \rightarrow 0^+} \sum_{\vec p \in \mathcal{P}_\mVD(\Graph)} \alpha(\vec p) (\#_{\me_0} \vec{p}) \erfc\bigg(\frac{\ell( \vec p)}{2\sqrt{t}} \bigg) &= 0,
\end{align*}
whence the claim follows.
        \end{proof}

If each edge on the underlying graph is scaled by a \emph{uniform} factor, more can be said.
\begin{proposition}[Graph scaling]
Let $\Graph$ be a metric graph satisfying \autoref{ass:graph} and $s\Graph$ the graph which arises through scaling each edge length of $\Graph$ by a constant factor $s > 0$. Then:
\begin{align}\label{eq:scalingheatkernel}
    p_{t}^{s\mathcal{G};\mVD}(x,y) =  \frac{1}{s} p_{\frac{t}{ s^2}}^{\mathcal{G};\mVD}\Big(\frac{x}{s},\frac{y}{s} \Big) \qquad \text{for every $x,y \in \Graph$ and every $t>0$,}
    \end{align}
    and in particular,
    \begin{align}\label{eq:scalingheatcontent}
    \mathcal{Q}_t(s\mathcal{G};\mVD) = s \mathcal{Q}_{\frac{t}{s^2}}(\mathcal{G};\mVD) \qquad \text{for all $t>0$.}
    \end{align}
\end{proposition}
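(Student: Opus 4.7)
The plan is to derive the kernel identity \eqref{eq:scalingheatkernel} from the path sum formula (\autoref{lem:path-sum-formula}) and then obtain the heat content identity \eqref{eq:scalingheatcontent} by a simple change of variables. The main observation is that $s\Graph$ and $\Graph$ have identical combinatorial structure (same vertex set, same edge set, same degrees, same $\mVD$); only the metric changes, with every edge length multiplied by $s$. In particular, there is a canonical bijection $\vec{p}\leftrightarrow \vec{p}$ between directed paths in $s\Graph$ and in $\Graph$ such that $\ell_{s\Graph}(\vec{p})=s\,\ell_\Graph(\vec{p})$, while the scattering coefficients -- which depend only on degrees and on the identity of Dirichlet vertices via \eqref{eq:scattering-coefficients} -- are unchanged: $\alpha_{s\Graph}(\vec{p})=\alpha_\Graph(\vec{p})$.

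First I would identify each point $x\in s\Graph$ with the rescaled point $x/s\in\Graph$ (edgewise). Applying \eqref{eq:heat-kernel-KPS-2} to $s\Graph$ and using the above invariance and scaling gives
\begin{align*}
p_t^{s\Graph;\mVD}(x,y)
&=\frac{1}{\sqrt{4\pi t}}\sum_{\vec{p}\in\mathcal{P}_{\ge 0}(x/s,y/s)}\alpha_\Graph(\vec{p})\,\e^{-\frac{s^2\ell_\Graph(\vec{p})^2}{4t}}\\
&=\frac{1}{s}\cdot\frac{1}{\sqrt{4\pi (t/s^2)}}\sum_{\vec{p}\in\mathcal{P}_{\ge 0}(x/s,y/s)}\alpha_\Graph(\vec{p})\,\e^{-\frac{\ell_\Graph(\vec{p})^2}{4(t/s^2)}}
=\frac{1}{s}\,p_{t/s^2}^{\Graph;\mVD}\!\Big(\frac{x}{s},\frac{y}{s}\Big),
\end{align*}
which is \eqref{eq:scalingheatkernel}.

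For \eqref{eq:scalingheatcontent}, I would simply integrate: substituting $x=sx'$, $y=sy'$ (so that $\dd x\,\dd y = s^2\,\dd x'\,\dd y'$) and using the kernel identity yields
\[
\heatcont(s\Graph;\mVD)=\int_{s\Graph}\!\int_{s\Graph}p_t^{s\Graph;\mVD}(x,y)\,\dd x\,\dd y
=\int_\Graph\!\int_\Graph \frac{1}{s}\,p_{t/s^2}^{\Graph;\mVD}(x',y')\,s^2\,\dd x'\,\dd y'
=s\,\mathcal{Q}_{t/s^2}(\Graph;\mVD).
\]
There is no serious obstacle here: the essential content is the invariance of the scattering coefficients under uniform metric scaling (a purely combinatorial observation) together with the explicit Gaussian prefactor $(4\pi t)^{-1/2}$. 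As a sanity check, one obtains the same identities by applying \autoref{thm:heat-content-formula-bif-mug} directly -- the three terms $|s\Graph|=s|\Graph|$, $\tfrac{2\sqrt{t}}{\sqrt{\pi}}\#\mVD$, and $8\sqrt{t}\sum_p\alpha(p)H(s\ell(p)/(2\sqrt{t}))$ match $s\,\mathcal{Q}_{t/s^2}(\Graph;\mVD)$ term by term, since $\sqrt{t}=s\sqrt{t/s^2}$ and $s\ell(p)/(2\sqrt{t}) = \ell(p)/(2\sqrt{t/s^2})$.
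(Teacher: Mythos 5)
Your proof is correct and follows essentially the same route as the paper: both derive \eqref{eq:scalingheatkernel} from the path sum formula by identifying paths in $s\Graph$ with paths in $\Graph$ (same scattering coefficients, lengths multiplied by $s$) and absorbing the factor $s$ into the Gaussian prefactor, and then obtain \eqref{eq:scalingheatcontent} by integration. Your write-up is in fact slightly more careful than the paper's (explicit Jacobian $s^2\,\dd x'\dd y'$ in the change of variables), and your closing sanity check via \autoref{thm:heat-content-formula-bif-mug} is exactly the alternative argument the paper mentions in the remark following its proof.
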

\begin{proof}
The proof is an immediate consequence of the path sum formula for the heat kernel: indeed, one observes
    \begin{align*}
        p_{\frac{t}{s^2}}^{\mathcal{G};\mVD}\Big(\frac{x}{s},\frac{y}{s}\Big) &= \frac{1}{\sqrt{4 \pi t/s^2}} \sum_{p \in \mathcal{P}_{\mathcal{G}}(\frac{x}{s},\frac{y}{s})} \alpha_{\mathcal{G}}(p)\e^{-\frac{\ell(p)^2}{4t/s^2}} = \frac{1}{\sqrt{4 \pi t}} \sum_{p \in \mathcal{P}_{\mathcal{G}}(\frac{x}{s},\frac{y}{s})} \alpha_{\mathcal{G}}(p)\e^{-\frac{(s\ell(p))^2}{4t/s^2}} \\&= s\frac{1}{\sqrt{4 \pi t}} \sum_{p \in \mathcal{P}_{s\mathcal{G}}(x,y)} \alpha_{s\mathcal{G}}(p)\e^{-\frac{\ell(p)^2}{4t}} = s p_t^{s\mathcal{G};\mVD}(x,y),
    \end{align*}
    which yields \eqref{eq:scalingheatkernel} and thus eventually \eqref{eq:scalingheatcontent}.
\end{proof}

Note that one can also use the heat content formula from \autoref{thm:heat-content-formula-bif-mug} to infer \eqref{eq:scalingheatcontent} directly, without proving \eqref{eq:scalingheatkernel}.

\bibliographystyle{plain}
\bibliography{literatur}

\end{document}